\newcommand\encircle[1]{%
  \tikz[baseline=(X.base)] 
    \node (X) [draw, shape=circle, inner sep=-1.9] {\strut \scriptsize #1};}
\newcommand{\stateh}{\encircle{\textnormal{h}}}
\newcommand{\statei}{\encircle{\textnormal{i}}}
\newcommand\tinyencircle[1]{%
  \tikz[baseline=(X.base)] 
    \node (X) [draw, shape=circle, inner sep=-2.6] {\strut \tiny #1};}
\newcommand{\tinystateh}{\tinyencircle{\textnormal{h}}}
\newcommand{\tinystatei}{\tinyencircle{\textnormal{i}}}
\newcommand{\dist}{\ensuremath{\mathop{\mathrm{dist}}\nolimits}}
\newcommand{\per}{\mathop{\mathrm{per}}\nolimits}
\newcommand{\sfv}{\ensuremath{\mathsf{v}}}
\newcommand{\plow}{\underline{\smash{p}}}
\newcommand{\Tlow}{\underline{\smash{\Theta}}}
\newcommand{\alphav}{\alpha_{\mathsf v}}
\newcommand{\cB}{\ensuremath{\mathcal{B}}}
\newcommand{\cJ}{\ensuremath{\mathcal{J}}}
\newcommand{\cQ}{\ensuremath{\mathcal{Q}}}
\newcommand{\EE}{\ensuremath{\mathbb{E}}}
\newcommand{\NN}{\ensuremath{\mathbb{N}}}
\newcommand{\PP}{\ensuremath{\mathbb{P}}}
\newcommand{\RR}{\ensuremath{\mathbb{R}}}
\newcommand{\ZZ}{\ensuremath{\mathbb{Z}}}
\newtheorem{definition}{Definition}[section]
\newtheorem{remark}{Remark}[section]
\newtheorem{theorem}{Theorem}[section]
\newtheorem*{theorem*}{Theorem}
\newtheorem{lemma}{Lemma}[section]
\newtheorem{proposition}[lemma]{Proposition}
\newtheorem{corollary}[lemma]{Corollary}
\theoremstyle{definition}
\newtheorem*{definition*}{Definition}
\newtheorem{claim}{Claim}
\let\oldtocsection=\tocsection
\let\oldtocsubsection=\tocsubsection
\let\oldtocsubsubsection=\tocsubsubsection
\renewcommand{\tocsection}[2]{\hspace{0em}\oldtocsection{#1}{#2}}
\renewcommand{\tocsubsection}[2]{\hspace{1em}\oldtocsubsection{#1}{#2}}
\renewcommand{\tocsubsubsection}[2]{\hspace{2em}\oldtocsubsubsection{#1}{#2}}
\begin{document}

\title{Contact process on interchange process}

\author{Marcelo Hil\'ario}
\address[M.~Hilário]{ICEx, Universidade Federal de Minas Gerais, Brazil}
\email{mhilario@mat.ufmg.br}

\author{Daniel Ungaretti}
\address[D.~Ungaretti]{Instituto de Matem\'atica, Universidade Federal
do Rio de Janeiro, Brazil}
\email{daniel@im.ufrj.br}

\author{Daniel Valesin}
\address[D.~Valesin]{Department of Statistics, University of Warwick, United Kingdom}
\email{daniel.valesin@warwick.ac.uk}

\author{Maria Eul\'alia Vares}
\address[M. E.~Vares]{Instituto de Matem\'atica, Universidade Federal do Rio de Janeiro, Brazil}
\email{eulalia@im.ufrj.br}

\begin{abstract}
We introduce a model of epidemics among moving particles on any locally finite graph. 
At any time, each vertex is empty, occupied by a healthy particle, or occupied by an infected particle.
Infected particles recover at rate $1$ and transmit the infection to healthy particles at neighboring vertices at rate $\lambda$. 
In addition, particles perform an interchange process with rate $\mathsf{v}$, that is, the states of adjacent vertices are swapped independently at rate $\mathsf{v}$, allowing the infection to spread also through the movement of infected particles.
On $\mathbb{Z}^d$, we start with a single infected particle at the origin and with all the other vertices independently occupied by a healthy particle with probability $p$ or empty with probability $1-p$. 
We define $\lambda_c(\mathsf{v}, p)$ as the threshold value for $\lambda$ above which the infection persists with positive probability and analyze its asymptotic behavior as $\mathsf{v} \to \infty$ for fixed $p$.
\end{abstract}

\maketitle

\tableofcontents

\section{Introduction}
\subsection{Model}
We introduce the \emph{interchange-and-contact process} as a model for the spread of an infection among a moving population.
This continuous-time interacting particle system is informally described as follows. 
At any point in time, each site of~$\mathbb Z^d$ (with~$d \ge 1$) can be in one of three states: $0$ (vacant),~$\stateh$ (occupied by a healthy particle) and~$\statei$ (occupied by an infected particle). The dynamics has three rules:
\begin{itemize}
\item infected particles recover ($\statei \to \stateh$) at rate $1$;
\item healthy particles become infected ($\stateh \to \statei$) at rate $\lambda$ times the number of infected neighbors;
\item for each edge~$e$ of~$\mathbb Z^d$, the states of the sites to which~$e$ is incident are swapped at rate $\mathsf{v}$.
\end{itemize}

We write~$(\zeta_t)_{t \ge 0}$ for an interchange-and-contact process on $\mathbb{Z}^d$ with \emph{infection rate} $\lambda$ and \emph{interchange rate} $\sfv$. The name `interchange-and-contact process' is explained by the following two points:
\begin{itemize}
\item[-] \emph{Interchange:} For~$\zeta \in \{0,\stateh,\statei\}^{\mathbb Z^d}$, define~$\xi^\zeta \in \{0,1\}^{\mathbb Z^d}$ by
\[\xi^\zeta(x) = \begin{cases} 1&\text{if } \zeta(x) \in \{\stateh,\statei\};\\ 0&\text{if }\zeta(x)=0 \end{cases}\]
Then, the process~$(\xi^{\zeta_t})_{t \ge 0}$ is an \emph{interchange process} (also known as \emph{stirring process}): sites can be either vacant (state 0) or occupied (state 1), and the dynamics is governed by the third rule in the list above. 
(Depending on the point of view, this process could also be regarded as an \emph{exclusion process}, but we will not adopt this perspective, because we would like to have individual particles performing random walks on~$\mathbb Z^d$, as in Definition~\ref{def_ctrw} below). 
\item[-] \emph{Contact:} Since particles are never created or destroyed by the interchange-and-contact dynamics, the subset~$\Omega_{\mathrm{full}}:=\{\stateh,\statei\}^{\mathbb Z^d}$ of the state space~$\Omega:=\{0,\stateh,\statei\}^{\mathbb Z^d}$ is left invariant. 
For~$\zeta \in \Omega_{\mathrm{full}}$, define~$\pi^\zeta \in \{0,1\}^{\mathbb Z^d}$ by
\[
\pi^\zeta(x) = \begin{cases} 0&\text{if } \zeta(x) = \stateh;\\ 1&\text{if } \zeta(x)=\statei. \end{cases}
\]
If the parameter~$\sfv$ is zero, then the process~$(\pi^{\zeta_t})_{t \ge 0}$ reduces to the Harris contact process. 
\end{itemize}
An exposition on the contact process can be found in~\cite{Lig13}. 
For now, let us only recall that it undergoes a phase transition: there exists~$\lambda_c^{\mathrm{CP}} \in (0,\infty)$ such that, if the process starts from finitely many infections, then the infection goes extinct almost surely if and only if~$\lambda \le \lambda_c^{\mathrm{CP}}$. 

\subsection{Background}
The case where the process evolves on~$\Omega_{\mathrm{full}}$, but~$\sfv$ is allowed to be positive, also corresponds to an existing model in the literature, called the \emph{contact process with stirring}, which we now briefly survey.

In~\cite{de1986reaction}, De Masi, Ferrari and Lebowitz studied the effect of introducing a stirring mechanism on spin systems governed by Glauber-type dynamics. 
They proved that, as the rate of stirring is taken to infinity, the system converges to a solution of an associated reaction-diffusion equation. 

The contact process with stirring was introduced by Durrett and Neuhauser in~\cite{durrett1994particle}.  Let~$\lambda_c^{\mathrm{CPS}}(\sfv)$ denote the supremum of the values of~$\lambda$ for which, starting from finitely many infected particles, and evolving  with infection rate~$\lambda$ and interchange rate~$\sfv$, the process goes extinct almost surely.
In~\cite{durrett1994particle} it is proved that
\begin{equation}\label{eq_first_convergence_lambda}\lim_{\sfv \to\infty} \lambda_c^{\mathrm{CPS}}(\sfv) = \frac{1}{2d}.\end{equation}
This is to be expected: the associated mean-field setting is a genealogical process in which each infection is regarded as an individual entity in a population where, independently, entities die with rate~1 and give birth to a new entity with rate~$2d\lambda$.
The associated threshold value of~$\lambda$ is then~$1/(2d)$.

Allowing for sites to be vacant, as we do for the interchange-and-contact process, introduces a very significant layer of complexity to the model.
The contact process dynamics is sensitive to the spatial inhomogeneities in the medium, and even if we were given which sites contain infected particles at a given time, fully describing the system would require the knowledge of which sites in $\mathbb{Z}^d$ were occupied or vacant throughout its prior evolution.
This makes the interchange-and-contact process more akin to models of contact process on dynamic random environments, in the spirit of the works of Broman~\cite{broman2007stochastic}, Steif and Warfheimer~\cite{steif2008critical}, Remenik~\cite{remenik2008contact} and Linker and Remenik~\cite{LR}. The latter studies a \emph{contact process on dynamical bond percolation}, defined as the classical contact process on~$\mathbb{Z}^d$ (with no motion of the infection), except that edges of~$\mathbb Z^d$ can be open or closed for the transmission of the infection. 
Edges evolve as independent two-state Markov chains, that jump from closed to open with rate~$p \sfv$, and from open to closed with rate~$(1-p)\sfv$, where~$p \in (0,1]$.

A critical threshold~$\lambda^{\mathrm{CPDP}}_c(p,\sfv)$ can be defined for the contact process on dynamical percolation, similarly to that of the previously discussed models say, using the process started from a single infection at the origin, and the environment in equilibrium  (though it turns out that the initial configuration is not important, as long as the initial set of infected sites is non-empty and finite).
Among several other results, Linker and Remenik proved that
\begin{equation}\label{eq_second_convergence_lambda}
\lim_{\sfv \to\infty} \lambda^{\mathrm{CPDP}}_c(p,\sfv) = \tfrac{1}{p}\lambda_c^{\mathrm{CP}}.
\end{equation}
This is justified by the observation that, when~$\sfv$ is very large, the edge dynamics mixes much quicker than the evolution of the contact process, so it is almost as if each time an edge were used by the infection, its state could be resampled independently of everything else, with probability~$p$ of being open and~$1-p$ of being closed. 
This amounts to a thinning with retention density~$p$ of the infection parameter. It should also be mentioned that more general environment dynamics have been considered by Seiler and Sturm in~\cite{seiler2023contact}.

\subsection{Main result}
We consider the interchange-and-contact process~$(\zeta_t)_{t \ge 0}$ with parameters~$\lambda$ and~$\sfv$. We take the initial configuration~$\zeta_0$ as the random configuration with
\begin{equation}\label{eq_how_it_starts}
    \zeta(0)=\statei \quad \text{and} \quad \zeta(x)=\begin{cases}
        \stateh &\text{with probability }p;\\0&\text{with probability }1-p,
    \end{cases} \quad \text{independently for }x \in \mathbb{Z}^d \backslash \{0\}.
\end{equation}
This is a natural choice, as the product Bernoulli measure is stationary for the interchange dynamics; we only perturb it at the origin to ensure that there is an infection at the start. Denoting by~$\mathbb P_{\lambda,\sfv,p}$ a probability measure under which this process is defined, the probability of survival and the critical infection threshold for survival are defined as
\begin{align*}
\uptheta(\lambda,\sfv,p)
    &:=\mathbb P_{\lambda,\sfv,p}\big(\text{for all $t$ there exists $x$ such that }\zeta_t(x)=\statei\big),\\
\lambda_c(\sfv,p)
    &:= \inf\{\lambda > 0: \uptheta(\lambda,\sfv,p) > 0 \},
\end{align*}
respectively. We can now state our main result.
\begin{theorem}
	\label{thm_main}
    For any~$p \in (0,1]$, we have
\begin{equation*}
    \lim_{\sfv \to \infty} \lambda_c(\sfv,p) = \frac{1}{2dp}.
\end{equation*}
\end{theorem}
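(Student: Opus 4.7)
The plan is to show the two bounds $\limsup_{\sfv\to\infty}\lambda_c(\sfv,p) \le \tfrac{1}{2dp}$ and $\liminf_{\sfv\to\infty}\lambda_c(\sfv,p) \ge \tfrac{1}{2dp}$ separately. The heuristic behind both is the same: as $\sfv\to\infty$, the interchange dynamics mixes particle positions so rapidly that, on the time scale of infection events, the neighborhood of any tagged infected particle looks like a sample from product Bernoulli($p$) measure. A typical infected particle therefore sees roughly $2dp$ healthy neighbors, produces new infections at effective rate $2dp\lambda$, and recovers at rate $1$, yielding a branching rate $2dp\lambda - 1$ that changes sign at $\lambda = 1/(2dp)$.

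For the upper bound I would fix $\lambda > 1/(2dp)$ and carry out a block/renormalization argument coupling the infection with a supercritical one-dependent oriented percolation. Choose large spatial and temporal scales $L$ and $T$, and call a space-time block $[-L,L]^d\times[0,T]$ \emph{good} if it contains enough infected particles at time $T$ to seed good blocks in the next time slab in two spatially shifted locations. The key technical input is a quantitative hydrodynamic limit for the interchange-and-contact dynamics: after the standard diffusive rescaling, the densities $(\rho_H,\rho_I)$ of healthy and infected particles should be close in $L^1$ to the solution of the reaction-diffusion system
\begin{equation*}
\partial_t\rho_H = \Delta\rho_H - 2d\lambda\,\rho_H\rho_I + \rho_I,\qquad \partial_t\rho_I = \Delta\rho_I + 2d\lambda\,\rho_H\rho_I - \rho_I,
\end{equation*}
which, under the conservation constraint $\rho_H+\rho_I = p$, admits the stable positive equilibrium $\rho_I^\ast = p - 1/(2d\lambda) > 0$ exactly when $\lambda > 1/(2dp)$. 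A standard Bezuidenhout--Grimmett style comparison then yields $\uptheta(\lambda,\sfv,p) > 0$ for all sufficiently large $\sfv$.

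For the lower bound I would fix $\lambda < 1/(2dp)$ and bound from above the expected number $N_t$ of infected particles at time $t$. The strategy is a first-moment comparison with a subcritical branching structure: couple the infection marks fired along edges of $\mathbb{Z}^d$ with an auxiliary process in which every attempt is accepted, and then discount by the probability that the targeted site actually carries a healthy particle. If the mixing were perfect this discount would equal $p$, giving the linear bound $\tfrac{d}{dt}\mathbb{E}[N_t] \le (2dp\lambda - 1)\mathbb{E}[N_t]$ whose solution decays exponentially. In the finite-$\sfv$ setting, I would derive an integral inequality for $\mathbb{E}[N_t]$ in which the effective growth exponent converges to $2dp\lambda - 1 < 0$ as $\sfv \to \infty$, and conclude via Markov's inequality that $\mathbb{P}(N_t\ge 1)\to 0$, forcing extinction.

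The hard step in both halves is the same: the sites neighbouring an infected particle are \emph{not} independent of its history. Recently created offspring of an infected particle are atypically close to it, and the exclusion interaction introduces non-trivial correlations between particle positions; without further work, the naive branching computation above both overestimates the number of healthy neighbours (in the lower bound) and fails to provide a genuine deterministic approximation starting from a single-site seed (in the upper bound). Quantifying that these correlations dissipate on the time scale of infection events as $\sfv\to\infty$ --- a propagation-of-chaos statement tailored to the interchange-and-contact dynamics, in the spirit of what is done for dynamical percolation in~\cite{LR} but complicated here by the fact that the environment is itself perturbed by the infection --- will constitute the bulk of the technical work.
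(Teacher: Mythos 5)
Your overall strategy coincides with the paper's: both directions are proved separately for fixed $(\lambda,p)$ with $2dp\lambda$ on either side of $1$, taking $\sfv$ large, and the survival half is ultimately a block/renormalization comparison. But as written, both halves have genuine gaps at exactly the point you flag as ``the hard step'', and in neither case does the sketch supply a mechanism that would close it.

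For the extinction direction, a first-moment differential inequality of the form $\tfrac{d}{dt}\mathbb{E}[N_t]\le(2dp\lambda-1+o(1))\mathbb{E}[N_t]$ is not attainable by sending $\sfv\to\infty$: the quantity you need to control is $\mathbb{P}(\zeta_t(y)\ \text{occupied}\mid \zeta_t(x)=\statei)$ summed over $y\sim x$, and this conditional occupation probability is biased \emph{above} $p$ by a non-vanishing amount, uniformly in $\sfv$, because infections live where particles are (the infector of $x$, recovered former infecteds, and the whole infection genealogy are occupied sites near $x$), and because on the infinite time horizon the surviving infection is automatically conditioned on locally dense regions of the environment. Fast stirring mixes on the time scale of a single infection event, but it does not restore the \emph{global} density to $p$ near a long-surviving cluster. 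This is why the paper does not stop at a microscopic estimate: it proves (Proposition~\ref{prop_extended_surv}) a bound only up to time $\log^3(\sfv)$ under a density hypothesis, and then runs a full multiscale renormalization (Section~\ref{s_proof_extinction}) with a sprinkled sequence of densities $p_N\downarrow p$ and the refined Baldasso--Teixeira coupling (Lemma~\ref{lem_coupling_rate_one}) precisely to control the event that the local density exceeds $p_0>p$ at later times. Without some such multiscale control your integral inequality has no uniformly negative effective exponent.

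For the survival direction, the hydrodynamic-limit route (reaction--diffusion PDE plus Bezuidenhout--Grimmett/Durrett--Neuhauser comparison) faces two obstacles that the sketch does not address. First, the comparison with one-dependent oriented percolation requires the goodness of a space-time block to be (approximately) independent of distant blocks; here the block event depends on the particle environment, which transports information at speed of order $\sfv$ and, once the infection's behaviour inside a block is revealed, is no longer in product form — so one needs quantitative horizontal and, crucially, vertical decoupling valid for \emph{deterministic} (non-equilibrium) particle configurations. That decoupling, not the PDE limit, is the main technical content of the paper (Lemmas~\ref{lem_horizontal_decoupling} and~\ref{lem_vertical_decoupling}, built on Lemma~\ref{lem_coupling_rate_one}), and in the paper the scales must even grow super-exponentially (Remark~\ref{rem_slow}) to make the block propagation survive the loss of speed from circumventing bad regions; a single-scale one-dependent comparison does not obviously suffice. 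Second, a De Masi--Ferrari--Lebowitz type limit for this model (with vacancies, and with the infected/healthy labels riding on the interchange) is itself a substantial unproved step, and it only gives convergence on compact time windows from macroscopic initial densities, so it cannot by itself seed and sustain the block argument; the paper instead gets microscopic propagation from $\sfv^{\varepsilon_0}$ seeds via an explicit coupling with a branching random walk (Section~\ref{s_micro_prop}), and handles the single initial infection by a crude positive-probability bootstrap at the very end. In short, the heuristic and the top-level decomposition are right, but the plan as stated defers precisely the parts of the argument that are the theorem's actual content.
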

Interestingly, this result incorporates both phenomena from the convergences in~\eqref{eq_first_convergence_lambda} and~\eqref{eq_second_convergence_lambda}, namely, the appearance of the mean-field threshold rate and the thinning of the infection parameter, respectively. Here, the thinning is due to a proportion~$1-p$ of the transmissions being lost due to targeting vacant sites.

In Section~\ref{ss_ideas}, we discuss the technical challenges involved in establishing this result. We then discuss our methods of proof, which in broad terms involve splitting the convergence into two regimes (extinction and survival), according to values of~$\lambda$ and~$p$ that are kept fixed as~$\sfv$ is taken to infinity.
Both regimes are analyzed through renormalization techniques.

Although our focus on this paper is exclusively the limit as~$\sfv \to \infty$, many other directions of investigation may naturally be considered for this model. 
To mention one of them, in analogy with~\cite{LR}, it would be interesting to study whether the model exhibits \emph{immunity}, meaning that there are values of~$p$ and~$\sfv$ for which the infection goes extinct almost surely \emph{regardless of the value of $\lambda$}.

\subsection{Motivation and related works}
Mobility of agents is a desirable feature in models of growth and epidemics, and several works have addressed this feature in the literature. 
For models in which the agents move as independent random walks and transmit an infection, notable contributions include the works of Kesten and Sidoravicius~\cite{
kesten2005spread,kesten2006phase,kesten2008shape}, B\'erard and Ram\'irez \cite{Berard2016front}, Baldasso and Stauffer~\cite{baldasso2022local,baldasso2023local}, and Dauvergne and Sly~\cite{dauvergne2022sir,dauvergne2023spread}. 

Models in which the motion of the infection-spreading agents is not independent have also been considered. 
Infected particles move as a zero-range process in a work by Baldasso and Teixeira~\cite{baldasso2020spread}, and as an exclusion process in a work by Jara, Moreno and Ram\'irez~\cite{jara2008front}. 
The latter model shares only superficial similarities with ours since there are no recoveries, and the mechanism for spreading the infection involves the jumps in the exclusion process.

As mentioned earlier, the interchange-and-contact process may be regarded as the contact process on a dynamical random environment. 
The contact process on both static and dynamic random environments has been a very active topic of research over the last two decades.
In the static setting, it has been shown that degree inhomogeneities in the graph gives rise to a very rich behavior; see for instance~\cite{chatterjee2009contact, mountford2013metastable, bhamidi2021survival}, and the recent survey~\cite{valesin2024contact}. 
Introducing dynamics in the environment, raises the question of whether the effects of inhomogeneity persist, alongside with other interesting lines of investigation; see for instance~\cite{jacob2017contact, jacob2025contact, jacob2024metastability, cardona2024contact, leite2024contact, schapira2025contact, fernley2025contact}.

Concerning the convergence \eqref{eq_first_convergence_lambda} for the contact process with fast stirring, more refined results have  been obtained.
In~\cite{konno1995asymptotic}, Konno proved that
\[
0 < \liminf_{\sfv \to \infty} \frac{\lambda_c^{\mathrm{CPS}}(\sfv)-\frac{1}{2d}}{f(\sfv)} \le \limsup_{\sfv \to \infty} \frac{\lambda_c^{\mathrm{CPS}}(\sfv)-\frac{1}{2d}}{f(\sfv)} < \infty,
\]
where~$f(\sfv) = \sfv^{-1}$ if~$d \ge 3$,~$f(\sfv)=\log(\sfv)\sfv^{-1}$ if~$d=2$, and~$f(\sfv)=\sfv^{-1/3}$ if~$d = 1$.
For~$d \ge 3$, more is known: putting together the main results of Katori~\cite{katori1994rigorous} and Berezin and Mytnik~\cite{berezin2014asymptotic}, it holds that~$\lim_{\sfv \to \infty} \sfv \cdot (\lambda_c^{\mathrm{CPS}}(\sfv) - \tfrac{1}{2d}) = (G(0,0)-1)/(2d)$, where~$G(0,0)$ is the Green function of discrete-time simple random walk on~$\mathbb Z^d$. 
For~$d=2$, results in the same spirit are available, albeit not achieving precision down to the limiting constant, in~\cite{berezin2014asymptotic} and~\cite{levitvalesin}. 
It is an interesting line of research to obtain refinements of this kind for the convergence given in Theorem \ref{thm_main}.

\subsection{Ideas of proof}
\label{ss_ideas}

To prove Theorem \ref{thm_main} we will establish separately the following:
\begin{align}
    \label{eq_main_ext}
    \text{for all } \lambda, p \text{ with }2dp\lambda < 1, \text{ there exists }\sfv_0 > 0 \text{ such that } \uptheta(\lambda,\sfv,p)=0 \text{ for all }\sfv \ge \sfv_0;\\
    \label{eq_main_surv}
    \text{for all } \lambda, p \text{ with }2dp\lambda > 1, \text{ there exists }\sfv_1 > 0 \text{ such that } \uptheta(\lambda,\sfv,p)>0 \text{ for all }\sfv \ge \sfv_1.
\end{align}
The proof of these two points share broad similarities: both begin with a microscopic analysis and proceed to a renormalization scheme, which employs decoupling tools.

The microscopic analysis assumes for the most part that the environment of particles in which the infection spreads is close to equilibrium (product Bernoulli measure with density~$p$), as should be the case when the process starts.
It then exploits the assumption on~$\lambda$ and~$p$ to establish that the infection behaves subcritically in the case of~\eqref{eq_main_ext} and supercritically in the case of~\eqref{eq_main_surv}.

The guiding principle for either direction is that as~$\sfv$ goes to infinity, the set of infected particles behaves similarly to a branching random walk with death rate~1 and birth rate~$2d\lambda p$, at least while there are not too many infections. 
When there are too many infected particles, one observes \emph{collisions}, that is, transmission attempts towards particles that are already infected.
This makes the approximation by branching random walks inaccurate.  

Apart from the occurrence of collisions, the heterogeneity in the environment of particles is another important factor that contributes to the inaccuracy of the branching random walk approximation.
As the process evolves and additional information on the environment is revealed, one may find regions where the density deviates significantly from equilibrium. 
Renormalization comes in as a tool to establish that these regions are sufficiently rare to be neglected.

Our renormalization approach follows the standard framework of tiling space-time into boxes, classifying each as ``good'' or ``bad'' based on the behavior of the process within them, and iteratively coarse-graining to construct higher-scale boxes, which are similarly classified. 
The construction is designed such that the probability of a box being bad decreases rapidly with increasing scales, a property established through a recursive argument.

At the bottom scale, an upper bound on the probability of a box being bad is obtained using the microscopic analysis mentioned earlier. 
For higher scales, the probability of observing bad boxes is controlled by the likelihood of encountering a pair of bad boxes at the preceding scale. 
A mild decoupling estimate enables us to demonstrate that the process behaves approximately independently in boxes that are sufficiently separated in space and time. 
This yields a contracting sequence of probabilities for bad boxes across scales.

This decoupling estimate is a key ingredient in our analysis deserving further discussion.
Given the oriented nature of the model (due to the time component), it is important to distinguish between ``horizontal decoupling'' (between pairs of boxes that are well-separated in space, but possibly not in time) and ``vertical decoupling'' (distant in time, possibly not in space).

Horizontal decoupling in our setting follows from the fact that both particles and infection cannot traverse the distance between well-separated boxes within the relevant time frame.
However, as the interchange rate~$\sfv$ goes to infinity, this becomes a very delicate requirement, imposing a careful choice for the scale progression used in the renormalization.
Having taken care of this difficulty, the horizontal decoupling can be obtained using standard large deviations bounds on the speed of random walks and spreading processes.

In contrast, to derive a useful vertical decoupling is substantially more complex.
To address this, we develop a refined version of the \emph{sprinkling} procedure by Baldasso and Teixeira (Theorem 1.5 in~\cite{BT}). 
It consists in randomly introducing  particles into the system across successive scales that mitigates dependencies, thereby facilitating decoupling.
We needed to develop a subtle improvement that allows for deterministic initial states, rather than random and stationary, as in~\cite{BT}.
See Lemma~\ref{lem_coupling_rate_one} below, and its proof in the appendix. 
A similar refinement of the decoupling of~\cite{BT}, allowing for deterministic initial configurations, has recently been developed by Conchon-Kerjan, Kious and Rodriguez in~\cite{kious2024sharp}.

Although the discussion in the previous few paragraphs refers to both the extinction and the survival regimes (\eqref{eq_main_ext} and~\eqref{eq_main_surv}, respectively), our treatments of these regimes are largely distinct, and we discuss them separately now.
\smallskip

\textbf{Extinction: microscopic analysis.} 
This item is handled in Section~\ref{s_micro_ext}. 
As discussed before, we use the term `collision' to refer to attempts to infect already-infected particles, which cause the infection dynamics in our model to deviate from that of a branching random walk.
In the context of proving extinction, this  does not pose any concern: collisions can only contribute to extinction, so they may be ignored. 
This substantially simplifies the analysis in this regime.

Assume that~$2dp\lambda < 1$,~$\sfv$ is large, and the process starts as in~\eqref{eq_how_it_starts}. Let~$\sigma_1 < \sigma_2 < \cdots$ denote the times at which the number of infected vertices changes (it necessarily increases or decreases by one unit each time, and is absorbed at zero). Let~$M_n$ denote the number of infected particles at time~$\sigma_n$. 
Roughly speaking, we argue that, unless the environment surrounding the infected particles exhibits an atypically high density above $p$, then~$(M_n)_{n \ge 1}$ is stochastically dominated from above by a birth-and-death chain biased towards zero.
To prove this, we use the fact that particle motion occurs in a much faster time-scale than epidemics events (transmissions and recoveries); hence, in between the times~$(\sigma_n)$, particles are well-mixed, so when transmissions attempt take place, their target location is approximately in equilibrium (that is, vacant with probability~$1-p$ and particle with probability~$p$).

We run the process for time~$\log^3(\sfv)$, which is sufficient for the dominating birth-and-death chain to be absorbed, within a spatial box with radius~$\sqrt{\sfv}\log^4(\sfv)$.
This radius exceeds the distance across which the infection can propagate during the~$\log^3(\sfv)$ time horizon.
It is also an adequate size for ensuring that the particle density remains well-controlled.
\smallskip

\textbf{Extinction: renormalization.} This item is the topic of Section~\ref{s_proof_extinction}. 
The renormalization scheme we apply in that section is identical to that of our earlier work~\cite{HUVV}, on the contact process on dynamical percolation. 
It involves (half-)crossings of space-time boxes by infection paths in the Poisson graphical construction. 
This follows standard lines: if a box is crossed by an infection path, then we can find two boxes of the lower level inside it that are also crossed; furthermore, these boxes can be taken far apart from each other, and the number of ways that they can be chosen is bounded above by a suitable quantity.
This leads to a recursion in~$N$ on the probability that a box of scale~$N$ is crossed, which is used to prove that this probability tends to zero.
\smallskip

\textbf{Survival: microscopic analysis.} This is done in Section~\ref{s_micro_prop}. 
In contrast with the extinction regime, collisions play a relevant role in the survival regime.
In fact, the infection attempts that are missed due to collisions could potentially drive the process below the supercritical regime of the mean-field model.
A careful analysis is needed to rule that out.

Suppose that~$2dp\lambda > 1$, that~$\sfv$ is large, and that the process starts as in~\eqref{eq_how_it_starts}.
We consider a space-time box of the form~$[-\sqrt{\sfv}\log^2(\sfv),\sqrt{\sfv}\log^2(\sfv)]^d \times [0,h_0]$, where the  height~$h_0$ is taken sufficiently large, depending on~$\lambda$ but not on~$\sfv$. 
We aim to say that, for some sufficiently small~$\varepsilon_0 > 0$ suitably chosen, if 
\begin{enumerate}[a)]
    \item there are~$\sfv^{\varepsilon_0}$ infections in~$[-\sqrt{\sfv},\sqrt{\sfv}]^d$ at time~0, and
    \item the density of particles in~$[-\sqrt{\sfv}\log^2(\sfv),\sqrt{\sfv}\log^2(\sfv)]^d$ at time~0 is close to~$p$, 
\end{enumerate}
then with high probability the infection spreads well up to the top, meaning that, at time $h_0$ there are at least~$\sfv^{\varepsilon_0}$ infections in each of the (overlapping) boxes
\[
[-2\sqrt{\sfv},0] \times [-\sqrt{\sfv},\sqrt{\sfv}]^{d-1},\quad [-\sqrt{\sfv},\sqrt{\sfv}]^{d} \quad \text{and}\quad [0,2\sqrt{\sfv}]\times [-\sqrt{\sfv},\sqrt{\sfv}]^{d-1}.
\]
To demonstrate this, we construct a refined coupling between the set of infected vertices in the interchange-and-contact process and a branching random walk.
By examining the scaling limit of the branching random walk under a diffusive scale, namely, the branching Brownian motion, we establish that the branching random walk spreads effectively and fills the boxes. 
This allows us to show that, with high probability, the interchange-and-contact process behaves similarly.

If the density of particles were always close to~$p$, then the environment would be favorable to the spread of the infection, and the box-to-box propagation would readily provide a ``block argument'' enabling comparison with oriented percolation, as in~\cite{durrett1984oriented}.
Since this is not the case, renormalization is required to address the effect of low-density regions.
\smallskip

\textbf{Survival: renormalization.} This is the topic of Section~\ref{s_surv_renorm}. 
The renormalization scheme required for the survival regime is more involved than the one for extinction.
One key difference is that in the survival regime, the scales grow faster than exponentially, unlike the exponential growth in the extinction regime. 
This adjustment is necessary to account for the reduced propagation speed of good boxes, as noted in Remark \ref{rem_slow} in Section~\ref{s_surv_renorm}. A more significant distinction lies in the vertical decoupling, which is far more intricate in the survival regime. 
In the extinction regime, a box from the bottom scale is assigned the status of good or bad depending solely on infection paths within that box.
These paths are determined by contact interactions (transmissions and recoveries) and the particle configuration (vacant versus occupied) inside the box. 
While the particle configuration is influenced by dynamics outside the box, this dependence can be managed using the standard decoupling method from~\cite{BT}, since it involves the interchange process exclusively.

In the survival regime, the definition of a bad box is much more complex.
We avoid detailing it here but note that it relies on information both inside and outside the box, involving both the interchange dynamics and the infection’s behavior.
This is a significant complication, because revealing information about the healthy-infected status of a particle, implies that the assumption that the rest of the particles are in a product Bernoulli measure is no longer valid. 
Fortunately, our refined version of the decoupling method from~\cite{BT} addresses this by eliminating the need for the particle configuration to be in equilibrium.

\smallskip

The above paragraphs cover the content from Sections~\ref{s_micro_ext} to~\ref{s_surv_renorm}. 
In Section~\ref{s_prelim}, we present several preliminary tools to deal with the interchange and interchange-and-contact process.

\subsection{Notation}
We write~$\mathbb N = \{1,2,\ldots\}$ and~$\mathbb N_0 = \{0,1,\ldots\}$.
For a set~$A$, we let~$|A|$ denote the cardinality of~$A$.
Given~$x \in \mathbb R^d$ and~$r > 0$, we let~$B_x(r)$ be the~$\ell_\infty$-ball in~$\mathbb R^d$ with center~$x$ and radius~$r$.
For~$x,y \in \mathbb Z^d$, we write~$x\sim y$ if~$x$ and~$y$ are nearest neighbors.
Given a set~$S$ and~$\eta \in S^{\mathbb Z^d}$, for each~$x \in \mathbb Z^d$ we define the translation~$\eta \circ \theta(x)$ given by~$(\eta \circ \theta(x))(y)=\eta(y+x)$.

\section{Preliminary constructions and tools}
\label{s_prelim}

This section compiles tools and results used throughout the paper. 
Section\ref{s_rws} covers basic facts about simple random walks on~$\mathbb Z^d$. 
Section~\ref{s_prelim_interchange} contains the construction of the interchange process, and important bounds for it, including the refinement of the decoupling method from~\cite{BT}. Section~\ref{s_prelim_interchange_and_contact} presents the graphical construction of the interchange-and-contact process, as well as decoupling bounds for it.

\subsection{Random walk notation and estimates}\label{s_rws}
\begin{definition}\label{def_ctrw}
	Let~$(\mathsf{p}(x,y, t): x,y \in \mathbb Z^d,\; t \ge 0)$, denote the transition function of a continuous-time random walk on~$\mathbb{Z}^d$ which jumps from~$x$ to each~$y \sim x$ with rate~$1$, that is,~$\mathsf p$ satisfies
\begin{equation}
\mathsf p(x,y,0)=\mathds{1}_{\{x=y\}},\qquad \frac{\mathrm{d}}{\mathrm{d}t} \mathsf p(x,y,t) = \triangle \mathsf p(x, y
,t),\; t > 0,
 \label{e:ctrw}
\end{equation}
where $
    \triangle f(y) = \sum_{z \sim y} (f(z)-f(y)).
$
\end{definition}
The maximal inequality below provides some control on the trajectory of the random walk.
\begin{lemma}\label{lem_kallenberg}
    For a continuous-time random walk~$(X_t)_{t \ge 0}$ on~$\mathbb Z^d$ with $X_0=0$ and whose transition function satisfies~\eqref{e:ctrw}, we have
	\begin{equation*}
		\label{eq_bound_rw_reach}
		\mathbb P\Bigl(\max_{0\le s \le t} \|X_s\| > x\Bigr) \le 2d\exp\Bigl\{-\frac12 x \log\left(1+\frac{x}{t} \right) \Bigr\},\quad t > 0,\; x > 0.
	\end{equation*}
\end{lemma}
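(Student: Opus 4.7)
The plan is to reduce the $d$-dimensional problem to a one-dimensional estimate via symmetry, and then apply Doob's maximal inequality to a suitable exponential martingale of the walk, optimizing the martingale parameter to produce the stated bound.

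For the reduction, since $\|X_s\|_\infty = \max_{1 \le k \le d} |X_s^{(k)}|$ and the coordinate processes $X^{(k)}$ are each, by the structure of the generator $\triangle$, distributed as one-dimensional continuous-time walks that jump to $\pm 1$ independently at rate one, I would combine a union bound over $k$ with the symmetry $X_s^{(k)} \stackrel{d}{=} -X_s^{(k)}$ to obtain
\[
\mathbb P\bigl(\max_{0 \le s \le t} \|X_s\| > x\bigr) \;\le\; 2d \cdot \mathbb P\bigl(\max_{0 \le s \le t} X_s^{(1)} > x\bigr).
\]
This accounts for the factor $2d$ in the statement.

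For the one-dimensional estimate, I would introduce, for each $\theta > 0$, the non-negative martingale
\[
M_s \;:=\; \exp\bigl\{\theta X_s^{(1)} - s\,\psi(\theta)\bigr\}, \qquad \psi(\theta) \;:=\; e^{\theta} + e^{-\theta} - 2,
\]
which satisfies $\mathbb E[M_0] = 1$. Since $\psi(\theta) \ge 0$, on the event $\{\max_{s\le t} X_s^{(1)} > x\}$ we have $\max_{s \le t} M_s \ge e^{\theta x - t \psi(\theta)}$, and so Doob's maximal inequality produces the Chernoff-type bound
\[
\mathbb P\bigl(\max_{0 \le s \le t} X_s^{(1)} > x\bigr) \;\le\; \exp\bigl\{-\theta x + t\psi(\theta)\bigr\}, \qquad \theta > 0.
\]

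The remaining work is to choose $\theta$ so that the right-hand side takes the claimed form. A natural choice is $\theta = \log(1+x/t)$, which gives the explicit simplification $\psi(\theta) = x^2/(t(t+x))$; an alternative is the cruder domination $\max_{s \le t} X_s^{(1)} \le N_t^+$, where $N^+$ is the rate-one Poisson process of $+1$-jumps, together with a Cramér/Chernoff bound for the Poisson tail using the same value of $\theta$. In either case, the main obstacle is the elementary but somewhat delicate algebraic bookkeeping that compares the resulting exponent with $-\tfrac{1}{2}\,x \log(1+x/t)$; when $\frac{1}{2}x\log(1+x/t) \le \log(2d)$ the bound is trivial, while in the complementary regime one simplifies the optimized exponent to the stated closed form.
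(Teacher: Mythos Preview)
Your reduction to dimension one and the exponential-martingale setup are both correct and match the paper's opening move. The paper, however, does not build the one-dimensional bound from a Chernoff calculation: it quotes a concentration inequality for continuous-time martingales with bounded jumps (Theorem~26.17 in Kallenberg's \emph{Foundations of Modern Probability}), namely that for a martingale $M$ with jumps bounded by $\kappa$ and $\langle M\rangle_t\le\sigma_t^2$,
\[
\mathbb P\Bigl(\max_{0\le s\le t}|M_s-M_0|>x\Bigr)\;\le\;2\exp\Bigl\{-\tfrac{x}{2\kappa}\log\bigl(1+\tfrac{\kappa x}{\sigma_t^2}\bigr)\Bigr\},
\]
and then applies it to $(X_t)$ with $\kappa=1$. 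This delivers the target functional form in one stroke, with no optimisation or algebraic comparison needed.

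Your final paragraph has a genuine gap. With $\theta=\log(1+x/t)$ your exponent is $-x\log(1+x/t)+\tfrac{x^2}{t+x}$, so the comparison you need is
\[
\frac{x^2}{t+x}\;\le\;\tfrac12\,x\log\bigl(1+\tfrac{x}{t}\bigr),
\qquad\text{equivalently}\qquad
\frac{u}{1+u}\;\le\;\tfrac12\log(1+u)\quad(u:=x/t).
\]
This inequality is \emph{false} for all $0<u<u_0$ with $u_0\approx3.92$: both sides vanish at $u=0$, and the left side has the larger derivative there, so it stays above until well past $u=1$. Your ``trivial regime'' $\frac12x\log(1+x/t)\le\log(2d)$ does not absorb this range; already for $d=1$, $t=1$, $x=2$ one has $u=2<u_0$ while $\frac12\cdot2\log3\approx1.10>\log2$. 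The Poisson-domination alternative hits the same wall. More conceptually, since $X^{(1)}_t$ has variance $2t$, the \emph{optimal} Chernoff exponent for small $u$ behaves like $-x^2/(4t)$, whereas the target behaves like $-x^2/(2t)$; that factor-of-two deficit in the exponent cannot be recovered by any choice of $\theta$ or by the prefactor $2d$. So the ``elementary bookkeeping'' you allude to does not exist --- the route via a direct Chernoff bound on $X^{(1)}$ cannot reach the stated exponent uniformly in $x,t$.
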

\begin{proof}
    Since the projections of $X_t$ onto each of the coordinates are simply independent continuous-time random walks on~$\mathbb Z$, the desired inequality follows from the case~$d=1$ together with a union bound.
    We thus assume that $d=1$. 
    
    We use a concentration inequality for continuous-time martingales that follows from {Theorem~26.17} in~\cite{kallenberg1997foundations}. 
    Let us briefly explain what is involved. Let~$(M_t)_{t \ge 0}$ be a continuous-time martingale with respect to its natural filtration and $(\langle M_t \rangle)_{t \ge 0}$ its predictable quadratic variation -- the almost-surely unique process that is adapted to the filtration~$(\sigma(\{M_s:s < t\}))_{t \ge 0}$ and is such that~$(M_t^2 - \langle M_t \rangle)_{t \ge 0}$ is a martingale. 
 Assume that the jumps of~$(M_t)$ have absolute value bounded above by a constant~$\kappa > 0$. 
 Fix~$t > 0$ and assume that there is a constant~$\sigma^2_t$ such that~$\langle M_t \rangle \le \sigma_t^2$ almost surely. 
 Then,
	\begin{equation}\label{eq_kal}
		\mathbb P \left(\max_{0 \le s \le t} |M_s-M_0| > x \right) \le 2\exp\left\{ - \frac12 \frac{x}{\kappa} \log \left(1 + \frac{\kappa x}{\sigma_t^2} \right) \right\},\quad x > 0.
	\end{equation}

	Now, let~$(X_t)_{t \ge 0}$ be the continuous-time random walk on~$\mathbb Z$ with $X_0 = 0$ and whose transition function satisfies~\eqref{e:ctrw}. 
    It is readily seen that~$(X_t)$ is a martingale whose jumps have absolute value equal to 1 and that~$\langle X_t \rangle = t$. The desired inequality follows immediately from~\eqref{eq_kal}.
\end{proof}

We now turn to the control of the probability that two independent random walks starting at distance at most $\ell$ meet before time $\ell^2$.
\begin{definition}[The probability of meeting]
\label{defi:meet}
	Given~$x, y \in \mathbb Z^d$, let~$\mathbb P_{x,y}$ be a probability measure under which we have two independent continuous-time random walks~$(X_t)_{t\ge 0}$ and~$(X_t')_{t \ge 0}$, both with jump rate~$1$ (as in Definition~\ref{def_ctrw}), with~$X_0 = x$ and~$X_0' = y$.
	For any~$\ell \in \mathbb N$, we let
	\begin{equation*}
		\mathrm{meet}(\ell):=\inf\big\{\mathbb P_{x,y}(\exists s \le \ell^2:  X_s = X_s' ):\;x,y \in B_0(\ell)\big\}.
	\end{equation*}
	\end{definition} 

\begin{lemma}\label{lem_meet}
	There exists~$c:=c(d)> 0$ such that for any~$\ell \in \mathbb N$,
	\begin{equation}\label{eq_meet}
		\mathrm{meet}(\ell) \ge \frac{c}{\ell^{(d-2)\vee 0}}.
	\end{equation}
\end{lemma}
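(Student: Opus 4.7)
The plan is to reduce the two-walk meeting problem to a single-walk hitting-time problem, and then to apply the second-moment (Paley--Zygmund) method to the occupation time at the origin. Since $X$ and $X'$ are independent continuous-time random walks on $\mathbb Z^d$, each with jumps to each neighbor at rate $1$, the difference $Z_t := X_t - X'_t$ is itself a continuous-time random walk on $\mathbb Z^d$, jumping to each neighbor at rate $2$, and $\{X_s = X'_s\} = \{Z_s = 0\}$. For $x, y \in B_0(\ell)$ we have $z := x-y \in B_0(2\ell)$. The claim thus reduces to proving that, uniformly in $z \in B_0(2\ell)$,
\[
\mathbb P_z\bigl(\exists\, s \in [0, \ell^2] :\; Z_s = 0\bigr) \;\ge\; c\,\ell^{-(d-2)\vee 0}.
\]

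Let $L := \int_0^{\ell^2} \mathbf 1\{Z_s = 0\}\,ds$, so that $\{L > 0\}$ coincides (a.s.) with the event above. By Paley--Zygmund, $\mathbb P_z(L > 0) \ge \mathbb E_z[L]^2 / \mathbb E_z[L^2]$, and, by the Markov property,
\[
\mathbb E_z[L^2] \;=\; 2\int_0^{\ell^2} \mathbb P_z(Z_{s_1}=0) \int_0^{\ell^2 - s_1} \mathbb P_0(Z_u=0)\,du\,ds_1 \;\le\; 2\,\mathbb E_z[L]\cdot G(\ell^2),
\]
where $G(t) := \int_0^t \mathbb P_0(Z_u = 0)\,du$. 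Hence $\mathbb P_z(L>0) \ge \mathbb E_z[L] / (2\, G(\ell^2))$. I would then estimate these moments using the continuous-time local CLT $\mathbb P_z(Z_s = 0) \asymp s^{-d/2}\,\exp(-c|z|^2/s)$ ($s \ge 1$). Direct integration, together with the change of variable $u = |z|^2 / s$, gives $G(\ell^2) \asymp \ell$ for $d=1$, $\asymp \log\ell$ for $d=2$, and $\asymp 1$ for $d\ge 3$; and, for $|z| \le 2\ell$, with the main contribution coming from the time window $s \asymp (|z|^2 \vee 1)$ where the Gaussian factor is bounded below, $\mathbb E_z[L] \gtrsim \ell$ for $d=1$, $\mathbb E_z[L] \gtrsim 1$ for $d=2$, and $\mathbb E_z[L] \gtrsim |z|^{-(d-2)} \ge (2\ell)^{-(d-2)}$ for $d\ge 3$.

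Combining these estimates yields $\mathbb P_z(L>0) \gtrsim \ell^{-(d-2)\vee 0}$ for $d = 1$ and $d \ge 3$, and the infimum over $z \in B_0(2\ell)$ gives the claim in those dimensions. I expect the \emph{main obstacle} to be the case $d = 2$: the naive second-moment bound produces only $\mathbb P_z(L > 0) \gtrsim 1/\log\ell$, which falls short of the claimed constant lower bound. To close the logarithmic gap one may sharpen the upper bound on $\mathbb E_z[L^2]$ by exploiting that the inner integral $G(\ell^2 - s_1)$ is significantly smaller than $\log\ell$ on the range of $s_1$ where $\mathbb P_z(Z_{s_1} = 0)$ is appreciable (namely $s_1 \asymp |z|^2$), or else appeal to the classical asymptotics for $\mathbb P_z(\tau_0 \le \ell^2)$ obtained via optional stopping with the potential kernel of the planar walk.
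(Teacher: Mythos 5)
Your reduction to the difference walk $Z_t=X_t-X_t'$ and the Paley--Zygmund/occupation-time computation is correct for $d=1$ and $d\ge 3$, and in those dimensions it is essentially a fleshed-out version of the paper's own (very terse) argument: the paper Poissonizes to a discrete-time walk and sums the local CLT over times in $[\ell^2,2\ell^2]$, which is exactly your lower bound on $\mathbb E_z[L]$, with the Green-function bound $G(\ell^2)\asymp 1$ (for $d\ge3$) left implicit as the normalizing factor. One small point of care: for $\|z\|$ of order $\ell$ the window $s\asymp |z|^2$ you invoke may exceed the horizon $\ell^2$; you should instead integrate over $s\in[\ell^2/2,\ell^2]$, where the Gaussian factor $e^{-c|z|^2/s}$ is still bounded below by a dimension-dependent constant. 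This does not change any of your conclusions.

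The genuine problem is $d=2$, and it is worse than a ``logarithmic gap in the method'': the constant lower bound you are trying to reach is false, so neither of your proposed repairs can succeed. Take $x,y$ in opposite corners of $B_0(\ell)$, so $|z|\asymp\ell$. By the same occupation-time bookkeeping you already use, the strong Markov property at $\tau_0$ gives $\mathbb E_z[L_{2\ell^2}]\ge \mathbb P_z(\tau_0\le \ell^2)\,\mathbb E_0[L_{\ell^2}]$, and in $d=2$ one has $\mathbb E_z[L_{2\ell^2}]\asymp 1$ while $\mathbb E_0[L_{\ell^2}]\asymp\log\ell$, whence $\mathbb P_z(\tau_0\le\ell^2)\le C/\log\ell$. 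So $\mathrm{meet}(\ell)=\Theta(1/\log\ell)$ in $d=2$: your second-moment bound $\gtrsim 1/\log\ell$ is in fact sharp, and no sharpening of $\mathbb E_z[L^2]$ or appeal to the potential kernel can remove the logarithm. (The paper dismisses $d=1,2$ as ``very clear,'' which is inaccurate for $d=2$; the correct bound $\mathrm{meet}(\ell)\ge c/\log\ell$ only changes \eqref{eq_better_bound_meet} to $e^{-ct/(\ell^{2}\log\ell)}$ in $d=2$, which is harmless everywhere the lemma is applied.) The right way to finish your write-up is therefore: prove the statement as written for $d=1$ and $d\ge3$ exactly as you do, and for $d=2$ record the weaker, optimal bound $c/\log\ell$ (or restate the lemma with this correction) rather than trying to close a gap that cannot be closed.
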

\begin{proof}
  We focus on the case $d \ge 3$, as for $d=1,2$ the assertion is very clear. Since the process  $(Z_t)_{t \ge 0}$ with $Z_t:=X_t-X'_t$ is a continuous-time simple random walk on $\mathbb Z^d$ with jump rate 2, starting at $z:=x-y$, it suffices to show the lower bound on the r.h.s.\ of \eqref{eq_meet} for the probability that this random walk hits the origin up to time $\ell^2$ uniformly on the initial $z \in B_0(2\ell)$, for all large $\ell$. 
  But we can write $Z_t=Y(N_t)$ where $(Y(n))_{n\ge 0}$ is a discrete-time simple random walk with $Y(0)=Z_0$ and $(N_t)_{t \ge 0}$ is an independent Poisson process with rate 2. 
  Conditioning on $N_t$ and using that $N_t/t$ converges a.s.\ to 2 we are left with an estimate for the discrete-time random walk, that follows from applying the Local Central Limit Theorem at times $n \in [\ell^2, 2\ell^2]$ and summing up. 
\end{proof}

\subsection{The interchange process}
\label{s_prelim_interchange}
\begin{definition}[Partial order]
	Let~$\Lambda \subseteq \mathbb Z^d$. We endow~$\{0,1\}^{\Lambda}$ with the partial order~$\le$ defined by declaring~$\xi \le \xi'$ when~$\xi(x) \le \xi'(x)$ for all~$x$. For two probability measures~$\mu$ and~$\mu'$ on~$\{0,1\}^{\mathbb Z^d}$, we write~$\mu \preceq \mu'$ if~$\mu$ is stochastically dominated by~$\mu'$ with respect to this partial order (similarly, if~$\xi,\xi'$ are random configurations, we write~$\xi \preceq \xi'$ if the law of~$\xi$ is stochastically dominated by that of~$\xi'$).
\end{definition}

\subsubsection{Graphical representation and interchange flow}

\begin{definition}[Graphical representation and flow of the interchange process] \label{def_interchange_flow}
	A \emph{graphical representation of the interchange process} with rate~$\mathsf v > 0$ is a collection
	\[\mathcal J:= (\mathcal J_{\{x,y\}}: \{x,y\} \text{ is an edge of }\mathbb Z^d),\]
	of independent Poisson point processes~$\mathcal J_{\{x,y\}}$  on~$[0,\infty)$ with intensity~$\mathsf v$. 
 Arrivals of these Poisson processes are called \emph{jump marks}. 
	Given a realization of~$\mathcal J$, we define the \emph{interchange flow}~$\Phi(x,s,t)=\Phi_{\mathcal J}(x,s,t)$ as follows. For any~$x \in \mathbb Z^d$ and~$s \ge 0$,~$t \mapsto \Phi(x,s,t)$ is the (almost surely well-defined) function satisfying $\Phi (x,s,s)=x$ and, for $t > s$, 
	\begin{align*}
		&\Phi(x,s,t-) = y,\; t \in \mathcal J_{\{y,z\}} &\Longrightarrow &&\Phi(x,s,t)= z; \\
		&\Phi(x,s,t-) = y,\; t \notin \cup_{z \sim y} \mathcal J_{\{y,z\}} &\Longrightarrow  &&\Phi(x,s,t)= y.
	\end{align*}
\end{definition}

Note that, for any~$s\le t$, the function~$x \mapsto \Phi(x,s,t)$ is a random permutation of~$\mathbb Z^d$. It is straightfoward to check that, for any~$x$ and~$s$,~$[s,\infty) \ni t \mapsto \Phi(x,s,t)$ has the distribution of a continuous-time random walk that starts at~$x$ at time~$s$, and jumps to each neighboring position with rate~$\mathsf v$.

{ For~$s > 0$ and~$t \in [0,s)$, we  define~$\Phi(x,s,t)$ as the unique~$y \in \mathbb Z^d$ such that~$\Phi(y,t,s)=x$. 
With this,~$\Phi(x,s,t)$ is now defined for all~$s \ge 0$ and all~$t \ge 0$.
Note that~$(\Phi(x,0,s):x \in \mathbb Z^d,\;0 \le s \le t)$ has the same law as~$(\Phi(x,t,t-s):x \in \mathbb Z^d,\;0\le s \le t)$. 
This property is known as the \emph{self-duality} of the interchange flow.}

Given~$\xi_0 \in \{0,1\}^{\mathbb Z^d}$ and a realization of~$\mathcal J$ with flow~$\Phi$, we obtain the interchange process by setting, for any~$x \in \mathbb Z^d$ and~$t \ge 0$,
\begin{equation*}
	\xi_t(x) = \xi_0(\Phi(x,t,0)).
\end{equation*}

	 We will also need the following estimate. 
\begin{lemma}
	\label{lem_time_together}
	There exists~$C > 0$ such that for any~$\mathsf v > 0$, if~$\Phi$ is the flow of the interchange process with rate~$\mathsf v$, then for any~$t \ge 0$ and any~$x,y \in \mathbb Z^d$, we have
	\[\mathbb E\left [\int_0^t \mathds{1}\{ \Phi(x,0,s) \sim \Phi(y,0,s)\}\;\mathrm{d}s \right] \le \begin{cases} C \sqrt{t/\mathsf v} &\text{if } d =1;\\ C\log(\mathsf{v}t)/\mathsf{v} & \text{if } d = 2; \\ C/\mathsf v&\text{if } d \ge 3.\end{cases}\]
\end{lemma}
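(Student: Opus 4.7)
The plan is to reduce the estimate to the expected occupation time of a single continuous-time random walk near the origin, for which standard heat-kernel bounds are available.

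I would first examine the \emph{difference process} $Z_s := \Phi(x,0,s) - \Phi(y,0,s)$, which takes values in $\mathbb{Z}^d\setminus\{0\}$ and is Markovian. Going through the effect of each edge-mark in the graphical representation shows that when $\|Z_s\|_1\ge 2$ (the two tagged particles are not adjacent), for each $e \sim 0$ there is a transition $Z\to Z+e$ at rate $2\sfv$ (coming with rate $\sfv$ from each of the two particles), so $Z$ locally evolves as a CTSRW on $\mathbb{Z}^d$ with total jump rate $4d\sfv$. When $\|Z_s\|_1 = 1$, say $Z_s=e_0$, the rates $Z\to Z+e$ with $e\sim 0$, $e\neq -e_0$ remain equal to $2\sfv$; additionally, the mark on the edge between the two particles, at rate $\sfv$, swaps their positions, producing the transition $Z\to -Z$, which stays inside the set $\partial:=\{v\in\mathbb{Z}^d:\|v\|_1=1\}$.

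The central observation is that, because the swap keeps $Z$ inside $\partial$, it does not affect the integrand $\mathds{1}\{Z_s\in\partial\}$, and may therefore be ignored. The remaining dynamics is that of a CTSRW of rate $4d\sfv$ on $\mathbb{Z}^d$, constrained only by the fact that jumps from $\partial$ into the origin are forbidden. I would then set up a coupling of $Z$ with a free CTSRW $\tilde{Z}$ of rate $4d\sfv$ started at $x-y$, in which $Z$ and $\tilde{Z}$ use the same Poisson edge-marks and any forbidden jump of $Z$ is replaced on $\tilde{Z}$ by a reflected image across the origin. A reflection-principle argument (or, equivalently, a direct comparison of heat kernels) then yields, for some $C=C(d)>0$,
\[
\mathbb{E}\!\int_0^t \mathds{1}\{Z_s\in\partial\}\,\mathrm{d}s \;\le\; C\sum_{v\in\partial}\int_0^t \mathbb{P}\!\left[\tilde{Z}_s = (x-y)+v\right]\mathrm{d}s.
\]

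Finally, the standard on-diagonal heat-kernel bound $\sup_w\mathbb{P}[\tilde{Z}_s = w] \le C_d (1+\sfv s)^{-d/2}$ and integration over $s \in [0,t]$ produce the three stated cases: $O(\sqrt{t/\sfv})$ when $d=1$, $O(\log(1+\sfv t)/\sfv)$ when $d=2$, and $O(1/\sfv)$ when $d\ge 3$ (the latter integral being convergent). The main obstacle is making the reflection coupling in the previous paragraph rigorous: one must ensure that the ``no-jump-to-origin'' constraint on $Z$ does not inflate its occupation time in $\partial$ by more than a dimension-dependent constant over that of the free walk $\tilde{Z}$. Once that is in place, the rest is routine.
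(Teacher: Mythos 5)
Your reduction to the difference walk $Z_s=\Phi(x,0,s)-\Phi(y,0,s)$ is correct: the rates you list are right, and discarding the swap $Z\to -Z$ is legitimate because the remaining dynamics and the set $\partial$ are invariant under the point reflection $v\mapsto -v$, so one can conjugate by that reflection after each swap without changing the occupation time of $\partial$. But the step you yourself flag as ``the main obstacle'' is not a technicality --- in your route it \emph{is} the lemma --- and the tool you name for it does not exist in the generality you need. There is no reflection principle across a single point in $d\ge 2$: replacing a suppressed jump of $Z$ by a ``reflected image across the origin'' does not yield a coupling in which $\tilde Z$ is a free walk shadowing $Z$, and the displayed inequality cannot be read off from such a picture (note also that its right-hand side, as written, is the occupation of $(x-y)+\partial$ rather than of $\partial$; this is harmless only because you immediately pass to the uniform heat-kernel bound). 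What you actually need is that the origin-censored walk --- the symmetric walk on $\mathbb{Z}^d\setminus\{0\}$ with jumps into $0$ suppressed --- has expected occupation time of $\partial$ of the same order as a free walk's. This is true, but it requires an argument, and the difficulty depends on the dimension: for $d\ge 3$ no comparison is needed at all, since after each visit to $\partial$ the censored walk moves to a site at distance $2$ and then, with probability bounded below (transience plus ellipticity), never returns to $\partial$, so the number of visits is stochastically geometric and the occupation time is $O(1/\sfv)$; for $d=1$ the censored walk is a walk on a half-line reflected at $1$, and its local time at $1$ is handled by a renewal or reflection argument giving $O(\sqrt{t/\sfv})$; for $d=2$ one needs either an excursion estimate (return times to $\partial$ from distance $2$ have tails of order $1/\log(\sfv s)$, giving $O(\log(\sfv t))$ expected visits) or on-diagonal heat-kernel bounds for the censored walk itself, which hold because it is a symmetric, uniformly elliptic walk on a graph rough-isometric to $\mathbb{Z}^2$, but establishing this is genuine work. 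As it stands, your proposal asserts the key estimate rather than proving it.

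For comparison, the paper bypasses the difference-walk analysis entirely: by Liggett's comparison inequality for the symmetric exclusion/interchange process (Proposition 1.7 of Chapter VIII in \cite{Lig}), the probability that the two tagged interchange particles are adjacent at time $s$ is bounded by the corresponding probability for two \emph{independent} rate-one walks at time $\sfv s$, and the lemma then follows from exactly the classical heat-kernel estimates you invoke at the end. If you want to keep a self-contained route, replace the reflection coupling by the dimension-by-dimension excursion arguments sketched above; otherwise the citation is the short path.
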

\begin{proof}
Applying Proposition 1.7 in \cite[Chapter VIII]{Lig} we have
\[
\mathbb P\left ( \Phi(x,0,s) \sim \Phi(y,0,s)\right) \leq \mathbb P\left (X^x_{s\mathsf v} \sim X^y_{s\mathsf v}\right),
\]
where $X^x_s$ and $X^x_s$ denote the positions at time $s$ of two independent, unit rate, simple symmetric random walks, starting at $x$ and $y$ respectively.
The conclusion follows easily from classical estimates on random walks.
\end{proof}

\subsubsection{Discrepancy and spatial decoupling}
\begin{definition}[The discrepancy probability for the interchange process]
\label{defi:discr_ip}
	Let~$\Phi$ be the interchange flow with rate~$\sfv = 1$. 
 We then write, for every~$\ell, L \in \mathbb N$ with~$\ell < L$ and~$t > 0$,
\begin{equation*}
	\mathrm{discr}^{\mathrm{ip}}(\ell, L, t) := \mathbb P(\exists x \in \partial B_0(L),\;0 \le s < s' \le t:\; \Phi(x,s,s') \in \partial B_0(\ell)).
\end{equation*}
\end{definition}
The reason we call this a \emph{discrepancy} probability is as follows: if~$(\xi_t)$ and~$(\xi_t')$ are two interchange processes obtained from the same graphical representation, and~$\xi_0(x) = \xi_0'(x)$ for all~$x \in B_0(L)$, then
\begin{equation}\label{eq_inclusion_discr}
	\{\exists x \in B_0(\ell),\; s \in [0,t]: \; \xi_s(x) \neq \xi'_s(x)\} \subseteq \{\exists x \in \partial B_0(L),\; 0 \le s < s' \le t:\; \Phi(x,s,s') \in \partial B_0(\ell) \}.
\end{equation}
As a consequence, we have the following.
\begin{lemma}\label{lem_covariances_0}
Let~$(\xi_t)_{t \ge 0}$ be the interchange process with rate~$\mathsf v=1$. Let~$\ell \in \mathbb N$,~$x_1,x_2 \in \mathbb Z^d$ with~$\|x_1 - x_2\| \ge 2\ell + 2$, and~$t > 0$. For~$i = 1,2$, let~$A_i$ be an event whose occurrence depends only on~$\{\xi_s(y): (y,s) \in B_{x_i}(\ell) \times [0,t]\}$. Then,
\begin{equation*}
	|\mathrm{Cov}(\mathds{1}_{A_1},\mathds{1}_{A_2})| \le 4\, \mathrm{discr}^{\mathrm{ip}}(\ell, \lfloor\tfrac12\|x_1-x_2\|\rfloor , t).
\end{equation*}
\end{lemma}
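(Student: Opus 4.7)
The idea is to build a coupling producing two independent copies $\xi^{(1)}, \xi^{(2)}$ of $(\xi_t)$, each forced to agree with $\xi$ on $B_{x_i}(\ell)\times[0,t]$ outside an event controlled by $\mathrm{discr}^{\mathrm{ip}}$. Set $L:=\lfloor\tfrac12\|x_1-x_2\|\rfloor$, so $L\ge\ell+1$ and $B_{x_1}(L)\cap B_{x_2}(L)=\emptyset$; let $E_i$ be the set of edges of $\mathbb Z^d$ with both endpoints in $B_{x_i}(L)$. Take two independent graphical representations $\mathcal J^{(1)},\mathcal J^{(2)}$ as in Definition~\ref{def_interchange_flow}, and glue them into a third, $\mathcal J$, by setting $\mathcal J=\mathcal J^{(1)}$ on $E_1$ and $\mathcal J=\mathcal J^{(2)}$ on every other edge. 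Since the resulting family is still independent across edges with the correct Poisson marginals, $\mathcal J$ is a valid graphical representation; crucially, $\mathcal J$ agrees with $\mathcal J^{(i)}$ on $E_i$ for both $i \in \{1,2\}$.

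Let $\xi$ be driven by $\mathcal J$ and each $\xi^{(i)}$ by $\mathcal J^{(i)}$, all starting from $\xi_0$, which I treat as deterministic (the iid Bernoulli case follows from the analogous gluing of the initial configurations on $B_{x_1}(L)$ and $B_{x_2}(L)$). Then $\xi,\xi^{(1)},\xi^{(2)}$ all have the interchange-process law from $\xi_0$, while $\xi^{(1)}$ and $\xi^{(2)}$ are independent because they depend on disjoint (hence independent) pieces of Poisson data. To compare $\xi$ with $\xi^{(i)}$ on the relevant window, I would introduce the auxiliary sub-process $\xi^{\mathrm{sub},i}$: the interchange process on the finite subgraph $(B_{x_i}(L),E_i)$, started from $\xi_0|_{B_{x_i}(L)}$ and driven by $\mathcal J|_{E_i}=\mathcal J^{(i)}|_{E_i}$. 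The backward-trajectory argument underlying \eqref{eq_inclusion_discr} shows that both $\xi$ and $\xi^{(i)}$ coincide with $\xi^{\mathrm{sub},i}$ on $B_{x_i}(\ell)\times[0,t]$, off the event that some backward flow trajectory issued from $B_{x_i}(\ell)$ at a time in $[0,t]$ exits $B_{x_i}(L)$. Any such exit implies, by continuity of the flow trajectories, that some $z\in\partial B_{x_i}(L)$ satisfies $\Phi(z,r,s)\in\partial B_{x_i}(\ell)$ for some $0\le r<s\le t$; by translation invariance this has probability at most $\mathrm{discr}^{\mathrm{ip}}(\ell,L,t)$. Taking a union bound over the two sources of discrepancy (the flow of $\xi$ and that of $\xi^{(i)}$) yields $\mathbb P(A_i(\xi)\ne A_i(\xi^{(i)}))\le 2\,\mathrm{discr}^{\mathrm{ip}}(\ell,L,t)$.

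Finally, independence and marginal-matching give $\mathbb E[\mathds 1_{A_1(\xi^{(1)})}\mathds 1_{A_2(\xi^{(2)})}]=\mathbb E[\mathds 1_{A_1}]\mathbb E[\mathds 1_{A_2}]$, whence
\[
|\mathrm{Cov}(\mathds 1_{A_1},\mathds 1_{A_2})|\le \mathbb P(A_1(\xi)\ne A_1(\xi^{(1)}))+\mathbb P(A_2(\xi)\ne A_2(\xi^{(2)}))\le 4\,\mathrm{discr}^{\mathrm{ip}}(\ell,L,t),
\]
which is the claim. The main point requiring care is the extension of \eqref{eq_inclusion_discr}: that lemma is stated for two processes with the \emph{same} graphical representation and \emph{different} initial data outside $B_0(L)$, whereas here $\xi$ and $\xi^{(i)}$ have the \emph{same} initial data but differ in their marks \emph{outside} $E_i$. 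Both comparisons, though, reduce to the same observation: if the backward flow from $y\in B_{x_i}(\ell)$ never leaves $B_{x_i}(L)$, then $\xi_s(y)$ is determined entirely by $\xi_0|_{B_{x_i}(L)}$ together with the marks on $E_i$, and these are shared among $\xi$, $\xi^{(i)}$, and $\xi^{\mathrm{sub},i}$.
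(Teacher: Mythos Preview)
Your proof is correct and takes essentially the same route as the paper: both localize $A_i$ to the data inside $B_{x_i}(L)$ off the discrepancy event and then exploit disjointness of the two $L$-balls. You package this as an explicit coupling with independent graphical representations $\mathcal J^{(1)},\mathcal J^{(2)}$ and a shared sub-process, whereas the paper keeps a single graphical representation and argues directly that $A_1\cap E_1$ and $A_2\cap E_2$ are conditionally independent given $\xi_0$; the underlying mechanism is identical.
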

\begin{proof}
Let~$L:=\lfloor \tfrac12\|x_1-x_2\|\rfloor > \ell$.  Assume that the interchange process is obtained from a graphical representation and let~$\Phi$ be the associated interchange flow. 
    For~$i=1,2$, let
    \[
    E_i = \{\text{there is no~$y \in \partial B_{x_i}(L)$,~$z \in \partial B_{x_i}(\ell)$, and~$s < s' < t$ such that~$z = \Phi(y,s,s')$}\}.
    \]
    Note that the occurrence of~$E_i$ only depends on the graphical representation inside~$B_{x_i}(L) \times [0,t]$. Moreover, using~\eqref{eq_inclusion_discr}, we see that if~$E_i$ occurs, then the values~$\{\xi_s(y):(y,s) \in B_{x_i}(\ell)\times [0,t]\}$ can be determined from~$\{\xi_0(y):y \in B_{x_i}(L)\}$ and from the graphical representation inside~$B_{x_i}(L) \times [0,t]$.
    
    Conditioning on~$\xi_0$ and using the above considerations, as well as the fact that~$(B_{x_1}(L) \times [0,t]) \cap (B_{x_2}(L) \times [0,t]) = \varnothing$, we write
\begin{align*}
\mathbb P(A_1 \cap A_2 \mid \xi_0) = \mathbb P(A_1 \cap E_1 \mid \xi_0) \cdot \mathbb P(A_2 \cap E_2 \mid \xi_0) + \mathbb P(A_1 \cap A_2 \cap (E_1 \cap E_2)^c \mid \xi_0). 
\end{align*}
Note that~$\mathbb P(E_i^c) = \mathbb P(E_i^c\mid \xi_0)= \mathrm{discr}^{\mathrm{ip}}(\ell,L,t)$, where the second equality follows from Definition \ref{defi:discr_ip}.
Hence,
\[
|\mathbb P(A_1 \cap E_1 \mid \xi_0) \cdot \mathbb P(A_2 \cap E_2 \mid \xi_0) - \mathbb P(A_1 \mid \xi_0) \cdot \mathbb P(A_2 \mid \xi_0)| \le 2 \mathrm{discr}^{\mathrm{ip}}(\ell, L, t)
\]
and
\[
\mathbb P(A_1 \cap A_2 \cap (E_1 \cap E_2)^c \mid \xi_0)\le 2 \mathrm{discr}^{\mathrm{ip}}(\ell, L, t).
\]
This proves that $|\mathrm{Cov}(\mathds{1}_{A_1},\mathds{1}_{A_2} \mid \xi_0)| \le 4 \mathrm{discr}^{\mathrm{ip}}(\ell, L, t)$
and the result now follows from integrating with respect to~$\xi_0$.
\end{proof}

We now want to establish an upper bound for the discrepancy probability. 
The following is an auxiliary step.

\begin{lemma}\label{lem_prelim_discr_ip}
   Let~$\Phi$ be the interchange flow with rate~$\sfv = 1$. For any~$t \ge 1$ and~$\ell \in \mathbb N$, we have
\begin{equation}\label{eq_com_prep}
    \mathbb P(\exists s, s':\; 0 \le s \le s' \le t,\; \| \Phi(0,s,s')\| \ge \ell) \le 8ed^2t \cdot \exp\Big\{ -\ell \cdot \log\left(1 + \frac{\ell}{2t} \right)\Big\}. 
\end{equation}
\end{lemma}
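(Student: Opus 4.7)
The plan is to decompose the event according to the swap marks appearing on the edges incident to the origin. Let $\tau_1 < \tau_2 < \cdots$ denote the successive arrival times of the superposed Poisson process $\bigcup_{y \sim 0} \mathcal{J}_{\{0,y\}}$, which has total intensity $2d$; set $\tau_0 := 0$ and $N := |\{i \ge 1 : \tau_i \le t\}|$, so that $N$ is Poisson distributed with mean $2dt$.

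The key structural observation is that, between two consecutive swap marks incident to $0$, the particle currently at the origin does not change. Indeed, for any $i \in \{0, 1, \ldots, N\}$, any $s \in [\tau_i, \tau_{i+1})$ (with the convention $\tau_{N+1} := \infty$), and any $s' \ge s$, one has
\[
\Phi(0, s, s') = \Phi(0, \tau_i, s'),
\]
because a swap at an edge $\{y, z\}$ affects the particle currently at $0$ only when $0 \in \{y, z\}$. Moreover, by the strong Markov property of the Poisson graphical representation, conditional on $\tau_i \le t$ the process $u \mapsto \Phi(0, \tau_i, \tau_i + u)$, $u \ge 0$, is distributed as a continuous-time random walk starting at $0$ with transition function as in \eqref{e:ctrw}, and its distribution does not depend on the value of $\tau_i$.

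Combining these observations, the event in \eqref{eq_com_prep} is contained in
\[
\bigcup_{i=0}^{N} \Big\{ \max_{s' \in [\tau_i, \tau_i + t]} \|\Phi(0, \tau_i, s')\| \ge \ell \Big\}.
\]
A union bound, followed by an application of Lemma \ref{lem_kallenberg} to each of these random walks (over a time horizon of length $t$) and then expectation over $N$, yields an estimate of the form $\mathbb{E}[N+1] \cdot 2d \exp\{-(\text{Kallenberg-type exponent})\}$. The numerical prefactor $8ed^2 t$ in the target then follows from the bound $\mathbb{E}[N+1] = 2dt + 1 \le 2edt$, valid for $t \ge 1$. The main delicate point I anticipate is obtaining the precise exponent $\ell \log(1 + \ell/(2t))$ claimed in \eqref{eq_com_prep}: this requires invoking Kallenberg's inequality in its sharp form, with the appropriate predictable quadratic variation of the underlying random walk, rather than any cruder Gaussian-type bound. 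The remaining steps — the decomposition via the $\tau_i$'s, the identification of each trajectory as a random walk, and the union bound — are otherwise routine.
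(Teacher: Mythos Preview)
Your approach is correct and genuinely different from the paper's. The paper does \emph{not} decompose by swap marks at the origin; instead it uses a first-hitting-time trick: it defines $\sigma = \inf\{s' \le t+\epsilon : \max_{s \le s'} \|\Phi(0,s,s')\| \ge \ell\}$, shows that on $\{\sigma \le t\}$ the ``bad'' set of times has Lebesgue measure at least $\epsilon$ with probability $e^{-2d\epsilon}$ (by freezing the offending particle for a short time via the strong Markov property), and then applies Fubini to bound $\mathbb P(\sigma \le t)$ by $\frac{e^{2d\epsilon}(t+\epsilon)}{\epsilon}$ times $\sup_{s'}\mathbb P(\max_{s \le s'}\|\Phi(0,s,s')\|\ge\ell)$; the latter is handled by self-duality and Lemma~\ref{lem_kallenberg}, and one sets $\epsilon = (2d)^{-1}$.

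Your route is more direct and elementary for this particular statement: the observation that the particle at the origin is constant between consecutive marks of $\bigcup_{y\sim 0}\mathcal J_{\{0,y\}}$ reduces the double supremum over $(s,s')$ to a union of at most $N+1$ single-time-origin random walks, and the union bound plus $\mathbb E[N+1]=2dt+1$ does the rest. The paper's technique, on the other hand, is a reusable device (it reappears in the proofs of Lemmas~\ref{lem_integral_gs} and~\ref{seg_primeira_dom}) applicable whenever one can argue that the bad event persists for a short deterministic time after it is first triggered.

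On your ``delicate point'': do not worry about matching the exponent exactly. A straightforward application of Lemma~\ref{lem_kallenberg} to each of your walks over horizon $t$ gives $2d\exp\{-\tfrac12 \ell\log(1+\ell/t)\}$, yielding the overall bound $4ed^2 t \exp\{-\tfrac12 \ell\log(1+\ell/t)\}$. This is of the same quality as the paper's stated bound (indeed, the paper's own proof appears to drop the factor $\tfrac12$ from Lemma~\ref{lem_kallenberg} when passing to the displayed exponent, so exact constant-matching is not the point here). No sharper form of Kallenberg is needed.
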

\begin{proof}
Let $\epsilon >0$, 
\[\mathcal S:= \bigl\{ s' \in [0,t+\epsilon]: \max_{s \in [0,s']} \|\Phi(0,s,s')\| \ge \ell \bigr\}
\quad \text{and} \quad \sigma := \inf \mathcal S,\]
with the usual convention that $\inf\emptyset=+\infty$. 
The l.h.s.\ of~\eqref{eq_com_prep} equals $\mathbb P(\sigma \leq t)$. On this event, let $\mathcal Y \in \mathbb Z^d$ be the random position where $\max_{s \in [0,\sigma]} \|\Phi(0,s,\sigma)\|$ is achieved. Set
\begin{equation*}
		\mathcal A:= \{\sigma \leq t, \;\text{there is no jump mark from }\mathcal Y  \text{ in the time interval } [\sigma, \sigma +\epsilon]\}.
	\end{equation*}
    Notice that on~$\mathcal A$, $\mathrm{Leb}(\mathcal S) \ge \epsilon$,
	and that by the strong Markov property,
$\mathbb P(\mathcal A \mid \sigma \leq t) = e^{-2d\epsilon}$,
  so
\begin{equation}\label{eq_sigma_A_Leb}
			\mathbb P(\sigma \leq t ) = e^{2d\epsilon} \cdot \mathbb P(\mathcal A ) \le \frac{e^{2d\epsilon}}{\epsilon} \cdot \mathbb E[\mathrm{Leb}(\mathcal S)]
            \le \frac{e^{2d\epsilon}(t+\epsilon)}{\epsilon}\cdot\ \sup_{\mathclap{s' \in [0,t+\epsilon]}}\ \mathbb P(s' \in \mathcal S).
	\end{equation}
    
	Lemma~\ref{lem_kallenberg} allows us to uniformly bound the supremum on the r.h.s. by
	\begin{equation*}
		\mathbb P\Bigl(\max_{s \in [0,s']} \|\Phi(0,s,s')\| \ge \ell \Bigr) \le 2d \exp \Bigl\{- \ell \cdot \log \Bigl(1+ \frac{\ell}{t+\epsilon} \Bigr) \Bigr\}.
	\end{equation*}
	We take~$\epsilon := (2d)^{-1}$. 
	To make the formulas cleaner, we add the assumption that~$t \ge 1$ and  bound~$t+\epsilon \le 2t$, so the r.h.s. above is bounded by the r.h.s. in~\eqref{eq_com_prep}.
\end{proof}

We are now ready to establish the desired bound on the discrepancy.
\begin{lemma} \label{lem_first_rw_discr}
	For any~$t \ge 1$,~$\ell, L \in \mathbb N$ with~$L  \ge \ell + 2$, we have
	\begin{equation}\label{eq_with_t_1}
	\mathrm{discr}^{\mathrm{ip}}(\ell, L, t) \le 16ed^3 t (2L+1)^{d-1} \exp \left\{- (L- \ell) \cdot \log \left(1 + \frac{L-\ell}{2t} \right) \right\}.
	\end{equation}
\end{lemma}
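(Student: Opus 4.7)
The plan is a straightforward union bound combined with Lemma~\ref{lem_prelim_discr_ip}. The key geometric observation is that if $x \in \partial B_0(L)$ and $\Phi(x,s,s') \in \partial B_0(\ell)$ then, by the reverse triangle inequality in the $\ell_\infty$ norm,
\[
\|\Phi(x,s,s') - x\| \ge \|x\| - \|\Phi(x,s,s')\| = L - \ell.
\]
So the event inside $\mathrm{discr}^{\mathrm{ip}}(\ell,L,t)$ is contained in the event that there exists $x \in \partial B_0(L)$ and $0 \le s \le s' \le t$ with $\|\Phi(x,s,s')-x\| \ge L-\ell$.

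Next, I would use translation invariance of the distribution of the graphical representation $\mathcal{J}$: for any fixed $x$, the shifted flow $\Phi(x,s,s')-x$ has the same law as $\Phi(0,s,s')$. Therefore, for each $x$,
\[
\mathbb{P}\bigl(\exists\, 0 \le s \le s' \le t:\; \|\Phi(x,s,s')-x\| \ge L-\ell\bigr) = \mathbb{P}\bigl(\exists\, 0 \le s \le s' \le t:\; \|\Phi(0,s,s')\| \ge L-\ell\bigr),
\]
and by Lemma~\ref{lem_prelim_discr_ip} (applied with $\ell$ replaced by $L-\ell$, which is $\ge 2$ by hypothesis) this is at most
\[
8ed^2 t \cdot \exp\Bigl\{-(L-\ell)\log\Bigl(1 + \tfrac{L-\ell}{2t}\Bigr)\Bigr\}.
\]

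Finally, I would union bound over $x \in \partial B_0(L)$. Since $\partial B_0(L) = B_0(L) \setminus B_0(L-1)$ has cardinality $(2L+1)^d - (2L-1)^d \le 2d (2L+1)^{d-1}$, multiplying gives
\[
\mathrm{discr}^{\mathrm{ip}}(\ell,L,t) \le 2d(2L+1)^{d-1} \cdot 8ed^2 t \cdot \exp\Bigl\{-(L-\ell)\log\Bigl(1+\tfrac{L-\ell}{2t}\Bigr)\Bigr\},
\]
matching the stated bound $16ed^3 t (2L+1)^{d-1} \exp\{-(L-\ell)\log(1+(L-\ell)/(2t))\}$.

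There is no real obstacle here; the only minor point to be careful about is justifying that the reverse triangle inequality step genuinely reduces the event for a starting site $x \in \partial B_0(L)$ to a large-displacement event for the flow, and then invoking translation invariance of $\mathcal{J}$ so that Lemma~\ref{lem_prelim_discr_ip} applies uniformly in $x$. The rest is bookkeeping: tracking the surface-area estimate of $\partial B_0(L)$ in $\ell_\infty$ and the constants coming from Lemma~\ref{lem_prelim_discr_ip}.
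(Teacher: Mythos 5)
Your proposal is correct and follows essentially the same route as the paper: a union bound over $x \in \partial B_0(L)$ (with $|\partial B_0(L)| \le 2d(2L+1)^{d-1}$), the observation that moving from $\partial B_0(L)$ to $\partial B_0(\ell)$ forces a flow displacement of at least $L-\ell$, translation invariance, and then Lemma~\ref{lem_prelim_discr_ip} applied with $\ell$ replaced by $L-\ell$. The constants and bookkeeping match the paper's bound exactly.
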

\begin{proof}
The statement follows from Lemma~\ref{lem_prelim_discr_ip} and the union bound
	\begin{align*}
		\mathrm{discr}^{\mathrm{ip}}(\ell,L,t) &\le 2d(2L +1)^{d-1} \cdot \mathbb P\left(\exists s,s': 0 \le s < s' \le t,\; \|\Phi(0,s,s')\| \ge L-\ell \right).\qedhere
	\end{align*}
\end{proof}

\subsubsection{Domination by product measures and temporal decoupling}
\label{ss_domination_product}
\begin{definition}[The functions~$g^{\uparrow}$ and~$g^{\downarrow}$] \label{def_gs}
	Let $\ell, L \in \mathbb N$ with $\ell < L$, $t > 0, p \in [0,1]$, and  $\xi \in \{0,1\}^{\mathbb Z^d}.$
	Let~$(\xi_t)_{t \ge 0}$ be the interchange process with rate~$\sfv=1$ started from~$\xi$, and set
	\begin{align*}
		&g^{\uparrow}(\ell, L, t,p,\xi):= \mathbb P\left( \begin{array}{l} \text{for some } s \le t \text{ and some box $B$ with radius $\ell$ contained} \\ \,\,\,\,\,\,\, \text{in $B_0(L)$, we have } |\{y \in B: \xi_s(y) = 1\}| > p |B|\end{array}\right),\\[.1cm]
		&g^{\downarrow}(\ell, L, t,p,\xi):= \mathbb P\left( \begin{array}{l} \text{for some } s \le t \text{ and some box $B$ with radius $\ell$ contained} \\ \,\,\,\,\,\,\, \text{in $B_0(L)$, we have } |\{y \in B: \xi_s(y) = 1\}| < p |B|\end{array}\right).
	\end{align*}
\end{definition}

\begin{lemma}[Stochastic domination between interchange processes] \label{lem_coupling_rate_one}
	Given~$\xi, \xi' \in \{0,1\}^{\mathbb Z^d}$, there exists a probability space in which there are two graphical representations of the interchange process with rate one, denoted~$H$ and~$H'$, with the following property. 
    For any spatial scales $\ell, L \in \mathbb{N} \text{ with } \ell < L, \text{ times } t,T > 0 \text{ with } t \le T, \text{and and parameter } p \in [0,1]$,
	we have
	\begin{equation*}
		\xi'_s(x) \ge \xi_s(x) \quad \text{for all } (x,s) \in B_0(L/4) \times [t,T]
	\end{equation*}	outside an event of probability at most
	\begin{equation}\label{eq_error_prob_coupling}
		g^\uparrow(\ell, L, t, p, \xi) + g^{\downarrow}(\ell, L,  t, p, \xi') + \mathrm{err}_{\mathrm{coup}}(\ell, L, t, T),
	\end{equation}
where
	\begin{equation}\label{eq_err_coup}
		\mathrm{err}_{\mathrm{coup}}(\ell, L, t, T) :=|B_0(L/2)|\cdot \left( 1 - \mathrm{meet}(\ell) \right)^{\lfloor t/\ell^2 \rfloor} + \mathrm{discr}^{\mathrm{ip}}(L/4, L/2, T). 
	\end{equation}
\end{lemma}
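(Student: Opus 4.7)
The plan is to adapt the sprinkling construction of Baldasso--Teixeira, substituting the role played there by product-Bernoulli stationarity with the deterministic density control provided by $g^\uparrow$ and $g^\downarrow$. I would begin with two independent rate-one graphical representations $\widetilde H,\widetilde H'$ on $[0,T]$, driving interchange processes $(\widetilde\xi_s)$ and $(\widetilde\xi'_s)$ from the prescribed initial conditions $\xi,\xi'$. Introduce the \emph{density event} $\mathcal G$ by requiring that, for every $s \in [0,t]$ and every box $B$ of radius $\ell$ contained in $B_0(L)$, one has $|\{y \in B: \widetilde\xi_s(y)=1\}| \le p|B| \le |\{y \in B: \widetilde\xi'_s(y)=1\}|$. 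By Definition~\ref{def_gs}, $\mathbb P(\mathcal G^c) \le g^\uparrow(\ell,L,t,p,\xi)+g^\downarrow(\ell,L,t,p,\xi')$, which accounts for the first two summands of~\eqref{eq_error_prob_coupling}.

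On $\mathcal G$ I would perform a slab-by-slab matching of 1-particles. Partition $[0,t]$ into $k_0:=\lfloor t/\ell^2\rfloor$ slabs $I_k=[k\ell^2,(k+1)\ell^2]$ and enumerate the 1-particles of $\widetilde\xi$ lying in $B_0(L/2)$ at time $0$, tracking each along the flow $\Phi_{\widetilde H}$. At the start of each slab, for every still-unmatched 1-particle of $\widetilde\xi$ located at some site $z$, invoke the density inequality on $\mathcal G$ inside an $\ell$-box centred at $z$ (via a greedy or Hall-type argument) to select an unmatched 1-particle of $\widetilde\xi'$ sitting in the same box. Since $\widetilde H,\widetilde H'$ are independent, the two trajectories behave as independent rate-one random walks starting within a common ball of radius $\ell$, hence by Definition~\ref{defi:meet} coincide at some $s \in I_k$ with conditional probability at least $\mathrm{meet}(\ell)$ (up to an inessential factor of $2$ in the meeting radius). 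When a meeting occurs, splice $\widetilde H'$ past the meeting time so as to reproduce the jumps of $\widetilde H$ along the matched trajectory, compensating with independent Poisson marks elsewhere so that the resulting $H'$ retains its rate-one marginal. A union bound over at most $|B_0(L/2)|$ candidate particles, each failing all $k_0$ independent matching attempts with probability at most $(1-\mathrm{meet}(\ell))^{k_0}$, yields the first term in $\mathrm{err}_{\mathrm{coup}}$. Finally, let $\mathcal D$ be the discrepancy event from Definition~\ref{defi:discr_ip} at scales $L/4<L/2$ and time $T$; outside $\mathcal D$, no trajectory crosses from $\partial B_0(L/2)$ into $B_0(L/4)$ during $[0,T]$, so any 1-particle of $\xi$ observed in $B_0(L/4)$ at time $s \in [t,T]$ originated inside $B_0(L/2)$, was matched to a 1-particle of $\xi'$ by time $t$, and both sit at the common site thereafter, giving $\xi'_s(x)\ge \xi_s(x)$ as required.

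The principal difficulty is the splicing step: naively forcing $\widetilde H'$ to copy the jumps of $\widetilde H$ past a meeting time would destroy the Poisson marginal of $H'$. The Baldasso--Teixeira device circumvents this by a careful decomposition of $\widetilde H'$ into independent Poisson pieces, some of which can be resampled (``sprinkled'') on matched trajectories without altering the joint law. The novelty required here, relative to~\cite{BT}, is that stationarity of the initial law is replaced by the deterministic density bounds encoded by $g^\uparrow$ and $g^\downarrow$; I expect most of the appendix proof to consist of the bookkeeping that justifies this substitution, since the remaining ingredients (meeting-time lower bound via Lemma~\ref{lem_meet} and flow discrepancy via Lemma~\ref{lem_first_rw_discr}) are already available in Section~\ref{s_prelim_interchange}.
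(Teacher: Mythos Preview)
Your proposal captures the structure of the paper's proof correctly: the three summands in~\eqref{eq_error_prob_coupling} arise exactly from the mechanisms you identify (density failure via $g^\uparrow+g^\downarrow$, failure of a particle to match over $\lfloor t/\ell^2\rfloor$ independent slabs, and the discrepancy bound confining relevant trajectories to $B_0(L/2)$), and the slab-by-slab re-pairing using $\ell$-boxes is precisely the Baldasso--Teixeira device the paper invokes.

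The one substantive difference is in the coupling mechanics. You propose to run $(\widetilde\xi_s),(\widetilde\xi'_s)$ from \emph{independent} representations and then \emph{splice} $H'$ after each meeting, which is where you (rightly) locate the difficulty. The paper instead leaves $\xi'$ running under a single representation $\mathcal J^2$ throughout and builds the representation for $\xi$ by an edge-level rule: on each edge, use $\mathcal J^2$ if an endpoint carries a matched $\xi$-particle and $\mathcal J^1$ otherwise. Since the choice is adapted and both Poisson processes have rate one, the marginal of $\xi$ is automatically preserved (this is the content of~\cite[Claim~3.5]{BT}), and no post-hoc splicing or compensation is needed. This direction also means that the density events are directly about the coupled processes $(\xi_s,\xi'_s)$, avoiding the bookkeeping of relating $\widetilde\xi'$ to the post-spliced $\xi'$. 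Your approach is symmetric and can be made rigorous, but the paper's choice of which process to modify sidesteps exactly the difficulty you flagged.
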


This is obtained from a coupling method introduced in~\cite{BT}. Due to some particularities of our context, we provide some details of the proof in Appendix~\ref{sec:decoupling}.

\begin{remark}
It will be useful to bound
\begin{equation}\label{eq_better_bound_meet}
\left( 1 - \mathrm{meet}(\ell) \right)^{\lfloor t/\ell^2 \rfloor} \le e^{-c t/\ell^{d \vee 2}}\quad \text{ for all } \ell \in \mathbb N,\; t \ge \ell^2,
\end{equation}
where~$c$ is the constant appearing at Lemma~\ref{lem_meet} {divided by 2}.
This is obtained by using~$1-x \le e^{-x}$, bounding~$\lfloor t/\ell^2 \rfloor \ge t/(2\ell^2)$ when~$t \ge\ell^2$, and using Lemma~\ref{lem_meet} to {write}~$\mathrm{meet}(\ell)\ge c\ell^{(2-d)\wedge 0}$ for every~$d$.
\end{remark}
 
\begin{definition}[The measures~$\pi_p$ and~$\pi_p^A$]\label{def_pi_p_A} 
	Let~$p \in [0,1]$, and~$\pi_p$ be the $\mathrm{Bernoulli}(p)$ product measure over vertices of~$\mathbb Z^d$. 
    Given~$A \subseteq \mathbb{Z}^d$, we let~$\pi_p^A$ be the measure~$\pi(\cdot \mid \{\xi:\xi(x)=1\text{ for all } x \in A\})$.
\end{definition}

\begin{lemma}\label{lem_integral_gs}
	Let~$\ell, L \in \mathbb N$ with~$\ell < L$,~$t > 0$, and~$p, p' \in [0,1]$ with~$p < p'$. Then, 
	\[\int_{\{0,1\}^{\mathbb Z^d}} g^\uparrow(\ell,L,t,p',\xi)\;\pi_p(\mathrm{d}\xi)\quad \text{and}\quad \int_{\{0,1\}^{\mathbb Z^d}} g^\downarrow(\ell,L,t,p,\xi)\;\pi_{p'}(\mathrm{d}\xi) \]
	are both smaller than
	\begin{align*}
	(2L+1)^d \cdot	 \left( e(2\ell+2)^d t + e \right) \cdot  \exp\left\{-2 (2\ell+1)^d (p'-p)^2 \right\}.	\end{align*}
\end{lemma}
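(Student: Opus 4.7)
The plan is to apply Fubini to rewrite each integral as a probability under the joint law of a random initial configuration and an independent graphical representation $\mathcal{J}$ of the interchange process. For the first integral, this reads
$$\mathbb{P}\Bigl(\exists\, s \in [0,t],\; \exists\, \text{box } B \text{ of radius }\ell\text{ in }B_0(L):\; |\{y \in B: \xi_s(y) = 1\}| > p'|B|\Bigr),$$
where $\xi_0 \sim \pi_p$ and $(\xi_s)_{s \ge 0}$ is the interchange process built from $\xi_0$ and $\mathcal{J}$ as in Section~\ref{s_prelim_interchange}.

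Fixing one such box $B$, I exploit the fact that $s \mapsto |\{y \in B: \xi_s(y) = 1\}|$ is piecewise constant, changing only at jump marks on the $2d(2\ell+1)^{d-1}$ edges with exactly one endpoint in $B$. Let $N_B$ denote the number of these \emph{boundary jumps} of $B$ occurring in $[0,t]$, and let $0 = \tau_0^B < \tau_1^B < \cdots < \tau_{N_B}^B$ enumerate them (together with the initial time $0$). Since each $\tau_j^B$ is measurable with respect to $\mathcal{J}$ alone, it is independent of $\xi_0$; combined with the identity $\xi_s = \xi_0 \circ \Phi(\cdot,s,0)$ and the invariance of $\pi_p$ under arbitrary bijections of $\mathbb{Z}^d$, this gives $\xi_{\tau_j^B} \sim \pi_p$ conditionally on $\mathcal{J}$. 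Hoeffding's inequality then yields
$$\mathbb{P}\bigl(|\{y \in B: \xi_{\tau_j^B}(y) = 1\}| > p'|B|\mid\mathcal{J}\bigr) \le \exp\bigl(-2(2\ell+1)^d(p'-p)^2\bigr).$$

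Summing this over $j \in \{0, \ldots, N_B\}$, taking expectations (using $\mathbb{E}[N_B] \le 2d(2\ell+1)^{d-1} t$), and union-bounding over the at most $(2L+1)^d$ admissible boxes $B$ yields
$$(2L+1)^d \bigl(2d(2\ell+1)^{d-1} t + 1\bigr) \cdot \exp\bigl(-2(2\ell+1)^d(p'-p)^2\bigr).$$
A brief calculation confirms $2d(2\ell+1)^{d-1} \le e(2\ell+2)^d$ (which reduces to the elementary inequality $2d \le e(d+1)$ after applying the Bernoulli estimate $((2\ell+2)/(2\ell+1))^{d-1} \ge 1 + (d-1)/(2\ell+1)$), producing the claimed bound. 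The second integral, involving $g^\downarrow$ and $\pi_{p'}$, is handled by the same argument using Hoeffding's lower tail $\mathbb{P}(\mathrm{Bin}(n,p') \le np) \le \exp(-2n(p'-p)^2)$.

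The only delicate point will be justifying that $\xi_{\tau_j^B} \sim \pi_p$ at the random time $\tau_j^B$: plain stationarity of $\pi_p$ under the interchange dynamics is not enough on its own, because $\tau_j^B$ is driven by the very same graphical representation that transports the configuration. The resolution is the stronger fact that $\pi_p$ is invariant under \emph{any} deterministic bijection of $\mathbb{Z}^d$, which, applied conditionally on $\mathcal{J}$ to the bijection $\Phi(\cdot,\tau_j^B,0)$, yields exactly what is needed. Everything else reduces to Hoeffding plus counting.
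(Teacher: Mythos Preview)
Your argument is correct, and it takes a genuinely different route from the paper's proof.

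The paper argues via a stopping-time ``stickiness'' trick: letting $\tau$ be the first time some box has density above $p'$, it observes that on $\{\tau<\infty\}$ the bad state persists for a further time $\epsilon$ with probability at least $e^{-(2\ell+2)^d\epsilon}$ (impose that no edge touching the offending ball fires). This converts $\mathbb P(\tau\le t)$ into a bound by $\epsilon^{-1}e^{(2\ell+2)^d\epsilon}\int_0^{t+\epsilon}\mathbb E[f(\xi_s)]\,\mathrm ds$, and stationarity of $\pi_p$ collapses the integral to $(t+\epsilon)\,\mathbb E[f(\xi_0)]$; Hoeffding and a union bound over boxes finish. The constant $e(2\ell+2)^d$ is exactly $\epsilon^{-1}e^{(2\ell+2)^d\epsilon}$ at the optimal $\epsilon=(2\ell+2)^{-d}$.

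You instead discretize time box-by-box: for each fixed $B$ the count $|\xi_s\cap B|$ only changes at boundary-edge jump marks, so the supremum over $s\in[0,t]$ reduces to the finitely many $\mathcal J$-measurable times $\tau_j^B$. Your observation that, conditionally on $\mathcal J$, $\xi_{\tau_j^B}=\xi_0\circ\Phi(\cdot,\tau_j^B,0)$ is still $\pi_p$-distributed (because $\pi_p$ is invariant under \emph{every} bijection of $\mathbb Z^d$, not merely under the semigroup) is the crux, and it is correct. This sidesteps the stopping-time argument entirely and gives the slightly sharper prefactor $2d(2\ell+1)^{d-1}t+1$ before you relax it to match the paper's statement.

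Your approach is more direct and exploits the specific structure of the interchange process (the pathwise identity $\xi_s=\xi_0\circ\text{bijection}$); the paper's argument uses only stationarity plus a local slowdown estimate and would transfer to other stationary Markov dynamics without that structure.
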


\begin{proof}
    We only prove the bound for the first integral, as the second may be treated in the same way.  
    Given~$\xi \in \{0,1\}^{\mathbb Z^d}$, let
	\[f(\xi) := \mathds{1}\left\{\text{there is a box $B$ with radius~$\ell$ contained in $B_0(L)$ such that $|\xi \cap B|/|B| > p'$} \right\}.\]
	Let~$(\xi_s)_{s \ge 0}$ be the interchange process with rate~$\mathsf v = 1$ started from a random configuration~$\xi_0 \sim \pi_p$. Defining~$\tau := \inf\{s \ge 0: f(\xi_s) = 1\}$, we have
\[
	\int_{\{0,1\}^{\mathbb Z^d}} g^\uparrow(\ell,L,t,p',\xi)\;\pi_p(\mathrm{d}\xi) = \mathbb P(\tau \le t).
\]

	Letting~$(\mathcal F_t)_{t \ge 0}$ denote the natural filtration of the process, we claim that for every~$\epsilon > 0$,
	\begin{equation}
		\label{eq_first_time_trick}
\text{on } \{\tau < \infty\}, \quad  \mathbb P(f(\xi_s) = 1 \text{ for all } s \in [\tau, \tau + \epsilon] \mid \mathcal F_\tau) \ge \exp\{-(2\ell + 2)^d\epsilon \}.
	\end{equation}
	To see this, we argue as follows. On~$\{\tau < \infty\}$, there is a ball~$B \subset B_0(L)$ with radius~$\ell$ such that~$|\xi_\tau \cap B|/|B| > p'$. The number of edges intersecting this ball is~$(2\ell + 2)^d$, and if none of these edges has an interchange jump in the time interval~$[\tau, \tau + \epsilon]$, then we have~$f(\xi_s) = 1$ for all~$s \in [\tau,\tau + \epsilon]$.
	We now bound, for arbitrary~$\epsilon > 0$,
    \begin{align*}
		\mathbb E \left[\int_0^{t+\epsilon} \hspace{-3mm}f(\xi_s)\;\mathrm{d}s\right] &\ge \epsilon \cdot \mathbb P(\tau \le t,\; f(\xi_s) = 1\ \forall s \in [\tau,\tau+\epsilon])
		\ge \epsilon \cdot \exp\{-(2\ell + 2)^d\epsilon \} \cdot \mathbb P(\tau \le t).
	\end{align*}
	where the second inequality follows from~\eqref{eq_first_time_trick} and the strong Markov property. Taking~$\epsilon = (2\ell + 2)^{-d}$, rearranging and using Fubini's theorem, this gives
	\[\mathbb P(\tau \le t) \le e (2\ell + 2)^d \cdot \int_0^{t+(2\ell+2)^{-d}} \mathbb E[f(\xi_s)]\mathrm{d}s = e(2\ell+2)^d \cdot \left( t + (2\ell + 2)^{-d}\right) \cdot \mathbb E[f(\xi_0)],\]
	where the equality holds because~$\pi_p$ is stationary for the interchange process.
    By a union bound over all boxes of radius~$\ell$ contained in~$B_0(L)$, we have
   \[
 \mathbb E[f(\xi_0)] \le   (2L+1)^d\cdot \mathbb P(\mathrm{Bin}((2\ell + 1)^d,p)> (2\ell + 1)^d p').
   \]
   Using Hoeffding's Inequality (see ~\cite [Sec. 2.6]{BLM}), the probability appearing on the r.h.s.\ is bounded above by
$\exp\left\{-2 (2\ell + 1)^d (p' - p)^2 \right\}$. This completes the proof. 
\end{proof}

\subsection{The interchange-and-contact process}
\label{s_prelim_interchange_and_contact}
\begin{definition}[The measure~$\hat \pi^A_p$] \label{def_pi_hat} Let~$p \in [0,1]$ and~$A \subset \mathbb Z^d$. We define~$\hat \pi_p^A$ as the measure on~$\{0,\stateh, \statei\}^{\mathbb Z^d}$ such that, if~$\zeta \sim \hat \pi_p^A$, then~$\zeta(x) = \statei$ for all~$x\in A$, and outside~$A$, independently at each vertex~$x$,~$\zeta(x)$ equals~$\stateh$ with probability~$p$ and~$0$ with probability~$1-p$.
\end{definition}
\begin{definition}[Projection]\label{def_projections}
    Given~$\Lambda \subseteq \mathbb Z^d$ and~$\zeta \in \{0,\stateh,\statei\}^{\Lambda}$, we define~$\xi^\zeta \in \{0,1\}^{\Lambda}$ by setting
    \[\xi^\zeta(x) = \begin{cases} 1&\text{if } \zeta(x) \in \{\stateh,\statei\};\\ 0&\text{otherwise.} \end{cases}\]
\end{definition}
\subsubsection{Graphical representation, infection paths and containment flow}

\begin{definition}[Graphical representation of the interchange-and-contact process] \label{def_graph_cpip}
The \emph{graphical representation of the interchange-and-contact process} with jump rate~$\mathsf v >0$ and infection rate~$\lambda > 0$ is a collection~$H$ of independent Poisson point processes on~$[0,\infty)$, as follows:
\begin{itemize}
	\item for each edge~$\{x,y\}$ of~$\mathbb Z^d$, a process~$\mathcal J_{\{x,y\}}$ with rate~$\mathsf v$ (\emph{jump marks});
	\item for each vertex~$x$ of~$\mathbb Z^d$, a process~$\mathcal{R}_x$ with rate~$1$ (\emph{recovery marks});
	\item for each ordered pair~$(x,y)$ of vertices of~$\mathbb Z^d$ with~$x \sim y$, a process~$\mathcal T_{(x,y)}$ with rate~$\lambda$ (\emph{transmission marks}).
\end{itemize}
\end{definition}

As is the case for the classical contact process, the interchange-and-contact process can be obtained from an initial configuration and the graphical representation, using the notion of \emph{infection paths}.

\begin{definition}[Infection path]
   Let~$(\xi_t)_{t \ge 0}$ be the interchange process started from~$\xi_0$ and graphical representation~$H$. 
   Let~$s < s'$. An \emph{infection path} is a function~$\gamma:[s,s'] \to \mathbb Z^d$ such that
   \begin{align}\label{eq_containm3}
       t \notin \mathcal R_{\gamma(t)} \quad &\text{for all }t \in [s,s'],\\
       \label{eq_containm4}
\xi_t(\gamma(t)) = 1 \quad &\text{for all }t \in [s,s'],
   \end{align}
   and such that there exist times~$s=s_0 < s_1 < \cdots < s_n < s_{n+1}=s'$ with
\begin{align}\label{eq_containm1}
    s_j \in \mathcal T_{(\gamma(s_j-),\gamma(s_j))} \quad &\text{for all } j \in \{1,\ldots,n\}\\
    \label{eq_containm2}
    \gamma(t)=\Phi(\gamma(s_j),s_j,t) \quad &\text{for all }  j \in \{0,\ldots, n\}, \;t \in [s_j,s_{j+1}).
\end{align}
If~$\gamma(s)=x$ and~$\gamma(s')=y$, we say that~$\gamma$ is an infection path from~$(x,s)$ to~$(y,s')$.
\end{definition}
Property~\eqref{eq_containm3} says that the path does not touch any recovery mark. Property~\eqref{eq_containm4}
means that it only passes by space-time points that are occupied by particles in the interchange process.   Property~\eqref{eq_containm1} means that it may jump by following transmission marks, and~\eqref{eq_containm2} that in between those transmission jump times, it must follow the interchange flow.
We emphasize that, unlike in the classical contact process, here the notion of infection path depends \emph{both} on the graphical representation \emph{and on the initial configuration}. This is natural, since infections are tied to particles. 

Given a realization of the graphical representation~$H$ and an initial configuration~$\zeta_0 \in \{0, \stateh, \statei\}^{\mathbb Z^d}$, we can now construct the interchange-and-contact process~$(\zeta_t)_{t \ge 0}$ as follows. Let~$\xi_0=\xi^{\zeta_0}$, as in Definition~\ref{def_projections}, and let~$(\xi_t)_{t\ge 0}$ be the interchange process started from~$\xi_0$ and constructed from (the jump marks in)~$H$. Then, for~$t \ge 0$ and~$y \in \mathbb Z^d$, set~$\zeta_t$ as follows:
\begin{itemize}
    \item if~$\xi_t(y)=0$, then~$\zeta_t(y)=0$;
    \item if~$\xi_t(y)=1$ and there is~$x \in \mathbb Z^d$ such that~$\zeta_0(x)=\statei$ and there is an infection path from~$(x,0)$ to~$(y,t)$, set~$\zeta_t(y)=\statei$;
    \item otherwise, set~$\zeta_t(y)=\stateh$.
\end{itemize}
The fact that infection paths now depend on the initial configuration complicates the analysis of the process. It is convenient to define a broader class of paths, which satisfy~\eqref{eq_containm1} and~\eqref{eq_containm2} above, but not necessarily~\eqref{eq_containm3} or~\eqref{eq_containm4}. In particular, the removal of~\eqref{eq_containm4} eliminates the dependence on the initial configuration.

\begin{definition}[Containment path and flow]
    \label{def_containment_try}
    Let~$H$ be a graphical representation of the in\-ter\-change-and-contact process. Let~$s < s'$. A \emph{containment path} is a function~$\gamma:[s,s'] \to \mathbb Z^d$ such that there exist~$s_1 < \cdots < s_n$ such that~\eqref{eq_containm1} and~\eqref{eq_containm2} hold. We define the \emph{containment flow} $\Psi(x,s,t) = \Psi_H(x,s,t)$, for~$x \in \mathbb Z^d$ and~$t \ge s \ge 0$ by letting
	\begin{equation*}
		\Psi(x,s,t):= \{y \in \mathbb Z^d: \text{there is a containment path from $(x,s)$ to $(y,t)$}\}.
	\end{equation*}
    { Given~$A \subseteq \mathbb Z^d$, we write}
    \begin{equation}\label{eq_flow_from_A}
    { \Psi^A_t:=\{y \in \mathbb Z^d:\text{there is a containment path from }(x,0) \text{ to }(y,t) \text{ for some } x \in A\}.}
    \end{equation}
\end{definition}
We have thus defined a set-valued process~$(\Psi(x,s,t))_{t \ge s}$  with~$\Psi(x,s,s) = \{x\}$, {as well as~$(\Psi^A_t)_{t\ge 0}$ with~$\Psi^A_0 = A$}. As usual, we will abuse notation and treat~$\Psi(x,s,t)$ {and~$\Psi^A_t$} as elements of~$\{0,1\}^{\mathbb Z^d}$, by associating a set with its indicator function.

Using the definition of containment paths, it is easy to check that~$(\Psi(x,s,t))_{t \ge s}$ is a spin system which behaves as a contact process with stirring with no recoveries, and in a situation where the lattice is completely occupied by particles. 
 For future use, it will be useful to spell out how it obeys the instructions of the graphical representation:
	\begin{itemize}
		\item[$\mathrm{(r1)}$] if~$t \in \mathcal{J}_{\{w,z\}}$, then            
			\begin{align*}
				[\Psi(x,s,t)](w) &= [\Psi(x,s,t-)](z),\\
				[\Psi(x,s,t)](z) &= [\Psi(x,s,t-)](w), \\
				[\Psi(x,s,t)](u) &= [\Psi(x,s,t-)](u)\;\forall u \notin \{w,z\};
			\end{align*}
	\item[$\mathrm{(r2)}$] recovery marks have no effect;
	\item[$\mathrm{(r3)}$] if~$t \in \mathcal{T}_{(w,z)}$, then
			\begin{align*}
				&[\Psi(x,s,t)](z) = \begin{cases} 1 &\text{if } [\Psi(x,s,t-)](w) = 1;\\ [\Psi(x,s,t-)](z) &\text{otherwise;} \end{cases}\\
					&[\Psi(x,s,t)](u) = \ \ [\Psi(x,s,t-)](u) \quad \forall u \neq z.
			\end{align*}
	\end{itemize}

The reason we use the word `containment' is given by the following lemma, whose proof is elementary and thus omitted:
\begin{lemma}
	\label{lem_contain}
	Let~$(\zeta_t)_{t \ge 0}, (\zeta_t')_{t \ge 0}$ be interchange-and-contact processes built from the same graphical representation~$H$ and started from~$\zeta_0,\zeta_0'\in \{0,\stateh,\statei\}^{\smash{\mathbb Z^d}}\hspace{-2mm}$, respectively. Letting 
	$A := \{x: \zeta_0(x) \neq \zeta'_0(x)\}$, we have
	\[\{x: \zeta_t(x) \neq \zeta'_t(x)\} \subseteq  \Psi^A_t\quad \text{for all } t \ge 0.\]
	In particular, for any~$(y,t) \in \mathbb Z^d \times [0,\infty)$, 
    if $\zeta_0 \equiv \zeta_0'$ on $\{x: \Psi(x,0,t) \ni y \}$, then $\zeta_t(y) = \zeta_t'(y)$.
\end{lemma}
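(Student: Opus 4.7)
The plan is to prove the set inclusion $\{x:\zeta_t(x)\neq \zeta_t'(x)\}\subseteq \Psi^A_t$ by induction over the (a.s.~locally finite) marks of the graphical representation $H$, and then derive the ``in particular'' statement as a direct consequence. Write $D_t:=\{x:\zeta_t(x)\neq\zeta_t'(x)\}$. At $t=0$ we have $D_0=A=\Psi^A_0$, and between marks neither $D_t$ nor $\Psi^A_t$ changes, so the task reduces to checking that $D_{s-}\subseteq \Psi^A_{s-}$ implies $D_{s}\subseteq \Psi^A_{s}$ at every mark time $s$.

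I would treat the three types of marks separately, matching them against the rules (r1)--(r3) that govern $\Psi$. At a jump mark $s\in\mathcal J_{\{w,z\}}$, both processes swap their states at $w$ and $z$ simultaneously, so $D_s$ is simply the swap of $D_{s-}$ at $\{w,z\}$; by (r1) the same swap happens to $\Psi^A$, preserving the inclusion. At a recovery mark $s\in\mathcal R_w$, only $w$ may change status, and since any vertex where the two configurations previously agreed still agrees afterwards (both flip from $\statei$ to $\stateh$, or neither does), we have $D_s\subseteq D_{s-}$; by (r2), $\Psi^A_s=\Psi^A_{s-}$, so the inclusion survives. The key case is a transmission mark $s\in\mathcal T_{(w,z)}$: the only vertex where a new disagreement can appear is $z$, and this can happen only if $\zeta_{s-}(z)=\zeta_{s-}'(z)=\stateh$ while $\zeta_{s-}(w)\neq\zeta_{s-}'(w)$, forcing $w\in D_{s-}\subseteq\Psi^A_{s-}$; then (r3) places $z$ in $\Psi^A_s$, so again $D_s\subseteq\Psi^A_s$.

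For the ``in particular'' statement, fix $(y,t)$ and assume $\zeta_0\equiv\zeta_0'$ on $\{x:\Psi(x,0,t)\ni y\}$. Then for every $x\in A$ one has $y\notin\Psi(x,0,t)$, i.e.\ no containment path joins $(x,0)$ to $(y,t)$; by the definition of $\Psi^A_t$ in~\eqref{eq_flow_from_A} this means $y\notin\Psi^A_t$, and the inclusion just established gives $\zeta_t(y)=\zeta_t'(y)$.

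The main obstacle is really only the transmission case, since that is the only mechanism that can \emph{create} new disagreements; everything else either destroys disagreements or transports them along the interchange flow. The bookkeeping is clean provided one restricts a priori to a finite space-time window containing $(y,t)$ so that the induction is over finitely many marks, which is standard in graphical constructions and needs no further comment.
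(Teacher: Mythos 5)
Your induction over the marks of $H$, checking jump, recovery and transmission marks against rules (r1)--(r3), is correct, and it is precisely the elementary argument the paper has in mind (the paper omits the proof altogether); the key observation that a new disagreement can only be created at the target $z$ of a transmission mark whose source $w$ already lies in $\{x:\zeta_{s-}(x)\neq\zeta'_{s-}(x)\}\subseteq \Psi^A_{s-}$ is exactly the point that makes the inclusion propagate, and your derivation of the ``in particular'' statement from $\Psi^A_t=\bigcup_{x\in A}\Psi(x,0,t)$ is also right. The only caveat, which you already flag, is the standard localization needed to make ``induction over marks'' well defined in infinite volume, and that is indeed routine for graphical constructions.
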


\subsubsection{Growth of the containment flow}

Recall the random walk transition kernel~$\mathsf p$ (Definition~\ref{def_ctrw}).

\begin{lemma}\label{lem_primeira_dom}
Let~$\Psi$ be the containment flow associated to a graphical representation of
    the in\-ter\-change-and-contact process with parameters~$\mathsf v$ and~$\lambda$. For any~$t > 0$ and any~$x\in \mathbb Z^d$, we have
	\begin{equation}\label{eq_containment_estimate}
		\mathbb P\big(x \in \Psi^{\{0\}}_t \big) \le e^{2d\lambda t}\cdot \mathsf p(0,x,(\mathsf v + \lambda)t).
	\end{equation}
\end{lemma}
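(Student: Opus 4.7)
The plan is to dominate the containment flow $(\Psi^{\{0\}}_t)$ by a branching random walk (BRW) on $\mathbb{Z}^d$ started from a single particle at the origin, in which each particle independently jumps to each neighbor at rate $\mathsf{v}$ and produces a new particle at each neighbor at rate $\lambda$ (so total branching rate $2d\lambda$). Using the graphical representation $H$, I will couple the two processes so that
\begin{equation*}
\Psi^{\{0\}}_t \;\subseteq\; \mathrm{supp}(\mathcal N_t) \quad \text{for all } t \ge 0,
\end{equation*}
where $\mathcal N_t(x)$ denotes the number of BRW particles at $x$ at time $t$. The coupling is most conveniently obtained by attaching to each edge extra independent Poisson marks of rate $\mathsf{v}$ that move BRW particles, and using the transmission marks $\mathcal T_{(w,z)}$ of $H$ to produce BRW offspring: a straightforward induction over the marks in $H$ shows that (r1) and (r3) only add sites to $\Psi$ that are already in $\mathrm{supp}(\mathcal N_t)$.

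Once the coupling is in place, the bound follows from the first-moment computation for BRW. Writing $m(x,t) := \mathbb E[\mathcal N_t(x)]$, a generator calculation yields
\begin{equation*}
\partial_t m(x,t) \;=\; \mathsf{v}\sum_{y\sim x}\bigl(m(y,t)-m(x,t)\bigr) \;+\; \lambda \sum_{y\sim x} m(y,t) \;=\; (\mathsf{v}+\lambda)\,\triangle m(x,t) + 2d\lambda\, m(x,t),
\end{equation*}
with initial datum $m(x,0)=\mathds{1}\{x=0\}$. Writing $m(x,t)=e^{2d\lambda t}\tilde m(x,t)$ reduces this to $\partial_t \tilde m = (\mathsf{v}+\lambda)\triangle \tilde m$ with $\tilde m(x,0)=\mathds{1}\{x=0\}$, whose solution is exactly $\mathsf{p}(0,x,(\mathsf{v}+\lambda)t)$ by Definition~\ref{def_ctrw} and a time-rescaling. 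Combining this with the coupling and Markov's inequality gives
\begin{equation*}
\mathbb P\bigl(x\in \Psi^{\{0\}}_t\bigr) \;\le\; \mathbb P(\mathcal N_t(x)\ge 1) \;\le\; m(x,t) \;=\; e^{2d\lambda t}\,\mathsf{p}(0,x,(\mathsf{v}+\lambda)t),
\end{equation*}
which is the desired bound~\eqref{eq_containment_estimate}.

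The main obstacle is the coupling step, since $(\Psi^{\{0\}}_t)$ is a set-valued process driven by the interchange flow of $H$, whereas BRW particles carry individual identities and move independently. A clean workaround, if the explicit coupling proves awkward, is to argue directly at the level of first moments: setting $u(x,t):=\mathbb P(x\in \Psi^{\{0\}}_t)$, the rules (r1)--(r3) give
\begin{equation*}
\partial_t u(x,t) \;=\; \mathsf{v}\sum_{y\sim x}\bigl(u(y,t)-u(x,t)\bigr) \;+\; \lambda\, \mathbb E\Big[(1-\Psi_t(x))\sum_{y\sim x}\Psi_t(y)\Big],
\end{equation*}
and bounding $(1-\Psi_t(x))\Psi_t(y)\le \Psi_t(y)$ shows that $u$ is a sub-solution of the same linear PDE solved by $m$. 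Since this linear operator preserves non-negativity, Duhamel's principle (or a standard comparison argument) then yields $u(x,t)\le m(x,t)$, giving~\eqref{eq_containment_estimate} without the need for an explicit pathwise coupling.
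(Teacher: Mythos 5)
Your fallback (first-moment) argument is correct, but your primary route has a genuine defect, so let me address both. As described, the explicit coupling fails: you let the BRW particles move using \emph{extra independent} Poisson marks of rate $\mathsf v$, whereas the sites of $\Psi^{\{0\}}_t$ move according to the interchange flow of $H$ (rule (r1)). At a jump mark of $H$ on an edge $\{w,z\}$ with $w\in\Psi_{t-}$ and $z\notin\Psi_{t-}$, the containment set acquires $z$, but there is no reason any BRW particle sits at $z$ at that instant, so the pathwise inclusion $\Psi^{\{0\}}_t\subseteq\mathrm{supp}(\mathcal N_t)$ breaks at the first such mark. The fix is to let the dominating particles move with the \emph{same} jump marks of $H$ (so their motions are coupled through the swaps and they are no longer independent walks); this is exactly what the paper does: it introduces an $\mathbb N_0$-valued process $(\kappa_t)$ driven by the same graphical representation, with (r1), (r2) unchanged and (r3) replaced by duplication, $\kappa_t(z)=\kappa_{t-}(w)+\kappa_{t-}(z)$, so that $\Psi^{\{0\}}_t\le\kappa_t$ pathwise and $\mathbb E[\kappa_t(x)]$ solves the linear equation with \emph{equality}.

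Your workaround bypasses any auxiliary process: the forward equation for the spin system $(\Psi_t)$ (bounded flip rates, so this is standard) gives $\partial_t u(x,t)=\mathsf v\,\triangle u(x,t)+\lambda\sum_{y\sim x}\mathbb E\bigl[(1-\Psi_t(x))\Psi_t(y)\bigr]$; dropping the collision factor and comparing against the positivity-preserving semigroup of $(\mathsf v+\lambda)\triangle+2d\lambda$ via Duhamel yields $u(x,t)\le e^{2d\lambda t}\,\mathsf p(0,x,(\mathsf v+\lambda)t)$, as required. This is a legitimate alternative to the paper's argument. What the paper's route buys is that the dominating object satisfies the linear equation exactly, so no comparison principle and no regularity discussion for $u$ are needed, at the mild cost of handling the unbounded process $\kappa$; what your route buys is that you only ever work with the bounded indicator $\Psi_t(x)$ and never construct $\kappa$. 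In both cases the final identification of the solution as $e^{2d\lambda t}\,\mathsf p(0,x,(\mathsf v+\lambda)t)$ is the same.
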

\begin{proof}
	Using the same graphical representation under which the containment flow is defined, we define an auxiliary process~$(\kappa_{t})_{t \ge 0}$ taking values in~$(\mathbb N_0)^{\mathbb Z^d}$ as follows. We let~$\kappa_{0}(0) = \mathds{1}_{\{0\}}$. The instructions in the graphical representation have the following effects for~$(\kappa_{t})$: rules (r1) and (r2) after Definition~\ref{def_containment_try} are applied in the same way, while rule (r3) is replaced by
	\begin{itemize}
		\item[$\mathrm{(r3')}$] if~$t \in \mathcal{T}_{(w,z)}$, then
				$\kappa_t(u) = \begin{cases} \kappa_{t-}(w)+\kappa_{t-}(z) &\text{if } u =z; \\ \kappa_{t-}(u)&\text{otherwise}.\end{cases}$	
	\end{itemize}
It can be readily checked that, for any~$t \ge 0$, $\Psi^{\{0\}}_t \leq \kappa_{t}$, so	$\mathbb{P}\big(x \in \Psi^{\{0\}}_t \big) \le \mathbb{E}\left[\kappa_{t}(x)\right]$. A standard generator computation shows that the function
	$(t,x) \mapsto \mathbb E[\kappa_{t}(x)],\, (t,x) \in [0,\infty)\times \mathbb Z^d$ solves
	\begin{equation*}
		\left\{
			\begin{array}{l} \frac{\mathrm{d}}{\mathrm{d}t} f(t,x) = (\mathsf v + \lambda) \triangle f(t,x) +2d\lambda f(t,x);\\[.2cm]
				f(0,x) = \mathds{1}_{\{0\}}(x)	,		\end{array}
			\right.
	\end{equation*}
 whose unique solution is given by $(t,x) \mapsto e^{2d\lambda t}\cdot \mathsf p(0,x,(\mathsf v + \lambda) t),\, (t,x) \in [0,\infty)\times \mathbb Z^d$.
\end{proof}


We now turn to proving a bound for the containment process that will be useful in Section~\ref{s_micro_prop}. 
We will need a couple of extra definitions.
First, for a fixed finite set~$A \subseteq \mathbb Z^d$ and define
\begin{equation}\label{eq_def_of_TA}
	T^A:= \inf\big\{t>0: \text{ there are $x \sim y$ with~$x,y \in \Psi^A_{t-}$ and $t \in \mathcal{T}_{(x,y)}$}\big\},
\end{equation}
that is,~$T^A$ is the first time when there is a transmission mark from an infected particle towards another infected particle in~$(\Psi^A_t)$.  
Second, we define
\[
\mathcal K_t^A := \int_0^t \sum_{\{x,y\}:x\sim y} \mathds{1}\big\{\Psi^A_s(x) = \Psi^A_s(y) = 1\big\}\;\mathrm{d}s,\qquad t \ge 0.
\]

\begin{lemma}
	\label{lem_good_event_Z}
    Let~$\lambda > 0$ and~$h > 0$. 
	If~$\varepsilon > 0$ is small enough (depending on~$\lambda,h$) and~$\mathsf v$ is large enough (depending on~$\lambda,h,\varepsilon$), the following holds. Consider the interchange-and-contact process with parameters~$\lambda$ and~$\sfv$. For all~$A \subseteq B_0(\sqrt{\mathsf v})$ with~$|A| \le \mathsf v^\varepsilon$, we have
	\begin{equation*}
		\mathbb P(|\Psi^A_{h}| \le \mathsf v^{3\varepsilon},\; \mathcal K_{h}^A \le \mathsf v^{-1/4},\; T^A > h) > 1-\mathsf{v}^{-\varepsilon}.
	\end{equation*}
\end{lemma}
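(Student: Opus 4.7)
My plan is to dominate the containment flow $(\Psi^A_t)$ by the multiset-valued process $(\kappa_t)_{t \ge 0}$ introduced in the proof of Lemma~\ref{lem_primeira_dom}, started from $\kappa_0 = \mathds{1}_A$, and then combine first- and second-moment estimates on $\kappa$ with Lemma~\ref{lem_time_together}. Let $N_t := \sum_x \kappa_t(x)$. The generator computation from the proof of Lemma~\ref{lem_primeira_dom} gives $\mathbb E[N_t] = |A|\,e^{2d\lambda t}$, and since $|\Psi^A_h| \le N_h$, Markov's inequality yields
\[
\mathbb P\bigl(|\Psi^A_h| > \sfv^{3\varepsilon}\bigr) \le \sfv^{-3\varepsilon}\cdot |A|\cdot e^{2d\lambda h} \le \sfv^{-2\varepsilon}\,e^{2d\lambda h} \le \tfrac{1}{3} \sfv^{-\varepsilon}
\]
once $\sfv$ is large enough depending on $\lambda, h, \varepsilon$.

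\textbf{Main step: bounding $\mathbb E[\mathcal K_h^A]$.} Since $\mathds{1}\{\Psi^A_s \ni x,\, \Psi^A_s \ni y\} \le \kappa_s(x)\,\kappa_s(y)$, one has
\[
\mathbb E[\mathcal K_h^A] \le \int_0^h \sum_{\{x,y\}: x\sim y} \mathbb E[\kappa_s(x)\,\kappa_s(y)]\,ds.
\]
I would rewrite the right-hand side by regarding $(\kappa_t)$ as a labeled-particle branching random walk whose particles move by the jump marks (rate $\sfv$) and branch according to rule (r3'), so that each particle produces an offspring at a uniformly chosen neighbor at total rate $2d\lambda$. Then the integrand equals the expected number of ordered pairs of distinct labeled particles $(X^i_s, X^j_s)$ at neighboring positions. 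For any pair $i\ne j$, let $\tau_{ij}$ be the birth time of the younger of the two (or the MRCA time). After $\tau_{ij}$ the difference $X^i_s - X^j_s$ is stochastically dominated (via Proposition~1.7 of Chapter~VIII of \cite{Lig}, as used in the proof of Lemma~\ref{lem_time_together}) by the difference of two independent continuous-time random walks at rate $\sfv$. Conditioning on the branching tree, Lemma~\ref{lem_time_together} then bounds the expected time this pair spends at neighboring positions over $[\tau_{ij}, h]$ by $C(h)\,\psi_d(\sfv)$, where $\psi_d(\sfv) = \sqrt{h/\sfv}$ for $d=1$, $\log(\sfv h)/\sfv$ for $d=2$, and $1/\sfv$ for $d\ge 3$. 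A standard second-moment estimate for the Yule-like branching of $N_t$ gives $\mathbb E[N_h^2] \le C'(\lambda,h)\,|A|^2$, so the expected number of pairs ever born is at most $C'(\lambda, h)\,|A|^2$. Combining,
\[
\mathbb E[\mathcal K_h^A] \le C''(\lambda, h)\,|A|^2\,\psi_d(\sfv) \le C''(\lambda, h)\,\sfv^{2\varepsilon}\,\psi_d(\sfv),
\]
which for every $d$ is $o(\sfv^{-1/4-\varepsilon})$ provided $\varepsilon$ is chosen small enough (note $\psi_d(\sfv) \le \sfv^{-1/2+o(1)}$ in the worst case $d=1$). Markov's inequality then yields $\mathbb P(\mathcal K_h^A > \sfv^{-1/4}) < \tfrac{1}{3}\sfv^{-\varepsilon}$.

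\textbf{Bounding $T^A$ and conclusion.} By the Campbell formula applied to the Poisson point processes $\mathcal T_{(x,y)}$ and the predictable process $\mathds 1\{x,y\in \Psi^A_{s-}\}$, the expected number of transmission marks in $[0,h]$ landing on an ordered edge whose both endpoints lie in $\Psi^A$ equals $2\lambda\,\mathbb E[\mathcal K_h^A]$. Hence
\[
\mathbb P(T^A \le h) \le 2\lambda\,\mathbb E[\mathcal K_h^A] < \tfrac{1}{3}\sfv^{-\varepsilon}
\]
for $\sfv$ large, and a union bound over the three events completes the proof. The main obstacle is the pair bound in the central step: one must balance the branching-induced inflation of the pair count (handled by a second moment on the Yule branching) against the $1/\sfv$-type decay of the expected neighbor-time per pair afforded by Lemma~\ref{lem_time_together}, and verify that there is enough slack to absorb the factor $|A|^2 \le \sfv^{2\varepsilon}$ while still beating $\sfv^{-1/4-\varepsilon}$.
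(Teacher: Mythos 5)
Your proposal is correct in outline and the exponent bookkeeping works (in the worst case $d=1$ one needs roughly $\varepsilon<1/12$), but it takes a genuinely different route from the paper in the two middle steps. The paper never bounds $\mathbb E[\mathcal K^A_{h}]$ unconditionally: it works on the event $\{|\Psi^A_{h}|\le \sfv^{3\varepsilon}\}$, writes $\mathcal K^A_{h}=\sum_{i<j}\sigma(i,j)$ over the at most $\sfv^{3\varepsilon}$ actual birth points of the containment flow, and applies a union bound plus Markov's inequality to each $\sigma(i,j)$ separately via Lemma~\ref{lem_time_together}; for $T^A$ it then controls only the joint event $\{\mathcal K^A_{h}\le\sfv^{-1/4},\,T^A\le h\}$ by optional stopping of the martingale $\mathds{1}\{T^A\le t\}-2\lambda\,\mathcal K^A_{t\wedge T^A}$ at the truncation time $\inf\{t:\mathcal K^A_t>\sfv^{-1/4}\}$ --- the same compensator idea you invoke, localized precisely so that no moment bound on $\mathcal K^A_{h}$ is needed. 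You instead prove the stronger unconditional estimate $\mathbb E[\mathcal K^A_{h}]\le C(\lambda,h)\,|A|^2\psi_d(\sfv)$ via the dominating process $\kappa$, after which the $\mathcal K$-event follows by Markov and the $T^A$-event by the Campbell/compensator formula; this buys a cleaner statement at the price of needing second-moment control of the dominating population, which the paper's conditioning trick avoids.

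Two of your justifications need repair, though neither is fatal. First, $(\kappa_t)$ built from the graphical representation is \emph{not} a branching random walk with independently evolving particles: co-located particles share jump marks and, by rule~$(\mathrm{r3'})$, branch in lockstep (a single mark in $\mathcal T_{(w,z)}$ adds $\kappa_{t-}(w)$ offspring simultaneously). So the ``standard second-moment estimate for the Yule-like branching'' cannot simply be quoted; instead one checks directly, e.g.\ by a generator computation using $\sum_w\kappa_t(w)^2\le N_t^2$, that $\tfrac{\mathrm d}{\mathrm dt}\mathbb E[N_t^2]\le 6d\lambda\,\mathbb E[N_t^2]$, whence $\mathbb E[N_h^2]\le e^{6d\lambda h}|A|^2$, which is all you need. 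Second, ``conditioning on the branching tree'' is not legitimate, because the genealogy (in particular, which particles branch simultaneously) is correlated with the spatial motion. The correct fix is to condition at the birth time $\tau_{ij}$ of the younger member of each pair, which is a stopping time: given $\mathcal F_{\tau_{ij}}$ and that the two particles occupy distinct sites, their subsequent displacements are driven by the post-$\tau_{ij}$ jump marks, which are independent of the past, so the conditional expected neighbouring time on $[\tau_{ij},h]$ is at most $C(h)\psi_d(\sfv)$ by (the one-time marginal comparison in the proof of) Lemma~\ref{lem_time_together}; co-located pairs contribute zero. Summing over pairs ordered by birth and using the tower property then yields $\mathbb E[\mathcal K^A_{h}]\le C(h)\psi_d(\sfv)\,\mathbb E[N_h^2]$, and the remainder of your argument, including the bound $\mathbb P(T^A\le h)\le 2\lambda\,\mathbb E[\mathcal K^A_{h}]$, goes through.
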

\begin{proof}
	The process~$\big(|\Psi^A_t|\big)_{t \ge 0}$ is stochastically dominated by a pure-birth process in which each existing individual gives birth to a new individual with rate~$2d\lambda$. For this larger process, if the initial population has~$|A|$ individuals, then the expected population size at time~$h$ is~$|A|\exp\{2d\lambda h\}$. 
    Hence, by Markov's inequality,
	\begin{equation}
		\label{eq_bound_first_bad}
		\mathbb P \big(|\Psi^A_{h}| > \mathsf v^{3\varepsilon}\big) \le \frac{|A|\exp\{2d\lambda h\}}{\mathsf v^{3\varepsilon}} \leq \frac{\exp\{2d\lambda h\}}{\mathsf v^{2\varepsilon}}.
	\end{equation}

	Before giving our next bound, we introduce notation. Let~$0<s_1 < s_2 < \cdots$ be the times at which~$(|\Psi^A_t|)_{t \ge 0}$ increases. For each~$j$, there is some vertex~$z_j$ such that~$z_j \notin \Psi^A_{s_j-}$ and~$z_j \in \Psi^A_{s_j}$. Next, take an enumeration~$A = \{y_1,\ldots,y_m\}$, with~$m = |A|$. We then define~$(x_j,t_j)_{j \ge 1}$ by setting
	\begin{align*}&(x_1,t_1) = (y_1,0),\;\ldots,\; (x_m,t_m) = (y_m,0),\\ &\hspace{3cm} (x_{m+1},t_{m+1}) = (z_1,s_1),\; (x_{m+2},t_{m+2}) = (z_2,s_2),\;\ldots.\end{align*}
		 In words, these are either the pairs of the form~$(x,0)$, where~$x \in A$, or the pairs of the locations and times when new infections enter the process~$(\Psi_t^A)_{t \ge 0}$.
         We then define, for each~$i < j \le |\Psi^A_{h}|$,
		\[\sigma(i,j):= \int_{t_j}^{h\vee {t_j}} \mathds{1}\{\Phi(x_i,t_i,s) \sim \Phi(x_j,t_j,s) \} \;\mathrm{d}s,\]
		that is,~$\sigma(i,j)$ is the amount of time until~$h$ that the interchange flow starting from~$(x_i,t_i)$ spends neighboring the one starting from~$(x_j,t_j)$. 
        Note that, in case~$t_j\ge h$, we have~$\sigma(i,j):=0$. 
        We then have
	\begin{equation*}
		\mathcal K^A_{h} = \sum_{1 \le i < j} \sigma(i,j).
	\end{equation*}
	We can then bound
	\begin{align*}
		\mathbb P\big(|\Psi^{A}_{h}| \le \mathsf v^{3\varepsilon},\; \mathcal{K}^A_{h} > \mathsf v^{-1/4} \big)&\le \sum_{1 \le i < j \le \mathsf v^{3 \varepsilon}} \mathbb P\left(\sigma(i,j) > \frac{\mathsf v^{-1/4}}{\mathsf v^{6\varepsilon}} \right) \le \mathsf v^{\tfrac14 + 6 \varepsilon}\cdot \sum_{1 \le i < j \le \mathsf v^{3 \varepsilon}} {\mathbb E[\sigma(i,j)]},
	\end{align*}
	by a union bound and Markov's inequality. By Lemma~\ref{lem_time_together} (in the worst case~$d = 1$), each expectation on the r.h.s. is smaller than~$C\sqrt{h/\mathsf v}$. We then obtain
	\begin{equation}
		\label{eq_bound_second_bad}
		\mathbb P\big(|\Psi^{A}_{h}| \le \mathsf v^{3\varepsilon},\; \mathcal{K}^A_{h} > \mathsf v^{-1/4} \big) \le C\sqrt{h}\cdot \mathsf v^{-\tfrac14 + 12 \varepsilon}.
	\end{equation}

	As the last step, we now want to bound~$\mathbb P(\mathcal{K}_{h}^A \le \mathsf v^{-1/4},\; T^A \le h)$. To do this, we first observe that the process 
	\[M_t:= \mathds{1}\{T^A \le t\} - 2\lambda \cdot \mathcal{K}^A_{t \wedge T^A},\qquad t \ge 0\]
	is a martingale, since before~$T^A$, a transmission that could trigger~$T^A$ occurs with rate 
    \[
    \lambda \cdot |\{(x,y):\;x,y \in \Psi^A_t,\; x\sim y\}|.
    \]

	Further define the stopping time~$\mathcal \kappa := \inf\{t \ge 0:\;\mathcal{K}^A_t > \mathsf v^{-1/4}\}$, and note that the stopped process $(M_{t \wedge \kappa})_{t\ge 0}$
	is also a martingale. Then,
	\begin{align*}
		0 = \mathbb E[M_0] = \mathbb E[M_{h_0 \wedge \kappa}] &= \mathbb P(T^A \le h_0 \wedge \kappa) - 2\lambda \cdot \mathbb E[\mathcal K^A_{h_0 \wedge T^A \wedge \kappa}]\ge \mathbb P(T^A \le h_0 \wedge \kappa) - 2\lambda \cdot \mathsf v^{-1/4}.
	\end{align*}
	We thus obtain
	\begin{equation}
		\label{eq_bound_third_bad}
		\mathbb P(\mathcal K^A_{h_0} \le \mathsf v^{-1/4},\; T^A \le h_0) \le \mathbb P(T^A \le h_0 \wedge \kappa) \le 2\lambda \cdot \mathsf v^{-1/4}.
	\end{equation}
	To conclude, if~$\varepsilon > 0$ is small enough and $\mathsf v$ is large enough, then the r.h.s.s of~\eqref{eq_bound_first_bad},~\eqref{eq_bound_second_bad} and~\eqref{eq_bound_third_bad} are much smaller than~$\mathsf v^{-\varepsilon}$, so the proof is complete.
\end{proof}

\subsubsection{Discrepancy and spatial decoupling}
We define our second kind of discrepancy probability.

\begin{definition}[The discrepancy probability for the interchange-and-contact process]\label{def_discr_icp}
Let~$H$ be the graphical representation for an interchange-and-contact process with parameters~$\mathsf v$ and~$\lambda > 0$, defined under some probability measure~$\mathbb P$. Given~$\ell, L \in \mathbb N$ with~$\ell < L$ and~$t > 0$, we define
	\begin{equation*}
		\mathrm{discr}^{\mathrm{icp}}_{\mathsf v, \lambda}(\ell, L, t) := \mathbb P\left(\begin{array}{c}\text{there exist }x \in \partial B_0(L),\; y \in \partial B_0(\ell) \text{ and } s,s' \in [0,t] \\\text{ with }  0 \le s < s' \le t \text{ such that } y \in \Psi(x,s,s')\end{array}\right).
	\end{equation*}
\end{definition}
Note that the event defining~$\mathrm{discr}^{\mathrm{icp}}_{\mathsf v, \lambda}(\ell, L, t)$ depends only on the Poisson processes of $H$ associated to vertices and edges inside the ball~$B_0(L)$. 
The following lemma is analogue to Lemma~\ref{lem_covariances_0}. 
\begin{lemma}\label{lem_covariances}
Let~$(\zeta_t)_{t \ge 0}$ be the interchange-and-contact process with parameters~$\mathsf v$ and~$\lambda$. Let~$\ell \in \mathbb N$,~$x_1,x_2 \in \mathbb Z^d$ with~$\|x_1 - x_2\| \ge 2\ell + 2$, and~$t > 0$. For~$i = 1,2$, let~$A_i$ be an event whose occurrence depends only on~$\{\zeta_s(y): (y,s) \in B_{x_i}(\ell) \times [0,t]\}$. Then,
\begin{equation*}
	|\mathrm{Cov}(\mathds{1}_{A_1},\mathds{1}_{A_2})| \le 4 \mathrm{discr}^{\mathrm{icp}}_{\mathsf v, \lambda}(\ell, \lfloor\tfrac12\|x-y\|\rfloor , t).
\end{equation*}
\end{lemma}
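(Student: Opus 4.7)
The plan is to mirror the proof of Lemma~\ref{lem_covariances_0}, replacing interchange-flow trajectories with containment paths (Definition~\ref{def_containment_try}). Set $L := \lfloor \tfrac12 \|x_1-x_2\| \rfloor$, so that $L > \ell$ and $B_{x_1}(L) \cap B_{x_2}(L) = \varnothing$. For $i=1,2$, I will introduce the event
\begin{equation*}
E_i := \{\text{there is no } w \in \partial B_{x_i}(L),\; z \in \partial B_{x_i}(\ell),\; 0 \le s < s' \le t \text{ with } z \in \Psi(w,s,s')\}.
\end{equation*}
By translation invariance of $H$, $\mathbb P(E_i^c) \le \mathrm{discr}^{\mathrm{icp}}_{\sfv,\lambda}(\ell, L, t)$. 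A point that was only implicit in the interchange-process case is worth highlighting here: since the containment flow $\Psi$ is a functional of $H$ alone and does not involve $\zeta_0$, this bound is unchanged by conditioning on the initial configuration, i.e.\ $\mathbb P(E_i^c \mid \zeta_0) = \mathbb P(E_i^c)$.

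Next I will argue that on $E_i$, the values $\{\zeta_s(y):(y,s) \in B_{x_i}(\ell)\times[0,t]\}$ are determined by $\zeta_0|_{B_{x_i}(L)}$ together with the portion of $H$ lying in $B_{x_i}(L)\times [0,t]$. By the second assertion of Lemma~\ref{lem_contain}, it suffices to prove the inclusion $\{x:y \in \Psi(x,0,s)\}\subseteq B_{x_i}(L)$ for every $(y,s)\in B_{x_i}(\ell)\times [0,t]$. Suppose otherwise: then there is a containment path $\gamma$ from some $(x,0)$ with $x \notin B_{x_i}(L)$ to $(y,s)$. Since every jump of $\gamma$ (whether through an interchange mark or a transmission mark) changes exactly one coordinate by $\pm 1$, there is a first time $s_1 \in [0,s]$ at which $\gamma$ enters $B_{x_i}(L)$, and necessarily $x' := \gamma(s_1) \in \partial B_{x_i}(L)$. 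Since $x' \notin B_{x_i}(\ell)$, the path must subsequently enter $B_{x_i}(\ell)$ at some time $s_3 \in (s_1, s]$, with $z := \gamma(s_3) \in \partial B_{x_i}(\ell)$. The restriction $\gamma|_{[s_1,s_3]}$ is a containment path witnessing $z \in \Psi(x',s_1,s_3)$, contradicting $E_i$.

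With this in hand, the remainder of the proof is the same algebraic manipulation as in Lemma~\ref{lem_covariances_0}. Conditional on $\zeta_0$, the events $A_1 \cap E_1$ and $A_2 \cap E_2$ depend only on $\zeta_0|_{B_{x_i}(L)}$ and on the restrictions of the independent Poisson processes of $H$ to the disjoint regions $B_{x_1}(L)\times [0,t]$ and $B_{x_2}(L)\times [0,t]$, respectively, hence are conditionally independent. Writing
\begin{equation*}
\mathbb P(A_1 \cap A_2 \mid \zeta_0) = \mathbb P(A_1 \cap E_1 \mid \zeta_0)\cdot \mathbb P(A_2 \cap E_2 \mid \zeta_0) + \mathbb P(A_1 \cap A_2 \cap (E_1 \cap E_2)^c \mid \zeta_0),
\end{equation*}
and applying the bound $\mathbb P(E_i^c\mid \zeta_0)\le \mathrm{discr}^{\mathrm{icp}}_{\sfv,\lambda}(\ell,L,t)$, I obtain $|\mathrm{Cov}(\mathds 1_{A_1},\mathds 1_{A_2}\mid \zeta_0)| \le 4\,\mathrm{discr}^{\mathrm{icp}}_{\sfv,\lambda}(\ell,L,t)$; integrating over $\zeta_0$ concludes the proof. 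The only genuine step beyond routine book-keeping is the geometric verification that $E_i$ blocks all containment paths entering $B_{x_i}(\ell)\times [0,t]$ from outside $B_{x_i}(L)\times [0,t]$; the fact that $\Psi$ is $\zeta_0$-free is what keeps the error bound free of $\zeta_0$-dependence.
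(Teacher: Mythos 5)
Your proposal is correct and coincides with the paper's intended argument: the paper omits this proof, stating only that it is "very similar" to that of Lemma~\ref{lem_covariances_0} and uses Lemma~\ref{lem_contain}, and your write-up carries out precisely that adaptation — defining the blocking events $E_i$ via containment paths, checking through the nearest-neighbour structure of containment paths that on $E_i$ the restriction of $(\zeta_s)$ to $B_{x_i}(\ell)\times[0,t]$ is determined by $\zeta_0|_{B_{x_i}(L)}$ and $H$ inside $B_{x_i}(L)\times[0,t]$, and then repeating the conditional-independence decomposition and integration over $\zeta_0$ from Lemma~\ref{lem_covariances_0}. Your explicit remark that $\Psi$ is a functional of $H$ alone, so that $\mathbb P(E_i^c\mid\zeta_0)=\mathbb P(E_i^c)$, is exactly the point that makes the transfer from the interchange case work.
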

The proof uses Lemma~\ref{lem_contain}, and we decide to omit it because it is very similar to the proof of Lemma~\ref{lem_covariances_0}.
Our next goal is to obtain a bound for~$\mathrm{discr}^{\mathrm{icp}}_{\sfv,\lambda}(\ell,L,t)$, similarly to Lemma~\ref{lem_first_rw_discr}. This will be significantly more involved in this case, and will require preliminary bounds.

\begin{lemma}\label{seg_primeira_dom}
Let~$\Psi$ be the containment flow associated to a graphical representation of
    the in\-ter\-change-and-contact process with parameters~$\mathsf v$ and~$\lambda$. For any~$t \ge 1$ and any~$x\in \mathbb Z^d$, we have
	\begin{align}\label{eq_first_bound_domination}
		\mathbb P\biggl(x \in \ \bigcup_{\smash{\mathclap{s:0\le s \le t}}}\  \Psi(0,0,s) \biggr) 
        &\le 8de\max(2d\mathsf v,1) \cdot t e^{4d\lambda t} \cdot \exp \left\{- \frac12 \|x\| \log\left(1 + \frac{\|x\|}{2(\mathsf v + \lambda) t} \right) \right\}\\
\label{eq_second_bound_domination}
		\mathbb P\biggl(x \in \ \bigcup_{\mathclap{s,s': 0 \le s < s' \le t}}\  \Psi(0,s,s') \biggr)
        &\le 16 de^2 \max(4d^2 \mathsf v^2,1) \cdot t e^{8d\lambda t} \cdot \exp\left\{- \frac12 \|x\| \log\left(1 + \frac{\|x\|}{4(\mathsf v + \lambda) t} \right) \right\}.
		\end{align}

\end{lemma}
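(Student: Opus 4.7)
The plan is to mimic the proof of Lemma~\ref{lem_prelim_discr_ip}, combining the ``$\epsilon$-trick'' used there with the first-moment bound of Lemma~\ref{lem_primeira_dom}. The key observation that drives the argument is that the containment flow can only shed a site $x$ from $\Psi(0,0,\cdot)$ via a jump mark on an edge incident to $x$: by rules (r1)--(r3) after Definition~\ref{def_containment_try}, transmission marks can only add sites, while jump marks swap states. Hence, given that $x$ lies in the flow at some time $\sigma$, the conditional probability that no jump mark incident to $x$ occurs during $[\sigma,\sigma+\epsilon]$ is at least $e^{-2d\mathsf{v}\epsilon}$, and on that event $x$ remains in the flow throughout. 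This dictates the natural choice $\epsilon = 1/\max(2d\mathsf{v},1)$.

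For~\eqref{eq_first_bound_domination}, I set $\mathcal{S} := \{s' \in [0,t+\epsilon]: x \in \Psi(0,0,s')\}$ and $\sigma := \inf \mathcal{S}$. The observation above, together with the strong Markov property and Markov's inequality applied to $\mathrm{Leb}(\mathcal{S})$, yields (exactly as in~\eqref{eq_sigma_A_Leb})
\[
\mathbb{P}(\sigma \le t) \le \frac{e^{2d\mathsf{v}\epsilon}(t+\epsilon)}{\epsilon}\sup_{s' \le t+\epsilon} \mathbb{P}\bigl(x \in \Psi(0,0,s')\bigr).
\]
Lemma~\ref{lem_primeira_dom} bounds the supremum by $e^{2d\lambda(t+\epsilon)}\cdot \mathsf{p}(0,x,(\mathsf{v}+\lambda)(t+\epsilon))$; the estimate $\mathsf{p}(0,x,s) \le 2d\exp\{-\tfrac12\|x\|\log(1+\|x\|/s)\}$ then follows from Lemma~\ref{lem_kallenberg}, since $\mathsf{p}(0,x,s)$ is dominated by the probability that the corresponding random walk reaches distance $\|x\|$ by time $s$. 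Choosing $\epsilon = 1/\max(2d\mathsf{v},1)$ and using $t \ge 1$ to absorb $t+\epsilon \le 2t$ gives~\eqref{eq_first_bound_domination}.

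For~\eqref{eq_second_bound_domination}, I apply the same trick to the set $\mathcal{S}':=\{u \in [0,t+\epsilon]: A_u\}$, where $A_u:=\{\exists s \in [0,u]: x \in \Psi(0,s,u)\}$, and let $\sigma':=\inf \mathcal{S}'$. If $(s_*,\sigma')$ realizes $A_{\sigma'}$, then no jump mark incident to $x$ in $[\sigma',\sigma'+\epsilon]$ forces $x \in \Psi(0,s_*,u)$ for all $u \in [\sigma',\sigma'+\epsilon]$, so
\[
\mathbb{P}(\sigma' \le t) \le \frac{e^{2d\mathsf{v}\epsilon}(t+\epsilon)}{\epsilon}\sup_{u \le t+\epsilon}\mathbb{P}(A_u).
\]
To estimate $\mathbb{P}(A_u)$, I invoke the time-reversal symmetry of the graphical representation: reversing time on $[0,u]$ preserves the law of the jump- and recovery-mark processes and maps $\mathcal{T}_{(w,z)}$ to $\mathcal{T}_{(z,w)}$, leaving the joint law invariant because each edge carries independent rate-$\lambda$ Poisson processes in both orientations. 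Consequently, the backward containment flow from $(x,u)$ has the same distribution as the forward flow $\Psi(x,0,\cdot)$ on $[0,u]$, and together with the lattice symmetries $y\mapsto y+c$ and $y \mapsto -y$,
\[
\mathbb{P}(A_u) \;=\; \mathbb{P}\Bigl(x \in \bigcup\nolimits_{r \in [0,u]} \Psi(0,0,r)\Bigr),
\]
which is bounded by~\eqref{eq_first_bound_domination}. Plugging this in and again choosing $\epsilon = 1/\max(2d\mathsf{v},1)$ yields~\eqref{eq_second_bound_domination}, with the prefactor $\max(2d\mathsf{v},1)^2 = \max(4d^2\mathsf{v}^2,1)$.

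The only nontrivial step is the time-reversal identity used in the second bound; the remainder is bookkeeping, absorbing the $(t+\epsilon)/\epsilon$ factors into the prefactors $\max(2d\mathsf{v},1)$ (resp.\ $\max(4d^2\mathsf{v}^2,1)$) and consolidating $e^{2d\lambda(t+\epsilon)}$ (resp.\ $e^{4d\lambda(t+\epsilon)}$) into $e^{4d\lambda t}$ (resp.\ $e^{8d\lambda t}$) using $\epsilon \le 1 \le t$.
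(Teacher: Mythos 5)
Your proof of \eqref{eq_first_bound_domination} is essentially the paper's own argument: the same $\epsilon$-trick at the first time $x$ enters $\Psi(0,0,\cdot)$ (with the same observation that membership of $x$ can only be destroyed by a jump mark incident to $x$), the same choice $\epsilon=\min(1,\tfrac{1}{2d\mathsf v})$, and the same combination of Lemma~\ref{lem_primeira_dom} with Lemma~\ref{lem_kallenberg}; this part is correct.

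For \eqref{eq_second_bound_domination} you take a genuinely different reduction. The paper applies the $\epsilon$-trick so that the remaining quantity is indexed by the \emph{starting} time, namely $\max_{s\in[0,t+\epsilon]}\mathbb P\bigl(x\in\bigcup_{s'\in[s,t+\epsilon]}\Psi(0,s,s')\bigr)$, which by time-homogeneity is exactly the probability bounded in \eqref{eq_first_bound_domination}; the union over terminal times is thus absorbed into a single application of the first bound. You instead fix the \emph{terminal} time $u$ and bound $\mathbb P(A_u)$, $A_u=\{\exists s\le u:\,x\in\Psi(0,s,u)\}$, via time reversal of the graphical representation. The reversal identity itself is fine: jump and recovery marks are reversible in law, reversal maps $\mathcal T_{(w,z)}$ to $\mathcal T_{(z,w)}$ (an exchangeable relabelling of i.i.d.\ Poisson processes), and a containment path from $(0,s)$ to $(x,u)$ reverses to a containment path from $(x,0)$ to $(0,u-s)$ for the reversed representation, so $\mathbb P(A_u)=\mathbb P\bigl(x\in\bigcup_{r\le u}\Psi(0,0,r)\bigr)$ up to the usual c\`adl\`ag technicalities, which the paper also glosses over. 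This is a legitimate alternative route that the paper does not use.

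The gap is quantitative. In your chain you pay for the time window twice: once through $\mathbb E[\mathrm{Leb}(\mathcal S')]\le (t+\epsilon)\sup_{u\le t+\epsilon}\mathbb P(A_u)$, and once again because $\mathbb P(A_u)$ is bounded by \eqref{eq_first_bound_domination}, whose prefactor already carries a factor of $t$. So your argument yields a bound of the form $C d e^2\max(4d^2\mathsf v^2,1)\, t^2 e^{8d\lambda t}\exp\{-\tfrac12\|x\|\log(1+\tfrac{\|x\|}{4(\mathsf v+\lambda)t})\}$, i.e.\ \eqref{eq_second_bound_domination} with $t$ replaced by (a constant times) $t^2$. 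Your closing remark that the factors $(t+\epsilon)/\epsilon$ are ``absorbed into the prefactors $\max(2d\mathsf v,1)$, resp.\ $\max(4d^2\mathsf v^2,1)$'' is not correct: $(t+\epsilon)/\epsilon$ contains a factor of $t$, which cannot be hidden in a $t$-independent constant, nor absorbed into $e^{8d\lambda t}$ uniformly in $\lambda$. The paper's starting-time reduction incurs only one factor of $t$ and therefore matches the stated constant, whereas your terminal-time reduction intrinsically does not. The weaker $t^2$ bound would suffice for every application of this lemma in the paper, but as written your proposal does not establish the inequality as stated.
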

\begin{proof}
	Fix~$x\in \mathbb Z^d$ and let~$\tau_x := \inf\{t: x \in \Psi(0,0,t)\}$.  For any~$t \ge 0$ and~$\epsilon > 0$, we have
	\begin{equation*}
		\mathbb E\Bigl[\int_0^{t +\epsilon} \hspace{-2mm}\Psi(0,0,s) \;\mathrm{d}s \Bigr] \ge \epsilon \cdot \mathbb P \left(\tau_x \le t,\; x \in \Psi(0,0,s) \text{ for all } s \in [\tau_x,\tau_x + \epsilon]\right)
		\ge \epsilon \cdot \mathbb P(\tau_x \le t)\cdot e^{-2d\mathsf v\epsilon},
	\end{equation*}
	where the second inequality follows from the strong Markov property (we impose that there is no jump mark involving~$x$ in the time interval~$[\tau_x,\tau_x+\epsilon]$).
Then, rearranging and using Fubini's theorem,
	\[
		\mathbb P(\tau_x \le t) \le \frac{e^{2d\mathsf v \epsilon}}{\epsilon} \int_0^{t + \epsilon}  \mathbb P(x \in \Psi(0,0,s)) \;\mathrm{d}s.
	\]
	We take~$\epsilon := \min(1,\tfrac{1}{2d\mathsf v})$, so that~$e^{2d\mathsf v \epsilon}/\epsilon \le e\max(2d\mathsf v, 1)$. For simplicity we add the assumption that~$t \ge 1$, so that we can bound~$t + \epsilon \le 2t$. Also using Lemma~\ref{lem_primeira_dom} to bound the probability inside the integral, we obtain
	\[
		\mathbb P(\tau_x \le t) \le e\max(2d\mathsf v,1) \cdot 2t e^{4d\lambda t} \cdot \max_{0\le s \le 2t} \mathsf p(0,x,(\mathsf v + \lambda)s).
	\]
	Using Lemma~\ref{eq_bound_rw_reach}, the above maximum is bounded by
\[
	2d \exp \Bigl\{- \frac12 \|x\| \log\Bigl(1 + \frac{\|x\|}{2(\mathsf v + \lambda) t} \Bigr) \Bigr\}.
\]
This completes the proof of~\eqref{eq_first_bound_domination}.
We can obtain~\eqref{eq_second_bound_domination} from~\eqref{eq_first_bound_domination} proceeding similarly to the proof of Lemma~\ref{lem_prelim_discr_ip}. Again take~$\epsilon:=\min(1,\tfrac{1}{2d\mathsf v})$. 
 Repeating the steps leading to~\eqref{eq_sigma_A_Leb},we obtain 
	\begin{align*}
		\mathbb P\Bigl(x \in \bigcup_{\substack{s,s': 0 \le s < s' \le t}}  \Psi(0,s,s') \Bigr) \le \frac{e^{2d\mathsf v \epsilon}}{\epsilon} \cdot  \max_{s \in [0,t+\epsilon]} \mathbb P\Bigl(x \in \bigcup_{s':s \le s' \le t+\epsilon}\Psi(0,s,s') \Bigr).
	\end{align*}
By~\eqref{eq_first_bound_domination}, the maximum on the r.h.s. is smaller than
\[
8de \max(2d\sfv,1) \cdot (t+\epsilon)e^{4d\lambda (t+\epsilon)} \cdot \exp \Bigl\{-\frac12 \|x\| \log \Bigl(1+ \frac{\|x\|}{2(\sfv + \lambda)(t+\epsilon)} \Bigr) \Bigr\}.
\]
We now use again the bounds~$e^{2d\mathsf v \epsilon}/\epsilon \le e\max(2d\mathsf v, 1)$ and~$t+\epsilon \le 2t$, completing the proof.
\end{proof}

\begin{proposition}
	\label{prop_discrepancy}
	For any~$\mathsf v > 0$,~$\lambda > 0$,~$\ell, L \in \mathbb N$ with~$\ell < L$ and~$t \ge 1$, we have
	\begin{equation}\label{eq_bound_discrepancy_icp}
		\mathrm{discr}^{\mathrm{icp}}_{\mathsf v, \lambda}(\ell, L, t) \le 64d^3e^2 \max(4d^2 \mathsf v^2,1) \cdot (9\ell L)^{d-1} \cdot t e^{8d\lambda t} \cdot \exp \Bigl\{-\frac12 (L-\ell) \log\Bigl( 1 + \frac{L-\ell}{4(\mathsf v + \lambda )t}\Bigr) \Bigr\}.
		\end{equation}
\end{proposition}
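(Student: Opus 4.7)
The plan is to reduce the statement to a union bound over pairs $(x,y) \in \partial B_0(L) \times \partial B_0(\ell)$ of the estimate \eqref{eq_second_bound_domination} from Lemma~\ref{seg_primeira_dom}, exploiting translation invariance of the graphical representation.

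First, I would observe that the event in the definition of $\mathrm{discr}^{\mathrm{icp}}_{\mathsf v, \lambda}(\ell, L, t)$ is the union over $x \in \partial B_0(L)$ and $y \in \partial B_0(\ell)$ of the events $E_{x,y} := \{\exists\,s,s': 0 \le s < s' \le t,\; y \in \Psi(x,s,s')\}$. By the translation invariance of the law of the Poisson point processes making up $H$, the event $E_{x,y}$ has the same probability as $\{y - x \in \bigcup_{0 \le s < s' \le t} \Psi(0,s,s')\}$. Lemma~\ref{seg_primeira_dom}, inequality \eqref{eq_second_bound_domination}, then bounds this probability by
\[
16 de^2 \max(4d^2 \mathsf v^2,1) \cdot t e^{8d\lambda t} \cdot \exp\Bigl\{- \tfrac12 \|y-x\| \log\bigl(1 + \tfrac{\|y-x\|}{4(\mathsf v + \lambda) t} \bigr) \Bigr\}.
\]

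Next, since $x \in \partial B_0(L)$ and $y \in \partial B_0(\ell)$ with $\ell < L$, a triangle inequality yields $\|y-x\| \ge L - \ell$. Because the function $r \mapsto r \log(1 + r/c)$ is increasing on $[0,\infty)$ for any $c > 0$, the exponential in the displayed bound is at most $\exp\bigl\{-\tfrac12 (L-\ell)\log\bigl(1 + \tfrac{L-\ell}{4(\mathsf v + \lambda)t}\bigr)\bigr\}$, which is exactly the exponential appearing in the target estimate \eqref{eq_bound_discrepancy_icp}.

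Finally, a union bound over all the pairs $(x,y)$ contributes a combinatorial factor. Using $|\partial B_0(L)| \le 2d(2L+1)^{d-1}$ and $|\partial B_0(\ell)| \le 2d(2\ell+1)^{d-1}$, and bounding $2L+1 \le 3L$, $2\ell + 1 \le 3\ell$ (valid since $L, \ell \ge 1$), the number of pairs is at most $4d^2 \cdot (9\ell L)^{d-1}$. Multiplying this by the per-pair bound gives the prefactor $4d^2 \cdot 16de^2 = 64d^3 e^2$ and completes the proof. There is no real obstacle here: the proposition is a straightforward consequence of Lemma~\ref{seg_primeira_dom} combined with a counting argument, with all the hard work already done in establishing \eqref{eq_second_bound_domination}.
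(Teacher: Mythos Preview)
Your proof is correct and follows essentially the same approach as the paper's: a union bound over pairs $(x,y) \in \partial B_0(L) \times \partial B_0(\ell)$, then an application of \eqref{eq_second_bound_domination} together with the boundary count $|\partial B_0(r)| \le 2d(3r)^{d-1}$. You spell out the translation invariance, the reverse triangle inequality $\|y-x\| \ge L-\ell$, and the monotonicity of $r \mapsto r\log(1+r/c)$ a bit more explicitly than the paper does, but the argument is the same.
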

\begin{proof}
	This follows from the union bound
	\begin{equation*}\label{eq_discr_icp_fin}
		\mathrm{discr}^{\mathrm{icp}}_{\mathsf v, \lambda}(\ell, L, t) \le \sum_{x \in \partial B_0(L)} \; \sum_{y \in \partial B_0(\ell)} \mathbb P(\exists s,s' \in [0,t] \text{ with } s < s' \text{ and } y \in \Psi(x,s,s')),
	\end{equation*}
    and~\eqref{eq_second_bound_domination}, together with the estimate
        $|\partial B_0(r)|\le 2d \cdot (2r+1)^{d-1} \le 2d \cdot (3r)^{d-1}$.\qedhere
\end{proof}

\section{Lack of microscopic propagation below the mean-field threshold}
\label{s_micro_ext}

Our goal in this section is to prove the following: 
\begin{proposition}\label{prop_extended_surv}
	Let~$\lambda > 0$ and~$p \in [0,1]$ be such that~$\lambda < 1/(2dp)$. The following holds if~$\mathsf v$ is large enough. Assume that~$(\zeta_t)_{t \ge 0}$ is the interchange-and-contact process with parameters~$\sfv$ and~$\lambda$, started from a random configuration~$\zeta_0$ such that~$\xi^{\zeta_0} \sim \pi_p$. Then, the probability that there is an infection path starting at~$(0,0)$ and ending at~$\mathbb Z^d \times \{\log^3(\sfv)\}$ is smaller than~$3\exp\{-\log^2(\sfv)\}$.
\end{proposition}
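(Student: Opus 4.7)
Set $T := \log^3\sfv$, $R := \sqrt{\sfv}\log^4\sfv$, and, using $2d\lambda p < 1$, pick $p' \in (p, 1/(2d\lambda))$. The plan is to identify a high-probability ``good event'' $\mathcal G$ combining spatial localization of the containment flow and density control of the particle environment, and then on $\mathcal G$ to dominate the number of infected particles by a subcritical birth-and-death chain.

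For the localization part, a union bound over $\|x\| > R$ applied to~\eqref{eq_first_bound_domination} of Lemma~\ref{seg_primeira_dom} gives that every containment path from $(0,0)$ stays inside $B_0(R)$ throughout $[0, T]$, with failure probability $o(e^{-\log^2\sfv})$; the reason is that $R \log(1 + R/((\sfv+\lambda)T)) \asymp \log^5\sfv$ dominates the prefactor $e^{2d\lambda T}$. For the density control, I would choose a scale $\ell := \lceil(\log\sfv)^C\rceil$ with $dC > 2$ and apply Lemma~\ref{lem_integral_gs} (using $\xi^{\zeta_0} \sim \pi_p$) to conclude that, with failure probability $o(e^{-\log^2\sfv})$, no sub-box of radius $\ell$ contained in $B_0(R)$ has $\xi^{\zeta_t}$-density exceeding $p'$ at any time $t \in [0, T]$; the exponent $-2(2\ell+1)^d(p'-p)^2$, of order $-\log^{dC}\sfv$, dwarfs the polynomial prefactor $(2R+1)^d \cdot (e(2\ell+2)^d T + e)$.

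On the good event $\mathcal G$, I would show that the infection count $M_t$ is stochastically dominated by a birth-and-death chain of birth rate $2d\lambda p''$ and death rate $1$ for some $p'' \in (p', 1/(2d\lambda))$, which is subcritical. The heuristic, following Section~\ref{ss_ideas}, is that between consecutive change-times $\sigma_n < \sigma_{n+1}$ of $M$ the interchange occurs at rate $\sfv$, overwhelming the per-particle epidemic rate $1 + 2d\lambda$; hence the environment $\xi^{\zeta_t}$ equilibrates locally, and density control then forces the target of the next transmission attempt to be occupied with probability at most $p''$. Collisions (transmissions hitting already-infected particles) can only decrease $M$ and are discarded. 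A subcritical birth-and-death chain started at $1$ reaches $0$ in time $O(1)$ with failure probability exponentially small in $T$, well below $e^{-\log^2\sfv}$; after absorption, no infection path can persist.

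The main technical obstacle is the rigorous implementation of ``well-mixed target''. A clean route is to use Lemma~\ref{lem_coupling_rate_one} to couple $\xi^{\zeta_t}$ with an auxiliary interchange process $\eta_t$ of initial density $p''$, with $\xi^{\zeta_t} \le \eta_t$ on $B_0(R/4) \times [t_0, T]$ for a small $t_0 > 0$. This sprinkles in extra particles so that every transmission attempt targets a site occupied under $\eta_t$, which has marginal probability $p''$ by stationarity. The density control from the second step controls the $g^\downarrow$ contribution to the coupling error~\eqref{eq_error_prob_coupling} and keeps it below $e^{-\log^2\sfv}$. The infection is then dominated by a contact process on $(\eta_t)$, whose first moment satisfies a subcritical linear ODE (since $2d\lambda p'' < 1$), yielding the desired decay and concentration.
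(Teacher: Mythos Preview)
Your overall architecture—localize via the containment flow, control particle density, then dominate the infection count by a subcritical birth-and-death chain—matches the paper's, and your localization and density estimates are essentially correct.

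The gap is in your implementation of the ``well-mixed target'' step. Lemma~\ref{lem_coupling_rate_one} builds $(\xi^{\zeta_t})$ and $(\eta_t)$ from two \emph{correlated} graphical representations $H,H'$; the pointwise domination $\xi^{\zeta_t}\le\eta_t$ is not accompanied by any independence. The target $y$ of a transmission attempt is a function of $H$ (through the infected particle's trajectory), while $\eta_t(y)$ is a function of $H'$; since $H$ and $H'$ are coupled (matched $\eta$-particles literally track $\xi^\zeta$-particles), you cannot invoke the marginal identity $\mathbb P(\eta_t(y)=1)=p''$ for this random $y$, and the claimed ``subcritical linear ODE'' for the first moment of the dominating process is not justified. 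The paper closes this step with a different, dedicated tool, the Up-and-down lemma (Lemma~\ref{lem_av_rw_bal}): assuming only the local density bound on boxes of radius $\sfv^{1/10}$ inside $B_0(L_0)$, it bounds directly the conditional probability that the time-$0$ ancestor $\Phi(\Phi(u,0,T)+\mathsf e,\,T,\,0)$ of the transmission target lies in the occupied set, using asymptotic independence of a pair of interchange particles together with a local-CLT averaging argument (Appendix~\ref{appendix_rw_ip}). That conditional bound is exactly the input to the one-step increment estimate (Lemma~\ref{lem_one_step_ext}), after which the supermartingale analysis of Lemma~\ref{lem_count_i_martingale} delivers the $e^{-\log^2\sfv}$ bound. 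Your plan correctly identifies this as the crux; what is missing is a substitute for Lemma~\ref{lem_av_rw_bal}, and the sprinkling coupling does not provide one.
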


In order to prove Proposition \ref{prop_extended_surv}, we will need several preliminary results. 
For~$\sfv > 0$, let
\begin{equation}
	\label{eq_def_extinction_bottom_scale}
L_0 = L_0(\sfv) := \sqrt{\sfv}\log^4(\sfv).
\end{equation}

\begin{lemma}[Up-and-down lemma]\label{lem_av_rw_bal}
    Let~$p, p' \in [0,1]$ with~$p < p'$. The following holds for~$\mathsf v > 0$ large enough. Let~$A \subseteq {\mathbb Z^d}$ be such that
    \begin{equation}\label{eq_my_density_assumption}
	    \frac{|A \cap B_x(\mathsf v^{1/10})|}{|\mathbb Z^d \cap B_x(\mathsf v^{1/10})|} < p\quad \text{for any }x \in B_0(L_0),
    \end{equation}
and let~$\Phi$ be an interchange flow with rate~$\mathsf v$.
    Fix~$u \in B_0(\tfrac12L_0) \cap \mathbb Z^d$, $\mathsf e \in \mathbb Z^d$  with $\mathsf e \sim 0$ and $T \in [\mathsf v^{-1/2},\log(\mathsf v)]$. Let
   \[\mathcal Y:= \Phi(\Phi(u,0,T)+\mathsf e, T,0),\] 
    that is,~$\mathcal Y$ is the (unique) element of~$\mathbb Z^d$ such that~$\Phi(\mathcal Y,0,T) = \Phi(u,0,T)+\mathsf e$. Then,
\begin{equation*}
    \mathbb P(\mathcal Y \in A) < p'.
\end{equation*}
\end{lemma}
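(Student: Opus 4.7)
The plan is to use the fact that $\mathcal{Y}$ has a distribution spread on a diffusive scale $\sqrt{\mathsf{v}T}\ge\mathsf{v}^{1/4}$, which is much larger than the scale $\mathsf{v}^{1/10}$ of the density assumption on $A$; together with that assumption, this should force $\mathbb{P}(\mathcal{Y}\in A)$ close to $p$, hence less than $p'$.

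First, since $\mathcal{Y}$ is the unique $y$ with $\Phi(y,0,T) = \Phi(u,0,T)+\mathsf{e}$, the density
\[
\rho(y) := \mathbb{P}(\mathcal{Y}=y) = \mathbb{P}\bigl(\Phi(y,0,T) - \Phi(u,0,T) = \mathsf{e}\bigr)
\]
depends only on $z := y-u$ by translation invariance of the interchange flow; call it $f(z)$. I would compare $f(z)$ with the density $\mathsf{p}(0,\mathsf{e}-z,2\mathsf{v}T)$ of $X_T-Y_T$, where $X,Y$ are two \emph{independent} continuous-time random walks of rate $\mathsf{v}$ per neighbor started at $z$ and $0$. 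The interchange and independent dynamics can be coupled to agree except during swap events, which occur only while the two tagged particles are adjacent; by Lemma~\ref{lem_time_together} the expected adjacency time in $[0,T]$ is small (at worst $C\sqrt{T/\mathsf{v}}$ in $d=1$), which should yield $f(z)\le(1+o(1))\mathsf{p}(0,\mathsf{e}-z,2\mathsf{v}T)$ for all $z$ in the relevant range. The local CLT then implies that $z\mapsto\mathsf{p}(0,\mathsf{e}-z,2\mathsf{v}T)$ is slowly varying on balls of radius $\mathsf{v}^{1/10}$ inside the diffusive window $B_0(R)$ with $R:=\sqrt{\mathsf{v}T\log\mathsf{v}}$: the Gaussian scale of variation is $\sqrt{\mathsf{v}T}\ge\mathsf{v}^{1/4}\gg\mathsf{v}^{1/10}$ (using $T\ge\mathsf{v}^{-1/2}$), so the ratio of values at two points at distance $\le\mathsf{v}^{1/10}$ in $B_0(R)$ is $1+o(1)$ as $\mathsf{v}\to\infty$.

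With these ingredients, since $u\in B_0(L_0/2)$ and $R\ll L_0/2$ for large $\mathsf{v}$, we have $u+B_0(R)\subseteq B_0(L_0)$, so I can partition $u+B_0(R)$ into disjoint $\ell_\infty$-balls $B_j$ of radius $\mathsf{v}^{1/10}$ and apply the density hypothesis inside each: $|A\cap B_j|\le p|B_j|$. Combined with slow variation,
\[
\sum_{y\in A\cap B_j}\rho(y) \le \max_{y\in B_j}\rho(y)\cdot|A\cap B_j| \le (1+o(1))\,p\sum_{y\in B_j}\rho(y),
\]
and summing over $j$ gives $\mathbb{P}(\mathcal{Y}\in A,\,\mathcal{Y}\in u+B_0(R))\le p(1+o(1))$. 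The tail $\mathbb{P}(\mathcal{Y}\notin u+B_0(R))$ is controlled using Lemma~\ref{lem_kallenberg} applied to the random walk underlying $\mathcal{Y}-u$: for $T\le\log\mathsf{v}$ and $R=\sqrt{\mathsf{v}T\log\mathsf{v}}$ this is $o(1)$, so altogether $\mathbb{P}(\mathcal{Y}\in A)\le p(1+o(1))+o(1) < p'$ for $\mathsf{v}$ large enough.

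The main technical obstacle I anticipate is rigorously justifying the pointwise comparison $f\le(1+o(1))\mathsf{p}$: although the expected adjacency time is small, individual swap events can significantly perturb the joint distribution (for instance $f(0)=0$ because the interchange forbids collisions, while the independent-walk difference assigns positive mass to $\mathsf{e}$ starting from $z=0$), so a careful coupling—or a direct heat-kernel estimate for the perturbed difference chain $s\mapsto\Phi(z,0,s)-\Phi(0,0,s)$ on $\mathbb{Z}^d\setminus\{0\}$—will be needed to transfer slow variation from the clean random-walk density to $f$.
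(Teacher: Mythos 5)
Your skeleton (reduce to the law of $\mathcal Y-u$, compare it with an independent-walk kernel, exploit slow variation of the Gaussian on scale $\sqrt{\mathsf v T}\ge \mathsf v^{1/4}\gg \mathsf v^{1/10}$, partition the diffusive window into radius-$\mathsf v^{1/10}$ boxes, add a tail bound) is sensible, but the step you yourself flag is a genuine gap, and the justification you sketch for it cannot work. The claim $f(z)\le (1+o(1))\,\mathsf p(0,\mathsf e-z,2\mathsf v T)$ uniformly over the diffusive window is a \emph{local limit theorem for the tagged pair in the interchange process}, i.e. a pointwise multiplicative comparison of probabilities of order $(\mathsf v T)^{-d/2}$. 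A coupling whose failure probability is controlled by the expected adjacency time (Lemma~\ref{lem_time_together}) only yields total-variation-type errors; an additive error $\delta$ is useless here unless $\delta\ll(\mathsf v T)^{-d/2}$, and nothing in that lemma gives such precision. Worse, the local interaction near the diagonal genuinely distorts the pair law by non-negligible factors (your own observation $f(0)=0$ is the tip of this; the same effect is why Green-function corrections appear in the fast-stirring asymptotics of $\lambda_c^{\mathrm{CPS}}$), so a uniform pointwise $1+o(1)$ bound would at best require a delicate heat-kernel analysis of the difference chain that you have not supplied. As written, the proof therefore rests on an unproved (and hard) lemma.

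For comparison, the paper avoids ever needing pointwise control of the pair. After the same reduction, it splits time at $t=T-\mathsf v^{-3/4}$ and uses the Markov property; over the final stretch, whose rate-one length $\mathsf v^{1/4}$ diverges, it invokes Lemma~\ref{lem_distance_two_interchange} — a \emph{summed} (total-variation level) decorrelation of two tagged interchange particles, quoted from the literature — to factorize the joint law into a product up to an error $\varepsilon$; the factor involving $A$ then becomes $\mathbb P(\Phi(z-y+u,0,T-t)\in A)$ for a \emph{single} particle, which is an honest rate-$\mathsf v$ random walk run for time $\mathsf v^{-3/4}$, with diffusive scale $\mathsf v^{1/8}\gg\mathsf v^{1/10}$, so the averaging Lemma~\ref{lem_av_rw} (a single-walk local CLT argument) gives the bound $p+\varepsilon$. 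In short, the paper's time-splitting converts your hard two-particle local estimate into a soft two-particle TV estimate plus an easy one-particle local estimate; if you want to salvage your route, you should either import such a decorrelation statement or restructure along these lines rather than attempt the pointwise comparison directly.
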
 

Figure~\ref{fig_up_and_down} illustrates Lemma \ref{lem_av_rw_bal}. 
The red dots on the bottom of the picture represent the set~$A$. 
We assume that the local density of~$A$ within~$B_0(L_0)$ is not larger than~$p$, meaning that inside any box of radius~$\sfv^{1/10}$ inside~$B_0(L_0)$, its density remains below~$p$. 
The blue trajectory is the path of the interchange flow started at~$u$ at time 0; we imagine that we reveal it first, from bottom to top. 
The trajectory in red is the interchange flow which at time 0 is at some point~$\mathcal Y$ and at time~$T$ is at~$v:=\Phi(u,0,T)+\mathsf e$. 
We imagine that we reveal it after the blue one, from top to bottom. 
The red path (traversed from top to bottom) is thus~$(\Phi(v,T,T-s), T-s)_{0\le s \le T}$. 
If we were to ignore the information provided when the blue path is revealed, the red path would simply have the law of a random walk.
Therefore, the probability that it lands on a point of~$A$ would not be much higher than~$p$, due the the local density assumption.
We then need to argue that this is true even when taking into account the information revealed from the blue path.

\begin{figure}[H]
\begin{center}
\setlength\fboxsep{0cm}
\setlength\fboxrule{0.01cm}
\fbox{\includegraphics[width=0.6\textwidth]{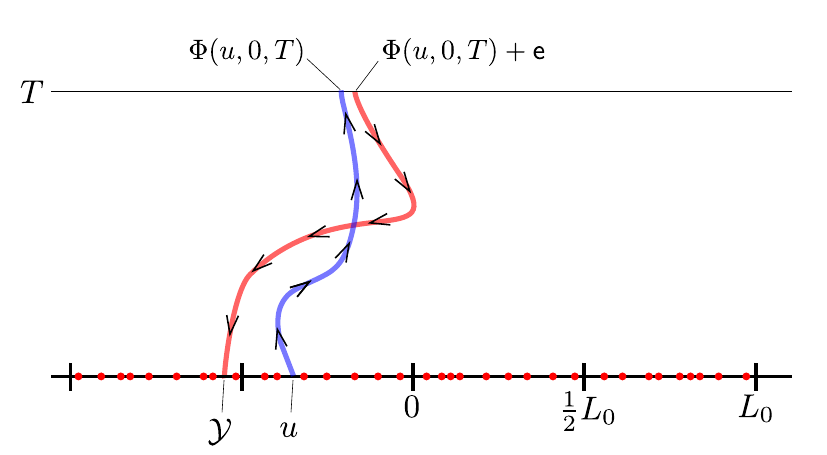}}
\end{center}
\caption{Trajectories involved in the statement of the Up-and-down lemma (Lemma~\ref{lem_av_rw_bal}). }
\label{fig_up_and_down}
\end{figure}

The proof of Lemma~\ref{lem_av_rw_bal} is not too difficult and will be deferred to Appendix~\ref{appendix_rw_ip} since it requires some preparation involving some bounds for the interchange process and coupling interchange particles with independent random walks. 

For the remainder of this section, fix~$\lambda > 0$ and~$p \in [0,1)$ such that~$2dp\lambda < 1$.  As before, we denote by~$(\zeta_t)_{t \ge 0}$ the interchange-and-contact process with parameters~$\lambda$ and~$\sfv$. The initial configuration will be specified in each context; whenever it is not specified, it is irrelevant.
We will often assume that~$\sfv$ is large, and will take~$L_0=L_0(\sfv)=\sqrt{\sfv}\log^4(\sfv)$ as in~\eqref{eq_def_extinction_bottom_scale}.

\begin{definition}
    The number of infected particles in a configuration is given by the function
\begin{equation*}
    \mathfrak{i}(\zeta):=|\{x \in \mathbb Z^d:\; \zeta(x) = \statei\}|,\quad \zeta \in \{0,\stateh,\statei\}^{\mathbb Z^d}.
\end{equation*}
\end{definition}

We now fix~$p_0,p_1$ with
\[p_1 >p_0 > p,\quad 2dp_1\lambda < 1.\]

\begin{definition}
   We define the following sets of configurations, all depending on~$\mathsf v$:
\begin{align*}
    \Xi_\mathrm{dens}(\mathsf v)
    &:= \Bigl\{
\zeta \in \{0,\stateh,\statei\}^{\mathbb Z^d}: \exists x \in B_0(L_0):\; \frac{|\xi^\zeta \cap B_x(\mathsf v^{1/10})|}{|\mathbb Z^d \cap B_x(\mathsf v^{1/10})|} \ge p_0
    \Bigr\},\\
    \Xi_\mathrm{dist}(\mathsf v)
    &:=  \bigl\{
\zeta \in \{0,\stateh,\statei\}^{\mathbb Z^d}: \exists x \in B_0(\tfrac12L_0)^c:\; \zeta(x)= \statei
    \bigr\},\\
\Xi_{\mathrm{inf}}(\mathsf v)
    &:= \bigl\{ \zeta \in \{0,\stateh,\statei\}^{\mathbb Z^d}: \;\mathfrak{i}(\zeta) > \log^3(\sfv) \bigr\}.
\end{align*}

\end{definition}

\begin{lemma}\label{lem_one_step_ext}
The following holds if~$\mathsf v$ is large enough. Assume that~$(\zeta_t)_{t \ge 0}$ starts from a deterministic configuration~$\zeta_0  \notin \Xi_{\mathrm{dens}} \cup \Xi_{\mathrm{dist}}\cup \Xi_{\mathrm{inf}}$ which contains at least one {infected particle}. Let
\[
\sigma:=  \inf\{t:\exists x: \zeta_{t-}(x)=\statei,\; t \in \mathcal R_x \cup (\cup_{y\sim x}\mathcal T_{(x,y)}) \},
\]
that is, the first time when an infected particle recovers or attempts to transmit the infection. Then,
\begin{align}
    \label{eq_increment_minus}
    \mathbb P(\mathfrak{i}(\zeta_\sigma)=\mathfrak{i}(\zeta_0)- 1)
        &= \frac{1}{1+2d\lambda};\\
    \label{eq_increment_plus}
    \mathbb P(\mathfrak{i}(\zeta_\sigma)=\mathfrak{i}(\zeta_0)+ 1)
        &\le \frac{2d\lambda}{1+2d\lambda}p_1.
    \end{align}
\end{lemma}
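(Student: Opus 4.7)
The plan is to decouple the clocks that trigger recoveries and transmissions from infected particles from the interchange flow~$\Phi$, so that a standard competing-exponentials computation handles the first event while the location of a would-be transmission target falls directly under Lemma~\ref{lem_av_rw_bal}. Writing $n = \mathfrak i(\zeta_0)$ and $u_1, \dots, u_n$ for the positions of the infected particles at time $0$, set $X_j(t) := \Phi(u_j, 0, t)$, and for each $j$ and each unit vector $\mathsf e$ define
\[
\tau_j^{\mathrm{rec}} := \inf\{t > 0 : t \in \mathcal R_{X_j(t)}\}, \qquad \tau_j^{\mathsf e} := \inf\{t > 0 : t \in \mathcal T_{(X_j(t), X_j(t) + \mathsf e)}\}.
\]
At each time the $X_j(\cdot)$'s occupy pairwise distinct vertices (the interchange flow is a permutation) and $\mathcal J, \mathcal R, \mathcal T$ are independent Poisson families; a routine Poisson-thinning argument then yields that these $n(1 + 2d)$ clocks are mutually independent, jointly independent of $\Phi$, and satisfy $\tau_j^{\mathrm{rec}} \sim \mathrm{Exp}(1)$, $\tau_j^{\mathsf e} \sim \mathrm{Exp}(\lambda)$. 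Since interchange jumps preserve $\mathfrak i$, between time~$0$ and the first firing of any clock $\mathfrak i$ stays constant, so $\sigma = \min_{j, \mathsf e}(\tau_j^{\mathrm{rec}}, \tau_j^{\mathsf e})$.

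The identity~\eqref{eq_increment_minus} is then immediate: $\mathfrak i(\zeta_\sigma) = \mathfrak i(\zeta_0) - 1$ iff the minimizer is some $\tau_j^{\mathrm{rec}}$, which by the standard competing-exponentials computation has probability $n/(n(1+2d\lambda)) = 1/(1+2d\lambda)$, and every recovery decreases $\mathfrak i$ by exactly one.

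For~\eqref{eq_increment_plus}, an increment of $+1$ requires both that the minimizer be some $\tau_j^{\mathsf e}$ and that the target $X_j(\sigma) + \mathsf e$ be \emph{healthy} at time~$\sigma$; we upper bound ``healthy'' by ``occupied''. Setting $A := \{x: \xi^{\zeta_0}(x) = 1\}$ and $\mathcal Y_{j, \mathsf e}(t) := \Phi(\Phi(u_j, 0, t) + \mathsf e, t, 0)$, ``target occupied at time $t$'' is the event $\{\mathcal Y_{j, \mathsf e}(t) \in A\}$. Because the clocks are independent of $\Phi$, conditioning on $\{\sigma = t, \text{minimizer is } \tau_j^{\mathsf e}\}$ leaves the law of $\Phi$ unchanged, so that
\[
\mathbb P\bigl(\mathfrak i(\zeta_\sigma) = \mathfrak i(\zeta_0) + 1\bigr) \;\le\; \sum_{j, \mathsf e} \int_0^\infty \lambda\, e^{-n(1+2d\lambda)t}\, \mathbb P(\mathcal Y_{j, \mathsf e}(t) \in A)\, \mathrm dt.
\]
The hypotheses $\zeta_0 \notin \Xi_{\mathrm{dens}}$ and $\zeta_0 \notin \Xi_{\mathrm{dist}}$ supply respectively the density condition~\eqref{eq_my_density_assumption} on $A$ (with $p_0$ in place of~$p$) and the inclusion $u_j \in B_0(L_0/2)$, while $\mathcal Y_{j,\mathsf e}(t)$ is precisely the vertex $\mathcal Y$ appearing in Lemma~\ref{lem_av_rw_bal}. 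Fixing any $p' \in (p_0, p_1)$, that lemma gives $\mathbb P(\mathcal Y_{j,\mathsf e}(t) \in A) < p'$ uniformly in $t \in [\sfv^{-1/2}, \log \sfv]$ for all sufficiently large~$\sfv$. Splitting the integral at this window, the inner piece contributes at most $(2d\lambda/(1+2d\lambda))\, p'$; on the complement we use the trivial bound $1$ and the assumption $n \le \log^3(\sfv)$ (from $\zeta_0 \notin \Xi_{\mathrm{inf}}$) to see that the contribution is $O(\log^3(\sfv)\, \sfv^{-1/2})$, hence $o(1)$. Choosing $\sfv$ large enough that this error is below $(2d\lambda/(1+2d\lambda))(p_1 - p')$ finishes the proof. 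The one nontrivial structural step is the joint independence of the clock family from $\Phi$; everything beyond that is exponential arithmetic or a direct appeal to Lemma~\ref{lem_av_rw_bal}.
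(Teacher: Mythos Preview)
Your proof is correct and takes essentially the same approach as the paper: both decompose~$\sigma$ via competing exponential clocks independent of the interchange flow, bound ``healthy'' by ``occupied'', and invoke Lemma~\ref{lem_av_rw_bal} on the window $[\sfv^{-1/2},\log \sfv]$. Your explicit introduction of an intermediate $p' \in (p_0,p_1)$ to absorb the $o(1)$ error from outside the window is slightly cleaner bookkeeping than the paper's version, which applies the lemma directly with $p_1$ and leaves the absorption of the leftover $\mathbb P(\sigma \notin [\sfv^{-1/2},\log\sfv])$ implicit.
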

\begin{proof}
Let~$\mathcal B$ be the event that the stopping time~$\sigma$ is triggered by an infection arrow, that is,~$\mathcal B$ is the event that there are~$\mathcal X,\mathcal Y \in \mathbb Z^d$ with~$\mathcal X \sim \mathcal Y$ such that~$\zeta_{\sigma-}(\mathcal X)=\statei$ and~$\sigma \in  \mathcal T_{(\mathcal X,\mathcal Y)}$. 
	Note that~$\mathbb P(\mathcal B) = \frac{2d\lambda}{1+2d\lambda}$, and we have~$\mathfrak{i}(\zeta_\sigma) \in \{\mathfrak{i}(\zeta_0),\mathfrak{i}(\zeta_0)+1\}$ on~$\mathcal B$, and~$\mathfrak{i}(\zeta_{\sigma}) =  \mathfrak{i}(\zeta_0)-1$ on~$\mathcal B^c$. This already proves~\eqref{eq_increment_minus}. 
    Next, we observe that
    \begin{equation}
        \{\mathfrak{i}(\zeta_\sigma)=\mathfrak{i}(\zeta_0)+ 1\} = \mathcal B \cap \{\zeta_{\sigma-}(\mathcal Y) = \stateh\}.\label{eq_my_first_ctnmt}
    \end{equation}
    
    On~$\mathcal B$, let~$\mathcal X_0$ and~$\mathcal Y_0$ be the (unique) points of~$\mathbb Z^d$ such that
    $\Phi(\mathcal X_0,0,\sigma) = \mathcal X,\ \Phi(\mathcal Y_0,0,\sigma) = \mathcal Y$.
    Define
\[
A_{\tinystateh}:=\{x:\zeta_0(x)=\stateh\},\quad A_{\tinystatei}:=\{x:\zeta_0(x)=\statei\},\quad A:= A_{\tinystateh} \cup A_{\tinystatei}
\]
and note that
\begin{equation}\label{eq_my_second_ctnmt}
        \mathcal B \cap \{\zeta_{\sigma-}(\mathcal Y) = \stateh\} \subseteq \mathcal B \cap \{\zeta_{\sigma-}(\mathcal Y) \in \{\stateh,\statei\}\} 
 =\mathcal B \cap \{\mathcal Y_0 \in A\}.
    \end{equation}

    Recalling that each infected particle recovers with rate one and attempts to transmit the infection with rate~$\lambda$ to each neighbor, we make the following observations:
\begin{itemize}
    \item $\sigma$ follows the exponential distribution with parameter~$\mathfrak{i}(\zeta_0)\cdot (1+2d\lambda)$;
    \item $\sigma$ is independent of~$\mathcal B$ and of~$\mathds{1}_{\mathcal B}\cdot (\mathcal X,\mathcal X_0,\mathcal Y,\mathcal Y_0)$;
    \item the interchange jumps~$(\mathcal J_{\{x,y\}}:x,y\in\mathbb Z^d,\;x\sim y)$ are independent of~$\sigma$ and of~$\mathcal B$.
\end{itemize}
    

By Lemma~\ref{lem_av_rw_bal} (which is applicable with our current choice of~$A$, by the assumption that~$\zeta_0 \notin\Xi_{\mathrm{dens}}$), for any~$x\in A_{\smash{\tinystatei}}$ and~$\mathsf e \in \mathbb Z^d$ with~$\mathsf e \sim 0$, we have
\begin{equation}\label{eq_nice_bound_B}
\mathbb P( \mathcal Y_0 \in A \mid \mathcal B \cap \{\mathcal X_0 = x,\; \mathcal Y = \mathcal X + \mathsf e,\;\sigma \in [\mathsf v^{-1/2},\log(\mathsf v)]\}  ) \le p_1
\end{equation}
if~$\mathsf v$ is large.
We are now ready to conclude. Using \eqref{eq_my_first_ctnmt} and \eqref{eq_my_second_ctnmt} we bound
\begin{align*}
\mathbb P(\mathfrak{i}(\zeta_\sigma)=\mathfrak{i}(\zeta_0)+ 1)
&\le \mathbb P(\mathcal B \cap \{\mathcal Y_0 \in A\}) 
\\
&\le \mathbb P(\sigma \notin [\mathsf v^{-1/2},\log(\mathsf v)]) + \mathbb P(\mathcal B \cap \{\mathcal Y_0 \in A,\;\sigma \in [\mathsf v^{-1/2},\log(\mathsf v)]\}).
\end{align*}
	We have~$\sigma \sim \mathrm{Exp}(\mathfrak{i}(\zeta_0) \cdot (1+2d\lambda))$. From the assumptions on~$\zeta_0$, we have~$1 \le \mathfrak{i}(\zeta_0) \le \log^3(\sfv)$, so~$\mathsf v^{-1/2} \ll (\mathfrak{i}(\zeta_0)\cdot (1+2d\lambda))^{-1} \ll \log(\mathsf v)$. Consequently, the first probability on the r.h.s. above can be made as small as desired by taking~$\mathsf v$ large. We bound the second probability as follows: 
\begin{align*}
	&\sum_{\mathclap{x \in A_{\tinystatei}}}\;\; \sum_{\mathsf e \sim 0} \mathbb P( \mathcal B \cap \{\mathcal Y_0 \in A,\;\mathcal X_0 = x,\; \mathcal Y = \mathcal X + \mathsf e,\;\sigma \in [\mathsf v^{-1/2},\log(\mathsf v)]\}  ) \\[-2mm]
	&\le p_1 \cdot \sum_{x \in A_{\tinystatei}}\; \sum_{\mathsf e \sim 0} \mathbb P( \mathcal B \cap \{\mathcal X_0 = x,\; \mathcal Y = \mathcal X + \mathsf e,\;\sigma \in [\mathsf v^{-1/2},\log(\mathsf v)]\}  ) \\[-2mm]
    &= p_1 \cdot \mathbb P(\mathcal B \cap \{\sigma \in [\mathsf v^{-1/2},\log(\mathsf v)]\})  \le p_1 \cdot \mathbb P(\mathcal B) = p_1 \cdot \frac{2d\lambda}{1+2d\lambda},
\end{align*}
where the first inequality follows from~\eqref{eq_nice_bound_B}. The proof of~\eqref{eq_increment_plus} is now complete.
\end{proof}

 \begin{lemma}
 \label{lem_count_i_martingale}
	The following holds if~$\mathsf v$ is large enough. Assume that~$(\zeta_t)_{t \ge 0}$ starts from a deterministic configuration $\zeta_0 \notin \Xi_{\mathrm{dens}} \cup \Xi_{\mathrm{dist}}$ with~$\mathfrak{i}(\zeta_0)=1$. Then,  we have
\begin{equation*}
	\mathbb P(\mathfrak{i}(\zeta_{\log^3(\sfv)}) \neq 0,\; \zeta_t \notin \Xi_{\mathrm{dens}} \cup \Xi_{\mathrm{dist}} \text{ for all }t \in [0,\log^3(\sfv)]) \le \exp\{-\log^2(\sfv)\}.
\end{equation*}
\end{lemma}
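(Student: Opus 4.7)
The plan is to iterate Lemma~\ref{lem_one_step_ext} via the strong Markov property, comparing the embedded chain of infection counts at $\sigma$-jump times to a biased lazy random walk on $\mathbb{N}_0$. Set $\bar\sigma_0 := 0$ and, inductively, let $\bar\sigma_{j+1}$ be the $\sigma$-time (as in Lemma~\ref{lem_one_step_ext}) of the process restarted at $\bar\sigma_j$, and set $M_j := \mathfrak{i}(\zeta_{\bar\sigma_j})$, so that $M_0 = 1$. With $p_- := 1/(1+2d\lambda)$ and $p_+ := 2d\lambda p_1/(1+2d\lambda)$, the strong Markov property combined with Lemma~\ref{lem_one_step_ext} gives that, conditional on $\zeta_{\bar\sigma_j}\notin \Xi_{\mathrm{dens}}\cup \Xi_{\mathrm{dist}}\cup \Xi_{\mathrm{inf}}$, the increment $M_{j+1}-M_j$ equals $-1$ with probability exactly $p_-$ and $+1$ with probability at most $p_+$. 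Drawing a uniform random variable at each step yields a standard one-step coupling with a lazy biased random walk $(\tilde M_j)$ with $\tilde M_0 = 1$, step $-1$ with probability $p_-$, step $+1$ with probability $p_+$, and step $0$ otherwise, in such a way that $M_j \le \tilde M_j$ for every $j$ up to (and including) the first step at which $\zeta_{\bar\sigma_j}$ enters $\Xi_{\mathrm{dens}}\cup \Xi_{\mathrm{dist}}\cup \Xi_{\mathrm{inf}}$. Since $2dp_1\lambda < 1$, we have $p_- > p_+$, so $\tilde M$ has a strictly negative drift toward $0$.

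Let $\mathcal E$ denote the event whose probability we are bounding, and split $\mathcal E = \mathcal E' \cup \mathcal E''$, where $\mathcal E'' := \mathcal E\cap\{\mathfrak{i}(\zeta_t) > \log^3(\sfv)\ \text{for some}\ t\in[0,\log^3(\sfv)]\}$, so that on $\mathcal E'$ we never enter $\Xi_{\mathrm{inf}}$ during $[0,\log^3(\sfv)]$. On $\mathcal E''$, the chain $M$ must take a value strictly larger than $\log^3(\sfv)$ at some step, and the coupling is valid up to that moment, so $\tilde M$ reaches such a value as well; a gambler's ruin estimate for $\tilde M$ started at $1$ then gives $\mathbb P(\mathcal E'') \le (p_+/p_-)^{\log^3(\sfv)}$, which is much smaller than $\exp\{-\log^2(\sfv)\}$. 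On $\mathcal E'$ the coupling is valid throughout $[0,\log^3(\sfv)]$, and $\mathfrak{i}(\zeta_t)\ge 1$ on the whole interval, so the $\sigma$-jump rate stays above $1+2d\lambda$. Consequently, the number $N$ of $\sigma$-jumps in $[0,\log^3(\sfv)]$ stochastically dominates a Poisson random variable of parameter $(1+2d\lambda)\log^3(\sfv)$, and a standard concentration bound for sums of exponentials yields $\mathbb P(\mathcal E'\cap\{N < \tfrac12 \log^3(\sfv)\}) \le \exp\{-c\log^3(\sfv)\}$.

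The remaining contribution comes from $\mathcal E'\cap\{N \ge \tfrac12 \log^3(\sfv)\}$, on which survival of $M$ and the coupling force $\tilde M_j > 0$ for every $j \le \tfrac12 \log^3(\sfv)$. A Lyapunov argument (or direct gambler's ruin) shows that the extinction time of $\tilde M$ starting at $1$ has exponential tails: $\mathbb P(\tilde M_j > 0\ \text{for all}\ j \le k) \le C\rho^k$ for some $\rho\in(0,1)$ depending only on $\lambda$ and $p_1$. Adding the three contributions bounds $\mathbb P(\mathcal E)$ by $\exp\{-c\log^3(\sfv)\}$, which is comfortably within the required $\exp\{-\log^2(\sfv)\}$ once $\sfv$ is large. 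The main technical subtlety, and the point that dictates the split $\mathcal E' \cup \mathcal E''$, is that Lemma~\ref{lem_one_step_ext} is only available when the starting configuration is outside $\Xi_{\mathrm{inf}}$; the gambler's ruin estimate handling $\mathcal E''$ is the key ingredient that prevents the coupling with $\tilde M$ from breaking.
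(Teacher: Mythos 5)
Your argument is correct, and it follows the same skeleton as the paper's proof: the embedded chain $M_j=\mathfrak{i}(\zeta_{\bar\sigma_j})$ at jump times, the drift input from Lemma~\ref{lem_one_step_ext} via the strong Markov property, and a three-way decomposition that matches the paper's events $\mathcal A_1$ (too few jumps by time $\log^3(\sfv)$, which you handle through the intersection bound $\mathbb P(\mathcal E'\cap\{N<\tfrac12\log^3(\sfv)\})$ with Exp$(1)$-dominated interarrivals), $\mathcal A_2$ (survival of the chain over order $\log^3(\sfv)$ steps) and $\mathcal A_3$ (the count exceeding $\log^3(\sfv)$, your $\mathcal E''$). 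The only real difference is in how the chain is controlled: the paper runs a single exponential supermartingale $\exp\{\delta\,\mathfrak{i}(\zeta_{\sigma_{n\wedge\nu}})+\delta'(n\wedge\nu)\}$ and applies optional stopping to handle both the long-survival and the blow-up events at once, whereas you dominate $M$ step by step by a lazy biased random walk $\tilde M$ and then quote classical facts for $\tilde M$ (the ruin probability $(p_+/p_-)^{\log^3(\sfv)-1}$ for the blow-up event, and the exponential tail of the time to hit $0$, e.g.\ via a Chernoff bound on $\{S_k\ge 0\}$, for long survival). These are equivalent in substance—the paper's supermartingale is exactly the Lyapunov function behind the walk estimates you invoke—so your route buys slightly more transparent probabilistic statements at the cost of spelling out the monotone (quantile) coupling with the history-dependent up-probability, which you correctly restrict to steps whose starting configuration lies outside $\Xi_{\mathrm{dens}}\cup\Xi_{\mathrm{dist}}\cup\Xi_{\mathrm{inf}}$; the only phrasing to tighten is the unconditional claim that $N$ dominates a Poisson variable (false if the infection dies out), but the intersection bound you actually use is the correct statement and is proved exactly as in the paper's bound on $\mathcal A_1$.
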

\begin{proof} Let~$T:=\log^3(\sfv)$.
Let~$\sigma_0 \equiv 0$, and as in the proof of Lemma~\ref{lem_one_step_ext}, define
\[
\sigma_1:= \inf\{t:\exists x: \zeta_{t-}(x)=\statei,\; t \in \mathcal R_x \cup (\cup_{y\sim x}\mathcal T_{(x,y)}) \}.
\]
Recursively, we define~$\sigma_{n+1}$ by setting~$\sigma_{n+1}=\infty$ on~$\{\sigma_n = \infty\}$ and
\[
\sigma_{n+1}:= \inf\{t>\sigma_n:\exists x: \zeta_{t-}(x)=\statei,\; t \in \mathcal R_x \cup (\cup_{y\sim x}\mathcal T_{(x,y)}) \}
\]
on~$\{\sigma_n < \infty\}$. We now define three bad events:
\begin{align*}
\mathcal A_1 &:= \{T < \sigma_{\lfloor T/2 \rfloor} < \infty\},\\
\mathcal A_2 &:= \{\sigma_{\lfloor T/2\rfloor} < \infty,\; \zeta_{\sigma_n} \notin \Xi_\mathrm{dens} \cup \Xi_\mathrm{dist} \cup \Xi_{\mathrm{inf}} \text{ for } n = 0,\ldots, \lfloor T/2\rfloor \},\\
\mathcal A_3 &:= \{\exists n^*: \; \zeta_{\sigma_{n^*}} \in \Xi_\mathrm{inf} \text{ and } \zeta_{\sigma_n} \notin \Xi_{\mathrm{dens}} \cup \Xi_{\mathrm{dist}} \cup \Xi_{\mathrm{inf}} \text{ for } n =0,\ldots, n^*-1\}.
\end{align*}
A moment's thought reveals that
\begin{equation*}
\{\mathfrak{i}(\zeta_{T}) \neq 0,\; \zeta_t \notin \Xi_{\mathrm{dens}} \cup \Xi_{\mathrm{dist}} \text{ for all }t \in [0,T]\} \subseteq \mathcal A_1 \cup \mathcal A_2 \cup \mathcal A_3.
\end{equation*}

We now proceed to give upper bounds for the probabilities of the three bad events.
\smallskip

\textbf{Bound on~$\mathbb P(\mathcal A_1)$.} Similarly to what was observed in the proof of Lemma~\ref{lem_one_step_ext}, conditionally on the event~$\{\sigma_n < \infty,\; \mathfrak{i}(\zeta_{\sigma_n}) > 0\}$, the law of~$\sigma_{n+1}-\sigma_n$ is exponential with parameter~$(2d\lambda + 1)\cdot \mathfrak{i}(\zeta_{\sigma_n})$. More precisely, letting~$(\mathcal F_t)_{t\ge 0}$ be the filtration generated by the graphical representation, we have 
\[\text{on }\{\sigma_n < \infty,\;\mathfrak{i}(\zeta_{\sigma_n}) > 0\},\quad 
\mathbb P(\sigma_{n+1}-\sigma_n > x \mid \mathcal F_{\sigma_n})  = \exp\{-(2d\lambda +1)\cdot \mathfrak{i}(\zeta_{\sigma_n})\cdot x\},\quad x > 0.
\]
Since~$(2d\lambda + 1) \cdot \mathfrak{i}(\zeta_{\sigma_n}) \ge 1$ when~$\mathfrak{i}(\zeta_{\sigma_n}) > 0$, we can stochastically dominate~$\mathrm{Exp}((2d\lambda + 1) \cdot \mathfrak{i}(\zeta_{\sigma_n}))$ by~$\mathrm{Exp}(1)$ on this event, so
\vspace{-2mm}
\[\text{on }\{\sigma_n < \infty,\;\mathfrak{i}(\zeta_{\sigma_n}) > 0\},\quad 
\mathbb E\bigl[e^{\theta \cdot (\sigma_{n+1}-\sigma_n)} \mid \mathcal F_{\sigma_{n}}\bigr]
\le \frac{1}{1-\theta},\quad \theta \in (0,1).
\]
Noting that for~$n \ge 1$ we have~$\{\sigma_n < \infty\} \subseteq \{\sigma_{n-1}<\infty,\;\mathfrak{i}(\zeta_{\sigma_{n-1}}) > 0\}$, we can bound
\begin{align*}
\mathbb E\bigl[e^{\theta \sigma_n} \cdot \mathds{1}\{\sigma_n < \infty\}\bigr]
&\le \mathbb E\Bigl[ e^{\theta \sigma_{n-1}} \cdot \mathds{1}\{\sigma_{n-1}<\infty,\;\mathfrak{i}(\zeta_{\sigma_{n-1}})>0\} \cdot \mathbb E\bigl[e^{\theta(\sigma_{n}-\sigma_{n-1})} \mid \mathcal F_{\sigma_{n-1}} \bigr]\Bigr]\\
&\le \frac{1}{1-\theta}\cdot \mathbb E\bigl[ e^{\theta \sigma_{n-1}} \cdot \mathds{1}\{\sigma_{n-1}<\infty,\;\mathfrak{i}(\zeta_{\sigma_{n-1}})>0\}\bigr].
\end{align*}
Iterating this gives
$\mathbb E[\exp\{\theta \sigma_n\} \cdot \mathds{1}\{\sigma_n < \infty\}] \le (\frac{1}{1-\theta})^n$.
Then,
\begin{align*}
    \mathbb P(\mathcal A_1)
    &\le \mathbb E\Bigl[ \frac{\exp\{\theta \cdot \sigma_{\lfloor T/2\rfloor}\}}{\exp\{\theta \cdot T\}} \cdot \mathds{1}_{\mathcal A_1}\Bigr]
    \le \exp\{-\theta \cdot T\}\cdot \Bigl(\frac{1}{1-\theta}\Bigr)^{\lfloor T/2\rfloor}\hspace{-2mm}
    \le \exp\Bigl\{-\Bigl(\theta -\frac{1}{2}\log\Bigl(\frac{1}{1-\theta} \Bigr) \Bigr)T \Bigr\}. 
\end{align*}
By taking~$\theta=1/2$, this gives
\begin{equation}\label{eq_bound_with_delta_1}
	\mathbb P(\mathcal A_1) \le \exp\left\{-\frac{1-\log(2)}{2}\cdot T\right\} = \exp\left\{-\frac{1-\log(2)}{2}\cdot \log^3(\sfv) \right\}.
\end{equation}

\textbf{Bound on~$\mathbb P(\mathcal A_2)$.}
Since on $\mathcal A_2$ we have~$\zeta_0 \notin \Xi_{\mathrm{dens}} \cup \Xi_{\mathrm{dist}} \cup \Xi_{\mathrm{inf}}$, we can use~\eqref{eq_increment_minus} and~\eqref{eq_increment_plus} to get a uniform estimate on the moment generating function of the random variable $\mathfrak{i}(\zeta_{\sigma_1}) - \mathfrak{i}(\zeta_{\sigma_0})$:
\begin{align*}
\frac{\mathrm d}{\mathrm d t}\ \mathbb E\Bigl[e^{t [\mathfrak{i}(\zeta_{\sigma_1}) - \mathfrak{i}(\zeta_{\sigma_0})]}\Bigr]
    &= e^{t}  \mathbb P (\mathfrak{i}(\zeta_{\sigma_1}) - \mathfrak{i}(\zeta_{\sigma_0}) = 1) - e^{-t} \frac{1}{1+2d\lambda}
    \le e^{t} \frac{2d\lambda p_1}{1+2d\lambda} - e^{-t} \frac{1}{1+2d\lambda}.
\end{align*}
Since $2d\lambda p_1 < 1$, if we take $0 < t < \delta := \frac12 \log (\frac{1}{2d\lambda p_1})$ it follows that the derivative above is negative, implying $\mathbb E[e^{\delta [\mathfrak{i}(\zeta_{\sigma_1}) - \mathfrak{i}(\zeta_{\sigma_0})]}] < \mathbb E[e^{0}] = 1$. Moreover, taking $\delta' > 0$ small, we obtain
\[
\mathbb E[\exp\{ \delta \cdot [\mathfrak{i}(\zeta_{\sigma_1}) - \mathfrak{i}(\zeta_{\sigma_0})] + \delta' \}] < 1.
\]
Similarly, 
\begin{align*}
    \text{on }\{\zeta_{\sigma_n} \notin \Xi_{\mathrm{dens}} \cup \Xi_{\mathrm{dist}} \cup \Xi_{\mathrm{inf}}\},\qquad \mathbb E[\exp\{\delta \cdot [\mathfrak{i}(\zeta_{\sigma_{n+1}}) - \mathfrak{i}(\zeta_{\sigma_{n}})] +\delta' \} \mid \mathcal F_{\sigma_n}] < 1. 
\end{align*}
This shows that, letting~$\nu:=\inf\{m:\zeta_{\sigma_m} \in \Xi_\mathrm{dens} \cup \Xi_{\mathrm{dist}} \cup \Xi_{\mathrm{inf}}\}$, the process
\begin{equation*}
    M_n := \exp\{ \delta \cdot \mathfrak{i}(\zeta_{\sigma_{n\wedge \nu}}) + \delta' \cdot (n \wedge \nu)   \},\quad n \in \mathbb N_0
\end{equation*}
is a supermartingale with respect to the filtration~$(\mathcal F_{\sigma_n})_{n \in \mathbb N_0}$.

Let~$\bar{n}:=\lfloor T/2 \rfloor$. On~$\mathcal A_2$, we have~$\sigma_{\bar n  \wedge \nu} = \sigma_{\bar n}$, so~$M_{\bar n} = \exp\{\delta \cdot \mathfrak{i}(\zeta_{\sigma_{\bar n}})+ \delta' \cdot  \bar n\} \ge e^{\delta' \cdot \bar n}$. Then,
\begin{align*}
	e^{\delta' \cdot \bar n} \cdot \mathbb P(\mathcal A_2) \le \mathbb E[M_{\bar n} \cdot \mathds{1}_{\mathcal A_2}] \le \mathbb E[M_{\bar n}] \le \mathbb E[M_0] = e^{\delta \cdot \mathfrak{i}(\zeta_0)} = e^{\delta},
\end{align*}
which gives
\begin{equation}\label{eq_bound_with_delta_2}
	\mathbb P(\mathcal A_2) \le \exp\{\delta -\delta' \cdot \bar n \} = \exp\{ \delta -\delta' \cdot  \lfloor \log^3(\sfv)/2\rfloor  \}.
\end{equation}

\textbf{Bound on~$\mathbb P(\mathcal A_3)$.} 
Let~$\nu':= \inf\{m: \zeta_{\sigma_m} \in \Xi_{\mathrm{inf}}\}$. On~$\mathcal A_3$, we have~$\nu' < \infty$,~$\nu' \le \nu$, and~$\mathfrak{i}(\zeta_{\nu'}) > \log^3(\sfv)$, so~$M_{\nu'} \ge \exp\{\delta \cdot \log^3(\sfv)\}$. Hence, using the optional stopping theorem,
\begin{align*}
	\exp\{\delta \cdot \log^3(\sfv)\} \cdot \mathbb P(\mathcal A_3) \le \mathbb E[M_{\nu'} \cdot \mathds{1}_{\mathcal A_3}] \le \mathbb E[M_{\nu'} \cdot \mathds{1}_{\{\nu' < \infty\}}] \le \mathbb E[M_0] = e^{\delta},
\end{align*}
which gives
\begin{equation}\label{eq_bound_with_delta_3}
	\mathbb P(\mathcal A_3) \le \exp\{-\delta(\log^3(\sfv)-1)\}.
\end{equation}

The result now follows from~\eqref{eq_bound_with_delta_1},~\eqref{eq_bound_with_delta_2} and~\eqref{eq_bound_with_delta_3}, by taking~$\mathsf v$ large enough.
\end{proof}

\begin{proof}[Proof of Proposition~\ref{prop_extended_surv}]
Letting ~$T:=\log^3(\sfv)$, we bound $\mathbb P(\mathfrak{i}(\zeta_{T}) \neq 0)$ by the sum
    \begin{equation}\label{eq_three_terms_path}
          \mathbb P(\mathfrak{i}(\zeta_{T}) \neq 0,\; \zeta_t \notin \Xi_{\mathrm{dens}} \cup \Xi_{\mathrm{dist}} \forall t \in [0,T]) +\mathbb P(\exists t \le T: \zeta_t \in \Xi_{\mathrm{dens}}) + \mathbb P(\exists t \le T: \zeta_t \in \Xi_{\mathrm{dist}}).
    \end{equation}
	By Lemma \ref{lem_count_i_martingale}, the first term on \eqref{eq_three_terms_path} is smaller than~$\exp\{-\log^2(\sfv)\}$. 
    The second term is
\begin{align*}
\mathbb P(\exists t \le T: \zeta_t \in \Xi_{\mathrm{dens}}) = \int_{\{0,1\}^{\mathbb Z^d}}g^\uparrow(\sfv^{1/10},\sqrt{\sfv}\log^4(\sfv),\log^3(\sfv), p_0,\zeta_0)\;  \pi_p(\mathrm{d}\zeta_0),
\end{align*}
recalling Definition~\ref{def_gs}. By Lemma~\ref{lem_integral_gs}, the second term is smaller than
\[
(2\sqrt{\sfv}\log^4(\sfv)+1)^d \cdot (e(2\sfv^{1/10}+2)^d \cdot \log^3(\sfv) + e) \cdot \exp \bigl\{- 2(2 \sfv^{1/10}+1)^d \cdot (p_0-p)^2\bigr\} \ll \exp\{-\log^2(\sfv)\}.
\]
Finally, recalling the definition of the containment flow (Definition~\ref{def_containment_try}), we have
\begin{align*}
\mathbb P(\exists t \le T: \zeta_t \in \Xi_{\mathrm{dist}})
\le \sum_{\substack{x \in \partial B_0(\lfloor \sqrt{\sfv} \log^4(\sfv)\rfloor})} \mathbb P(x \in \Psi(0,0,s) \text{ for some }s \le \log^3(\sfv)).
\end{align*}
By Lemma~\ref{seg_primeira_dom}, all terms in the sum of the r.h.s. are smaller than
\[
8de \max(2d\sfv,1) \cdot \log^3(\sfv)\exp\{4d\lambda \log^3(\sfv)\}\cdot \exp\Bigl\{-\frac12 \lfloor \sqrt{\sfv}\log^4(\sfv) \rfloor \cdot \log \Bigl(1 + \frac{\lfloor \sqrt{\sfv}\log^4(\sfv) \rfloor}{2(\sfv + \lambda)\log^3(\sfv)} \Bigr) \Bigr\}.
\]
Then,~$\mathbb P(\exists t \le T: \zeta_t \in \Xi_\mathrm{dist})$ is smaller than~$|B_0(\sqrt{\sfv}\log^4(\sfv))|$ times the expression above. Again, when~$\sfv$ is large enough, this is much smaller than~$\exp\{-\log^2(\sfv)\}$, completing the proof.
\end{proof}

\section{Proof of Theorem~\ref{thm_main}: extinction}\label{s_proof_extinction}

\subsection{Renormalization scheme}

\subsubsection{Boxes and half-crossings}
We will apply the same renormalization scheme as in Section~2 of~\cite{HUVV}, involving \emph{half-crossings} of space-time boxes; let us briefly explain it.

We want to discuss events involving infection paths, so we fix a realization of the graphical construction~$H$ of the interchange-and-contact process and an initial configuration~$\xi_0$ of the interchange process.

Let~$\mathsf{e}_1,\ldots,\mathsf e_d$ denote the vectors in the canonical basis of~$\mathbb R^d$, and let~$\langle \cdot,\cdot \rangle$ denote the inner product of~$\mathbb R^d$.
\begin{definition}
    Let~$x=(x^1,\ldots,x^d) \in \mathbb Z^d$,~$\ell \in \mathbb N$,~$t \ge 0$ and~$h > 0$. Let~$\mathcal Q:=B_x(\ell) \times [t,t+h]$.
    \begin{itemize}
            \item A \emph{temporal half-crossing of~$\mathcal Q$} is an infection path~$\gamma:[t,t+\tfrac{h}{2}]\to \mathbb Z^d$ such that~$\gamma(s) \in B_x(\ell)$ for all~$s$.
            \item A \emph{spatial half-crossing of~$\mathcal Q$ in the direction~$i$} is an infection path~$\gamma:[s_1,s_2]\to \mathbb Z^d$ such that~$(\gamma(s),s) \in \mathcal Q$ for all~$s$, and 
            \begin{align*}
                \text{either }&\langle \gamma(s_1), \mathsf e_i\rangle = x^i,\quad \langle \gamma(s_2), \mathsf e_i\rangle = x^i + \ell,\quad \langle \gamma(s), \mathsf e_i\rangle \in [x^i,x^i+\ell] \; \forall s;\\
                \text{or }&\langle \gamma(s_1), \mathsf e_i\rangle = x^i+\ell,\quad \langle \gamma(s_2), \mathsf e_i\rangle = x^i,\quad \langle \gamma(s), \mathsf e_i\rangle \in [x^i,x^i+\ell] \; \forall s;\\
                \text{or }&\langle \gamma(s_1), \mathsf e_i\rangle = x^i,\quad \langle \gamma(s_2), \mathsf e_i\rangle = x^i-\ell,\quad \langle \gamma(s), \mathsf e_i\rangle \in [x^i-\ell,x^i] \; \forall s;\\
                \text{or }&\langle \gamma(s_1), \mathsf e_i\rangle = x^i-\ell,\quad \langle \gamma(s_2), \mathsf e_i\rangle = x^i,\quad \langle \gamma(s), \mathsf e_i\rangle \in [x^i-\ell,x^i] \; \forall s.
            \end{align*}
            \item A \emph{half-crossing of~$\mathcal Q$} is any of the above (temporal half-crossing or spatial half-crossing in any direction). If it exists, we say that~$\mathcal Q$ \emph{is half-crossed}.
    \end{itemize}
\end{definition}

\textbf{Renormalization scales.} We take
\begin{equation*}
    L_N:=128^N \cdot L_0=128^N \cdot \sqrt{\sfv}\log^4(\sfv),\quad h_N:=128^N \cdot 2\log^3(\sfv),\qquad N \in \mathbb N_0.
\end{equation*}
The reason for the constant $128$ will be given in Remark~\ref{rem:constants_extinction} below.
We write
\begin{equation*}
    \mathcal Q_N(x,t):= B_x(L_N) \times [t,t+h_N],\quad x \in \mathbb Z^d,\; t \ge 0.
\end{equation*}
The following is a particular case of Lemma~2.5 of~\cite{HUVV} (with slightly different notation and weaker constants), by taking $\alpha=\beta=128$ in equation~(2.5) therein.

    \begin{lemma}[Cascading half-crossings] \label{lem_cascade}
    Let~$N \in \mathbb N$,~$x \in \mathbb Z^d$ and~$t \ge 0$. There exists an integer~$k \le 255^{2d}(2d+1)$ and~$(x_1,s_1),(y_1,t_1),\ldots,(x_k,s_k),(y_k,t_k) \in \mathcal Q_N(x,t)$ with the following properties:
    \begin{itemize}
        \item $\mathcal Q_{N-1}(x_1,s_1), \mathcal Q_{N-1}(y_1,t_1), \ldots, \mathcal Q_{N-1}(x_k,s_k),\mathcal Q_{N-1}(y_k,t_k)$ are all  contained in~$\mathcal Q_N(x,t)$;
        \item for all~$i$, we have either~$\|x_i-y_i\| \ge 4L_{N-1}$ or~$|s_i-t_i| \ge 2h_{N-1}$;
        \item if~$\mathcal Q_N(x,t)$ is half-crossed, then there is $i$ such that~$\mathcal Q_{N-1}(x_i,s_i),$ and $\mathcal Q_{N-1}(y_i,t_i)$ are both half-crossed.
    \end{itemize}
\end{lemma}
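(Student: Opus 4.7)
The plan is to prove this as a purely deterministic, combinatorial fact about paths inside $\mathcal{Q}_N(x,t)$; no probability or graphical construction properties enter. Concretely, I will produce a fixed list (depending only on $x$, $t$, $N$) of candidate pairs of $(N-1)$-boxes, and then argue that any half-crossing of $\mathcal{Q}_N(x,t)$ must induce two half-crossed boxes from some pair in that list.

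First, I will enumerate the candidate pairs. Since $L_N = 128 L_{N-1}$ and $h_N = 128 h_{N-1}$, the big box $\mathcal{Q}_N(x,t)$ can be covered by translates $\mathcal{Q}_{N-1}(y,s)$ with $y$ ranging over a grid of spacing $L_{N-1}$ intersected with $B_x(L_N-L_{N-1})$, and $s$ ranging over a grid of spacing $h_{N-1}$ intersected with $[t, t+h_N - h_{N-1}]$. Keeping only sub-boxes that fit inside $\mathcal{Q}_N(x,t)$, this gives at most $(2\cdot 128 - 1)^d \cdot (128)$ positions; the factor $2d+1$ in the bound accounts for one temporal half-crossing together with $2d$ spatial half-crossings (one per axial direction and sign). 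A careful book-keeping then yields at most $255^{2d}(2d+1)$ pairs to list.

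Second, I will do a case analysis on the half-crossing type. If $\gamma$ is a temporal half-crossing, it lives inside $B_x(L_N)$ over a time interval of length $h_N/2 = 64 h_{N-1}$. Partitioning this interval into $128$ successive subintervals of length $h_{N-1}/2$ and restricting $\gamma$ to each, I obtain $128$ temporal half-crossings of $(N-1)$-size boxes (spatially centered so that $\gamma$ stays within a ball of radius $L_{N-1}$ — a mild refinement using that the interchange flow portion of $\gamma$ is continuous between transmission jumps). Among these $128$ subintervals I can select two whose time ranges are separated by at least $2h_{N-1}$, since $128$ intervals of length $h_{N-1}/2$ leave $64 h_{N-1}$ of total span. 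The spatial case is analogous: a spatial half-crossing in direction $\mathsf{e}_i$ traverses $L_N = 128 L_{N-1}$ units in that direction, so slicing along direction $\mathsf{e}_i$ into successive pieces that each traverse $L_{N-1}$ units produces $128$ spatial half-crossings of smaller boxes aligned in direction $\mathsf{e}_i$, and any two whose indices differ by at least $5$ satisfy the required $4L_{N-1}$ spatial separation.

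The main (and only) subtle point is that in each case the $(N-1)$-boxes picked must both be fully contained in $\mathcal{Q}_N(x,t)$, and must coincide with one of the pre-listed candidates. The factor $128$ in the ratios $L_N/L_{N-1}$ and $h_N/h_{N-1}$ provides substantial slack for this: a single half-crossing spans $128$ substeps while we only need to leave room for a separation of $2$ (temporal) or $4$ (spatial) substeps. This is precisely why the constants in the definition of $L_N, h_N$ were taken to be $128$ (to be larger than $\max(2,4)$ by a safe margin, cf.\ Remark~\ref{rem:constants_extinction}). Since the statement is a direct specialization of Lemma~2.5 of~\cite{HUVV} with $\alpha=\beta=128$, the verification reduces to matching these numerical inequalities, and I would simply refer to the already-written argument there. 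I expect no conceptual obstacle; the only thing to be careful about is the compatibility between the candidate list convention and the subdivision of $\gamma$, which I will arrange so that each piece of $\gamma$ automatically lands in a listed candidate box.
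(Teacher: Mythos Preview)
Your approach matches the paper's: both simply invoke Lemma~2.5 of~\cite{HUVV} with $\alpha=\beta=128$, and the paper gives no further argument.

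One correction to your sketch, though. In the temporal case you claim that restricting $\gamma$ to each subinterval of length $h_{N-1}/2$ yields a \emph{temporal} half-crossing of some $(N-1)$-box, ``spatially centered so that $\gamma$ stays within a ball of radius $L_{N-1}$''. This is not justified: $\gamma$ follows the interchange flow between transmission marks, and that flow itself is a nearest-neighbour jump process with rate $\mathsf v$, so nothing prevents $\gamma$ from travelling well beyond $L_{N-1}$ within time $h_{N-1}/2$. The right dichotomy is that either $\gamma$ stays inside some $B_y(L_{N-1})$ on the subinterval (giving a temporal half-crossing of $\mathcal Q_{N-1}(y,s)$), or it traverses a distance $L_{N-1}$ in some coordinate direction, producing a \emph{spatial} half-crossing of an $(N-1)$-box. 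Either outcome gives a half-crossed $(N-1)$-box, which is all the lemma requires; but your subdivision does not preserve the half-crossing type, and the ``mild refinement'' you mention does not salvage that claim. This also affects your enumeration of candidate pairs (a single temporal half-crossing at scale $N$ may need pairs of $(N-1)$-boxes of mixed type), which is why the careful book-keeping in~\cite{HUVV} is genuinely needed rather than just a formality.
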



\textbf{Choice of constants and notation:} For the rest of this section, fix~$\lambda, p$ with~$2dp\lambda < 1$. These are the values of~$\lambda$ and~$p$ for which we will prove~\eqref{eq_main_ext}. Then, fix~$p_0$ slightly larger than~$p$ so that~$2dp_0 \lambda < 1$. Also define
\begin{equation}
    p_N:= \big(1-2^{-N}\big)p + 2^{-N}p_0,\quad N \in \mathbb N.
\end{equation}
We denote by~$(\zeta_t)_{t\ge 0}$ the interchange-and-contact process with parameters~$\lambda$ and~$\sfv$. The initial configuration will be specified in each context; whenever it is not specified, it is irrelevant.
\smallskip

Our estimates from the previous sections will readily give us:
\begin{lemma}\label{lem_base_extinction}
    The following holds for~$\sfv$ large enough. If~$\xi_0$ is stochastically dominated by~$\pi_{p_0}$, then, for any~$x$ and~$t$, the probability that~$\mathcal Q_0(x,t)$ is half-crossed is smaller than~$e^{-\log^{3/2}(\sfv)}$.
\end{lemma}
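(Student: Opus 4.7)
The plan is to reduce the lemma to two applications of previously established estimates. First, by translation invariance of both the graphical construction and of $\pi_{p_0}$, it suffices to treat the case $x=0$, $t=0$. I would then invoke a monotonicity principle: if $\xi_0 \le \xi_0'$ pointwise and the two interchange-and-contact processes are built from the same graphical representation, then $\xi_t \le \xi_t'$ for all $t$, so every infection path in the first is an infection path in the second. Consequently the half-crossing event is monotone increasing in $\xi_0$, and the stochastic domination hypothesis allows us to assume, at the cost of an upper bound, that $\xi_0 \sim \pi_{p_0}$ exactly.

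I would then split the half-crossings into two types. A \emph{temporal} half-crossing of $B_0(L_0) \times [0, h_0]$ is an infection path $\gamma:[0, h_0/2] \to B_0(L_0)$, where $h_0/2 = \log^3(\sfv)$, matching precisely the time horizon of Proposition~\ref{prop_extended_surv}. A union bound over the starting vertex $y = \gamma(0) \in B_0(L_0)$, combined with translation invariance of $\pi_{p_0}$, reduces the temporal case to the estimate $3e^{-\log^2(\sfv)}$ of Proposition~\ref{prop_extended_surv} applied with parameter $p_0$ (legitimate since $2dp_0\lambda < 1$ by our choice of $p_0$). The resulting factor $|B_0(L_0)| = O(\sfv^{d/2}\log^{4d}(\sfv))$ is polynomial and easily absorbed.

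For a \emph{spatial} half-crossing, the infection path must achieve a displacement of $L_0 = \sqrt{\sfv}\log^4(\sfv)$ in some coordinate direction within time at most $h_0 = 2\log^3(\sfv)$, comfortably beyond the diffusive scale $\sqrt{\sfv h_0} \sim \sqrt{\sfv}\log^{3/2}(\sfv)$. Since every infection path is in particular a containment path, I would bound the probability by summing over the starting point $y$ on the appropriate face of $B_0(L_0)$ and the endpoint $z$ at displacement $L_0$, applying the containment-flow large-deviation bound~\eqref{eq_second_bound_domination} of Lemma~\ref{seg_primeira_dom} to each pair. A direct calculation with $\|x\| = L_0$ and $t = h_0$ shows that the logarithmic factor in the exponent is of order $\log(\sfv)/\sqrt{\sfv}$, so the full exponent is of order $\log^5(\sfv)$, which handily beats both the remaining polynomial prefactors and the $e^{O(\log^3(\sfv))}$ contribution from the $e^{8d\lambda t}$ term.

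The main obstacle, or rather the only place requiring some care, is the calibration of the two scales $L_0$ and $h_0$ used here: one must check that $L_0$ is simultaneously large enough for spatial half-crossings to be exponentially suppressed by~\eqref{eq_second_bound_domination} and small enough for the union bound over $B_0(L_0)$ to be absorbed by the $e^{-\log^2(\sfv)}$ from Proposition~\ref{prop_extended_surv}. The choices $L_0 = \sqrt{\sfv}\log^4(\sfv)$ and $h_0 = 2\log^3(\sfv)$ meet both constraints, and taking $\sfv$ sufficiently large yields a bound comfortably below $e^{-\log^{3/2}(\sfv)}$.
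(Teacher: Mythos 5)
Your proposal is correct and follows essentially the same route as the paper: reduce to $(x,t)=(0,0)$ by translation invariance and monotonicity in the initial configuration, bound temporal half-crossings via a union bound over starting vertices combined with Proposition~\ref{prop_extended_surv}, and bound spatial half-crossings via a union bound over pairs at distance $\approx L_0$ combined with the containment-flow estimate~\eqref{eq_second_bound_domination}, with the same scale calibration yielding an exponent of order $\log^5(\sfv)$ against prefactors of order $e^{O(\log^3(\sfv))}$. The only cosmetic difference is that the paper sums over all pairs $x\in B_0(L_0)$, $\|x-y\|=\lfloor L_0\rfloor$ rather than restricting the starting point to a face, which changes nothing.
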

\begin{proof}
It suffices to prove the statement for~$(x,t)=(0,0)$. To see this, note that the event of half-crossing of a space-time box only depends on the realization of the interchange process and the recovery marks and transmission arrows of the graphical representation, all inside the box. The graphical representation is invariant under space-time shifts, and the assumption that~$\xi_0$ is stochastically dominated by~$\pi_{p_0}$ implies that~$\xi_t$ is stochastically dominated by the same measure, for all~$t$.

So we proceed with~$(x,t)=(0,0)$. The probability of a temporal half-crossing of~$\mathcal Q_0(0,0)$ is smaller than the probability that for some~$y \in B_0(L_0)$, there is an infection path starting at~$y$ at time 0 and reaching time~$h_0/2=\log^3(\sfv)$. By Proposition~\ref{prop_extended_surv} and a union bound, this probability is smaller than~$|B_0(L_0)| \cdot 3 \exp\{-\log^2(\sfv)\} \ll \exp\{-\log^{3/2}(\sfv)\}$.

Let us now treat spatial-half crossings. Recall the definition of containment flow (Definition~\ref{def_containment_try}). The probability of a spatial half-crossing of~$\mathcal Q_0(0,0)$ is bounded from above by
\begin{align*}
    \sum_{\substack{x \in B_0(L_0),\\ y:\|x-y\|=\lfloor L_0\rfloor}}\mathbb P \biggl(y \in \bigcup_{s,s':0\le s < s' \le h_0} \Psi(x,s,s') \biggr).
\end{align*}
By a union bound and~\eqref{eq_second_bound_domination}, this is smaller than
\begin{align*}
 (2L_0+1)^{2d} \cdot   64d^3e^2 \sfv^2   \cdot h_0e^{8d\lambda h_0} \cdot \exp\Bigl\{-\frac12 \lfloor L_0 \rfloor \log \Bigl( 1+ \frac{\lfloor L_0 \rfloor}{4(\sfv + \lambda)h_0}\Bigr) \Bigr\} 
\end{align*}
Recalling that~$L_0=\sqrt{\sfv}\log^4(\sfv)$ and~$h_0=2\log^3(\sfv)$, it is easily checked that this is much smaller than~$\exp\{-\log^{3/2}(\sfv)\}$ when~$\sfv$ is large enough, completing the proof.
\end{proof}

\subsection{Induction step}
Let
\[\delta_N:=(255^{2d}(4d+2))^{-N-1},\quad N \in \mathbb N_0.\] 
We are interested in establishing the following, for~$\sfv$ large enough (uniformly over~$N$):\\[.1cm]
\noindent \textbf{Half-crossing estimate at scale~$N$ ($\mathrm{HC}_N$)}:
\begin{equation}
    \label{eq_hce}
    \tag{$\mathrm{HC}_N$}
    \begin{split}
    \xi^{\zeta_0} \text{ is stochastically dominated by }\pi_{p_N}\quad \Longrightarrow \quad \mathbb P(\mathcal Q_N(x,t) \text{ is half-crossed}) < \delta_N\; \forall (x,t).
    \end{split}
\end{equation}
This will be done by induction on~$N$. The two key ingredients are horizontal and vertical decoupling estimates, which we now state. They are proved in Section~\ref{ss_induction_ext}.
\begin{lemma}[Horizontal decoupling]
	\label{lem_horizontal_decoupling_ext}
	Let~$N \in \mathbb N_0$ and assume that~\eqref{eq_hce} is satisfied. Assume that~$\xi_0$ is stochastically dominated by~$\pi_{p_{N}}$.
	Then, for any~$(x,s),(y,t) \in \mathbb Z^d \times [0,\infty)$ with~$\|x-y\| \ge 4L_{N}$ and~$|s-t| \le 2h_{N}$, we have
	\begin{equation*}
		\mathbb P(\mathcal Q_N(x,s) \text{ and } \mathcal Q_N(y,t) \text{ are both half-crossed}) \le \delta_N^2 + \mathsf v^{-2^N}.
	\end{equation*}
\end{lemma}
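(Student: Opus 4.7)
The plan is to combine the inductive hypothesis \eqref{eq_hce} with a covariance decoupling obtained from Lemma~\ref{lem_covariances} and quantified by Proposition~\ref{prop_discrepancy}. Without loss of generality take $s \le t$, and write $A_1 = \{\mathcal Q_N(x,s) \text{ is half-crossed}\}$, $A_2 = \{\mathcal Q_N(y,t) \text{ is half-crossed}\}$. Shifting time by $s$ and using that $\pi_{p_N}$ is invariant under the interchange dynamics, we may assume the new initial configuration is stochastically dominated by $\pi_{p_N}$, and both events are then measurable with respect to the restriction of the process to their respective spatial balls of radius $L_N$ over the common time window $[0,T]$, with $T := (t-s)+h_N \le 3h_N$. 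Applying \eqref{eq_hce} at each box (together with the space-time translation invariance of the graphical construction) yields the marginal estimates $\mathbb P(A_i) < \delta_N$.

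The heart of the argument is the covariance bound. Since $\|x-y\| \ge 4L_N \ge 2L_N + 2$, Lemma~\ref{lem_covariances} gives
\[
|\mathrm{Cov}(\mathds{1}_{A_1}, \mathds{1}_{A_2})| \le 4\, \mathrm{discr}^{\mathrm{icp}}_{\sfv, \lambda}\bigl(L_N, \lfloor \tfrac12 \|x-y\|\rfloor, T\bigr).
\]
Feeding this into Proposition~\ref{prop_discrepancy} (with $\ell = L_N$ and $L \ge 2L_N$, so $L-\ell \ge L_N$), the crucial observation is that
\[
\frac{L_N}{4(\sfv + \lambda) \cdot 3 h_N} = \frac{128^N \sqrt{\sfv}\log^4(\sfv)}{24(\sfv + \lambda) \cdot 128^N \log^3(\sfv)} \asymp \frac{\log(\sfv)}{\sqrt{\sfv}}
\]
is independent of $N$ and small for large $\sfv$. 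Using $\log(1+x) \ge x/2$ for small $x$, the exponent in the decay factor is of order $L_N \log(\sfv)/\sqrt{\sfv} \asymp 128^N \log^5(\sfv)$, whereas the exponential prefactor $e^{8d\lambda t}$ contributes an exponent of order $48 d\lambda \cdot 128^N \log^3(\sfv)$, and the polynomial prefactors grow only as $\exp\{O(N) + O(\log \sfv)\}$. Once $\sfv$ is large enough so that $\log^2(\sfv)$ dominates $48 d\lambda$, the decay absorbs all these contributions and gives
\[
4\, \mathrm{discr}^{\mathrm{icp}}_{\sfv, \lambda}\bigl(L_N, \lfloor \tfrac12 \|x-y\|\rfloor, T\bigr) \le \exp\{-c\, 128^N \log^5(\sfv)\} \le \sfv^{-2^N}
\]
uniformly in $N$, since $128^N \log^5(\sfv) \gg 2^N \log(\sfv)$. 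Combining with $\mathbb P(A_1 \cap A_2) = \mathbb P(A_1)\mathbb P(A_2) + \mathrm{Cov}(\mathds{1}_{A_1}, \mathds{1}_{A_2})$ finishes the proof.

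The main technical obstacle is the uniformity in $N$: both the main exponential decay in Proposition~\ref{prop_discrepancy} and the growth factor $e^{8d\lambda t}$ carry exponents proportional to $128^N$, and the whole argument hinges on the decay carrying two extra powers of $\log(\sfv)$ coming from the ratio $L_N/(\sfv h_N)$. This is precisely why the base scales were chosen as $L_0 = \sqrt{\sfv}\log^4(\sfv)$ and $h_0 = 2\log^3(\sfv)$: they make $L_N/(\sfv h_N)$ independent of $N$ and of order $\log(\sfv)/\sqrt{\sfv}$, which is both small (so that $\log(1+x) \ge x/2$ is usable) and yet large enough that the induced factor $\log^2(\sfv)$ overwhelms the $d\lambda$-dependent constant in $e^{8d\lambda t}$.
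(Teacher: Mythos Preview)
Your argument is correct and follows the same covariance--decoupling strategy as the paper, with one variation worth noting. The paper conditions on the $\sigma$-algebra $\mathcal G$ generated by the transmission and recovery marks inside the two boxes and then applies the \emph{interchange-process} covariance bound Lemma~\ref{lem_covariances_0} together with Lemma~\ref{lem_first_rw_discr} for $\mathrm{discr}^{\mathrm{ip}}$; you instead apply Lemma~\ref{lem_covariances} for the full interchange-and-contact process and use Proposition~\ref{prop_discrepancy} for $\mathrm{discr}^{\mathrm{icp}}$. Your route incurs the extra growth factor $e^{8d\lambda T}$, whose exponent is of order $128^N\log^3(\sfv)$, and you correctly observe that it is swallowed by the dominant decay exponent of order $128^N\log^5(\sfv)$. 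The paper's route avoids that factor altogether and is marginally cleaner, but both reach the same bound.

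One point deserves care. Lemma~\ref{lem_covariances} as stated applies to events measurable with respect to $\{\zeta_s(y):(y,s)\in B_{x_i}(\ell)\times[0,t]\}$, but the half-crossing events are \emph{not} functions of the $\zeta$-values alone: they are determined by the graphical representation (jump, recovery, and transmission marks) inside the box together with the interchange process $\xi_s$ there. So the hypothesis of Lemma~\ref{lem_covariances} is not literally verified. The \emph{proof} of that lemma (via Lemma~\ref{lem_contain} and the localization of the containment flow) does extend immediately to events depending on the graphical representation inside the small box, since on the good event those are determined by $\zeta_0$ on the larger ball and the Poisson marks in the larger box, which are independent for the two well-separated boxes. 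The paper's formulation has the same looseness (it asserts measurability with respect to $\sigma(\mathcal F,\mathcal G)$, where $\mathcal F$ is generated by $\xi$-values only), so this is a shared cosmetic issue rather than a genuine gap.
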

\begin{lemma}[Vertical decoupling]
	\label{lem_vertical_decoupling_ext}
	Let~$N \in \mathbb N_0$ and assume that~\eqref{eq_hce} is satisfied. Assume that~$\xi_0$ is stochastically dominated by~$\pi_{p_{N+1}}$.
	Then, for any~$(x,s),(y,t) \in \mathbb Z^d \times [0,\infty)$ with~$|s-t| > 2h_{N}$, we have
	\begin{equation*}
		\mathbb P(\mathcal Q_N(x,s) \text{ and } \mathcal Q_N(y,t) \text{ are both half-crossed}) \le \delta_N^2 + \mathsf v^{-2^N}.
	\end{equation*}
\end{lemma}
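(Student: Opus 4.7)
The plan is to condition on the particle configuration $\xi_t$ at the start of the later box, use the Markov property to factor the inner probability through a deterministic function $F(\xi_t)$, and then apply the refined sprinkling of Lemma~\ref{lem_coupling_rate_one} to dominate $\xi_t$ by an independent sample from $\pi_{p_N}$ on the relevant spatial region. Monotonicity of half-crossings in the particle configuration will then let us replace the conditionally biased $\xi_t$ by this independent upper bound, and the induction hypothesis $(\mathrm{HC}_N)$ closes the argument.

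Assume without loss of generality that $s < t$ and set $t_\star := s + h_N$; since $t - s > 2h_N$ we have $t > t_\star + h_N$. Write $A_1, A_2$ for the half-crossing events of the two boxes. Since the graphical marks inside $\mathcal Q_N(y,t)$ are independent of everything occurring before time $t$, and $\xi_t$ encodes all the relevant information from the past,
\[
\mathbb{P}(A_1 \cap A_2) = \mathbb{E}\bigl[\mathds{1}_{A_1}\, F(\xi_t)\bigr],\qquad F(\eta) := \mathbb{P}\bigl(\mathcal Q_N(y,t) \text{ is half-crossed} \mid \xi_t = \eta\bigr).
\]
The function $F$ is monotone nondecreasing in $\eta$: coupling two forward evolutions with comparable initial particle configurations under identical jump, recovery and transmission marks preserves the comparison at all future times, and additional particles only add admissible infection paths.

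Now apply Lemma~\ref{lem_coupling_rate_one} (rescaled from rate $1$ to rate $\sfv$) over the time window $[t_\star, t+h_N]$, with intermediate scale $\ell = \lceil \sfv^{1/10}\rceil$, outer radius $L$ chosen so that $B_0(L/4)$ contains $B_y(L_N)$ together with a diffusive buffer of order $\sqrt{\sfv\, h_N \log \sfv}$, and reference density $p_\star \in (p_{N+1}, p_N)$. This produces, on an enlarged probability space, an alternative interchange process $\xi^+$ started at time $t_\star$ from an independent $\pi_{p_N}$ sample and evolving with its own graphical construction, such that outside a failure event $\mathcal{G}^c$ we have $\xi_u(z) \le \xi^+_u(z)$ for every $u \in [t, t+h_N]$ and every $z$ in the buffered region. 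By construction, $\xi^+$ is independent of $A_1$, and the marginal law of $\xi^+_t$ is $\pi_{p_N}$. Combining the monotonicity of $F$ with a Lemma~\ref{lem_kallenberg} estimate to discard the negligible contribution of particles entering the box from outside the buffer yields $F(\xi_t) \le F(\xi^+_t)$ on $\mathcal G$, whence
\[
\mathbb{P}(A_1 \cap A_2) \le \mathbb{P}(A_1)\cdot \mathbb{E}[F(\xi^+_t)] + \mathbb{P}(\mathcal{G}^c) \le \delta_N^2 + \mathbb{P}(\mathcal{G}^c),
\]
using $(\mathrm{HC}_N)$ for the first box (valid because $\pi_{p_{N+1}} \preceq \pi_{p_N}$) and for the alternative process started from $\pi_{p_N}$.

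The main obstacle is to prove $\mathbb{P}(\mathcal{G}^c) \le \sfv^{-2^N}$ uniformly in $N$, by bounding the three terms in \eqref{eq_error_prob_coupling}. Lemma~\ref{lem_integral_gs} handles $g^\uparrow$ and $g^\downarrow$ at a rate of order $\exp\{-c(p_N - p_{N+1})^2 \ell^d\}$, which is more than adequate with $\ell = \lceil \sfv^{1/10}\rceil$ and $p_N - p_{N+1} \asymp 2^{-N}$; estimate~\eqref{eq_better_bound_meet} controls the meet-time contribution using the exponentially growing waiting time $h_N \asymp 128^N \log^3 \sfv$ against the polynomial $\ell^{d \vee 2}$; and Lemma~\ref{lem_first_rw_discr} bounds the residual $\mathrm{discr}^{\mathrm{ip}}$ term once $L - \ell$ exceeds the diffusive scale $\sqrt{\sfv\, (t - t_\star) \log \sfv}$. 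Orchestrating these three estimates so that they close the induction for every relevant scale $N$ is the delicate point.
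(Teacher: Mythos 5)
Your overall architecture is the same as the paper's: condition on the configuration a time of order $h_N$ before the later box starts (this is exactly where the hypothesis $|s-t|>2h_N$ is used), invoke the refined sprinkling of Lemma~\ref{lem_coupling_rate_one} to dominate the (random, biased) configuration by a fresh $\pi_{p_N}$-started interchange process on a buffered region, use monotonicity of half-crossings in the space-time particle configuration, and close with~\eqref{eq_hce}. Your way of handling the correlation between the earlier half-crossing and the coupling failure (bounding $\mathbb P(\mathcal G^c)$ unconditionally via monotonicity of $g^\uparrow$ and the stochastic domination of the time-$t_\star$ configuration by $\pi_{p_{N+1}}$) is a mild and legitimate variant of the paper's step, which instead introduces the event $\{g^\uparrow(\Theta_N,\tilde\xi)>\sqrt a\}$ and Markov's inequality.

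There is, however, a genuine quantitative gap, and it sits precisely at the point you defer as ``the delicate point'': the choice of the sprinkling block scale $\ell$. You take $\ell=\lceil \sfv^{1/10}\rceil$, \emph{independent of $N$}. Lemma~\ref{lem_integral_gs} then gives for the $g^\uparrow$ and $g^\downarrow$ terms a bound of order $\exp\{-c\,(2\ell+1)^d(p_N-p_{N+1})^2\}=\exp\{-c'\,4^{-N}\sfv^{d/10}\}$, whereas the target error is $\sfv^{-2^N}=\exp\{-2^N\log\sfv\}$. For fixed $\sfv$ the exponent $4^{-N}\sfv^{d/10}$ tends to $0$ as $N\to\infty$ while $2^N\log\sfv$ tends to infinity, so your bound cannot beat $\sfv^{-2^N}$ uniformly in $N$: the recursion does not close beyond finitely many scales, no matter how large $\sfv$ is taken (and the induction must hold for \emph{all} $N$ with a single $\sfv$). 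The same defect hits the meet-time term in high dimension: with $\ell=\sfv^{1/10}$ the exponent $t/\ell^{d\vee 2}=\sfv^{1-(d\vee2)/10}h_N$ degrades as $d$ grows and is no longer uniformly adequate. This is exactly why the paper makes $\ell$ grow with the scale, $\ell=h_N^{1/(2d+1/3)}$, tied to the factor $128$ in $L_N,h_N$ (Remark~\ref{rem:constants_extinction}): then $(2\ell+1)^d(p_{N+1}-p_N)^2\gtrsim 2^N\log^{9/7}\sfv$ as in~\eqref{eq_why_128}, and $\sfv h_N/\ell^{d\vee2}\gtrsim \sfv h_N^{1/7}\gtrsim 2^N\sfv$ as in~\eqref{eq_why_128_2}, both of which dominate $2^N\log\sfv$ uniformly in $N$. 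Since the whole difficulty of the vertical decoupling is to make the three error terms contract against the doubly-exponential target $\sfv^{-2^N}$ at every scale, committing to a scale-independent $\ell$ is not a deferrable detail but a step that fails as stated. (A smaller wrinkle: once you have space-time domination on the buffered region over $[t,t+h_N]$, you should compare the half-crossing events directly under shared recovery/transmission marks, as in Lemma~\ref{lem_outro_feio}, rather than compare $F$ evaluated at the time-$t$ configurations and then patch with Lemma~\ref{lem_kallenberg}; discrepancies from outside the buffer are already what the $\mathrm{discr}^{\mathrm{ip}}$ term in $\mathrm{err}_{\mathrm{coup}}$ controls.)
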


Putting these statements together, we obtain:
\begin{proposition}
    \label{prop_hcn}
If~$\sfv$ is large enough, then \eqref{eq_hce} holds for every $N \in \mathbb{N}_0$.
\end{proposition}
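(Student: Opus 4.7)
The plan is induction on $N$, combining the base estimate of Lemma~\ref{lem_base_extinction}, the cascade Lemma~\ref{lem_cascade}, and the horizontal and vertical decoupling Lemmas~\ref{lem_horizontal_decoupling_ext}--\ref{lem_vertical_decoupling_ext}. For the base case $N=0$, Lemma~\ref{lem_base_extinction} bounds the half-crossing probability (under $\xi^{\zeta_0}\preceq \pi_{p_0}$) by $e^{-\log^{3/2}(\sfv)}$, which for $\sfv$ large is much smaller than $\delta_0=(255^{2d}(4d+2))^{-1}$. This stronger-than-needed bound will be crucial at the first inductive step.

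For the inductive step, assume \eqref{eq_hce} at scale $N$, and let $\zeta_0$ satisfy $\xi^{\zeta_0}\preceq \pi_{p_{N+1}}$. Since $p_{N+1}<p_N$ we also have $\xi^{\zeta_0}\preceq \pi_{p_N}$, so the hypotheses of both decoupling lemmas are met. Fix $(x,t)\in\mathbb Z^d\times[0,\infty)$ and apply Lemma~\ref{lem_cascade} to produce at most $k:=255^{2d}(2d+1)$ pairs of scale-$N$ boxes inside $\mathcal Q_{N+1}(x,t)$, each pair either spatially ($\|x_i-y_i\|\ge 4L_N$) or temporally ($|s_i-t_i|\ge 2h_N$) well-separated, such that a half-crossing of $\mathcal Q_{N+1}(x,t)$ forces both boxes of at least one pair to be half-crossed. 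For each pair we invoke Lemma~\ref{lem_vertical_decoupling_ext} when $|s_i-t_i|>2h_N$, and otherwise (then $\|x_i-y_i\|\ge 4L_N$ by the cascade) Lemma~\ref{lem_horizontal_decoupling_ext}; in either case the joint half-crossing probability is at most $\delta_N^2+\sfv^{-2^N}$. A union bound yields
\[
\mathbb{P}\bigl(\mathcal Q_{N+1}(x,t)\text{ is half-crossed}\bigr)\;\le\; 255^{2d}(2d+1)\bigl(\delta_N^2+\sfv^{-2^N}\bigr),
\]
uniformly in $(x,t)$ and in $\zeta_0$ satisfying the stated domination.

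It remains to show this right-hand side is bounded by $\delta_{N+1}$. Writing $C:=255^{2d}(4d+2)$ so that $k=C/2$ and $\delta_N=C^{-(N+1)}$, one computes $k\delta_N^2=\tfrac12 C^{-(2N+1)}\le\tfrac12\delta_{N+1}$ whenever $2N+1\ge N+2$, i.e.\ $N\ge 1$; and the error $k\sfv^{-2^N}$ is at most $\tfrac12\delta_{N+1}$ uniformly in $N$ once $\sfv$ is large, because $\sfv^{-2^N}$ decays doubly exponentially in $N$ while $\delta_{N+1}$ only geometrically. The delicate point is that the inequality $k\delta_0^2\le\tfrac12\delta_1$ fails (it would require $C\le 1$), so the geometric recursion does not self-sustain starting merely from $p_0\le\delta_0$. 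We bypass this by substituting the much stronger base estimate $e^{-\log^{3/2}(\sfv)}$ for $\delta_0$ at the first step: with $\delta_0^2$ replaced by $e^{-2\log^{3/2}(\sfv)}$ in the display above (which is valid because the decoupling lemmas remain true with the true bound on $p_0$ in place of $\delta_0$ in their proofs), the $N=0\to N=1$ step also closes for $\sfv$ large. The main obstacle is thus purely numerical bookkeeping around the first iteration; from $N\ge 1$ onwards the geometric sequence $\delta_N$ shrinks fast enough for the recursion to absorb the factor $k$ automatically.
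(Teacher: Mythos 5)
Your proof is correct and follows essentially the same route as the paper: base case from Lemma~\ref{lem_base_extinction}, then Lemma~\ref{lem_cascade} plus the two decoupling lemmas and a union bound, with the same numerical bookkeeping $\mathfrak C_d(\delta_N^2+\sfv^{-2^N})\le\delta_{N+1}$ valid only for $N\ge 1$. Your fix for the first step --- feeding the stronger scale-$0$ bound $e^{-\log^{3/2}(\sfv)}$ into the decoupling in place of $\delta_0$ --- is exactly how the paper closes the passage to $N=1$ (treating it as a second base case), so the two arguments coincide.
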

\begin{proof}
We write~$\mathfrak C_d:=255^{2d}(2d+1)$, so that~$\delta_N=(2\mathfrak C_d)^{-N-1}$. Firstly, we prove that we can take~$\sfv$ sufficiently large so that~\eqref{eq_hce} holds for~$N=0$ and $N=1$. The case $N=0$ follows from Lemma~\ref{lem_base_extinction}. It is also useful to take~$\sfv$ large so that $\sfv^{-1} \le (2\mathfrak{C}_d)^{-3}$, which implies
\begin{equation*}
    \sfv^{-2^N} \le  (2\mathfrak C_d)^{-N-3} \quad \text{for all } N \in \mathbb N
\end{equation*}
by induction. Next, we check that~\eqref{eq_hce} also holds for $N=1$, since by Lemmas~\ref{lem_cascade}, \ref{lem_horizontal_decoupling_ext} and~\ref{lem_vertical_decoupling_ext}, the probability that~$\mathcal Q_{1}(x,t)$ is half-crossed is at most
\begin{equation*}
\mathfrak C_d \cdot (\delta_0^2 + \sfv^{-1}) \le \mathfrak C_d \cdot (\exp\{- 2 \log^{3/2} \sfv \} + \sfv^{-1})
\end{equation*}
and can be made smaller than $\delta_1 = (2\mathfrak{C}_d)^{-2}$ by increasing $\sfv$ if needed.

Finally, assume that~\eqref{eq_hce} has been proved for some~$N \ge 1$, and assume that~$\xi_0$ is stochastically dominated by~$\pi_{p_{N+1}}$. Let~$(x,t) \in \mathbb Z^d \times [0,\infty)$. By a union bound using Lemma~\ref{lem_cascade}, the induction hypothesis and Lemmas~\ref{lem_horizontal_decoupling_ext} and~\ref{lem_vertical_decoupling_ext}, the probability that~$\mathcal Q_{N+1}(x,t)$ is half-crossed is at most
\begin{align*}
\mathfrak C_d \cdot (\delta_N^2 + \sfv^{-2^N}) &\le \mathfrak C_d \cdot \big((2\mathfrak C_d)^{-2N-2} +  (2\mathfrak{C}_d)^{-N-3} \big)\\
&= 2^{-2N-2}\mathfrak{C}_d^{-2N-1} + 2^{-N-3} \mathfrak{C}_d^{-N-2} \le (2\mathfrak C_d)^{-N-2} = \delta_{N+1},
\quad \text{since $N \ge 1$.}\qedhere
\end{align*}
\end{proof}

\begin{proof}[Proof of Theorem~\ref{thm_main},~\eqref{eq_main_ext}]

Let~$H$ be a graphical construction for the interchange-and-contact process. 
Let~$\xi_0 \in \{0,1\}^{\mathbb Z^d}$ be distributed as~$\pi_p$, and let~$\zeta_0$ be given by
\[
\zeta_0(x)= \begin{cases}
0&\text{if } x \neq 0 \text{ and }\xi_0(x)=0;\\
    \stateh &\text{if } x \neq 0 \text{ and }\xi_0(0)=1;\\
    \statei &\text{if } x =0.
\end{cases}
\]
Now, we will consider the set of infection paths induced by~$H$ and~$\xi_0$. Let~$A$ be the event that for all~$t$, there is an infection path started at the origin at time 0 and reaching time~$t$. We will also consider the interchange-and-contact process obtained from~$H$ and started at~$\zeta_0$, denoted~$(\zeta_t)_{t \ge 0}$.
We then have
\begin{align*}
    \mathbb P(A) = p \cdot \mathbb P(A \cap \{\xi_0(0)=1\} \mid \{\xi_0(0)=1\}) = p \cdot \mathbb P(\forall t \; \exists x: \; \zeta_t(x)=\statei) = p\cdot \Theta(\lambda, \sfv, p).
\end{align*}
Hence, to show that~$\Theta(\lambda, \sfv, p)=0$, it suffices to show that~$\mathbb P(A)=0$. We do this now.

For any~$N\in \mathbb N$, the event~$A$ is contained in the event that there is an infection path starting at~$(0,0)$ and leaving the box~$\mathcal Q_N(0,0)$. This event is in turn contained in the event that~$\mathcal Q_N(0,0)$ is half-crossed, which has probability smaller than~$\delta_N$, since~$\xi_0$ has law~$\pi_{p}$ (and hence is stochastically dominated by~$\pi_{p_N}$). The result now follows, since~$\delta_N \xrightarrow{N \to \infty} 0$.
\end{proof}

\subsubsection{Half-crossing estimates: induction step}\label{ss_induction_ext}
\begin{proof}[Proof of Lemma~\ref{lem_horizontal_decoupling_ext}]
Assume that~$\xi_0$ is dominated by~$\pi_{p_N}$. Fix~$(x,s),(y,t)$ as in the statement. We assume without loss of generality that~$s \le t \le s+2h_N$. Letting
\[
X:=\mathds{1}\{\mathcal Q_N(x,s) \text{ is half-crossed}\}, \quad Y:=\{\mathcal Q_N(y,t) \text{ is half-crossed}\}
\]
it suffices to prove that $\mathrm{Cov}(X,Y)| \le \sfv^{-2^N}$.
We define the space-time boxes
\[
\mathcal{B}_x := B_x(L_N) \times [s,s+3h_N] \supset \mathcal Q_N(x,s),\quad \mathcal{B}_y:=B_y(L_N) \times [s,s+3h_N] \supset \mathcal{Q}(y,t).
\]
We let~$\mathcal{F}$ denote the~$\sigma$-algebra generated by the interchange process inside these boxes, that is,
\[\mathcal{F}:=\sigma(\{\xi_r(z):\; (z,r) \in \mathcal{B}_x \cup \mathcal{B}_y\});\]
we also let~$\mathcal{G}$ denote the~$\sigma$-algebra generated by the Poisson processes of transmission and recovery marks inside~$\mathcal{B}_x \cup \mathcal{B}_y$. We note that~$X$ and~$Y$ are measurable with respect to~$\sigma(\mathcal{F},\mathcal{G})$. Additionally, by Lemma~\ref{lem_covariances_0}, we have
\begin{align*}
    \mathrm{Cov}(X,Y \mid \mathcal{G}) &\le 4 \mathrm{discr}^{\mathrm{ip}}(L_N, \tfrac12  \lfloor \|x-y\| \rfloor, 3 \sfv h_N)
\end{align*}
(note that the factor~$\sfv$ appears in the third argument of~$\mathrm{discr}^{\mathrm{ip}}$ because this discrepancy is defined for the interchange process with rate~1).
Since~$\mathrm{discr}^{\mathrm{ip}}(\ell,L,h)$ is non-increasing in~$L$ and~$\|x-y\| \ge 4L_N$, the r.h.s. above is smaller than
\begin{align*}
        4 \mathrm{discr}^{\mathrm{ip}}(L_N, \tfrac32 L_N    , 3 \sfv h_N) \stackrel{\eqref{eq_with_t_1}}{\le} 16 e d^3 \cdot 3\sfv h_N(3L_N+1)^{d-1} \exp\Bigl\{-\frac12 L_N \cdot \log \Bigl(1+ \frac{\tfrac12 L_N}{6\sfv h_N} \Bigr)\Bigr\}.
\end{align*}
Recalling that~$L_N=128^N \cdot \sqrt{\sfv}\log^4(\sfv)$ and~$h_N=128^N \cdot 2 \log^3(\sfv)$, when~$\sfv$ is large enough, the above is much smaller than~$\exp\{- c\cdot 128^N\cdot \log^4(\sfv)\}$ for some~$c > 0$ not depending on~$\sfv$ or~$N$. When~$\sfv$ is large enough (uniformly in~$N$), this is much smaller than~$\sfv^{-2^N}$. 
\end{proof}

Before we prove Lemma~\ref{lem_vertical_decoupling_ext}, we will need a preliminary lemma. We will use the decoupling method presented in Section~\ref{ss_domination_product}. We will apply the functions~$g^\uparrow$ and~$g^\downarrow$ and~$\mathrm{err}_{\mathrm{coup}}$ defined in that section. To make the notation cleaner, we abbreviate the sets of parameters for these functions:
\begin{align}
    \label{eq_choice_of_thetaN_ext}&\Theta_N := (\ell = h_N^{1/(2d+1/3)},\; L = 4L_{N},\; t = \mathsf v h_N,\; p = \tfrac12(p_N+p_{N+1})),\\
    &\Theta_N' := (\ell = h_N^{1/(2d+1/3)},\; L = 4L_{N},\; t = \mathsf v h_N,\; T = 2\mathsf v h_N).\label{eq_def_ThetaNp_ext}
\end{align}
\begin{remark}\label{rem:constants_extinction}
The choice of the constant $128 = 2^7$ and the exponent $\frac{1}{2d+1/3}$ in \eqref{eq_choice_of_thetaN_ext} and \eqref{eq_def_ThetaNp_ext} are tied together so that the bounds in \eqref{eq_why_128} and \eqref{eq_why_128_2} hold uniformly for all $N \ge 0$.
\end{remark}

\begin{lemma}\label{lem_outro_feio}
Let~$N \in \mathbb N_0$ and assume that~\eqref{eq_hce} holds.
    For every (deterministic)~$\xi_0 \in \{0,1\}^{\mathbb Z^d}$, the probability that~$\mathcal Q_N(0,h_N)$ is half-crossed is smaller than
    \begin{equation}
\label{eq_ext_all_terms}
    \delta_N + g^\uparrow(\Theta_N,\xi_0) + \int g^\downarrow(\Theta_N, \xi') \pi_{p_N}(\mathrm{d}\xi') + \mathrm{err}_{\mathrm{coup}}(\Theta_N').
\end{equation}
\end{lemma}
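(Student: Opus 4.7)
The plan is to compare, via the coupling of Lemma~\ref{lem_coupling_rate_one}, the interchange-and-contact process started from $\xi_0$ with one started from a stationary sample $\xi' \sim \pi_{p_N}$, for which the induction hypothesis \eqref{eq_hce} applies. Specifically, I would sample $\xi'$ independently of everything else, form initial configurations $\zeta_0, \zeta_0'$ whose occupancy projections are $\xi_0$ and $\xi'$ respectively, and build the two processes $(\zeta_s), (\zeta_s')$ using the coupled interchange flows provided by the lemma together with a single shared realization of the recovery and transmission Poisson processes. The rate-one convention of the coupling lemma is matched to our rate-$\sfv$ setting by rescaling time by $\sfv$, which explains the choices $t = \sfv h_N$ and $T = 2 \sfv h_N$ appearing in $\Theta_N$ and $\Theta_N'$; with $L = 4L_N$, the identity $L/4 = L_N$ aligns the coupled region with the spatial base of $\mathcal Q_N(0, h_N)$.

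By Lemma~\ref{lem_coupling_rate_one}, outside an event whose probability, after integrating over $\xi'$, is at most
\[
g^\uparrow(\Theta_N, \xi_0) + \int g^\downarrow(\Theta_N, \xi')\,\pi_{p_N}(\mathrm d \xi') + \mathrm{err}_{\mathrm{coup}}(\Theta_N'),
\]
the interchange projections satisfy $\xi_s(x) \le \xi_s'(x)$ for every $(x,s) \in B_0(L_N) \times [h_N, 2h_N]$. On this good event, any infection path certifying a half-crossing of $\mathcal Q_N(0, h_N)$ in $(\zeta_s)$ should remain an infection path in $(\zeta_s')$: the occupancy constraint $\xi_s(\gamma(s)) = 1$ is only reinforced to $\xi_s'(\gamma(s)) = 1$, while the recovery and transmission conditions along $\gamma$ depend on marks that the two processes share. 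Hence the half-crossing event for $(\zeta_s)$ is contained in the union of the bad coupling event and the half-crossing event for $(\zeta_s')$.

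To control the latter, I would restart $(\zeta_s')$ at time $h_N$: stationarity of $\pi_{p_N}$ under the interchange dynamics gives $\xi^{\zeta_{h_N}'} \sim \pi_{p_N}$, and the graphical construction is invariant under time shifts, so the probability that $\mathcal Q_N(0, h_N)$ is half-crossed in $(\zeta_s')$ equals the probability that $\mathcal Q_N(0, 0)$ is half-crossed for a process whose initial occupancy has law $\pi_{p_N}$. The induction hypothesis \eqref{eq_hce} bounds this by $\delta_N$, and summing the three error contributions delivers \eqref{eq_ext_all_terms}.

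The delicate point is the monotonicity step in the second paragraph: one must check that the coupling from Lemma~\ref{lem_coupling_rate_one} is compatible with attaching a common realization of the recovery and transmission marks to both processes, in such a way that the interchange flows coincide along the trajectory of any infection path in the smaller process. This is not a general feature of dominance couplings (since the two underlying graphical representations $H, H'$ for the interchange can in principle have different jump marks inside the box), and it is where the specific structure of the appendix construction must be invoked. Granted this compatibility, everything else is direct bookkeeping.
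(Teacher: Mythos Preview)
Your approach is the same as the paper's: couple the two interchange processes via Lemma~\ref{lem_coupling_rate_one}, superimpose common recovery and transmission marks, and compare half-crossings. The paper differs only cosmetically: it conditions on the value of $\xi'_0$ first and then integrates, and it invokes \eqref{eq_hce} directly at the box $\mathcal Q_N(0,h_N)$ rather than restarting at time $h_N$ (your stationarity detour is correct but unnecessary, since \eqref{eq_hce} is stated for all $(x,t)$).

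The ``delicate point'' you isolate is genuine, and the paper is terse about it (it simply refers to the probability that $(\xi_t)$ and $(\xi'_t)$ ``do not agree'' inside the box). Your instinct that the appendix construction is what saves the day is right. On the event $A_1 \cap A_2 \cap A_3$ of Appendix~\ref{sec:decoupling}, every $\xi$-particle visiting $B_0(L_N) \times [h_N,2h_N]$ is \emph{matched}, and by rule~(ii) of the coupled evolution, any edge incident to a matched particle uses the $\mathcal J^2$-marks; hence a matched $\xi$-particle's trajectory under the mixed flow $\Phi$ coincides with its trajectory under $\Phi'$. Since an infection path $\gamma$ for $(\xi_t,H)$ between transmission times is, by~\eqref{eq_containm4}, riding on a $\xi$-particle (which is therefore matched), the path also satisfies~\eqref{eq_containm2} for $\Phi'$; conditions~\eqref{eq_containm1} and~\eqref{eq_containm3} are inherited from the shared marks, and~\eqref{eq_containm4} for $(\xi'_t)$ follows from the domination $\xi'_s \ge \xi_s$ on the box. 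So the same $\gamma$ is an infection path for $(\xi'_t,H')$, and the monotonicity step goes through as you anticipated.
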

\begin{proof}
Fix~$\xi_0 \in \{0,1\}^{\mathbb Z^d}$. Let~$\xi_0'$ be distributed as~$\pi_{p_N}$, but condition on its value for now (so it will initially be treated as deterministic). We use the coupling given by Lemma~\ref{lem_coupling_rate_one} to construct interchange processes~$(\xi_t)_{t \ge 0}$ and~$(\xi'_t)_{t \ge 0}$ started from~$\xi_0$ and~$\xi_0'$, respectively, and such that~$\xi_t(x) \ge \xi'_t(x)$ for all~$(x,t) \in \mathcal Q_N(0,h_N)$ outside an event of probability at most
\[
g^\uparrow(\Theta_N,\xi_0) + g^\downarrow(\Theta_N,\xi_0')+\mathrm{err}_\mathrm{coup}(\Theta_N').
\]
Note that this coupling provides a construction for the interchange processes. On top of that, independently, we take recovery marks and transmission arrows as in Definition~\ref{def_graph_cpip}. It now makes sense to consider infection paths with respect either to~$(\xi_t)$ or to~$(\xi'_t)$.

Then, the probability that~$\mathcal Q_N(0,h_N)$ is half-crossed with respect to~$(\xi_t)$ is smaller than 
\[
\mathbb P(\mathcal Q_N(0,h_N) \text{ is half-crossed with respect to }(\xi'_t)) + \mathbb P((\xi_t) \text{ and }(\xi'_t) \text{ do not agree inside }\mathcal Q_N(0,h_N))  .
\]
By the assumption that~\eqref{eq_hce} holds, integrating the first probability above with respect to~$\xi_0' \sim \pi_{p_N}$ yields a value smaller than~$\delta_N$. Integrating the second probability with respect to~$\xi_0'$ gives the remaining terms in~\eqref{eq_ext_all_terms}.
\end{proof}

\begin{proof}[Proof of Lemma~\ref{lem_vertical_decoupling_ext}]
Assume that~$\xi_0$ is stochastically dominated~$\pi_{p_{N+1}}$. Fix~$(x,s)$ and~$(y,t)$ with~$t > s+2h_N$. We abbreviate
$\tilde \xi := \xi_{t-h_N} \circ \theta(y)$ and define
\[
a:= \int g^\uparrow (\Theta_N, \xi) \pi_{p_{N+1}}(\mathrm{d} \xi)
\quad\text{and the event}\
\mathcal A:= \left\{g^\uparrow (\Theta_N, \tilde\xi) > \sqrt{a} \right\}.
\]

Since~$\xi_0$ is stochastically dominated by~$\pi_{p_{N+1}}$, the same applies to~$\tilde{\xi}$. Hence, by Markov's inequality and monotonicity of~$g^\uparrow$,
\[
\mathbb P(\mathcal A) \le a^{-1/2} \cdot \mathbb E[g^\uparrow(\Theta_N,\tilde{\xi})] \le a^{-1/2}\cdot \int g^\uparrow(\Theta_N,\xi) \pi_{p_{N+1}}(\mathrm{d}\xi) = \sqrt{a}.
\]

Next, for each~$r \ge 0$, let~$\mathcal F_r$ denote the~$\sigma$-algebra generated by~$\xi_0$ and the restriction of the graphical representation to the time interval~$[0,r]$. Lemma~\ref{lem_outro_feio} implies that
\begin{align*}
&\mathbb P(\mathcal Q_{N}(y,t) \text{ is half-crossed} \mid \mathcal F_{t-h_N})  \leq \delta_N + g^\uparrow(\Theta_N,\tilde \xi) + \int g^\downarrow(\Theta_N, \xi) \pi_{p_N}(\mathrm{d}\xi) + \mathrm{err}_{\mathrm{coup}}(\Theta_N').
\end{align*}
Hence,
\begin{align*}
    \text{on }\mathcal A^c, \quad &\mathbb P (\mathcal Q_N(y,t) \text{ is half-crossed} \mid \mathcal F_{t-h_N}) \le \delta_N + \mathcal E,
\end{align*}
where $\mathcal E:= \sqrt{a} + \int g^\downarrow(\Theta_N, \xi) \pi_{p_N}(\mathrm{d}\xi) + \mathrm{err}_{\mathrm{coup}}(\Theta_N')$. We are now ready to bound
\begin{align}
   \nonumber&\mathbb P(\mathcal Q_N(x,s) \text{ and } \mathcal Q_N(y,t) \text{ are both half-crossed}) \\
   \nonumber&= \mathbb E[\mathds{1}\{\mathcal Q_N(x,s) \text{ is half-crossed}\} \cdot \mathbb P (\mathcal Q_N(y,t) \text{ is half-crossed} \mid \mathcal F_{t-h_N}) ]\\
   &\le \mathbb P(\mathcal A)+ (\mathcal E + \delta_N) \cdot \mathbb P(\mathcal Q_N(x,s) \text{ is half-crossed}) \le \sqrt{a} + \mathcal E \delta_N + \delta_N^2  \le \sqrt{a} + \mathcal E + \delta_N^2. \label{eq_need_to_go_back}
\end{align}
We now turn to bounding all the error terms that we have gathered along the way.\\[.2cm]
\textbf{Bound on $\sqrt{a}$.} Recalling the definition of~$\Theta_N$ in~\eqref{eq_choice_of_thetaN_ext} and using Lemma~\ref{lem_integral_gs}, we have
\begin{align}
a
&\le (8L_N+1)^d \cdot \Bigl(e \Bigl(2 h_N^{\frac{1}{2d+1/3}}+2\Bigr)^d \cdot \sfv h_N  + e\Bigr)\cdot \exp\Bigl\{-2 \cdot  \Bigl(2 h_N^{\frac{1}{2d+1/3}}+1\Bigr)^d \cdot (p_{N+1}-p_N)^2 \Bigr\}. \label{eq_bound_on_a}
\end{align}
Recall that~$L_N=128^N \cdot \sqrt{\sfv} \log^4(\sfv)$, $h_N = 128^N \cdot 2 \log^3(\sfv)$ and~$p_{N+1}-p_N=2^{-N-1}(p_0-p)$. Hence, 
\begin{equation*}
(2 h_N^{1/(2d+1/3)}+1)^d 
    \ge c_d \cdot (128^N \log^3(\sfv))^{d/(2d+1/3)}
    \ge c_d \cdot 8^N \log^{9/7} (\sfv),
\end{equation*}
for some positive constant $c_d$ and $\sfv$ sufficiently large (uniformly in $N$), since $\frac{d}{2d+1/3} \ge \frac37$ for $d \ge 1$.
As a consequence, in the exponent of~\eqref{eq_bound_on_a} we have
\begin{align}
\nonumber
2 \cdot  (2 h_N^{1/(2d+1/3)}+1)^d \cdot (p_{N+1}-p_N)^2 
    &\ge c_d \cdot (8^N \log^{9/7} (\sfv)) (4^{-N}(p_0-p)^2)\\
\label{eq_why_128}
    &= c_d (p_0-p)^2 \cdot 2^N \log^{9/7} (\sfv).    
\end{align}
A moment's reflection shows that if~$\sfv$ is large enough (depending on~$d$ and $p_0-p$, but uniformly over~$N$), then~$a$ (and also~$\sqrt{a}$) is much smaller than~$\sfv^{-2^N}$. 

\medskip
\textbf{Bound on $\int g^\downarrow(\Theta_N,\xi)\pi_{p_N}(\mathrm{d}\xi)$.} The exact same bound as in the previous item, using Lemma~\ref{lem_integral_gs}, shows that this is also much smaller than~$\sfv^{-2^N}$.\\[.2cm]
\textbf{Bound on $\mathrm{err}_{\mathrm{coup}}(\Theta_N')$.} Recall from~\eqref{eq_err_coup} that
\[
\mathrm{err}_{\mathrm{coup}}(\ell,L,t,T) := |B_0(L/2)|\cdot \left( 1 - \mathrm{meet}(\ell) \right)^{\lfloor t/\ell^2 \rfloor} + \mathrm{discr}^{\mathrm{ip}}(L/4, L/2, T),
\]
and recall from~\eqref{eq_def_ThetaNp_ext} that
$\Theta_N' := (\ell = h_N^{1/(2d+1/3)},\; L = 4L_{N},\; t = \mathsf v h_N,\; T = 2\mathsf v h_N)$.
Hence,
\begin{align}\label{eq_error_coup_fin}
    \mathrm{err}_{\mathrm{coup}}(\Theta_N')= |B_0(2L_{N})| \cdot (1-\mathrm{meet}(h_N^{1/(2d+1/3)}))^{\lfloor \mathsf v h_N/h_N^{2/(2d+1/3)}\rfloor} + \mathrm{discr}^{\mathrm{ip}}(L_N,2L_N,2\sfv h_N).
\end{align}

By~\eqref{eq_better_bound_meet}, we can bound $( 1 - \mathrm{meet}(\ell) )^{\lfloor t/\ell^2 \rfloor} \le e^{-c t/\ell^{d \vee 2}}$.
It is straightforward to verify that for any $d\ge 1$ we can ensure that
\begin{align}
\label{eq_why_128_2}
    |B_0(L/2)|\cdot \left( 1 - \mathrm{meet}(\ell) \right)^{\lfloor t/\ell^2 \rfloor} 
    \le (4L_{N}+1)^d \cdot \exp\bigl\{- c \mathsf v h_N^{1/7}\bigr\}.
\end{align}

Since~$L_N=128^N \cdot \sqrt{\sfv} \log^4(\sfv)$ and~$h_N=128^N \cdot 2 \log^3(\sfv)$, the r.h.s. is much smaller than~$\sfv^{-2^N}$.
We now turn to the discrepancy term of~\eqref{eq_error_coup_fin}. Using Lemma~\ref{lem_first_rw_discr}, we bound
\begin{equation}
    \label{eq_final_discrepancy_bound}
\mathrm{discr}^{\mathrm{ip}}(L_N,2L_N,2\sfv h_N) \le 16ed^3 \cdot 2\sfv h_N \cdot (2L_N+1)^{d-1} \cdot \exp\Bigl\{-L_N \cdot \log \Bigl( 1 + \frac{L_N}{4\sfv h_N} \Bigr)\Bigr\}.
\end{equation}
We have~$\frac{L_N}{4\sfv h_N} = \frac{\log(\sfv)}{8\sqrt{\sfv}}$, so, using the bound~$\log(1+x) \ge x/2$ for~$x > 0$ small enough,
\[
L_N \cdot \log \Bigl( 1 + \frac{L_N}{4\sfv h_N} \Bigr) \ge \frac{1}{16} \cdot 128^{N} \cdot \log^5(\sfv).
\]
Using this, we see that the r.h.s. of~\eqref{eq_final_discrepancy_bound} is much smaller than~$\sfv^{-2^N}$.\\[.2cm]
This concludes the treatment of all error terms. 
Going back to~\eqref{eq_need_to_go_back}, we have thus proved that
\[
P(\mathcal Q_N(x,s) \text{ and } \mathcal Q_N(y,t) \text{ are both half-crossed}) \le \delta_N^2 + \sfv^{-2^N}.\qedhere
\]
\end{proof}

\section{Microscopic propagation above the mean-field threshold}
\label{s_micro_prop}
\subsection{Propagation from random configuration}

The main goal of this section is to prove the following.

\begin{proposition}[Propagation from random configuration]\label{prop_estrela_vermelha_new}
	Let~$\lambda > 0$ and~$p \in (0,1]$ be such that~$2d p \lambda > 1$. There exist~$h_0 > 0$ and~$\varepsilon_0 \in (0,1)$ (which can be taken as small as desired) such that the following holds for~$\sfv$ large enough. Let~$A \subseteq B_0(\sqrt{\sfv})$ with~$|A|\ge \sfv^{\varepsilon_0}$, and let~$(\zeta_t)_{t \ge 0}$ be the interchange-and-contact process on~$\mathbb Z^d$ with parameters~$\lambda$ and~$\sfv$ started from a random configuration with law~$\hat \pi^A_p$. Then,
	\begin{equation*}
		\mathbb P\left(
		\begin{array}{l} 
		\begin{array}{l} 
		\zeta_{h_0} \text{ has more than $\mathsf v^{\varepsilon_0}$ infected vertices inside}\\ \text{each of } B_{-\lfloor \sqrt{\mathsf v} \rfloor \mathsf e_1}(\sqrt{\mathsf v}),\; B_{0}(\sqrt{\mathsf v}),\; B_{\lfloor \sqrt{\mathsf v} \rfloor \mathsf e_1}(\sqrt{\mathsf v})  \end{array}  \end{array} \right) > 1-  \mathsf v^{-\varepsilon_0/2} ,
	\end{equation*}
		where~$\mathsf{e}_1:=(1,0,\ldots,0) \in \mathbb Z^d$.
\end{proposition}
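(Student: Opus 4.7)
The plan is to dominate the set of infected particles from below by a supercritical branching random walk (BRW), and then read off the filling of the three target boxes from the diffusive scaling limit of the BRW, which is a supercritical branching Brownian motion (BBM). Fix $p'\in(0,p)$ with $\alpha:=2d\lambda p'>1$, pick a small $\varepsilon>0$, and take $h_0$ a large constant depending on $\alpha$ to be chosen later.

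\emph{Coupling to a BRW.} From the graphical representation $H$ I would construct an auxiliary BRW $(Y_t)_{t\in[0,h_0]}$ started from $A$ whose particles move under the interchange flow, die at recovery marks, and branch at a transmission mark from $x$ to a neighbor $y$ precisely when $y$ is occupied by a healthy particle at that instant. Lemma~\ref{lem_good_event_Z}, applied with time $h_0$, ensures that, outside an event of probability $\sfv^{-\varepsilon}$, the containment flow $\Psi^A$ has at most $\sfv^{3\varepsilon}$ particles on $[0,h_0]$, its pairwise neighbor-time is at most $\sfv^{-1/4}$, and $T^A>h_0$; on this event the flow never attempts to re-infect itself, so every live $Y$-particle is truly infected. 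To control the effective branching rate of $Y$, I would combine Lemma~\ref{lem_coupling_rate_one} (in its form allowing deterministic initial data) with Lemma~\ref{lem_integral_gs}: for $\sfv$ large, at every flow particle and every time in $[0,h_0]$, the particle density in a box of side $\sfv^{1/(3d)}$ around it lies within $\tfrac{p-p'}{2}$ of $p$, except on an event of probability $\exp\{-\sfv^{c}\}$. This makes $(Y_t)$ stochastically dominate a clean BRW on $\mathbb Z^d$ with jump rate $\sfv$ per edge, death rate $1$, and branching rate $\alpha$, up to a total error polynomial in $\sfv^{-1}$.

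\emph{Filling the boxes via the BBM limit.} Under the rescaling $x\mapsto x/\sqrt{\sfv}$, each particle of $Y$ has total jump rate $2d\sfv$ and so its rescaled trajectory converges to Brownian motion with diffusion constant $2$ per coordinate; the full BRW $(Y_t)$ converges to a BBM with this diffusion, branching rate $\alpha$, and death rate $1$, and the rescaled target boxes become $B_{-\mathsf e_1}(1)$, $B_0(1)$, $B_{\mathsf e_1}(1)$. Since $\alpha>1$, the single-particle BBM survives with probability $q>0$, and on survival its mass grows like $e^{(\alpha-1)t}$ and spreads diffusively. Hence, by taking $h_0$ large depending only on $\alpha$, for any fixed $M\in\mathbb N$ the BBM started at any point of $B_0(1)$ has at least $M$ particles in each of the three rescaled regions at time $h_0$ with probability at least $q/2$. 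Transferring back via the coupling, the analogous bound holds for $(Y_t)$ with $\sqrt{\sfv}$-scale boxes once $\sfv$ is large.

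\emph{Amplification and main obstacle.} Each $x\in A$ seeds a sub-BRW; I would obtain approximate independence between these by restricting to $\sfv^{\varepsilon_0}$ well-separated roots inside sub-boxes of side $\sfv^{1/2-\varepsilon_0/d}$ and invoking Lemma~\ref{lem_covariances}. A second-moment computation then shows that, with probability at least $1-\sfv^{-\varepsilon_0/2}$, a fraction $q/4$ of the lineages succeed simultaneously, giving at least $(q/4)M\sfv^{\varepsilon_0}\ge \sfv^{\varepsilon_0}$ infections in each target box. The main obstacle is the coupling step: different lineages share a single interchange configuration, and each transmission reveals information about it that could bias future transmissions. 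This is precisely the issue resolved by the deterministic-initial-data version of Lemma~\ref{lem_coupling_rate_one}, which allows us to re-couple to a product Bernoulli environment after each revelation, paying only the errors $g^\uparrow, g^\downarrow$ and $\mathrm{err}_{\mathrm{coup}}$; ensuring all such errors remain polynomial in $\sfv^{-1}$ is what forces $\varepsilon_0$ to be small.
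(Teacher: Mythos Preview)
Your high-level strategy (dominate from below by a supercritical BRW, pass to the BBM limit, amplify over the $|A|$ starting points) matches the paper's. However, the implementation has a genuine gap, and you miss the structural fact that makes this proposition much easier than you suggest.

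The object $(Y_t)$ you construct, with particles ``moving under the interchange flow'' and branching when a transmission lands on a healthy neighbor, is not a BRW: its particles are not independent, and its branching law is environment-dependent. Your proposed remedy, controlling the branching rate through density estimates via Lemmas~\ref{lem_coupling_rate_one} and~\ref{lem_integral_gs}, does not address the dependence between the \emph{motions} of the particles, which is what you need for the BBM scaling limit to apply. Restricting to well-separated roots handles dependence between lineages at time~$0$, but not between particles within a lineage, nor between lineages once they have diffused for time of order~$1$ (which is exactly the regime in which they fill $\sqrt{\sfv}$-boxes).

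The paper avoids all of this with two observations. First, since the process starts from $\hat\pi_p^A$, whenever $t<T^A$ and a transmission arrow points to a vertex $y\notin\Psi_t^A$, the interchange flow from $y$ back to time $0$ lands outside $A$ and has never intersected $(\Psi^A_s)_{s\le t}$; hence $\zeta_t(y)$ is still an untouched Bernoulli($p$) sample, and the effective branching rate is \emph{exactly} $2d\lambda p$, with no density argument and no re-coupling after revelations. Second, to obtain a genuine BRW $(\eta_t)$ with independent walkers, the paper pairs each infected particle with a walker that copies the interchange motion except on the set $\mathcal{S}^{(j)}$ of times when that particle neighbors another infected one, where it instead follows an independent random walk $Y^{(j)}$. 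On the good event of Lemma~\ref{lem_good_event_Z}, $\mathrm{Leb}(\mathcal{S}^{(j)})\le\sfv^{-1/4}$, so the discrepancy $\|X^{(j)}_t-W^{(j)}_t\|$ stays below $\sfv^{7/16}\ll\tfrac12\sqrt{\sfv}$ (Lemma~\ref{lem_D_and_E}), and infected particles stay within the same $\sqrt{\sfv}$-boxes as their paired walkers. The $|A|$ lineages in $(\eta_t)$ are then genuinely independent, and Corollary~\ref{cor_multiple_brw} gives the filling directly via a Chernoff bound, with no need for second moments or Lemma~\ref{lem_covariances}.
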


The proof of this proposition will require significant preparatory work, to be carried out in the following subsections. In Section~\ref{ss_bbm}, we consider  two auxiliary processes, namely \emph{branching Brownian motions} and \emph{branching random walks}. Using branching Brownian motions as a stepping stone, we prove that branching random walks satisfy a statement analogous to Proposition~\ref{prop_estrela_vermelha_new}, see Corollary~\ref{cor_multiple_brw} below.  In Section~\ref{ss_coupling_of_brw}, we construct a coupling between the interchange-and-contact process and branching random walks and prove Proposition~\ref{prop_estrela_vermelha_new}. Finally, in Section~\ref{ss_propagation_deterministic} we will state and prove a version of Proposition~\ref{prop_estrela_vermelha_new} in which the initial configuration of the process is deterministic.

\subsection{Branching Brownian motion and branching random walk}\label{ss_bbm}
In this section, we introduce branching Brownian motion and branching random walks. 
Our aim is to obtain a propagation result for the latter, Corollary~\ref{cor_multiple_brw} below. 
In order to prove it, we will appeal to an analogous result for branching Brownian motion (which can be obtained from previous work by Biggins~\cite{Biggins}), and exploit the fact that branching Brownian motion is the scaling limit of branching random walks.

\begin{definition}[Branching Brownian motion]
Let~$\beta > 0$. We consider a process of particles moving in~$\mathbb R^d$, with branching and deaths, as follows. At time~0, there are finitely many particles sitting at points of~$\mathbb R^d$. At any given time, existing particles behave independently of each other. Particles move as standard Brownian motions, and also (independently of the motion) die with rate 1 and split into two with rate~$\beta$ (in this latter case, the two new particles are placed in the same location that the parent was occupying). We represent a configuration~$\mathscr{B}$ of this process as a sum of Dirac measures,~$\mathscr B = \sum_{i=1}^m \delta_{x_i}$, where~$m$ is the number of particles in~$\mathscr B$ and~$x_1,\ldots,x_m$ are their locations (enumerated in some arbitrary way). The process is then denoted by~$(\mathscr{B}_t)_{t \ge 0}$.
\end{definition}

Since each particle in this process dies with rate 1 and is replaced by two particles with rate~$\beta$, the extinction probability~$q$ is the smallest solution in~$[0,1]$ of~$q = \frac{1}{1+\beta} + \frac{\beta}{1+\beta}\cdot q^2$. Hence,~$q = 1$ if~$\beta \le 1$, and~$q = \frac{1}{\beta}$ otherwise.

\begin{lemma}
	\label{lem_bbm}
	Let~$\beta > 1$. For any~$k \in \mathbb N$ and~$\alpha \in (0,1-\frac{1}{\beta})$, there exists~$h > 0$ (depending on~$\beta, k, \alpha$) such that the following holds. Let~$(\mathscr B_t)_{t \ge 0}$ denote the branching Brownian motion described above with parameter~$\beta$ and started from a single particle at the origin. Then,
	\begin{equation*}
		\mathbb P (\mathscr{B}_h(B_x(\tfrac12)) \ge k \text{ for all } x \in B_0(8)) > \alpha.
	\end{equation*}
\end{lemma}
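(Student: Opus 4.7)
The plan is to reduce the statement, via a finite covering argument, to showing that for any fixed bounded set $B \subseteq \RR^d$ with nonempty interior,
\begin{equation}\label{eq_plan_bbm_single}
\lim_{h \to \infty} \mathbb P(\mathscr B_h(B) \ge k) = 1 - \tfrac{1}{\beta}.
\end{equation}
For the covering step, choose a finite set $\{x_1,\ldots,x_M\} \subseteq B_0(8)$ such that every $y \in B_0(8)$ satisfies $\|y-x_i\| \le \tfrac{1}{4}$ for some $i$. For such $i$, $B_{x_i}(\tfrac{1}{4}) \subseteq B_y(\tfrac{1}{2})$, so the event in the lemma contains
\[
\mathcal G_h := \{\mathscr B_h(B_{x_i}(\tfrac{1}{4})) \ge k \text{ for all } i = 1, \ldots, M\};
\]
hence it suffices to produce an $h$ with $\mathbb P(\mathcal G_h) > \alpha$.

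To prove \eqref{eq_plan_bbm_single}, let $\mathcal S$ denote the event of non-extinction of the BBM, which has probability $1 - 1/\beta$ since $\beta > 1$. The many-to-one identity yields
\[
\mathbb E[\mathscr B_h(B)] = e^{(\beta - 1) h} \cdot \mathbb P_0(W_h \in B),
\]
where $(W_t)_{t \ge 0}$ is a standard Brownian motion started at the origin. For any fixed bounded $B$ of positive Lebesgue measure, the probability on the right is of order $h^{-d/2}$, so $\mathbb E[\mathscr B_h(B)] \to \infty$. Combining this with the $L^2$-convergence of supercritical BBM (Biggins~\cite{Biggins}), $\mathscr B_h(B)/\mathbb E[\mathscr B_h(B)]$ converges in $L^2$ to a nonnegative limit $W$ with $\{W > 0\} = \mathcal S$ almost surely; thus on $\mathcal S$, $\mathscr B_h(B) \to \infty$ a.s., which gives \eqref{eq_plan_bbm_single}. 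Applying this for each of the finitely many $x_i$ and intersecting, $\mathbf 1_{\mathcal G_h} \to \mathbf 1_{\mathcal S}$ almost surely as $h \to \infty$, so $\mathbb P(\mathcal G_h) \to 1 - 1/\beta$, and since $\alpha < 1 - 1/\beta$ some sufficiently large $h$ satisfies $\mathbb P(\mathcal G_h) > \alpha$.

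The main subtlety lies in justifying the $L^2$-convergence step for small bounded sets $B$. When a direct citation is unavailable, one can instead establish the bound $\mathrm{Var}(\mathscr B_h(B)) \le C\,(\mathbb E[\mathscr B_h(B)])^2$ via the many-to-two spine decomposition --- the dominant contribution to the second moment comes from pairs of particles whose most recent common ancestor branched at an early time, which is comparable to the square of the first moment --- and then apply a Paley--Zygmund argument together with the monotone-decreasing behavior of $e^{-(\beta-1)h}\mathbb E[\mathscr B_h(\cdot)]$ to obtain that $\mathscr B_h(B) \to \infty$ a.s. on $\mathcal S$, which is the only fact the covering argument actually requires.
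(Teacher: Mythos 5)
Your proposal follows essentially the same route as the paper: the paper also covers $B_0(8)$ by finitely many small balls and then invokes Biggins' results (Theorem~3 and Corollary~4 of~\cite{Biggins}, which transfer to BBM) for the local growth of the particle counts, so the covering step and the reliance on Biggins are the same in substance.

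One step as written is not quite right, though it is easily repaired. From ``$\mathscr B_h(B)/\mathbb E[\mathscr B_h(B)]$ converges in $L^2$ to $W$'' you conclude ``thus on $\mathcal S$, $\mathscr B_h(B)\to\infty$ a.s.'' and later ``$\mathbf 1_{\mathcal G_h}\to\mathbf 1_{\mathcal S}$ almost surely''; $L^2$ convergence only gives convergence in probability, so the almost-sure statements do not follow as stated. Two fixes: either cite the almost-sure form of Biggins' uniform convergence theorem (this is exactly what the paper does, and it yields $\liminf_t \mathscr B_t(B_x(r))/c(r)^t>0$ a.s.\ on survival, uniformly over $x$ in compacts), or keep only convergence in probability and note that it suffices, since for each fixed ball $\mathbb P(\mathcal S\cap\{\mathscr B_h(B)<k\})\le \mathbb P(\mathcal S, W\le\delta)+\mathbb P(|\mathscr B_h(B)/\mathbb E[\mathscr B_h(B)]-W|>\delta/2)$ once $k/\mathbb E[\mathscr B_h(B)]<\delta/2$, and both terms can be made small; summing over the finitely many centers gives $\liminf_h\mathbb P(\mathcal G_h)\ge 1-\tfrac1\beta>\alpha$, which is all you need. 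Finally, the Paley--Zygmund fallback you sketch only yields a uniform positive lower bound on $\mathbb P(\mathscr B_h(B)\ge k)$, not a bound approaching $1-\tfrac1\beta$, so it cannot by itself replace the Biggins-type convergence; the main route (suitably patched) is the one to keep.
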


\begin{proof}
  { The proof of this lemma is indeed a simple consequence of Theorem~3 and Corollary~4 in \cite{Biggins}, whose proofs, as commented at the end of that paper, are essentially the same for the branching Brownian motion or branching random walk. One gets that as $t \to \infty$  and for each fixed $r$ there exists $c(r)>1$ so that $\liminf_{t \to \infty}\mathscr{B}_t(B_x(r)) /c(r)^t$ is positive and this holds  uniformly for $x$ over compacts. Now it suffices to take $r>0$ sufficiently small and a finite set $F$ so that each $B_x(1/2)$ with $x \in B_0(8)$ contains at least one ball $B_u(r)$ for some $u \in F$.}
\end{proof}

\begin{definition}[Branching random walk]
Let~$\beta > 0$ and~$\mathsf v > 0$. We consider a process of particles moving in~$\mathbb Z^d$, with branching and deaths, as follows.
Initially, there are finitely many particles sitting at points of~$\mathbb Z^d$. At any given time, particles behave independently of each other; they jump to each neighboring position with rate~$\mathsf{v}$, die with rate 1, and split into two (which are placed in the same location) with rate~$\beta$.  A configuration~$\eta$ of this process is represented as a sum of Dirac measures on~$\mathbb Z^d$, representing the locations of existing particles. The process is then denoted~$(\eta_{\sfv,t})_{t \ge 0}$ (omitting~$\beta$ from the notation), or simply by~$(\eta_{t})_{t \ge 0}$ when~$\sfv$ is clear from the context.
\end{definition}

Fix~$\beta > 1$. Define 
\[\bar \eta_{\sfv,t} := \sum_x \eta_{\sfv,t}(x) \cdot \delta_{\lfloor x/\sqrt{\mathsf v}\rfloor},\quad t \ge 0,\]
that is,~$(\bar \eta_{\sfv,t})$ is obtained from~$(\eta_{\sfv,t})$ by scaling space by~$\frac{1}{\lfloor\sqrt{\mathsf v}\rfloor}$. The convergence
\begin{equation*}
	(\bar\eta_{\sfv,t})_{t \ge 0} \xrightarrow[\text{(d)}]{\mathsf v \to \infty} (\mathscr{B}_t)_{t \ge 0},
\end{equation*}
follows from Donsker's theorem, where the limiting branching Brownian motion also has reproduction rate~$\beta$, and the convergence is with respect to the Skorohod topology on the space of c\`adl\`ag trajectories on finite point measures on~$\mathbb R^d$. As a consequence of this convergence and of Lemma~\ref{lem_bbm}, we obtain the following.
\begin{lemma}
	\label{lem_brw}
	Let~$\beta > 1$. For any~$k \in \mathbb N$ and~$\alpha \in (0, 1 - \frac{1}{\beta})$, there exist~$h > 0$ such that the following holds for~$\mathsf v$ large enough (both $h, \sfv$ depending on~$\beta,k,\alpha$). Let~$(\eta_t)_{t \ge 0}$ denote the branching random walk described above with parameters~$\beta$ and~$\mathsf v$, started from a single particle at the origin. Then,
	\begin{equation*}
		\mathbb P ( \eta_{h}(B_x(\tfrac{\sqrt{\mathsf v}}{2})) \ge k \text{ for any } x \in B_0(8\sqrt{\mathsf v}))> \alpha.
	\end{equation*}
\end{lemma}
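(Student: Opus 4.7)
The natural strategy is to transfer the branching Brownian motion (BBM) statement from Lemma~\ref{lem_bbm} to the branching random walk (BRW) using the diffusive convergence $\bar\eta_{\sfv,t} \Rightarrow \mathscr B_t$ stated immediately before the lemma.

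First, pick $\alpha' \in (\alpha, 1-1/\beta)$ and apply Lemma~\ref{lem_bbm} (whose proof, being based on Biggins' theorem, gives the same conclusion for \emph{any} fixed radius in place of $1/2$) with radius $1/4$ to obtain $h > 0$ such that
\[
\mathbb{P}\bigl(\mathscr B_h(B_y(1/4)) \ge k \text{ for all } y \in B_0(8)\bigr) > \alpha'.
\]
Fix a finite $(1/4)$-net $F \subseteq B_0(8)$ (so that every $y \in B_0(8)$ is within $\ell_\infty$-distance $1/4$ of some $u \in F$). Clearly the event $\{\mathscr B_h(B_u(1/4)) \ge k \text{ for all } u \in F\}$ then also has probability exceeding $\alpha'$.

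Next, observe that the event $\mathcal{E} := \{\mu : \mu(B_u(1/4)) \ge k \text{ for all } u \in F\}$ is open in the vague topology on locally finite point measures on $\mathbb{R}^d$, because $\mu \mapsto \mu(B_u(1/4))$ is lower semicontinuous for each open ball, and the intersection is over the finite set $F$. Combining the convergence $\bar\eta_{\sfv,h} \Rightarrow \mathscr B_h$ with the portmanteau theorem,
\[
\liminf_{\sfv \to \infty} \mathbb{P}(\bar\eta_{\sfv,h} \in \mathcal{E}) \ge \mathbb{P}(\mathscr B_h \in \mathcal{E}) > \alpha',
\]
so for all sufficiently large $\sfv$ this probability exceeds $\alpha$.

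Finally, we relate this finite-net event back to the stated conclusion. For any $x \in B_0(8\sqrt{\sfv}) \cap \mathbb{Z}^d$ we have $x/\sqrt{\sfv} \in B_0(8)$, hence there is $u \in F$ with $\|u - x/\sqrt{\sfv}\|_\infty \le 1/4$, which yields $B_u(1/4) \subseteq B_{x/\sqrt{\sfv}}(1/2)$ and, upon unscaling, $B_{u\sqrt{\sfv}}(\sqrt{\sfv}/4) \subseteq B_x(\sqrt{\sfv}/2)$. Therefore $\eta_{\sfv,h}(B_x(\sqrt{\sfv}/2)) \ge \bar\eta_{\sfv,h}(B_u(1/4)) \ge k$ on $\mathcal E$, and the lemma follows. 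The only real nuisance is bookkeeping: reconciling the continuous BBM statement (indexed by $y \in B_0(8)$) with the lattice BRW statement (indexed by $x \in B_0(8\sqrt{\sfv}) \cap \mathbb Z^d$) forces the net mesh to be taken strictly smaller than the outer ball radius so that the inclusions above hold, and the $O(1/\sqrt{\sfv})$ discrepancy coming from the floor $\lfloor\cdot/\sqrt{\sfv}\rfloor$ in the definition of $\bar\eta_{\sfv,\cdot}$ must be absorbed by tightening the net mesh to slightly less than $1/4$ for large $\sfv$.
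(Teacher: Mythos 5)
Your argument is correct and is essentially the paper's intended route: the paper proves Lemma~\ref{lem_brw} simply by citing the Donsker-type convergence $\bar\eta_{\sfv,t}\Rightarrow\mathscr B_t$ together with Lemma~\ref{lem_bbm}, and your finite-net plus portmanteau argument (using that the proof of Lemma~\ref{lem_bbm} works for any fixed radius, e.g. $1/4$) is exactly the standard way to fill in those details. The only points worth a word in a write-up are that the balls in the open event $\mathcal E$ should be taken open (or one uses that $\mathscr B_h$ a.s.\ puts no mass on their boundaries) and that fixed-time marginal convergence follows from Skorohod convergence since any deterministic $h$ is a.s.\ a continuity time of the BBM; both are routine.
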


In all that follows, we fix~$p \in (0,1]$ and~$\lambda > \frac{1}{2dp}$.

\bigskip
\textbf{Choice of~$h_0$.}  
We take
\begin{equation}\label{eq_choices_h0}
	\beta = 2dp\lambda,\quad \alpha =  \frac{\beta - 1}{2\beta},\quad k = \frac{2}{\alpha},
\end{equation}
and fix~$h_0$ as the value of~$h$ corresponding to~$\beta,k,\alpha$ in Lemma~\ref{lem_brw}.
The reason for these choices will become clear in the proof of the following. 

\begin{corollary}
	\label{cor_multiple_brw}
	The following holds for~$\mathsf v$ large enough. Let~$A \subseteq B_0(\sqrt{\mathsf v})$, and let~$(\eta_t)_{t \ge 0}$ be the branching random walk with parameters~$\beta = 2dp\lambda$ and~$\mathsf{v}$, with~$\eta_0 = \sum_{x \in A} \delta_{x}$. Then, letting~$h_0$ be chosen as above, we have
	\[
		\mathbb P (\eta_{h_0}(B_x(\tfrac{\sqrt{\mathsf v}}{2})) \ge |A| \text{ for any } x \in B_0(4\sqrt{\mathsf v})) > 1 -  \exp \Bigl\{- \frac{\beta - 1}{16\beta} \cdot |A|\Bigr\}.
	\]
\end{corollary}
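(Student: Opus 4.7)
The plan is to exploit the branching property of the branching random walk: starting from $\eta_0 = \sum_{a \in A} \delta_a$ is distributionally the same as the superposition of $|A|$ independent copies of the branching random walk, one started from each $a \in A$. For each $a \in A$, let $(\eta^a_t)_{t \ge 0}$ denote the corresponding component, so that $\eta_{h_0} = \sum_{a \in A}\eta^a_{h_0}$ and the summands are independent.

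For each $a \in A$, introduce the indicator
\[
Y_a := \mathds{1}\bigl\{ \eta^a_{h_0}(B_x(\tfrac{\sqrt{\sfv}}{2})) \ge k \text{ for every } x \in B_0(4\sqrt{\sfv})\bigr\}.
\]
By translation invariance and the branching property, $(\eta^a_t)$ has the law of the branching random walk started from a single particle at $0$, translated by $a$. Since $a \in B_0(\sqrt{\sfv})$ and $x \in B_0(4\sqrt{\sfv})$, we have $x-a \in B_0(5\sqrt{\sfv}) \subseteq B_0(8\sqrt{\sfv})$, so Lemma~\ref{lem_brw} (applied with $\beta$, $k$, $\alpha$ as in~\eqref{eq_choices_h0}) gives $\mathbb{P}(Y_a = 1) > \alpha$, provided $\sfv$ is large. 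The variables $(Y_a)_{a \in A}$ are independent by construction.

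The key observation now is that on the event $\{\sum_{a \in A} Y_a \ge |A|/k\}$, for every $x \in B_0(4\sqrt{\sfv})$ the ball $B_x(\sqrt{\sfv}/2)$ receives at least $k$ particles from each $a$ with $Y_a = 1$, hence at least $k \cdot |A|/k = |A|$ particles in total, which is exactly the conclusion we seek. By our choice $k = 2/\alpha$, the threshold $|A|/k$ equals $\alpha|A|/2$, which is half of the lower bound $\alpha|A|$ on $\mathbb{E}[\sum_a Y_a]$. Thus it only remains to apply a standard multiplicative Chernoff bound for sums of independent Bernoulli variables (with deviation parameter $\delta = 1/2$), yielding
\[
\mathbb{P}\Bigl(\sum_{a \in A} Y_a < \tfrac{\alpha |A|}{2} \Bigr) \le \exp\Bigl\{-\tfrac{\alpha |A|}{8}\Bigr\} = \exp\Bigl\{-\tfrac{\beta-1}{16\beta} |A|\Bigr\},
\]
where the last equality uses $\alpha = (\beta-1)/(2\beta)$. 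There is no real obstacle here; the only delicate point is verifying the constant bookkeeping ($k$, $\alpha$, and the radii $4\sqrt{\sfv}$ vs.\ $8\sqrt{\sfv}$), all of which are built into the choices made in~\eqref{eq_choices_h0} precisely so that the arithmetic in the final Chernoff step matches the target exponent.
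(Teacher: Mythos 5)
Your proof is correct and follows essentially the same route as the paper: decompose $\eta$ into $|A|$ independent single-particle branching random walks, apply Lemma~\ref{lem_brw} to an indicator for each starting point, and finish with a multiplicative Chernoff bound using the choices of $k$ and $\alpha$ in~\eqref{eq_choices_h0}. The only (harmless) cosmetic difference is that you define the indicators over the fixed ball $B_0(4\sqrt{\mathsf v})$ (using $5\sqrt{\mathsf v}\le 8\sqrt{\mathsf v}$ up front), whereas the paper defines them over $B_x(8\sqrt{\mathsf v})$ and invokes the inclusion $B_0(4\sqrt{\mathsf v})\subseteq B_x(8\sqrt{\mathsf v})$ only at the end.
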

\begin{proof}
	We construct the branching random walk~$(\eta_t)_{t \ge 0}$ as~$\eta_t := \sum_{x \in A} \eta^{(x)}_t$, where for each~$x \in A$,~$(\eta^{(x)}_t)_{t \ge 0}$ is a branching random walk with~$\eta^{(x)}_0 = \delta_{x}$, and these are independent for different choices of~$x$. 

	Let~$k$ and~$\alpha$ be as in~\eqref{eq_choices_h0}. For each~$x \in A$, let~$X_x$ denote the indicator function of the event that~$\eta^{(x)}_{h_0}(B_y(\sqrt{\mathsf v}/2)) \ge k$ for each~$y \in B_x(8\sqrt{\mathsf v})$. Then, by the choice of~$h_0$, we have~$\sum_{x \in A} X_x \sim \mathrm{Bin}(|A|,\mathsf p)$, with~$\mathsf p \ge \alpha$. A standard Chernoff bound (e.g. Corollary 27.7 in~\cite{FK}) then gives
	\[\mathbb P\Big(\sum_{x \in A} X_x < |A|\alpha/2\Big) \le \exp\Big\{-\frac{|A|\alpha}{8}\Big\} = \exp\Big\{-\frac{\beta - 1}{16\beta} \cdot |A|\Big\}.\]
	Next, since~$A \subseteq B_0(\sqrt \mathsf v)$, {each} ~$y \in B_0(4\sqrt{\mathsf v})$ belongs to~$B_x(8\sqrt{\mathsf v})$ for {all}~$x \in A$. Hence, if~$\sum_{x \in A} X_x \ge |A|\alpha/2$,  for any~$y \in B_0(4\sqrt \mathsf v)$ we have
	\[\eta_{h_0}(B_y(\tfrac{\sqrt \mathsf v}{2})) \ge \sum_{x\in A:X_x = 1} \eta_{h_0}^{(x)}(B_y(\tfrac{\sqrt \mathsf v}{2})) \ge k\sum_{x \in A} X_x \ge k \cdot |A|\cdot \frac{\alpha}{2} = |A|. \qedhere\]
\end{proof}

\subsection{Coupling between interchange-and-contact process and branching random walk}\label{ss_coupling_of_brw}
Throughout this section, we fix~$\lambda > 0$,~$\mathsf v > 0$,~$p \in (0,1]$, and a finite set~$A \subset \mathbb Z^d$. Recall the measure~$\hat\pi_p^A$ (Definition~\ref{def_pi_hat}) obtained by assigning state~$\statei$ to every vertice in~$A$, and~$\stateh$ with probability~$p$ and~$0$ with probability~$1-p$, independently, outside $A$. 
We will define a coupling between
\begin{equation*}
	\begin{array}{rl} 
    (\zeta_t)_{t \ge 0}: &\text{interchange-and-contact process} \\
    &\text{with parameters } \mathsf v,\;\lambda \\
    &\text{started from $\zeta_0 \sim \hat{\pi}^A_p$}
    \end{array}
    \quad \text{and} \quad
    \begin{array}{rl}
    (\eta_t)_{t \ge 0}: 
    &\text{branching random walk} \\
    &\text{with parameters } \mathsf v,\;\beta = 2d\lambda p\\
    &\text{started from }\eta_0 := \sum_{x\in A} \delta_x.\end{array}
\end{equation*}
	The coupling will have the property that, at least for a period of time, each infected particle in~$(\zeta_t)$  has a random walker counterpart in~$(\eta_t)$, and these two are never too far from each other in space, with high probability. 
To avoid confusion, we reserve the term `particle' for the interchange-and-contact process, and the term `walker' for the branching random walk.

We work on a probability space in which~$\zeta_0$ with law~$\hat{\pi}^A_p$ and the graphical representation $H$ of the interchange-and-contact process with parameters~$\lambda$ and~$\sfv$ are defined (and are independent). We will later add some additional (and independent) randomness to this space. \\[.2cm]
\textbf{Description of coupling.} Using the graphical representation~$H$, we construct
the process~$(\zeta_t)_{t \ge 0}$ started from~$\zeta_0$ and the process $(\Psi^A_t)_{t \ge 0}$, the containment flow from $A$ (see \eqref{eq_flow_from_A} in Definition~\ref{def_containment_try}).
Recall the definition of~$T^A$ in~\eqref{eq_def_of_TA} and also that
\begin{equation*} \mathds{1}\{\zeta_t(x) = \statei\} \leq \Psi^A_t(x),\quad t \ge 0,\; x \in \mathbb Z^d.
\end{equation*}
Note that for ~$t < T^A$, there is no transmission mark which both starts and ends in $\{x\colon\zeta_t(x) = \statei\}$. 
It is also important to note that~$T^A$ does not depend on~$\{\zeta_0(x): x \notin A\}$. 

Proceeding similarly to what we did in the proof of Lemma~\ref{lem_good_event_Z},
let~$0< \mathsf s_1 < \mathsf s_2 <\cdots$ denote the times at which the cardinality of the set $\{x\colon\zeta_t(x) = \statei\}$ increases one unit; for each~$j$, there is some~$z_j$ such that~$\zeta_{\mathsf s_j-}(z_j) \neq \statei$ and~$\zeta_{\mathsf s_j}(z_j) = \statei$. Enumerate~$A = \{y_1,\ldots,y_m\}$ and define
	\begin{align*}&(x_1,\mathsf t_1) = (y_1,0),\;\ldots,\; (x_m,\mathsf t_m) = (y_m,0),\\ &\hspace{3cm} (x_{m+1},\mathsf t_{m+1}) = (z_1,\mathsf s_1),\; (x_{m+2},\mathsf t_{m+2}) = (z_2,\mathsf s_2),\;\ldots.\end{align*}
		For each~$j$, an infection appears at a particle located in~$x_j$ at time~$\mathsf t_j$ (or, in case~$\mathsf t_j = 0$, the infection was already initially present). This infected particle then moves for~$t \ge \mathsf t_j$ according to the interchange flow~$t \mapsto \Phi(x_j,\mathsf t_j,t)$, and  eventually encounters a recovery mark and becomes healthy; we let~$\mathsf t_j'$ be the time when this occurs. We also let
		\[X^{(j)}_t := \Phi(x_j,\mathsf t_j,t),\quad t \ge \mathsf t_j.\]
		Although we define this process for all~$t \ge \mathsf t_j$, we will be mostly interested in it for~$t \in [\mathsf t_j,\mathsf t_j')$. In particular, we have the decomposition
\begin{equation}
	\label{eq_decomposition}
		\{(x,t): \zeta_t(x) = \statei\} = \bigcup_{j: t \in [\mathsf t_j,\mathsf t_j')}   \{(X^{(j)}_t,t)\}.
\end{equation}
Now define
\[\mathcal S^{(j)} := \{\mathsf t_j\} \cup \{t \in (\mathsf t_j,\mathsf t_j'):\;  X^{(j)}_{t-} \sim X^{(k)}_{t-} \text{ for some } k \neq j\}, \]
that is,~$\mathcal S^{(j)}$ contains~$\mathsf t_j$ (the time at which the~$j$-th infection appears in the system), together with all times~$t \in (\mathsf t_j, \mathsf t_j')$ with the property that immediately before~$t$, the particle carrying this infection had an infected neighbor.

We now want to introduce the process~$(\eta_t)_{t\ge 0}$ in this same probability space. 
This will be done in two stages. 
First, we will describe its behavior until time~$T^A$; during this period, each walker is associated to an infected particle. 
Both walker and infect particle appear at the same moment and the former (mostly) mimics the motion of the latter.
At time~$T^A$ (in case it is finite), the coupling breaks, and we let~$(\eta_t)_{t > T^A}$ evolve independently of~$(\zeta_t)_{t > T^A}$, following the law of a branching random walk started from~$\eta_{T^A}$.

To give the description of the first stage, we enlarge the probability space with a family
$((Y^{(j)}_t)_{t \ge 0}:\; j \in \mathbb N)$,
of independent continuous-time random walks on~$\mathbb{Z}^d$ which start at 0 and jump to each neighboring position with rate~$\mathsf v$ (they are also independent of~$\zeta_0$ and~$H$).
For~$j \in \{1,\ldots, |A|\}$, we define the walker trajectory~$(W^{(j)}_t)_{0 \le t \le \mathsf t_j' \wedge T^A}$ by setting
\[
	W^{(j)}_t = x_j + \sum_{s \in [0,t] \backslash \mathcal S^{(j)}}(X^{(j)}_s - X^{(j)}_{s-}) + \sum_{ s \in [0,t] \cap \mathcal S^{(j)}} (Y^{(j)}_s - Y^{(j)}_{s-}), \quad t \in [0,\mathsf t_j' \wedge T^A],
\]
that is, at times outside~$\mathcal{S}^{(j)}$, the walker mimics~$(X^{(j)}_t)$, and at times in~$\mathcal{S}^{(j)}$, it mimics the independent process~$(Y^{(j)}_t)$. 
Here and throughout, any sum over an uncountable index set is understood to have only finitely many non-zero terms.

Next, let~$n = \max\{j:\mathsf t_j < T^A\}$; we want to define the trajectory of the~$j$-th walker, for~$j \in \{|A|+1,\ldots,n\}$. 
This will be done inductively: fix~$j$ in this set, and assume that~$(W^{(i)}_t)$ has already been defined for all~$i < j$. By the definition of~$(x_j,\mathsf t_j)$, there exists some~$i < j$ such that at time~$\mathsf t_j$, there is an infection mark from~$(X^{(i)}_{\mathsf t_j},\mathsf t_j)$ towards~$(x_j,\mathsf t_j)$ (the infection with index~$i$ is the ``parent'' of the infection with index~$j$). We then let
\[
	W^{(j)}_t = W^{(i)}_{t_j} +  \sum_{s \in [t_j,t] \backslash \mathcal S^{(j)}}(X^{(j)}_s - X^{(j)}_{s-}) + \sum_{ s \in [t_j,t] \cap \mathcal S^{(j)}} (Y^{(j)}_s - Y^{(j)}_{s-}), \quad t \in [\mathsf t_j,\mathsf t_j' \wedge T^A],
\]
that is, the rule for the motion is the same as before, and the only difference is the starting position, which is taken as the same as the walker corresponding to the parent infection, at the time of transmission. We now set
\[\eta_t := \sum_{j:t \in [\mathsf t_j,\mathsf t_j')} \delta_{W^{(j)}_t},\qquad t \in [0,T^A).\]
To complete the description of the first stage, we only need to define~$\eta_{T^A}$ (in case~$T^A < \infty$). By definition, at time~$T^A$ there is a transmission mark from some vertex~$x \in \Psi^A_{T^A}$ to some neighboring vertex~$y \in \Psi^A_{T^A}$. Now, there are two cases.
\begin{itemize}
	\item If~$\zeta_{T^A-}(x)\neq \statei$, then this transmission mark has no real effect in the interchange-and-contact process, and it should not impact the branching random walk either, since up to this point, infected particles and walkers are in bijection. 
    We thus set~$\eta_{T^A} = \eta_{T^A-}$ in this case.
	\item If~$\zeta_{T^A-}(x) = \statei$, then by~\eqref{eq_decomposition} there is an index~$j$ such that~$X^{(j)}_{T_A-} = x$. We then set~$\eta_{T^A} = \eta_{T^A-} + \delta_{W^{(j)}_{T^A-}}$ (that is, we add a new walker at the same position of the parent, where~$j$ is the index of this parent). Note that, in case we also had~$\zeta_{T^A-}(y)=\statei$, this introduces a discrepancy: the new walker of~$(\eta_{T^A})$, represented by~$\delta_{W^{(j)}_{T^A-}}$, has no counterpart in the interchange-and-contact process.
\end{itemize}
Now that~$(\eta_t)$ is defined up to~$T^A$, as already mentioned, the process is defined to continue after~$T^A$ (in case~$T^A < \infty$) by behaving as a branching random walk, independently of~$(\zeta_t)_{t > T^A}$. This completes the description of the coupling.\\[.2cm]
In verifying that~$(\eta_t)_{t \ge 0}$ has the correct law of a branching random walk, it is immediately clear that distinct walkers move independently with the correct distribution, and walkers die with rate~$1$. 
The only point that requires a careful consideration is that walkers produce offspring (at their own location) with rate~$\beta = 2d\lambda p$. 
Of course, this only needs to be checked before time~$T^A$. 

To justify this, we argue as follows. Let~$t < T^A$, and consider a walker at time~$t$, say at~$W^{(i)}_t$. This walker is tied to the infected particle at~$X^{(i)}_t$. The infected particle encounters a transmission mark with rate~$2d\lambda$ (counting all directions); say that this happens at time~$t'$, with~$t \le t' < T^A$, and that the target position of the transmission mark is vertex~$y$. We then have~$y \notin \Psi^A_{t'}$ (since~$t' < T^A$). Letting~$y^*$ be the unique vertex such that~$\Phi(y^*,0,t') = y$, we have that the trajectory~$(\Phi(y^*,0,s))_{0 \le s \le t'}$ does not intersect~$(\Psi^A_s)_{0 \le s \le t'}$ at any point in time. This means that the particle/hole status of~$y$ at time~$t'$ is still in equilibrium (it is a particle with probability~$p$ and a hole with probability~$1-p$). If it is a hole, no new infection is created, so no new walker is introduced to~$\eta_{t'}$. If it is a particle, then a new infection appears, and a new walker is placed at the position~$W^{(i)}_{t'}$. This shows that existing walkers indeed create offspring at their own location with rate~$\beta=2d\lambda p$.


We would now like to control the distance between an infected particle and the walker to which it is paired. Note that a discrepancy may already be present at the time the infected particle appears (and the corresponding walker is born). Apart from this, if the Lebesgue measure of~$\mathcal{S}^{(j)}$  is not too large, then there is little time for any additional discrepancy to be introduced for the infected particle with index~$j$. 
For any~$j$, on the event~$\{\mathsf t_j < T^A\}$, we have 
\begin{align}\label{eq_discrepancy}
	\|X^{(j)}_t - W^{(j)}_t \| 
    &\le\| X^{(j)}_{\mathsf t_j} - W^{(j)}_{\mathsf t_j}\| +\| \mathcal{D}^{(j)}_t\| +\| \mathcal{E}^{(j)}_t\|\qquad \text{for all }t \in [\mathsf t_j, \mathsf t_j'\wedge T^A  ),\\
    \nonumber
    \text{where}\quad
    &\mathcal{D}^{(j)}_t :=  \sum_{\mathclap{s \in [\mathsf t_j,t] \cap \mathcal{S}^{(j)}}}\ ( X^{(j)}_s - X^{(j)}_{s-}),\qquad \mathcal{E}^{(j)}_t := \sum_{\mathclap{s \in [\mathsf t_j, t] \cap \mathcal{S}^{(j)} }}\ (Y^{(j)}_s - Y^{(j)}_{s-}).
\end{align}
These random variables are defined in the event~$\{\mathsf t_j < \infty\}$, and for all~$t \ge \mathsf t_j$.

\begin{figure}[t]
\begin{center}
\setlength\fboxsep{0cm}
\setlength\fboxrule{0.01cm}
\fbox{\includegraphics[width=0.9\textwidth]{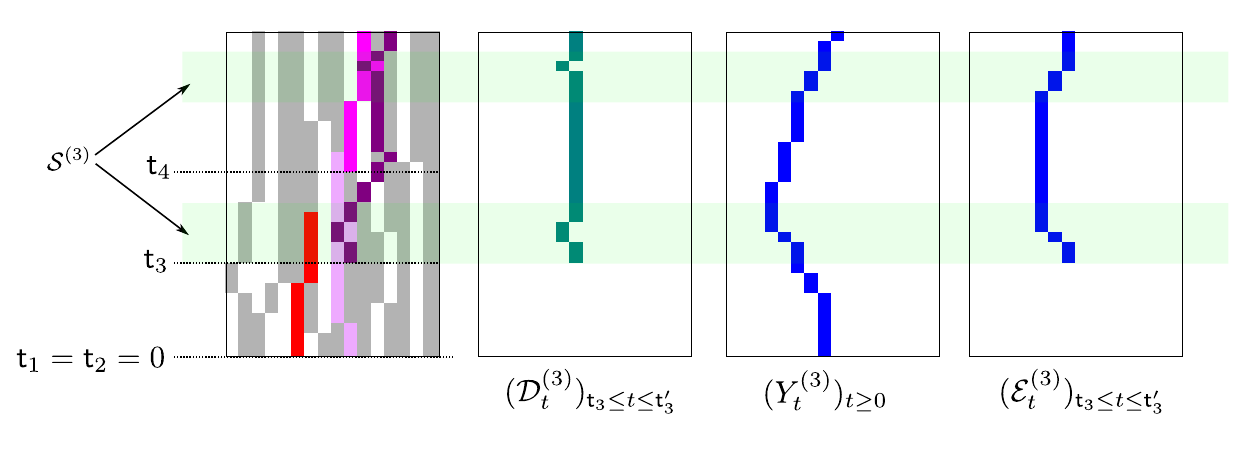}}
\end{center}
\label{fig_coupling}
\caption{Illustration of the processes~$(\mathcal D^{(j)}_t)_{t \in [\mathsf t_j,\mathsf t_j']}$ and~$(\mathcal E^{(j)}_t)_{t \in [\mathsf t_j,\mathsf t_j']}$. 
The interchange-and-contact process is depicted on the left. 
White spots are empty, and gray spots contain healthy particles.
For illustrative purposes, distinct infected particles are depicted with different colors. 
We follow the third infection, which appears at time~$\mathsf t_3$ whose path~$(X^{(3)}_t)_{t \ge \mathsf t_3}$ is colored in dark purple.
The set of times~$\mathcal S^{(3)}$ is highlighted: this is roughly the set of times when this third infection neighbors some other infection. 
The process~$(\mathcal D^{(3)}_t)$ mimics the jumps of~$(X^{(3)}_t)$ at times in~$\mathcal S^{(3)}$, and stays still otherwise. 
The process~$(\mathcal E^{(3)}_t)$ mimics the jumps of the independent random walk~$(Y^{(3)}_t)$ at times in~$\mathcal S^{(3)}$, and stays still otherwise.}
\end{figure}

Let~$\mathrm{Leb}(B)$ denote the Lebesgue measure of a set~$B \subseteq \mathbb{R}$. 

\begin{lemma} \label{lem_D_and_E}
There are constants~$c,C > 0$ such that for any~$j \in \mathbb N$ and any~$t \ge 0$ we have
	\begin{equation}\label{eq_bound_short_D}
		\mathbb P\left(\begin{array}{l}\mathsf t_j < \infty,\; \mathrm{Leb}([\mathsf t_j,\mathsf t_j+t] \cap \mathcal{S}^{(j)}) < \mathsf v^{-1/4},\\[.2cm] \max_{\mathsf t_j \leq s \le \mathsf t_j + t}\; (\|\mathcal{D}^{(j)}_s\| \vee \|\mathcal E^{(j)}_s \|) > \mathsf v^{7/16}\end{array}\right) < C\exp\{-c\mathsf v^{1/8}\}.
	\end{equation}
\end{lemma}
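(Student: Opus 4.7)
The approach is to reduce the problem to a coordinatewise application of the Kallenberg-type concentration inequality~\eqref{eq_kal} for pure-jump martingales. Introduce the stopping time
\[
\tau := \inf\{s \ge \mathsf t_j : \mathrm{Leb}([\mathsf t_j, s] \cap \mathcal S^{(j)}) \ge \mathsf v^{-1/4}\},
\]
so that on the event appearing in~\eqref{eq_bound_short_D} we automatically have $\tau > \mathsf t_j + t$. It therefore suffices to bound the probability that either of the stopped processes $(\mathcal D^{(j)}_{s \wedge \tau})$ and $(\mathcal E^{(j)}_{s \wedge \tau})$ has supremum norm exceeding $\mathsf v^{7/16}$ before time $\mathsf t_j + t$. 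Applying~\eqref{eq_kal} coordinatewise with $\kappa = 1$, $\sigma^2 = 2\mathsf v^{3/4}$ and $x = \mathsf v^{7/16}/\sqrt{d}$ will produce the required bound $C\exp\{-c\mathsf v^{1/8}\}$: the ratio $\kappa x/\sigma^2$ is of order $\mathsf v^{-5/16} \to 0$, so $\log(1 + \kappa x/\sigma^2)$ is of the same order, making the exponent of order $\mathsf v^{7/16} \cdot \mathsf v^{-5/16} = \mathsf v^{1/8}$.

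For $\mathcal E^{(j)}$ the martingale structure is essentially automatic. The auxiliary random walk $(Y^{(j)}_t)$ is, by construction, independent of the graphical representation, of $\zeta_0$, and of the other $(Y^{(k)})$, and $\mathcal S^{(j)}$ is measurable with respect to those. Hence each coordinate of $(\mathcal E^{(j)}_{s \wedge \tau})$ is a mean-zero pure-jump martingale with jumps in $\{-1, 0, +1\}$ and predictable quadratic variation
\[
2\mathsf v \cdot \mathrm{Leb}([\mathsf t_j, \cdot \wedge \tau] \cap \mathcal S^{(j)}) \le 2\mathsf v^{3/4},
\]
and~\eqref{eq_kal} applies.

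For $\mathcal D^{(j)}$ the strategy is the same, but we must check that in the natural filtration generated by the graphical representation, by $\zeta_0$ and by the $(Y^{(k)})$, the set $\mathcal S^{(j)}$ is predictable. This is the main delicate point: membership at time $s$ is decided by the neighboring relation $X^{(j)}_{s-} \sim X^{(k)}_{s-}$ holding for some other active infection index $k$, a condition that is left-continuous in $s$ and therefore predictable. Since each coordinate of $X^{(j)}_t = \Phi(x_j, \mathsf t_j, t)$ is a martingale of jump rate $2\mathsf v$ (by the independence of edge marks in Definition~\ref{def_interchange_flow}, applied after the strong-Markov reduction at the stopping time $\mathsf t_j$, using that the recovery and transmission marks do not interact with the interchange jumps tracking $X^{(j)}$), the integrated process $(\mathcal D^{(j)}_{s \wedge \tau})$ is a martingale with predictable quadratic variation bounded again by $2\mathsf v^{3/4}$. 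The same application of~\eqref{eq_kal} concludes, and a final union bound between $\mathcal D^{(j)}$ and $\mathcal E^{(j)}$ and over the $d$ coordinates yields~\eqref{eq_bound_short_D}.
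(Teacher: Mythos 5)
Your argument is correct, but it takes a genuinely different route from the paper. The paper extends $\mathcal D^{(j)}$ past $\mathsf t_j'$, performs an explicit time change that deletes the complement of $\mathcal S^{(j)}$, identifies the resulting process $\mathscr D^{(j)}$ as an honest rate-$\mathsf v$ continuous-time random walk, and then applies a standard maximal large-deviation bound for random walks (as in \cite[Proposition 2.4.5]{LL}) over a time window of length $\mathsf v^{3/4}$; the same is done for $\mathcal E^{(j)}$. You instead stop at the first time $\tau$ at which $\mathrm{Leb}([\mathsf t_j,\cdot]\cap\mathcal S^{(j)})$ reaches $\mathsf v^{-1/4}$ (noting correctly that this quantity is continuous, so there is no overshoot, and that the event in \eqref{eq_bound_short_D} forces $\tau>\mathsf t_j+t$), view each coordinate of $\mathcal D^{(j)}_{\cdot\wedge\tau}$ and $\mathcal E^{(j)}_{\cdot\wedge\tau}$ as a stochastic integral of the predictable indicator of $\mathcal S^{(j)}$ against a rate-$2\mathsf v$ coordinate martingale, bound the predictable quadratic variation by $2\mathsf v^{3/4}$, and invoke the Kallenberg inequality \eqref{eq_kal} already used for Lemma~\ref{lem_kallenberg}; the exponent calculation $\mathsf v^{7/16}\cdot\mathsf v^{-5/16}=\mathsf v^{1/8}$ matches the claimed bound. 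The substantive content is the same in both proofs — the jumps copied at $\mathcal S^{(j)}$-times are directionally unbiased (edge marks ring independently at rate $\mathsf v$ regardless of the neighbouring condition, and recovery/transmission marks and the $Y^{(k)}$ are independent of the jump marks), and their total intensity is controlled by $\mathrm{Leb}(\mathcal S^{(j)})$ — but you package it via predictability and optional stopping, which buys you a shorter argument with no need to identify the law of a time-changed process, at the cost of the filtration/compensator bookkeeping you rightly flag as the delicate point. Two small polish items: since the paper's $\|\cdot\|$ is the $\ell_\infty$-type norm used throughout, the factor $1/\sqrt d$ in your choice of $x$ is unnecessary (a plain union bound over coordinates with $x=\mathsf v^{7/16}$ suffices, and in any case only constants change); and the application of \eqref{eq_kal} to a process launched at the stopping time $\mathsf t_j$ should be justified by conditioning on $\mathcal F_{\mathsf t_j}$ on the event $\{\mathsf t_j<\infty\}$, exactly as the paper conditions in its time-change step.
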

The reason for the value~$\mathsf v^{-1/4}$ in the above is that we want to later apply the bound from Lemma~\ref{lem_good_event_Z} for the amount of time particles stay together. The reason for~$\mathsf v^{7/16}$ is as follows. Intuitively, a continuous-time random walk that jumps with rate~$\mathsf v$ to each neighboring location can reach distance of order~$(\mathsf v s)^{1/2}$ within time~$s$. Hence, if~$s = \mathsf v^{-1/4}$, the distance reached is about~$(\mathsf v \cdot \mathsf v^{-1/4})^{1/2} = \mathsf v^{3/8}$. We take~$7/16$ because it is larger than~$3/8$ and smaller than~$1/2$. We want it to be smaller than~$1/2$ because eventually, we want to say that if~$\eta_{h_0}$ has many walkers inside a ball of the form~$B_x(\tfrac12 \sqrt{\mathsf v})$, then~$\zeta_{h_0}$ has many infected particles inside the ball~$B_x(\sqrt \mathsf v)$. For this to work, the distance between each walker and the infected particle to which it is paired has to be smaller than~$\tfrac12\sqrt{\mathsf v}$.

\begin{proof}[Proof of Lemma~\ref{lem_D_and_E}]
	Fix~$j \in \mathbb N$. On the event~$\{\mathsf t_j < \infty\}$, for all~$t \ge \mathsf t_j$ define
	\[\tilde {\mathcal D}^{(j)}_t := \begin{cases}
		{ \mathcal D}^{(j)}_t &\text{if } t \in [\mathsf t_j, \mathsf t_j');\\[.2cm]
		{ \mathcal D}^{(j)}_{\mathsf t_j'} + \sum_{s \in (\mathsf t_j',t]} (X^{(j)}_s - X^{(j)}_{s-})&\text{if } t \ge \mathsf t_j'.
	\end{cases}\]
Before time~$\mathsf t_j'$, both~$(\mathcal D^{(j)}_t)$  and~$(\tilde{\mathcal D}^{(j)}_t)$ replicate the jumps of~$(X^{(j)}_t)$ in a selective way: a jump that happens at time~$t$ is only copied in case~$t \in \mathcal S^{(j)}$. Then, we complete the trajectory~$(\tilde{\mathcal D}^{(j)}_t)_{t \ge \mathsf t_j'}$ by saying that after time~$\mathsf t_j'$, this process just replicates \textit{all} jumps of~$(X^{(j)}_t)$ (regardless of whether or not the time of the jump belongs to~$\mathcal{S}^{(j)}$).

	Our next step is to do a time change in the trajectory of~$(\tilde{\mathcal D}^{(j)}_t)_{t \ge \mathsf t_j}$ so that it starts at time zero, and more importantly, we delete the time intervals corresponding to periods when it was not following the jumps of~$(X^{(j)}_t)$. 
	Formally, this is done as follows. First define
	\[\mathsf{L}^{(j)}_t :=\begin{cases} \mathrm{Leb}([\mathsf t_j, t] \cap \mathcal{S}^{(j)})&\text{if } t \in[\mathsf t_j, \mathsf t_j');\\[.1cm]
	\mathrm{Leb}(\mathcal{S}^{(j)}) + t - \mathsf t_j'&\text{if } t \ge \mathsf t_j'.\end{cases}\]
	This is a process that, up to time~$\mathsf t_j'$, increases with unit speed within~$\mathcal S^{(j)}$, and stays still otherwise. Then, define the pseudo-inverse
    \begin{align*}
    \mathscr{R}^{(j)}_r
        &:= \inf\{t \ge \mathsf t_j:\; \mathsf{L}^{(j)}_t = r\}, 
        \qquad r \ge 0,\qquad \text{(note that~$\mathscr R^{(j)}_0 = \mathsf t_j$)}\\
    \mathscr{D}^{(j)}_r 
        &:= \tilde{\mathcal D}^{(j)}_{\mathscr R^{(j)}_{\smash{r}}},\qquad r \ge 0.
    \end{align*}
	It is now not difficult to check that, conditionally on the event~$\{\mathsf t_j < \infty\}$,~$(\mathscr{D}^{\smash{(j)}}_r)_{r \ge 0}$ is a continuous-time random walk on~$\mathbb Z^d$ that starts at the origin and jumps to each neighboring position with rate~$\mathsf v$. To do this, it suffices to condition on the trajectory of this process up to say time~$r$, and to show that, in a time interval~$[r,r+\delta)$ with~$\delta$ small, it jumps with probability of order~$\mathsf v \delta$ to each neighboring location; we omit the details.
Next, note that for any~$t > 0$,
	\begin{align*} 
    \biggl\{
    &\begin{array}{l}
    \mathrm{Leb}([\mathsf t_j,\mathsf t_j + t] \cap \mathcal S^{(j)}) < \mathsf v^{-\frac14},\\
    \mathsf t_j < \infty,\; \max_{\mathsf t_j \le s \le \mathsf t_j + t} \| \mathcal D^{(j)}_s\| > \mathsf v^{\frac{7}{16}}
    \end{array}
    \biggr\}
    \subseteq 
    \biggl\{ \mathsf t_j < \infty,\; \max_{r \le \sfv^{-\frac14}} \| \mathscr D_r^{(j)} \| > \mathsf v^{\frac{7}{16}}
    \biggr\},\\
	\text{so}\qquad \mathbb P 
    \biggl(
    &\begin{array}{l}
    \mathrm{Leb}([\mathsf t_j,\mathsf t_j+t] \cap \mathcal{S}^{(j)}) < \mathsf v^{-\frac14},\\ 
    \mathsf t_j < \infty,\; \max_{\mathsf t_j \leq s \le \mathsf t_j + t}\; \|\mathcal{D}^{(j)}_s\|  > \mathsf v^{\frac{7}{16}}\end{array}
    \biggr)\le \mathbb P 
    \biggl(
    \max_{r \le \sfv^{-\frac14}} \| \mathscr D_r^{(j)} \| > \mathsf v^{\frac{7}{16}}\; \biggm|\; \mathsf t_j < \infty
    \biggr) \\
    &= \mathbb P\left( \max_{s \in [0, \mathsf v^{3/4}]} \|\mathcal X_s\| > \mathsf v^{\frac{7}{16}}\right),
	\end{align*}
	where~$(\mathcal{X}_s)_{s \ge 0}$ is the continuous-time random walk with~$\mathcal{X}_0 = 0$ that jumps to neighboring positions with rate 1. Using standard large deviations bounds for random walks (see e.g. \cite[Proposition 2.4.5]{LL}) and Poisson random variables, there exist~$c,C > 0$ such that, for any~$\alpha > 1$ and any~$S > 0$,
		\[\mathbb P\Bigl( \max_{0 \le s \le S} \|\mathcal X_s\| > \alpha \sqrt{S}\Bigr) \le C\exp\{-c\alpha^2\}.\]
		Applying this with~$S = \mathsf{v}^{3/4}$ and~$\alpha = \mathsf{v}^{1/16}$ gives the upper bound~$C \exp\{-c\mathsf v^{1/8}\}$.

		Finally, an entirely similar argument also shows that
		\[\mathbb P \left(\begin{array}{l}\mathsf t_j < \infty,\; \mathrm{Leb}([\mathsf t_j,\mathsf t_j+t] \cap \mathcal{S}^{(j)}) < \mathsf v^{-\frac14},\\[.2cm] \max_{\mathsf t_j \leq s \le \mathsf t_j + t}\; \|\mathcal{E}^{(j)}_s\|  > \mathsf v^{\frac{7}{16}}\end{array}\right) < C\exp\{-c\mathsf v^{1/8}\};\]
			we omit the details. This completes the proof.
\end{proof}

\begin{proof}[Proof of Proposition~\ref{prop_estrela_vermelha_new}]
	We work on a probability space where the coupling between~$(\zeta_t)$ and~$(\eta_t)$ described above is defined. We define three good events, the first being the one that appears in Lemma~\ref{lem_good_event_Z}, with~$h = h_0$:
	\[
		\mathcal G_1 := \{|\Psi^A_{h_0}| \le \mathsf v^{3\varepsilon_0},\; \mathcal K_{h_0}^A \le \mathsf v^{-1/4},\; T^A > h_0\}.
	\]
	Next, we let~$\mathcal G_2 := \cap_{j=1}^{\lceil \mathsf v^{3\varepsilon_0} \rceil} (E^{(j)})^c$, where~$E^{(j)}$ is the event
	\[
		\Big\{\mathsf t_j < h_0, \;\mathrm{Leb}([\mathsf t_j,h_0] \cap \mathcal S^{(j)}) \le \mathsf{v}^{-1/4},\; \max_{s \in [\mathsf t_j, \mathsf t_j' \wedge h_0]} (\|\mathcal D^{(j)}_s\| \vee \|\mathcal E^{(j)}_s\| ) > \mathsf{v}^{7/16} \Big\}.
	\]
	The third good event is the one that appears in the statement of Corollary~\ref{cor_multiple_brw}:
	\[
		\mathcal G_3 := \left\{ \eta_{h_0}(B_x(\tfrac{\sqrt{v}}{2})) \ge \lceil \mathsf v^{\varepsilon_0}\rceil \text{ for any } x \in B_0(4\sqrt{\mathsf v})\right\}.
	\]
	By Lemma~\ref{lem_good_event_Z}, Lemma~\ref{lem_D_and_E} and Corollary~\ref{cor_multiple_brw}, we have
	\[
		\mathbb P(\mathcal G_1 \cap \mathcal G_2 \cap \mathcal G_3) \ge 1 - \mathsf{v}^{-\varepsilon_0} - \mathsf v^{3\varepsilon_0} \cdot  C \exp\{ -c \mathsf v^{1/8}\} - \exp \left\{- \frac{2dp\lambda - 1}{32dp\lambda} \cdot \lceil \mathsf v^{\varepsilon_0}  \rceil\right\}.
	\]
	By taking~$\varepsilon_0$ small enough and then taking~$\mathsf v$ large enough, the r.h.s. above is larger than~$1-2\mathsf v^{-\varepsilon_0}$. We now claim that 
	\begin{equation}
	\label{eq_if_good_then0}
	\text{on $\mathcal G_1 \cap \mathcal G_2$, for any $j$ with $\mathsf t_j < h_0$, we have\;\; $\max_{\mathclap{s \in [\mathsf t_j, \mathsf t_j' \wedge h_0]}}\;\; (\| \mathcal D^{(j)}_s \| \vee \|  \mathcal{E}^{(j)}_s \|) \le \mathsf v^{\frac{7}{16}}.$}
	\end{equation}
	To prove this, assume that~$\mathcal G_1 \cap \mathcal G_2$ occurs and fix~$j$ such that~$\mathsf t_j < h_0$. Since
	\begin{equation}\label{eq_t_vs_Z}
	\max\{i: \mathsf t_i < h_0\} \le |\Psi^A_{h_0}| \le \mathsf v^{3\varepsilon_0},
	\end{equation}
    we have~$j \le \mathsf v^{3\varepsilon_0}$. Moreover, since~$E^{(j)}$ does not occur, we see that
    \begin{equation*}
    \mathrm{Leb}([\mathsf t_j, \mathsf t_j' \wedge h_0] \cap \mathcal S^{(j)}) \le \mathcal K^A_{h_0} \le \mathsf v^{-1/4}
    \quad \text{and} \quad
    \max_{s \in [\mathsf t_j, \mathsf t_j' \wedge h_0]} (\| \mathcal D^{(j)}_s \| \vee \|  \mathcal{E}^{(j)}_s \|) \le \mathsf v^{7/16}.
    \end{equation*}
	Next, we will prove that
    \begin{equation}
	\label{eq_if_good_then}
	\text{on $\mathcal G_1 \cap \mathcal G_2$, for any $j$ with $\mathsf t_j < h_0$, we have\;\; $\max_{\mathclap{s \in [\mathsf t_j, \mathsf t_j' \wedge h_0]}}\;\; \| X^{(j)}_s - W^{(j)}_s \| \le j \cdot (\mathsf v^{\frac{7}{16}} + 1)$.}
	\end{equation}
	Assume that~$\mathcal G_1 \cap \mathcal G_2$ occurs.  For~$j = 1$, we have~$\mathsf t_1 = 0$ and~$X^{(1)}_0 = W^{(1)}_0$, so 
	\[ \max_{s \in [0,\mathsf t_1' \wedge h_0]} \| X^{(1)}_s - W^{(1)}_s\| \stackrel{\eqref{eq_discrepancy}}{\le} \max_{s \in [0,\mathsf t_1' \wedge h_0]} (\| \mathcal D^{(1)}_s\| \vee \| \mathcal E^{(1)}_s\|) \stackrel{\eqref{eq_if_good_then0}}{\le} \mathsf v^{7/16}. \]
	Assume that the desired inequality has already been proved for~$1,\ldots,j-1$, and that~$\mathsf t_{j} < h_0$. Note that again by~\eqref{eq_discrepancy} and~\eqref{eq_if_good_then0}, we have
	\begin{equation}\label{eq_missing_starting}
 \max_{s \in [\mathsf t_{j}, \mathsf t_{j}' \wedge h_0]} \|X^{(j)}_s - W^{(j)}_s\| \le \| X^{(j)}_{\mathsf t_{j}} - W^{(j)}_{\mathsf t_{j}}\| + \mathsf v^{7/16}.
	\end{equation}
	In case~$\mathsf t_{j} = 0$, we have~$X^{(j)}_{\mathsf t_{j}} = W^{(j)}_{\mathsf t_j}$, so the desired inequality holds. Now assume that~$\mathsf t_j > 0$. Then, there is some~$i < j$ such that the infection that appears at~$X^{(j)}_{\mathsf t_j}$ at time~$\mathsf t_j$ was transmitted from the infected particle at~$X^{(i)}_{\mathsf t_j}$, which is a location neighboring~$X^{(j)}_{\mathsf t_j}$. We then have~$\|X^{(j)}_{\mathsf t_j} - X^{(i)}_{\mathsf t_j}\| = 1$, and~$W^{(j)}_{\mathsf t_j} = W^{(i)}_{\mathsf t_j}$. Hence,
	\[ \| X^{(j)}_{\mathsf t_j} - W^{(j)}_{\mathsf t_j}\| \le 1 + \|X^{(i)}_{\mathsf t_j} - W^{(i)}_{\mathsf t_j} \| \le 1 + i \cdot (\mathsf v^{7/16} + 1) \le 1 + (j-1) (\mathsf v^{7/16}+1),
	\]
	where the second inequality follows from the induction hypothesis. Together with~\eqref{eq_missing_starting}, this gives the desired inequality in this case as well. We have now established~\eqref{eq_if_good_then}.

	Using~\eqref{eq_t_vs_Z} together with~\eqref{eq_if_good_then}, we see that on~$\mathcal G_1 \cap \mathcal G_2$ we have
	\begin{equation*}
	\max_{s \in [\mathsf t_j, \mathsf t_j' \wedge h_0]} \| X^{(j)}_s - W^{(j)}_s\| \le \mathsf v^{3 \varepsilon_0} (\mathsf v^{7/16}+1), \quad \text{for all~$j$ with~$\mathsf t_j < h_0$.}    
	\end{equation*}
        
    Now assume that~$\mathcal G_3$ also occurs. Then, we have~$\eta_{h_0}(B_{\lfloor\sqrt{\sfv}\rfloor\mathsf e_1}(\tfrac12 \sqrt \mathsf v)) > \lceil \mathsf v^{\varepsilon_0} \rceil$. Each of these~$\lceil \mathsf v^{\varepsilon_0} \rceil$ walkers is paired with an infected particle which is at distance at most~$\mathsf v^{3 \varepsilon_0} \cdot (\mathsf v^{7/16}+1)$ from it. By choosing~$\varepsilon_0$ small enough and then choosing~$\mathsf v$ large enough, we have~$\mathsf v^{3 \varepsilon_0} \cdot (\mathsf v^{7/16}+1) < \tfrac 12 \sqrt \mathsf v$, so all these infected particles are inside~$B_{\lfloor\sqrt{\sfv}\rfloor\mathsf e_1}(\sqrt \mathsf v)$. The same argument applies to~$B_{-\lfloor\sqrt{\sfv}\rfloor\mathsf e_1}(\sqrt{\mathsf v})$.
    This completes the proof.
\end{proof}

\subsection{Propagation from deterministic initial configuration}
\label{ss_propagation_deterministic}
We now aim to obtain a version of Proposition~\ref{prop_estrela_vermelha_new} in which the initial configuration of the interchange-and-contact process is  deterministic. Recall the definition of the function~$g^\downarrow$ from Definition~\ref{def_gs}, and the projection~$\zeta \mapsto \xi^\zeta$ from Definition~\ref{def_projections}. 
\begin{proposition}[Propagation starting from a deterministic configuration] \label{prop_estrela_vermelha}
	Let~$\lambda > 0$ and~$p,p' \in (0,1]$ be such that~$p < p'$ and~$2d\lambda p > 1$. Let~$h_0 > 0$ and~$\varepsilon_0 \in (0,1/16)$ be taken corresponding to~$\lambda,p$ in Proposition~\ref{prop_estrela_vermelha_new}. The following holds for~$\sfv$ large enough. Let~$(\zeta_t)_{t \ge 0}$ be the interchange-and-contact process with parameters~$\sfv$ and~$\lambda$, and assume that it starts from a (deterministic) configuration~$\zeta_0$ containing at least~$\mathsf v^{\varepsilon_0}$ infected particles inside~$B_0(\sqrt{\mathsf v})$. Then, letting
\begin{equation}
	\label{eq_def_Theta00}
{\Theta} = (\ell_\Theta,L_\Theta, t_\Theta,p_\Theta),\;\text{where}\ \ell_\Theta:=\mathsf v^{1/(8d)},\;L_\Theta:=\sqrt{\mathsf v}\log^2(\mathsf v),\; t_\Theta:=\mathsf v^{1-2\varepsilon_0},\;p_\Theta:=\tfrac12(p+p'),
\end{equation}
we have
	\begin{equation*}
		\mathbb P\left(
		\begin{array}{l} 
		\zeta_{h_0} \text{ has more than $\mathsf v^{\varepsilon_0}$ infected vertices inside}\\ \text{each of } B_{-\lfloor \sqrt{\mathsf v} \rfloor \mathsf e_1}(\sqrt{\mathsf v}),\; B_{0}(\sqrt{\mathsf v}),\; B_{\lfloor \sqrt{\mathsf v} \rfloor \mathsf e_1}(\sqrt{\mathsf v})  \end{array} \right) > 1-  2\sfv^{-\varepsilon_0/2} - g^{\downarrow}(\Theta,\xi^{\zeta_0}).
	\end{equation*}
\end{proposition}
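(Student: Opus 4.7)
Let $A := \{x \in B_0(\sqrt \sfv) : \zeta_0(x) = \statei\}$, so $|A| \ge \sfv^{\varepsilon_0}$ by hypothesis. The plan is to compare $(\zeta_t)$ with an auxiliary interchange-and-contact process $(\tilde \zeta_t)$ started from $\tilde\zeta_0 \sim \hat\pi_p^A$, to which Proposition~\ref{prop_estrela_vermelha_new} applies and yields the propagation event with probability at least $1-\sfv^{-\varepsilon_0/2}$. The two remaining pieces of the error in the conclusion, namely another $\sfv^{-\varepsilon_0/2}$ and the term $g^\downarrow(\Theta, \xi^{\zeta_0})$, will arise from a coupling of the two processes designed so that the propagation event transfers from $\tilde\zeta$ to $\zeta$.

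To build the coupling I would apply Lemma~\ref{lem_coupling_rate_one} to the interchange projections of the initial configurations, taking $\xi := \xi^{\tilde\zeta_0} \sim \pi_p^A$ and $\xi' := \xi^{\zeta_0}$, with threshold $p_\Theta = (p+p')/2$, spatial scales $\ell_\Theta$ and $L_\Theta$, pre-mixing time $t_\Theta = \sfv^{1-2\varepsilon_0}$, and overall horizon $T$ of order $\sfv h_0$ (needed to cover the interval $[0,h_0]$ after the rate-$\sfv$ rescaling of the rate-one dynamics of the lemma). This yields two interchange graphical representations for which $\xi^{\zeta_s}(x) \ge \xi^{\tilde\zeta_s}(x)$ throughout $B_0(L_\Theta/4) \times [t_\Theta/\sfv, h_0]$ outside a bad event whose three contributions are controlled as follows: integrating the $g^\uparrow$ term against $\tilde\zeta_0 \sim \pi_p^A$ via Lemma~\ref{lem_integral_gs} gives a bound of order $\mathrm{poly}(\sfv)\cdot \exp(-c\sfv^{1/8})$ (since $p_\Theta - p$ is a fixed positive constant and $\ell_\Theta^d = \sfv^{1/8}$ yields exponential concentration), which is much smaller than $\sfv^{-\varepsilon_0/2}$; $\mathrm{err}_{\mathrm{coup}}$ is similarly negligible thanks to~\eqref{eq_better_bound_meet} and Lemma~\ref{lem_first_rw_discr}; and the $g^\downarrow$ term is exactly $g^\downarrow(\Theta, \xi^{\zeta_0})$. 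On top of these coupled interchange marks I would overlay a single shared family of Poisson processes of transmission and recovery marks, making both $(\zeta_t)$ and $(\tilde\zeta_t)$ bona fide interchange-and-contact processes with common infection marks but distinct interchange marks.

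It then remains to transfer the propagation event from $\tilde\zeta_{h_0}$ to $\zeta_{h_0}$ on the good coupling event. The short pre-mixing window $[0, t_\Theta/\sfv] = [0, \sfv^{-2\varepsilon_0}]$ can be handled via Lemma~\ref{seg_primeira_dom}, which shows that with overwhelming probability neither particles nor infections travel much further than $\sfv^{1/2-\varepsilon_0}\log^2(\sfv)$ during this time, so both processes may be effectively restarted at time $\sfv^{-2\varepsilon_0}$ with $A$ still serving as the infection seed. From then on, pointwise particle domination $\xi^{\zeta_s} \ge \xi^{\tilde\zeta_s}$ together with shared transmission and recovery marks should allow each infection trajectory of $\tilde\zeta$ to be transported to an infection trajectory of $\zeta$, since the extra particles of $\zeta$ merely furnish additional carriers and never block an existing infection path. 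Consequently, the at least $\sfv^{\varepsilon_0}$ infected sites guaranteed for $\tilde\zeta_{h_0}$ in each of the three target boxes correspond to infected sites for $\zeta_{h_0}$ in the same boxes.

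The hard part will be precisely this final infection-transport step, because $H$ and $H'$ are formally distinct graphical representations and an infection path driven by the flow under $H$ is not automatically one driven by the flow under $H'$. The sprinkling-style construction underlying Lemma~\ref{lem_coupling_rate_one} is, however, engineered so that after the initial mixing window each particle of $\tilde\zeta$ is paired with a particle of $\zeta$ that shares its subsequent interchange jumps; combined with the common infection marks, this pairing is the key technical device that enables the transport of infection paths between the two processes and makes the scheme above go through.
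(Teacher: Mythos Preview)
Your plan is the paper's: couple $(\zeta_t)$ with an auxiliary process $(\tilde\zeta_t)$ started from $\hat\pi_p^A$, overlay common transmission and recovery marks, apply Proposition~\ref{prop_estrela_vermelha_new} to $\tilde\zeta$, and transport the propagation via the particle matching built into Lemma~\ref{lem_coupling_rate_one}. You also correctly isolate the real difficulty---that occupancy domination alone cannot carry infection paths from one interchange flow to another---and identify the pairing as the device that makes it work.

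Two places where your sketch needs sharpening to match the paper's argument. First, you must additionally require that the $\tilde\zeta$-infection never exits $B_0(\tfrac14 L_\Theta)$ during the \emph{whole} interval $[0,h_0]$ (the paper calls this event~$\mathcal B$ and bounds its complement via Lemma~\ref{seg_primeira_dom}); otherwise the occupancy domination from the coupling does not cover the sites the infection visits. Second, your handling of $[0,\sfv^{-2\varepsilon_0}]$ by ``particles do not move far'' does not give what is needed: the two interchange flows differ on that window, so small displacement says nothing about whether the infected sets of $\zeta$ and $\tilde\zeta$ agree at time $\sfv^{-2\varepsilon_0}$. The paper instead introduces the event that \emph{no new infection appears in $\tilde\zeta$} before $\sfv^{-2\varepsilon_0}$, which has probability at least $1-4d\lambda\sfv^{-\varepsilon_0}$ since the $\lceil\sfv^{\varepsilon_0}\rceil$ initial infections emit transmission attempts at total rate $O(\sfv^{\varepsilon_0})$. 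On that event the $\tilde\zeta$-infected set at time $\sfv^{-2\varepsilon_0}$ is just the interchange flow of the original $A$-particles; since the particles of $A$ lie in both configurations and can be self-matched in the coupling from time zero, their trajectories under $H$ and $H'$ coincide, so the $\zeta$-infected set contains them as well, and from there your matching argument takes over. A minor point: integrating $g^\uparrow(\Theta,\cdot)$ against $\pi_p^A$ rather than $\pi_p$ requires absorbing the at most $\sfv^{\varepsilon_0}$ forced particles of $A$; this is precisely the content of Lemma~\ref{lem_integrals_A}, not Lemma~\ref{lem_integral_gs} alone.
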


Some preliminary work will be needed before we prove this proposition. {For the rest of this section, we fix~$\lambda > 0$ and~$p,p'$ with~$p<p'$ and~$2d\lambda p > 1$.}

The first lemma we need is in the same spirit as Lemma~\ref{lem_integral_gs}, with the main differences that here we consider a specific choice of parameters, and allow an initial set~$A$ to contain only infected particles. Recall the function~$g^\uparrow$ from Definition~\ref{def_gs}, and the measure~$\pi_p^A$ from Definition~\ref{def_pi_p_A}.
\begin{lemma}\label{lem_integrals_A}
	The following holds for~$\mathsf v$ large enough.  Letting~${\Theta}$ be as in~\eqref{eq_def_Theta00}, for any~$A \subseteq \mathbb Z^d$ with~$|A| \le  \mathsf v^{\varepsilon_0} $ we have 
	\begin{equation}
        \label{eq_integrals_A}
		\int_{\{0,1\}^{\mathbb Z^d}} g^\uparrow({\Theta},\xi)\; \pi^A_{p}(\mathrm{d}\xi) < \exp\{-\mathsf v^{1/16}\}.
	\end{equation}
\end{lemma}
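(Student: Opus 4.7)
The plan is to follow the same strategy as in Lemma~\ref{lem_integral_gs}, namely the first-time trick combined with a Hoeffding estimate at the initial time, but to carefully handle the fact that $\pi_p^A$ is \emph{not} invariant under the interchange dynamics. The point will be that, although invariance fails, the deviation of $\pi_p^A$ from $\pi_p$ is localized on a very small (moving) set, and this set is too small to matter because $|A| \le \mathsf v^{\varepsilon_0}$ is much less than the size of the boxes of radius $\ell_\Theta = \mathsf v^{1/(8d)}$ that appear in $g^\uparrow$.

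Let $(\xi_s)_{s \ge 0}$ be the interchange process with rate $1$ started from $\xi_0 \sim \pi_p^A$, constructed from an interchange flow $\Phi$, and let
\[
f(\xi) := \mathds{1}\{\exists\, B \subseteq B_0(L_\Theta) \text{ of radius } \ell_\Theta \text{ with } |\{y \in B : \xi(y)=1\}| > p_\Theta |B|\},\qquad \tau := \inf\{s \ge 0 : f(\xi_s) = 1\}.
\]
The argument in Lemma~\ref{lem_integral_gs} based on the first-time trick works verbatim (it only uses the geometry of the event defining $f$ and the bounded rates of the interchange dynamics), so
\[
\int g^\uparrow(\Theta,\xi)\, \pi_p^A(\mathrm d\xi) = \mathbb P(\tau \le t_\Theta) \le e(2\ell_\Theta+2)^d \bigl(t_\Theta + (2\ell_\Theta+2)^{-d}\bigr) \sup_{0 \le s \le t_\Theta + (2\ell_\Theta+2)^{-d}} \mathbb E[f(\xi_s)].
\]

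For the supremum I will use the identity $\xi_s(y) = \xi_0(\Phi(y,s,0))$. Conditionally on $\Phi$, the law of $\xi_s$ is $\pi_p^{A_s}$, where $A_s := \Phi(A,0,s)$ is the image of $A$ under the flow; in particular $|A_s| = |A| \le \mathsf v^{\varepsilon_0}$. Fix a box $B \subseteq B_0(L_\Theta)$ of radius $\ell_\Theta$ and write $a_B := |A_s \cap B|$. Then $|\{y \in B : \xi_s(y) = 1\}| - a_B$ is a sum of $|B| - a_B$ i.i.d.\ Bernoulli($p$) variables (conditionally on $\Phi$), so the event appearing in $f$ requires this sum to exceed its mean by at least $(p_\Theta - p)|B| - (1-p)a_B \ge (p_\Theta - p)|B| - |A|$. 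Since $\varepsilon_0 < 1/16$ and $|B| \ge \ell_\Theta^d = \mathsf v^{1/8}$, for $\mathsf v$ large this deviation is at least $\tfrac12(p_\Theta-p)|B|$, and Hoeffding's inequality yields a conditional probability bounded by $\exp\{-\tfrac12(p_\Theta-p)^2 (2\ell_\Theta+1)^d\} \le \exp\{-c\, \mathsf v^{1/8}\}$. A union bound over the at most $(2L_\Theta+1)^d \le C_d \mathsf v^{d/2}\log^{2d}\mathsf v$ such boxes gives $\mathbb E[f(\xi_s)\mid \Phi] \le C_d \mathsf v^{d/2}\log^{2d}\mathsf v \cdot \exp\{-c\mathsf v^{1/8}\}$, and the bound is uniform in $\Phi$, so it also holds in expectation.

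Plugging this back into the first-time estimate and using $e(2\ell_\Theta+2)^d (t_\Theta + 1) \le C \mathsf v^{1/8} \cdot \mathsf v^{1-2\varepsilon_0}$, the right-hand side becomes a polynomial factor in $\mathsf v$ (and a polylog) times $\exp\{-c\mathsf v^{1/8}\}$, which for $\mathsf v$ large is much smaller than $\exp\{-\mathsf v^{1/16}\}$, as desired. The only genuinely new point compared with Lemma~\ref{lem_integral_gs} is the conditional-on-$\Phi$ description of $\xi_s$ as a $\pi_p^{A_s}$ configuration and the verification that $|A|$ is negligible compared with $(p_\Theta-p)|B|$; once this is in place, the rest is standard. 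I do not anticipate a serious obstacle beyond book-keeping with the constants $\varepsilon_0$, $\ell_\Theta$, $L_\Theta$, $t_\Theta$.
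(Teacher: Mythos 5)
Your proof is correct, but it takes a different route from the paper's. The paper reduces the statement to Lemma~\ref{lem_integral_gs}: it writes the integral over $\pi_p^A$ as an integral over $\pi_p$ of $g^\uparrow(\Theta,\xi^{1\to A})$, where $\xi^{1\to A}$ forces ones on $A$, then couples the two interchange processes through the same graphical construction to observe that they differ at no more than $|A|\le \mathsf v^{\varepsilon_0}$ sites at every time, and absorbs this surplus into a shifted threshold $\hat p=\tfrac12(p+p_\Theta)$, i.e.\ $g^\uparrow(\Theta,\xi^{1\to A})\le g^\uparrow(\hat\Theta,\xi)$; after that it can invoke the stationarity-based Lemma~\ref{lem_integral_gs} as a black box. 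You instead rerun the internal mechanism of that lemma (the first-hitting-time trick, which indeed uses nothing about the initial law) directly under the non-stationary start $\pi_p^A$, and replace the stationarity step by the exact observation that, conditionally on the interchange flow, $\xi_s\sim\pi_p^{A_s}$ with $A_s=\Phi(A,0,s)$ and $|A_s|=|A|$; the surplus $|A|$ is then absorbed into the Hoeffding deviation $(p_\Theta-p)|B|-(1-p)a_B$ rather than into a threshold shift. Both arguments hinge on the same quantitative fact, $\mathsf v^{\varepsilon_0}\ll (p_\Theta-p)\,\mathsf v^{1/8}$ because $\varepsilon_0<1/16$, so neither is stronger in substance; the paper's version is shorter given Lemma~\ref{lem_integral_gs}, while yours is more self-contained and gives an exact description of the law of $\xi_s$ in place of the approximate comparison. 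Your bookkeeping (uniformity of the conditional bound in $\Phi$ and $s$, the polynomial prefactor $e(2\ell_\Theta+2)^d(t_\Theta+\epsilon)$ times $(2L_\Theta+1)^d$ boxes against $\exp\{-c\mathsf v^{1/8}\}$) checks out and comfortably beats the stated $\exp\{-\mathsf v^{1/16}\}$.
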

\begin{proof}
	Given~$\xi \!\in \!\{0,1\}^{\mathbb Z^d}\!$ and~$A \subset \mathbb Z^d$, we
    let~$\xi^{1 \to A} \!\in\! \{0,1\}^{\mathbb Z^d}$\! be the configuration given by~$\xi^{1\to A}(x) = 1$ if~$x \in A$, and~$\xi^{1 \to A}(x) = \xi(x)$ otherwise.

	As in the definition of~${\Theta}$, let~$p_\Theta := \tfrac12 (p+p')$. Also let~$\hat p := \tfrac12 (p+p_\Theta)$, and let~$\hat{\Theta}$ be the same set of parameters as~${\Theta}$, except that the last parameter~$p_\Theta$ is replaced by the smaller value~$\hat p$. We claim that if~$\mathsf v$ is large enough, then for any~$A \subset \mathbb Z^d$ with~$|A| \le \mathsf v^{\varepsilon_0}$ and any~$\xi \in \{0,1\}^{\mathbb Z^d}$, we have
	\begin{equation}
		\label{eq_reduce_Theta_prime}
	g^\uparrow({\Theta},\xi^{1 \to A}) \le g^\uparrow(\hat{\Theta}, \xi). 
	\end{equation}
	Before we prove this, let us see how it allows us to conclude. We have
	\begin{align*}
		\int g^\uparrow({\Theta},\xi)\; \pi^A_{p}(\mathrm{d}\xi)  = \int g^\uparrow({\Theta},\xi^{1 \to A})\;\pi_{p}(\mathrm{d}\xi) \stackrel{\eqref{eq_reduce_Theta_prime}}{\le} \int g^\uparrow(\hat{\Theta},\xi)\; \pi_{p}(\mathrm{d}\xi).
	\end{align*}
	By Lemma~\ref{lem_integral_gs}, the r.h.s. is smaller than 
	\begin{align*}
		(2\sqrt{\sfv}\log^2(\sfv)+1)^d \cdot (e(2\mathsf v^{1/(8d)}+2)^d \mathsf v^{1-2\varepsilon_0} + e)\cdot \exp \Big\{-\frac18 (2 \mathsf v^{1/(8d)}+1)^d (p'-p)^2 \Big\}.
	\end{align*} 
	By taking~$\mathsf v$ large enough, this is smaller than~$\exp\{-\mathsf v^{1/16}\}$.

	It remains to prove~\eqref{eq_reduce_Theta_prime}. Fix~$A \subset \mathbb Z^d$ with~$|A| \le \mathsf v^{\varepsilon_0}$ and~$\xi \in \{0,1\}^{\mathbb Z^d}$. Let~$(\xi_t)_{t \ge 0}$ be the interchange process started from~$\xi$, and using the same graphical construction, let~$(\tilde{\xi}_t)_{t \ge 0}$ be the interchange process started from~$\xi^{1 \to A}$. Note that for any~$t$, the number of~$x \in \mathbb Z^d$ for which~$\xi_t(x) \neq \tilde \xi_t(x)$ is at most~$\mathsf v^{\varepsilon_0}$. Hence, 
	\begin{align*}
		g^\uparrow({\Theta},\xi^{1 \to A})&= \mathbb P( |\tilde \xi_t \cap B| > p_\Theta |B| \text{ for some $t \le t_\Theta$ and box $B \subset B_0(L_\Theta)$ of radius $\ell_\Theta$}  ) \\[.2cm] &\le \mathbb P\left( |\xi_t \cap B| > p_\Theta |B| -  \mathsf v^{\varepsilon_0} \text{ for some $t \le t_\Theta$ and box $B \subset B_0(L_\Theta)$ of radius $\ell_\Theta$}  \right).
	\end{align*}
	If~$B$ is a box of radius~$\ell_\Theta$, then~$|B| = (2\ell_\Theta+1)^d = (2\mathsf v^{1/(8d)} + 1)^d > \mathsf v^{1/8}$, so~$\mathsf v^{\varepsilon_0} \ll (p_\Theta-\hat p)|B|$ if~$\mathsf v$ is large enough, since~$\varepsilon_0 < 1/16$. Hence, the probability on the r.h.s. above is smaller than
	\[\mathbb P\left( |\xi_t \cap B| > \hat p |B|  \text{ for some $t \le t_\Theta$ and box $B \subset B_0(L_\Theta)$ of radius $\ell_\Theta$}  \right) = g^\uparrow(\hat \Theta,\xi).\qedhere\]
\end{proof}

Next, we turn to an application of Lemma~\ref{lem_coupling_rate_one} to the present context. The main differences are that here we consider the set of parameters~$\Theta$ from~\eqref{eq_def_Theta00}, and allow the processes to have rate~$\sfv$ rather than 1.
\begin{lemma} \label{lem_coupling_interchanges}
	The following holds for~$\mathsf v$ large enough. Given~$\xi, \xi' \in\{0,1\}^{\mathbb Z^d}$, there exists a probability space in which there are two graphical constructions of the interchange process with rate~$\mathsf v$, denoted~$H$ and~$H'$, with the following property. Let~$(\xi_t)_{t \ge 0}$ be the interchange process started from~$\xi$ and constructed with~$H$, and~$(\xi_t')_{t \ge 0}$ be the interchange process started from~$\xi'$ and constructed with~$H'$. Then, taking~${\Theta}$ as in~\eqref{eq_def_Theta00}, outside an event of probability at most
	\[ g^\downarrow({\Theta},\xi)  + g^\uparrow({\Theta},\xi') + \exp\{-\log^2(\mathsf v)\},\]
	we have
	\begin{equation*}
		\xi_s(x) \ge \xi_s'(x) \quad \text{ for all } (x,s) \in B_0(\tfrac{1}{4}\sqrt{\mathsf v} \log^2( \mathsf v)) \times [\mathsf v^{-2\varepsilon_0},h_0].
	\end{equation*}
\end{lemma}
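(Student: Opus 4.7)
The strategy is to reduce to the rate-one coupling of Lemma~\ref{lem_coupling_rate_one} by time rescaling, and then show that the $\mathrm{err}_{\mathrm{coup}}$ term produced by that lemma is comfortably absorbed by the $\exp\{-\log^2(\sfv)\}$ slack in the statement. Recall that if $(\tilde\xi_t)$ is a rate-one interchange process, then $(\tilde\xi_{\sfv t})$ has the law of a rate-$\sfv$ interchange process on the same graph. Consequently, the desired dominance on $B_0(\tfrac{1}{4}\sqrt{\sfv}\log^2(\sfv))\times[\sfv^{-2\varepsilon_0},h_0]$ for rate-$\sfv$ processes is equivalent to the analogous dominance for rate-one processes on the same spatial set but on the time window $[\sfv^{1-2\varepsilon_0},\sfv h_0]$; this is the rate-one statement I will establish.

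I apply Lemma~\ref{lem_coupling_rate_one} with the roles of $\xi$ and $\xi'$ swapped (the cited lemma delivers $\xi'\ge\xi$, whereas here I want $\xi\ge\xi'$), choosing the parameters
\[
\ell=\sfv^{1/(8d)},\quad L=\sqrt{\sfv}\log^2(\sfv),\quad t=\sfv^{1-2\varepsilon_0},\quad T=\sfv h_0,\quad p=p_\Theta=\tfrac12(p+p').
\]
Note that $L/4=\tfrac14\sqrt{\sfv}\log^2(\sfv)$ matches the spatial scale in the conclusion, and that $(\ell,L,t,p)$ coincides with the tuple $\Theta$ from \eqref{eq_def_Theta00}. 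Since $g^\uparrow$ and $g^\downarrow$ are defined via the rate-one interchange process (Definition~\ref{def_gs}), the bulk error delivered by Lemma~\ref{lem_coupling_rate_one} is exactly $g^\uparrow(\Theta,\xi')+g^\downarrow(\Theta,\xi)$, which already matches the first two terms in the statement.

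It remains to show that $\mathrm{err}_{\mathrm{coup}}(\ell,L,t,T)\le\exp\{-\log^2(\sfv)\}$ for $\sfv$ large. Recalling \eqref{eq_err_coup}, I treat its two summands separately. For the first, \eqref{eq_better_bound_meet} gives $(1-\mathrm{meet}(\ell))^{\lfloor t/\ell^2\rfloor}\le\exp\{-ct/\ell^{d\vee 2}\}$, and my choices yield $t/\ell^{d\vee 2}\ge\sfv^{3/4-2\varepsilon_0}$ in every dimension; the polynomial prefactor $|B_0(L/2)|$ is then negligible, giving a bound much smaller than $\exp\{-\log^2(\sfv)\}$ once $\varepsilon_0<1/16$. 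For the discrepancy summand, Lemma~\ref{lem_first_rw_discr} provides a bound of the form $C\,T\,L^{d-1}\exp\bigl\{-(L-\ell)\log\bigl(1+(L-\ell)/(2T)\bigr)\bigr\}$. Since $L-\ell\sim\sqrt{\sfv}\log^2(\sfv)$ and $T=\sfv h_0$, the argument inside the logarithm is $O(\log^2(\sfv)/\sqrt{\sfv})$, so the bound $\log(1+x)\ge x/2$ applies and makes the exponent at least $(L-\ell)^2/(4T)\ge c\log^4(\sfv)$, again dwarfing $\exp\{-\log^2(\sfv)\}$. No real obstacle arises: the only care required is to check that the parameters in the rate-one application line up with $\Theta$ from \eqref{eq_def_Theta00}, which they do by construction.
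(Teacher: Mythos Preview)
Your proposal is correct and mirrors the paper's proof: rescale time to reduce to the rate-one Lemma~\ref{lem_coupling_rate_one} with parameters $(\ell,L,t,p)=\Theta$ and $T=\sfv h_0$, then bound the two summands of $\mathrm{err}_{\mathrm{coup}}$ via \eqref{eq_better_bound_meet} and Lemma~\ref{lem_first_rw_discr} respectively. One minor slip: the discrepancy term in $\mathrm{err}_{\mathrm{coup}}$ has arguments $(L/4,L/2,T)$, so the relevant gap in Lemma~\ref{lem_first_rw_discr} is $L/2-L/4=L/4$ rather than your $L-\ell$, but both are $\sim\sqrt{\sfv}\log^2(\sfv)$ and your $c\log^4(\sfv)$ exponent estimate stands unchanged.
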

\begin{proof}
We use the coupling provided by Lemma~\ref{lem_coupling_rate_one} to obtain two graphical constructions for the rate-one interchange process, denoted~$H_1$ and~$H_1'$, corresponding to~$\xi$ and~$\xi'$ as in the statement of that lemma. 

	Let~$(\xi_{1,t})_{t \ge 0}$ be the interchange process started from~$\xi$ and costructed with~$H_1$, and~$(\xi'_{1,t})_{t \ge 0}$ the one started from~$\xi'$ and constructed from~$H_1'$.  Next, setting~$\xi_{\mathsf v,t}:= \xi_{\mathsf v t}$ and~$\xi'_{\mathsf v,t}:= \xi'_{\mathsf v t}$, we obtain two interchange processes with rate~$\mathsf v$. Note that~$(\xi_{\mathsf v,t})$ and~$(\xi'_{\mathsf v,t})$ follow the graphical constructions~$H,H'$ that are defined as the graphical constructions obtained from~$H_1$ and~$H_1'$ (respectively) after speeding up time by a factor~$\mathsf v$.

	Setting~${\Theta} = (\ell_\Theta:=\mathsf v^{1/(8d)},\;L_\Theta:=\sqrt{\mathsf v}\log^2(\mathsf v),\; t_\Theta:=\mathsf v^{1-2\varepsilon_0},\;p_\Theta:=\tfrac12(p+p'))$ as in~\eqref{eq_def_Theta00} and~$T=h_0\mathsf v$, we have
	\begin{align*}
		&\mathbb P \left(\xi_{\mathsf v,s}'(x) \ge \xi_{\mathsf v,s}(x) \text{ for all } (x,s) \in B_0(L_\Theta/4) \times [\mathsf v^{-2\varepsilon_0},h_0] \right) \\
		&\qquad =\mathbb P \left(\xi_{1,s}'(x) \ge \xi_{1,s}(x) \text{ for all } (x,s) \in B_0(L_\Theta/4) \times [t_\Theta,T] \right) \\[-1mm]
		&\qquad \overset{\mathclap{\eqref{eq_error_prob_coupling}}}{\ge}
        1- g^\downarrow({\Theta},\xi) - g^\uparrow({\Theta},\xi') - \mathrm{err}_{\mathrm{coup}}(\Theta).
	\end{align*} 
The result will now follow if we prove that
	\begin{equation}
		\label{eq_bound_16_1}
		|B_0(\tfrac{L_\Theta}{2})|\cdot ( 1 - \mathrm{meet}(\ell_\Theta) )^{\bigl\lfloor \tfrac{t_\Theta}{\ell_\Theta^2} \bigr\rfloor}  \le \frac{e^{-\log^2(\mathsf v) }}{2},
        \qquad
        \mathrm{discr}^{\mathrm{ip}}(\tfrac{L_\Theta}{4}, \tfrac{L_\Theta}{2},  T) \le \frac{e^{-\log^2(\mathsf v) }}{2}.
	\end{equation}
	Let us prove the first inequality. Plugging in the values of~$\ell_\Theta, L_\Theta,t_\Theta$ and using~$1-x \le e^{-x}$ and~$\lfloor x \rfloor \ge x/2$ for~$x \ge 1$, we bound
	\begin{align*}
		|B_0(\tfrac{L_\Theta}{2})|\cdot 
        ( 1 - \mathrm{meet}(\ell_\Theta) )^{\bigl\lfloor \tfrac{t_\Theta}{\ell_\Theta^2} \bigr\rfloor}   
		&\stackrel{\mathclap{\eqref{eq_meet}}}{\le} (\sqrt{\mathsf v} \log^2(\mathsf v)+1)^d\cdot \exp\Bigl\{-c\mathsf v^{1-2\varepsilon_0 - \frac{1}{4d}-\frac{(d-2)\vee 0}{8d}} \Bigr\}. 
	\end{align*}
	Since~$\varepsilon_0 < \frac{1}{16}$, we have~$1 - 2\varepsilon_0 - \frac{1}{4d} -  \frac{(d-2)\vee 0}{8d} > \frac12$. This shows that the first inequality in~\eqref{eq_bound_16_1} holds when~$\mathsf v$ is large enough.
	To prove the second inequality in~\eqref{eq_bound_16_1}, we use Lemma~\ref{lem_first_rw_discr}:
	\begin{align*}
		\mathrm{discr}^{\mathrm{ip}}(L_\Theta/4,L_\Theta/2,T) \le 16ed^3 T \left(L_\Theta+1\right)^{d-1} \cdot \exp\Bigl\{-\frac{L_\Theta}{4}\log \Bigl(1 + \frac{L_\Theta}{8T} \Bigr) \Bigr\}.
	\end{align*}
 	Note that~$L_\Theta \ll T$, so~$\tfrac{L_\Theta}{8T}$ is small and we can bound~$\log(1+\tfrac{L_\Theta}{8T}) \ge \tfrac{L_\Theta}{16T}$. We now plug in the values of~$L_\Theta$ and~$T$; it is easily seen that the second inequality in~\eqref{eq_bound_16_1} holds for~$\mathsf v$ large.
\end{proof}

\begin{proof}[Proof of Proposition~\ref{prop_estrela_vermelha}]
Let~$\lambda,p,p',h_0,\varepsilon_0$ be as in the statement of the proposition. We also let~$\sfv$ be large, to be chosen later.
For an interchange-and-contact process~$(\zeta_t)_{t \ge 0}$ (started from an arbitrary initial configuration), define the events
\begin{align*}
    &\mathcal A:= \left\{
    \begin{array}{l} 
		\zeta_{h_0} \text{ has more than $\mathsf v^{\varepsilon_0}$ infected sites in}\\
        \text{$B_{-\lfloor \sqrt{\sfv} \rfloor \mathsf e_1}\!(\sqrt{\sfv})$, $B_{0}(\sqrt{\sfv})$ and $B_{\lfloor \sqrt{\sfv} \rfloor \mathsf e_1}\!(\sqrt{\sfv})$}
    \end{array} \right\},&
    &\mathcal B:= \left\{
    \begin{array}{l} 
		\text{before $h_0$, $(\zeta_t)$ has no infected}\\
        \text{particles outside~$B_0(\tfrac14 \sqrt{\sfv} \log^2(\sfv))$}
    \end{array} \right\}.
\end{align*}
Given~$\zeta \in \{0,\stateh,\statei\}^{\mathbb Z^d}$, let~$\mathbb P_\zeta$ denote a probability measure under which we have defined an interchange-and-contact process with parameters~$\sfv$ and~$\lambda$, started from~$\zeta$. 

Let~$A \subset B_0(\sqrt{\sfv})$ be a set with~$|A|:=\lceil \sfv^{\varepsilon_0} \rceil$. We need to prove for all $\zeta$ with $\{x:\zeta(x)=\statei\}\supset A$ that
\begin{equation}
\label{eq_new_condition_is}
    \mathbb P_{\zeta}(\mathcal A) > 1- 2\sfv^{-\varepsilon_0/2} - g^\downarrow(\Theta,\xi^{\zeta})
\end{equation}
By monotonicity considerations, it suffices to prove \eqref{eq_new_condition_is} for all $\zeta$ with $\{x:\zeta(x)=\statei\}= A$.
We now state and prove two auxiliary claims.
\begin{claim}\label{cl_first_help}
    We have \quad
    $\displaystyle\int \mathbb P_\zeta(\mathcal A \cap \mathcal B)\; \hat{\pi}_p^A(\mathrm{d}\zeta) > 1 - \sfv^{-\varepsilon_0/2} - \exp\{-\log^{3/2}(\sfv)\}$.
\end{claim}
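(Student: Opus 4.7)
The plan is to derive the claim by combining Proposition~\ref{prop_estrela_vermelha_new}, which already gives the appropriate bound on the probability of $\mathcal A$ starting from $\hat\pi_p^A$, with a crude containment-flow bound on $\mathbb P(\mathcal B^c)$ that is uniform in $\zeta$.

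First, since $A \subseteq B_0(\sqrt{\sfv})$ and $|A|=\lceil \sfv^{\varepsilon_0} \rceil \ge \sfv^{\varepsilon_0}$, the initial law $\hat{\pi}_p^A$ fits exactly the hypotheses of Proposition~\ref{prop_estrela_vermelha_new}. Hence, for $\sfv$ large,
\[
\int \mathbb P_\zeta(\mathcal A)\; \hat{\pi}_p^A(\mathrm d\zeta) > 1 - \sfv^{-\varepsilon_0/2}.
\]

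Second, I would bound $\mathbb P_\zeta(\mathcal B^c)$ uniformly in $\zeta$. The event $\mathcal B^c$ is contained in
\[
\mathcal C := \bigl\{ \exists\, x \in A,\ y \notin B_0(\tfrac14 \sqrt{\sfv}\log^2(\sfv)),\ s \le h_0 \text{ with } y \in \Psi(x,0,s) \bigr\},
\]
since an infected particle outside the box at time $s$ forces the existence of an infection path, hence a containment path, from $A$ to that location. Crucially, $\mathcal C$ depends only on the graphical representation and on $A$, not on $\zeta$ outside $A$. A union bound together with \eqref{eq_first_bound_domination} of Lemma~\ref{seg_primeira_dom} yields
\[
\mathbb P(\mathcal C) \le |A| \sum_{\|z\|\ge \tfrac14 \sqrt{\sfv}\log^2(\sfv) - \sqrt{\sfv}} 8de\max(2d\sfv,1)\cdot h_0 e^{4d\lambda h_0} \exp\Bigl\{-\tfrac12 \|z\|\log\bigl(1 + \tfrac{\|z\|}{2(\sfv+\lambda)h_0}\bigr)\Bigr\}.
\]
For the relevant $\|z\|\gtrsim \tfrac15\sqrt{\sfv}\log^2(\sfv)$ and $h_0$ constant, we have $\tfrac{\|z\|}{2(\sfv+\lambda)h_0}\to 0$, so $\log(1+x)\ge x/2$ gives an exponent of order $\log^4(\sfv)$. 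The remaining polynomial prefactors (from $|A|$, the volume factor, and $\sfv$) are dwarfed by this stretched exponential, so $\mathbb P(\mathcal C) \le \exp\{-\log^{3/2}(\sfv)\}$ once $\sfv$ is large enough.

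Finally, using $\mathbb P_\zeta(\mathcal A \cap \mathcal B) \ge \mathbb P_\zeta(\mathcal A) - \mathbb P_\zeta(\mathcal B^c) \ge \mathbb P_\zeta(\mathcal A) - \mathbb P(\mathcal C)$ and integrating against $\hat\pi_p^A$, the two bounds combine to yield the claim. The only delicate point is checking that the $\sfv^{\varepsilon_0}$-type polynomial prefactors coming from $|A|$ and the surface of the box are indeed absorbed by the $\exp\{-\Theta(\log^4(\sfv))\}$ tail; this is a routine calculation given the parameter choices.
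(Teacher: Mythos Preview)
Your proposal is correct and follows essentially the same approach as the paper: apply Proposition~\ref{prop_estrela_vermelha_new} to bound $\int \mathbb P_\zeta(\mathcal A)\,\hat\pi_p^A(\mathrm d\zeta)$, then bound $\mathbb P_\zeta(\mathcal B^c)$ uniformly via the containment flow using Lemma~\ref{seg_primeira_dom}, and combine. The only cosmetic difference is that the paper sums over a single shell $\{y:\|y-x\|=\lfloor L_\Theta/8\rfloor\}$ that any escaping path must cross, whereas you sum over all displacements beyond the threshold; both yield an exponent of order $\log^4(\sfv)$, which comfortably absorbs the polynomial prefactors and beats $\exp\{-\log^{3/2}(\sfv)\}$.
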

\begin{proof}
    By Proposition~\ref{prop_estrela_vermelha_new}, we have
\begin{equation*}
    \int \mathbb P_{\zeta'}(\mathcal A) \; \hat{\pi}^A_p(\mathrm{d}\zeta') > 1 - \sfv^{-\varepsilon_0/2}.
\end{equation*}
Letting~$\mathbb P$ be a probability measure under which a graphical construction of the interchange-and-contact process is defined and recalling that~$L_\Theta = \sqrt{\sfv} \log^2(\sfv)$, for any~$\zeta' \in \{0,\stateh,\statei\}^{\mathbb Z^d}$ for which~$\{x: \zeta'(x)=\statei\} = A$ we bound:
\begin{align*}
    \mathbb{P}_{\zeta'}(\mathcal{B}^c) &\le \sum_{x \in A}\; \;\sum_{y:\|y-x\| =  \lfloor L_\Theta/8 \rfloor}\mathbb{P}(y \in \cup_{s \le h_0} \Psi^{\{x\}}_s)\\
    &\le |A|\! \cdot\! |B_0(\lfloor L_\Theta/8 \rfloor)|\! \cdot\! 16d^2e h_0 e^{4d\lambda h_0}\!\! \cdot \exp\Bigl\{-\frac12 \lfloor L_\Theta/8 \rfloor \log\Bigl(1+ \frac{\lfloor L_\Theta/8 \rfloor}{2(\sfv + \lambda)h_0} \Bigr)\Bigr\},
\end{align*}
where the last inequality follows from Lemma~\ref{seg_primeira_dom}. It is straightforward to check that the above is smaller than~$\exp\{-\log^{3/2}(\sfv)\}$ when~$\sfv$ is large enough.
\end{proof}
\begin{claim} \label{cl_second_help}
For all $\zeta,\zeta'$ such that $\{x:\zeta(x)=\statei\}=\{x:\zeta'(x)=\statei\}=A$ we have
\begin{equation}\label{eq_for_new_condition_is}
\mathbb P_\zeta(\mathcal A)
    \ge \mathbb P_{\zeta'}(\mathcal A \cap \mathcal B) 
    - g^\downarrow(\Theta,\xi^{\zeta}) 
    - g^\uparrow(\Theta, \xi^{\zeta'}) 
    - 4d\lambda \sfv^{-\varepsilon_0} - \exp\{ -\log^2(\sfv)\}.
\end{equation}
\end{claim}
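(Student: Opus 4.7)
The plan is to couple the two processes via Lemma~\ref{lem_coupling_interchanges}, so that after a short transient $[0,t^*]$ with $t^*:=\sfv^{-2\varepsilon_0}$, the particle configuration of $(\zeta_t)$ dominates that of $(\zeta'_t)$ on $B_0(L_\Theta/4)$; then, using that both processes start from the same infected set $A$, argue that $\mathcal A\cap\mathcal B$ for the primed process forces $\mathcal A$ for the unprimed one, up to the error terms in the claim.

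Concretely, first I would apply Lemma~\ref{lem_coupling_interchanges} with $\xi=\xi^\zeta$ and $\xi'=\xi^{\zeta'}$ to obtain jump-mark graphical representations $H_j,H_j'$ for two coupled rate-$\sfv$ interchange processes satisfying $\xi^{\zeta}_s(x)\ge\xi^{\zeta'}_s(x)$ for $(x,s)\in B_0(L_\Theta/4)\times[t^*,h_0]$ outside an event $E_{\mathrm{coup}}$ of probability at most $g^\downarrow(\Theta,\xi^\zeta)+g^\uparrow(\Theta,\xi^{\zeta'})+\exp\{-\log^2\sfv\}$. On top of $H_j$ and $H_j'$ I would place a \emph{common} realization of recovery marks $\mathcal R$ and transmission marks $\mathcal T$, and define $(\zeta_t)$, $(\zeta'_t)$ from $(\zeta,H_j,\mathcal R,\mathcal T)$ and $(\zeta',H_j',\mathcal R,\mathcal T)$ respectively; each has the correct marginal law. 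The BT-style construction underlying Lemma~\ref{lem_coupling_rate_one} identifies each $\xi'$-particle with a $\xi$-particle at a common site and makes them move jointly from the moment they meet; in particular, the $|A|$ particles sitting at sites of $A$ in the two processes are already paired at time $0$ and evolve identically thereafter.

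Next, introduce the ``quiet initial window'' event $E_{\mathrm{short}}$ that, during $[0,t^*]$, no recovery mark and no outgoing transmission mark is triggered at a site occupied by an infected particle of $(\zeta_t)$; by the pairing the analogue for $(\zeta'_t)$ holds automatically. Using $|A|\le\sfv^{\varepsilon_0}$ and the total rate $1+2d\lambda$ per infected particle, Markov's inequality gives $\mathbb P(E_{\mathrm{short}}^c)\le|A|(1+2d\lambda)t^*\le(1+2d\lambda)\sfv^{-\varepsilon_0}\le 4d\lambda\,\sfv^{-\varepsilon_0}$, using $2d\lambda>1$ (implied by $2d\lambda p>1$ and $p\le 1$). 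On $E_{\mathrm{short}}$ the infected sets of the two processes coincide at time $t^*$, and on $\mathcal B$ they remain inside $B_0(L_\Theta/4)$ throughout $[0,h_0]$.

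Finally, on $E_{\mathrm{coup}}\cap E_{\mathrm{short}}\cap\{\mathcal A\cap\mathcal B\text{ for }(\zeta'_t)\}$, I would argue inductively along the infection paths of $(\zeta'_t)$ that each admits a counterpart in $(\zeta_t)$: paired infected particles move together, recoveries hit them simultaneously, and whenever a transmission mark $\mathcal T_{(u,v)}$ succeeds in $(\zeta'_t)$ because $\xi^{\zeta'}_s(v)=1$, the domination gives $\xi^{\zeta}_s(v)=1$ so it also succeeds in $(\zeta_t)$, producing a new pair of infected particles at $v$ which are henceforth paired too. Hence the infected set of $(\zeta_t)$ pointwise dominates that of $(\zeta'_t)$ in the relevant box, $\mathcal A$ transfers from $(\zeta'_t)$ to $(\zeta_t)$, and summing $\mathbb P(E_{\mathrm{coup}}^c)+\mathbb P(E_{\mathrm{short}}^c)$ gives the claim. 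The hard part will be justifying this pairing/induction step cleanly: it requires opening the appendix proof of Lemma~\ref{lem_coupling_rate_one} to verify that the BT matching extends across transmission events, so that the domination $\xi^{\zeta}\ge\xi^{\zeta'}$ inside $B_0(L_\Theta/4)$ is exactly what is needed to coherently propagate every infection of the primed process in the unprimed one.
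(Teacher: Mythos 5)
Your proposal is correct and follows essentially the same route as the paper: couple the two interchange constructions via Lemma~\ref{lem_coupling_interchanges}, run both epidemics with common recovery and transmission marks, control a low-probability event in the initial window $[0,\sfv^{-2\varepsilon_0}]$ of probability at most $4d\lambda\sfv^{-\varepsilon_0}$ (the paper's $\mathcal G_3$ forbids new infections in the primed process, your $E_{\mathrm{short}}$ forbids all epidemic marks at infected sites of the unprimed one; either choice works and yields the same error term), and then transfer $\mathcal A\cap\mathcal B$ from the primed to the unprimed process on the intersection of the good events. The pairing/induction step you flag as the hard part is precisely what is hidden in the paper's ``it is straightforward to check the inclusion'' $\mathcal G_1\cap\mathcal G_2\cap\mathcal G_3\subseteq\mathcal A$, so your explicit treatment (matching the $A$-particles at time $0$, noting matched pairs move together, and propagating the matching through successful transmissions using the occupancy domination) is a more detailed account of the same argument rather than a different one.
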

Before we prove Claim~\ref{cl_second_help}, we show how the two claims imply~\eqref{eq_new_condition_is}. Integrating both sides of~\eqref{eq_for_new_condition_is} as functions of~$\zeta'$, with respect to~$\hat \pi_{p}^A$, we have that~$\mathbb P_{\zeta}(\mathcal A)$ is larger than
\begin{align*}
    \int \mathbb P_{\zeta'}(\mathcal A \cap \mathcal B) \; \hat{\pi}^A_p(\mathrm{d}\zeta') - g^\downarrow(\Theta,\xi^{\zeta}) - \!\int\! g^\uparrow(\Theta, \xi^{\zeta'})\; \hat{\pi}^A_p(\mathrm{d}\zeta') 
    - 4d\lambda \sfv^{-\varepsilon_0}\! - e^{ -\log^2(\sfv)}.
\end{align*}
We use Claim~\ref{cl_first_help} to bound the first term from below by~$1-\sfv^{-\varepsilon_0/2} - \exp\{-\log^{3/2}(\sfv)\}$.	Moreover, we bound
\[\int g^\uparrow({\Theta},\xi^{\zeta'})\;\hat\pi^A_{p}(\mathrm{d}\zeta') =\int g^\uparrow({\Theta},\xi)\;\pi^A_{p}(\mathrm{d}\xi) \le \exp\{-\mathsf v^{1/16} \},\]
where the inequality is given by Lemma~\ref{lem_integrals_A}.
Putting things together, we have proved that~$\mathbb{P}_\zeta(\mathcal A)$ is larger than
\[
1-\sfv^{-\varepsilon_0/2}\! - e^{-\log^{3/2}(\sfv)}\! - g^\downarrow(\Theta,\xi^{\zeta})
- e^{-\sfv^{\mathrlap{1/16}}}\; - 4d\lambda \sfv^{-\varepsilon_0} - e^{-\log^2(\sfv)}.
\]
When~$\sfv$ is large enough, the r.h.s. is larger than $1- 2\sfv^{-\varepsilon_0/2} - g^\downarrow(\Theta,\xi^{\zeta})$, so the proof of~\eqref{eq_new_condition_is} is complete. We now prove the second claim.

\begin{proof}[Proof of Claim~\ref{cl_second_help}] Fix~$\zeta,\zeta'$ with~$\{x:\zeta(x)=\statei\}=\{x:\zeta'(x)=\statei\}=A$.   We will use the projections~$\xi^{\zeta}$ and~$\xi^{\zeta'}$, as in Definition~\ref{def_projections}.
We take two graphical constructions~$H,H'$ for the interchange process with rate~$\mathsf v$ corresponding to~$\xi^{\zeta}$ and~$\xi^{\zeta'}$ (respectively) as in Lemma~\ref{lem_coupling_interchanges}. On top of~$H$ and~$H'$, we take Poisson processes~$(R_x)$ and $(\mathcal{T}_{(x,y)})$ of recovery and transmission marks, respectively, as in Definition~\ref{def_graph_cpip}. We denote by~$\smash{\widehat {\mathbb P}}$ the probability measure in the probability space where these objects are defined.
We then construct~$(\zeta_{t})_{t \ge 0}$ and~$(\zeta'_t)_{t \ge 0}$ coupled together in this space as follows:~$(\zeta_t)$ starts from~$\zeta$ and uses the instructions in~$H,(\mathcal R_x),(\mathcal T_{(x,y)})$, and~$(\zeta'_t)$ starts from~$\zeta'$ and uses the instructions in~$H',(\mathcal R_x),(\mathcal T_{(x,y)})$.

	We now introduce three good events~$\mathcal{G}_1$,~$\mathcal{G}_2$ and~$\mathcal{G}_3$ which will satisfy
	\begin{equation} \label{eq_key_set_inclusion}
	\mathcal{G}_1 \cap \mathcal{G}_2 \cap \mathcal{G}_3 \subseteq \mathcal A
        = \left\{
        \begin{array}{l} 
		\zeta_{h_0} \text{ has more than $\mathsf v^{\varepsilon_0}$ infected sites in}\\
        \text{$B_{-\lfloor \sqrt{\sfv} \rfloor \mathsf e_1}\!(\sqrt{\sfv})$, $B_{0}(\sqrt{\sfv})$ and $B_{\lfloor \sqrt{\sfv} \rfloor \mathsf e_1}\!(\sqrt{\sfv})$}
        \end{array} \right\}.
	\end{equation}
Let us define
\begin{align*}
\mathcal{G}_1
&:= \left\{
\begin{array}{l} 
	\zeta'_{h_0} \text{ has more than $\mathsf v^{\varepsilon_0}$ infected sites in}\\
    \text{$B_{-\lfloor \sqrt{\sfv} \rfloor \mathsf e_1}\!(\sqrt{\sfv})$, $B_{0}(\sqrt{\sfv})$ and $B_{\lfloor \sqrt{\sfv} \rfloor \mathsf e_1}\!(\sqrt{\sfv})$}
\end{array}
\right\} \cap \left\{
\begin{array}{l} 
		\text{before $h_0$, $(\zeta_t')$ has no infected}\\
        \text{particles outside~$B_0(\tfrac14 \sqrt{\sfv} \log^2(\sfv))$}
    \end{array}\right\}, \\
\mathcal{G}_2
    &:= \{ \xi^{\zeta_t} \ge \xi^{\zeta'_t} \text{ for all } (x,t) \in B_0(\tfrac{1}{4}\sqrt{\mathsf v} \log^2(\mathsf v)) \times [\mathsf v^{-2\varepsilon_0}, h_0] \},\\
\mathcal G_3
    &:= \{\text{no new infection appears in $(\zeta'_t)$ before time $\mathsf v^{-2 \varepsilon_0}$} \}.
\end{align*}
	In words, $\mathcal{G}_1$ is the analogue of $\mathcal A \cap \mathcal B$ for $(\zeta'_t)$ and $\mathcal{G}_2$ requires that in the space-time set~$B_0(\tfrac{\sqrt{\mathsf v}}{4} \log^2(\mathsf v)) \times [\mathsf v^{-2\varepsilon_0}, h_0]$, wherever~$(\zeta'_t)$ has a particle,~$(\zeta_t)$ also has one (ignoring the healthy/infected status of these particles).	It is straightforward to check the inclusion~\eqref{eq_key_set_inclusion}.
	
    By definition and Lemma~\ref{lem_coupling_interchanges}, we have
    \begin{equation*}
        \widehat{\mathbb P}(\mathcal G_1) = \mathbb{P}_{\zeta'}(\mathcal A \cap \mathcal B)
        \quad \text{and} \quad
        \widehat{\mathbb P}(\mathcal G_2^c) \le g^\downarrow({\Theta},\xi^\zeta) + g^\uparrow({\Theta},\xi^{\zeta'}) + \exp\{ -\log^2(\mathsf v)\}.
    \end{equation*}
	Finally, note that the number of infected particles in~$(\zeta'_t)_{t \ge 0}$ is stochastically dominated by a continuous-time Markov chain on~$\mathbb N$ that starts at~$\lceil \mathsf v^{\varepsilon_0} \rceil$ and jumps from~$k$ to~$k+1$ with rate~$2d\lambda k$. In particular,~$\widehat{\mathbb P}(\mathcal G_3^c)$ is smaller than or equal to the probability that this chain has its first jump before time~$\mathsf v^{-2 \varepsilon_0}$, that is,
	\[\mathbb P(\mathcal G_3^c) \le 1-\exp\{-2d\lambda \lceil \mathsf v^{\varepsilon_0} \rceil \cdot \mathsf v^{-2 \varepsilon_0} \} \le 4d\lambda \mathsf v^{-\varepsilon_0},\]
	where the second inequality holds for~$\mathsf v$ large enough. Hence, we have proved that
	\begin{align*}
		\mathbb P_\zeta(\mathcal A) 
        &\smash{\overset{\smash{\eqref{eq_key_set_inclusion}}}{\ge}} \ \widehat{\mathbb P}(\mathcal G_1 \cap \mathcal G_2 \cap \mathcal G_3) \ge\  \mathbb P(\mathcal G_1) - \mathbb P(\mathcal G_2^c) - \mathbb P(\mathcal G_3^c)\\
		&\ge\  \mathbb P_{\zeta'}(\mathcal A \cap \mathcal B) - g^\downarrow({\Theta},\xi^\zeta) - g^\uparrow({\Theta},\xi^{\zeta'}) - \exp\{ -\log^2(\mathsf v)\} - 4d\lambda \mathsf v^{-\varepsilon_0}.\qedhere
	\end{align*}
\end{proof}
\phantom{\qedhere}
\end{proof}

\section{Proof of Theorem~\ref{thm_main}: survival}
\label{s_surv_renorm}

\textbf{Choice of constants and notation:~$\lambda, p, \plow, p_0, \varepsilon_0, h_0$}. For the rest of this section, fix~$\lambda > 0$ and~$p \in [0,1]$ with~$2dp \lambda > 1$. These are the values of~$\lambda$ and~$p$ for which we will prove~\eqref{eq_main_surv}. Then,  fix~$\plow$ slightly smaller than~$p$ so that~$2d\plow\lambda > 1$ also holds, and take~$p_0:=\tfrac12(\plow+p)$. Take~$h_0$ and~$\varepsilon_0$ corresponding to~$\lambda, \plow$ in Proposition~\ref{prop_estrela_vermelha_new}, with~$\varepsilon_0<1/16$. We assume throughout that~$\sfv$ is large enough, as required by the two propositions, and will keep increasing it when necessary.

We will keep denoting by~$(\zeta_t)_{t \ge 0}$ the interchange-and-contact process with parameters~$\lambda$ and~$\sfv$. The initial configuration will be specified in each context; whenever it is not specified, it is irrelevant.

\subsection{Renormalization scheme}

\subsubsection{Bottom-scale grid}

We define
\begin{equation*}
    \Tlow := (\ell_{\Tlow},L_{\Tlow}, t_{\Tlow},p_{\Tlow}),\; \text{where}\;\ell_{\Tlow}:=\mathsf v^{1/(8d)},\;L_{\Tlow}:=\sqrt{\mathsf v}\log^2(\mathsf v),\; t_{\Tlow}:=\mathsf v^{1-2\varepsilon_0},\;p_{\Tlow}:=\tfrac12(\plow + p_0),
\end{equation*}
that is,~$\Tlow$ is the same as~$\Theta$ that appears in Proposition~\ref{prop_estrela_vermelha}, except that the last parameter
is now~$\tfrac12(\plow + p_0)$.
For~$\zeta \in \{0,\stateh,\statei\}^{\mathbb Z^d}$, we abbreviate
\[
G_{\sfv}(\zeta):=g^\downarrow(\Tlow,\; \xi^{\zeta} \cdot \mathds{1}_{B_0(2L_{\Tlow})}).
\]

Let us define the bottom-scale grid of our renormalization scheme.

\begin{definition}[Scale-0 grid and boxes] 
	\label{def_bottom_scale_boxes} 
 Given~$m \in \mathbb Z$ and~$n \in \mathbb N_0$, define
\begin{align*}
    \mathcal L_0 &:= \lfloor \sqrt{\mathsf v} \rfloor,\\
    \mathbf{x}_0(m) &:= \mathcal L_0 m  \cdot \mathsf{e}_1  \in \mathbb Z^d,\\
    \vec{\mathbf{x}}_0(m,n)&:= \mathcal L_0 m  \cdot \mathsf{e}_1 + h_0 n \cdot \mathsf{e}_{d+1} \in \mathbb Z^d \times [0,\infty),
\end{align*}
	where~$\mathsf e_1 := (1,0,\ldots,0) \in \mathbb Z^d \times [0,\infty)$ and~$\mathsf e_{d+1} := (0,\ldots,0,1) \in \mathbb Z^d \times [0,\infty)$.
	The points~$\vec{\mathbf{x}}_0(m,n)$ are called the \emph{scale-0 grid points}. 

    Next, let
    \[\mathcal{L}_0^{\mathrm{side}}:=2\sqrt{\sfv}\log^2(\sfv)\]
    and define the collection of space-time boxes~$\{\mathcal Q_0(m,n): m \in \mathbb Z,\; n \in \mathbb N_0\}$ by letting
    \begin{align*}
    \mathcal Q_0(0,0)&:= [-\mathcal{L}_0^{\mathrm{side}},\mathcal{L}_0^{\mathrm{side}}]^d \times [0,h_0] \subset \mathbb R^d \times [0,\infty), \\
    \mathcal Q_0(m,n)&:= \vec{\mathbf{x}}_0(m,n) + \mathcal Q_0(0,0), \quad m \in \mathbb Z,\; n \in \mathbb N_0.
    \end{align*}
\end{definition}

As mentioned in the Introduction, given~$\zeta \in \{0,\stateh, \statei\}^{\mathbb Z^d}$ and~$x \in \mathbb Z^d$, we let~$\zeta \circ \theta(x) \in \{0,\stateh, \statei\}^{\mathbb Z^d}$ be the translation given by
\[[\zeta \circ \theta(x)](y) = \zeta(x+y),\quad y \in \mathbb Z^d.\]
\begin{definition}[Bad points in scale 0]
\label{def_bad_0_new}
	We declare that the point~$ (0,0) \in \mathbb Z \times \mathbb N_0$ is \emph{0-bad} for a realization of~$(\zeta_t)$ if either
\begin{itemize}
	\item[$(\mathrm{B}1)$] \emph{``few particles at the initial time'':} we have
		\begin{equation}G_\mathsf{v}(\zeta_0) \ge \exp\{-\tfrac12\mathsf v^{\varepsilon_0}\}.\label{eq_few_particles1}\end{equation}
        \end{itemize}
        or
        \begin{itemize}
	\item[$(\mathrm{B}2)$] \emph{``good conditions for propagation, but no propagation'':} \eqref{eq_few_particles1} does not hold and~$\zeta_{0}$ has at least~$\mathsf v^{\varepsilon_0}$ infections inside the box~$B_{0}( \sqrt{\mathsf v})$, but~$\zeta_{h_0}$ has fewer than~$\mathsf v^{\varepsilon_0}$ infections inside (at least) one of the boxes
 \begin{equation}\label{eq_important_boxes_new}
     B_{\mathbf x_0(-1)}( \sqrt{\mathsf v}),\;\;B_{0}( \sqrt{\mathsf v}),\;\; B_{\mathbf x_0(1)}( \sqrt{\mathsf v}).
 \end{equation}
\end{itemize}
	For~$m \in \mathbb Z$ and~$n \in \mathbb N_0$, we say that the point~$(m,n) \in \mathbb Z \times \mathbb N_0$ is 0-bad for~$(\zeta_t)$ in case the point~$(0,0)$ is 0-bad for the process translated so that~$\vec{\mathbf{x}}_0(m,n)$ becomes the space-time origin, that is, the process
	\[(\zeta_{nh_0+t} \circ \theta(\mathbf x_0(m)))_{t \ge 0}.\]
\end{definition}
\begin{remark}\label{rem_dependence0}
In order to check whether condition~$\mathrm{(B1)}$ is satisfied {for $(m,n)$}, it is enough to know the value of~$\zeta_t(x)$ for~$(x,t)$ in
\[
B_{\mathbf x_0(m)}(\mathcal{L}_0^{\mathrm{side}}) \times \{h_0 n\}.
\]
In order to check whether condition~$\mathrm{(B2)}$ is satisfied { for $(m,n)$}, it is enough to know the value of~$\zeta_t(x)$ for~$(x,t)$ in the same space-time set as above, together with
\[
B_{\mathbf x_0(m)}(2\sqrt{\sfv})\times \{h_0 (n+1)\}. 
\]
Both these space-time sets are contained in~$\mathcal Q_0(m,n)$.
Consequently, we can decide whether~$(m,n)$ is bad with knowledge of~$(\zeta_t(x):(x,t) \in \mathcal Q_0(m,n))$.
\end{remark}

\begin{remark}
	Since Definition~\ref{def_bad_0_new} is somewhat involved, it is useful to spell out its negation, that is, to describe when a point~$(m,n) \in \mathbb Z \times \mathbb N_0$ is not 0-bad (i.e. it is \emph{0-good}) for~$(\zeta_t)_{t \ge 0}$. We do this for~$(m,n) = (0,0)$; this point is 0-good if one of the following two conditions holds:
\begin{itemize}
	\item[$(\mathrm{G}1)$] \emph{``many particles, but few infected ones at the initial time'':} we have $G_{\mathsf v}(\zeta_0) < \exp\{-\tfrac12\mathsf v^{\varepsilon_0}\}$, and~$\zeta_{0}$ has fewer than~$\mathsf v^{\varepsilon_0}$ infected particles inside~$B_{0}( \sqrt{\mathsf v})$;
	\item[$(\mathrm{G}2)$] \emph{``successful propagation'':} we have~$G_{\mathsf v}(\zeta_0) < \exp\{-\tfrac12\mathsf v^{\varepsilon_0}\}$,~$\zeta_{0}$ has at least~$\mathsf v^{\varepsilon_0}$ infections inside~$B_{0}( \sqrt{\mathsf v})$, and $\zeta_{h_0}$ has at least~$\mathsf v^{\varepsilon_0}$ infections inside each of the boxes in~\eqref{eq_important_boxes_new}.
\end{itemize}
\end{remark}

Since eventually our notion of good points will serve to show survival of the infection, it may seem odd to label condition $(\mathrm{G}1)$ above as `good'. The reason for this labelling is technical. We want to be able to prove that bad points are very rare when~$\zeta_0$ has many particles (for instance, when it dominates a sufficiently dense Bernoulli product measure), \emph{regardless of whether these particles are healthy or infected}. To achieve this goal, it is helpful to label situations where there are many particles but few infections as good. At the same time, this will not cause trouble when we show survival of the infection, due to the following simple observation, which we record as a lemma.
\begin{lemma}
	Let~$0=m_0,\ldots,m_k \in \mathbb Z$ with~$|m_{i+1}-m_i| \le 1$ for each~$i$. Assume that~$\zeta_{0}$ has at least~$\mathsf v^{\varepsilon_0}$ infections inside~$B_{0}(\sqrt{\mathsf v})$, and that the points~$(m_i,i)$, with~$0 \le i \le k$, are all 0-good for~$(\zeta_t)$. Then, the boxes
 \[
B_{{\mathbf{x}_0}(m_k-1)}( \sqrt{\mathsf v}),\;\;B_{\mathbf{x}_0(m_k)}( \sqrt{\mathsf v}),\;\;B_{\mathbf{x}_0(m_k+1)}( \sqrt{\mathsf v})
 \]
all have at least~$\mathsf v^{\varepsilon_0}$ infections in~$\zeta_{(k+1)h_0}$.
\end{lemma}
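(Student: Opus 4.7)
My plan is to prove this by induction on $k$, exploiting the simple structural fact that whenever the central box $B_{\mathbf{x}_0(m)}(\sqrt{\sfv})$ already contains at least $\sfv^{\varepsilon_0}$ infections at the start time of a 0-good point $(m,n)$, condition $(\mathrm{G}1)$ is automatically ruled out, so the point must fall into case $(\mathrm{G}2)$, which delivers propagation to the three neighbouring boxes at the next time level. In other words, once we have sufficiently many infections in one of these balls, a 0-good label forces the successful-propagation scenario.

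\textbf{Base case $k=0$.} By hypothesis $\zeta_0$ has at least $\sfv^{\varepsilon_0}$ infected vertices inside $B_0(\sqrt{\sfv}) = B_{\mathbf{x}_0(0)}(\sqrt{\sfv})$, which is incompatible with condition $(\mathrm{G}1)$. Since $(0,0)$ is 0-good, we conclude that $(\mathrm{G}2)$ holds at $(0,0)$. By definition of $(\mathrm{G}2)$, the three balls $B_{\mathbf{x}_0(-1)}(\sqrt{\sfv})$, $B_{\mathbf{x}_0(0)}(\sqrt{\sfv})$ and $B_{\mathbf{x}_0(1)}(\sqrt{\sfv})$ each contain at least $\sfv^{\varepsilon_0}$ infected vertices of $\zeta_{h_0}$, which is exactly the claim for $k=0$.

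\textbf{Induction step.} Suppose the assertion holds for $k-1$ in place of $k$. Applying it along the prefix $m_0,\ldots,m_{k-1}$, the three balls $B_{\mathbf{x}_0(m_{k-1}-1)}(\sqrt{\sfv})$, $B_{\mathbf{x}_0(m_{k-1})}(\sqrt{\sfv})$ and $B_{\mathbf{x}_0(m_{k-1}+1)}(\sqrt{\sfv})$ each contain at least $\sfv^{\varepsilon_0}$ infected vertices of $\zeta_{k h_0}$. Because $|m_k - m_{k-1}| \le 1$, we have $m_k \in \{m_{k-1}-1, m_{k-1}, m_{k-1}+1\}$, so in particular
\[
|\{x \in B_{\mathbf{x}_0(m_k)}(\sqrt{\sfv}) : \zeta_{k h_0}(x) = \statei\}| \ge \sfv^{\varepsilon_0}.
\]
Translating the process so that $\vec{\mathbf{x}}_0(m_k, k)$ becomes the space-time origin, this means that condition $(\mathrm{G}1)$ fails for $(m_k, k)$. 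Since $(m_k, k)$ is 0-good by assumption, $(\mathrm{G}2)$ must hold, yielding that each of
\[
B_{\mathbf{x}_0(m_k-1)}(\sqrt{\sfv}),\quad B_{\mathbf{x}_0(m_k)}(\sqrt{\sfv}),\quad B_{\mathbf{x}_0(m_k+1)}(\sqrt{\sfv})
\]
contains at least $\sfv^{\varepsilon_0}$ infected vertices of $\zeta_{(k+1)h_0}$, completing the induction.

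There is no real obstacle here: the statement is a deterministic bookkeeping lemma whose entire content is packed into the definition of 0-good and the triviality that $m_k$ is a neighbour (in the $\mathsf{e}_1$-direction of the grid) of $m_{k-1}$. The only point that deserves care is noting that condition $(\mathrm{G}1)$ is phrased with the \emph{initial-time} infection count of the translated process, so that one is entitled to read it off from $\zeta_{k h_0}$ at the box centred at $\mathbf{x}_0(m_k)$, which is the very box our induction hypothesis controls.
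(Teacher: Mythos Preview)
Your proof is correct and is precisely the argument the paper has in mind: the lemma is presented there as a ``simple observation'' with no written proof, and your induction on $k$---ruling out $(\mathrm{G}1)$ by the infection-count hypothesis and then invoking $(\mathrm{G}2)$---is the intended justification.
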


Using Proposition~\ref{prop_estrela_vermelha_new}, we will now show that, for a process with density of particles above~$\plow$, the probability that a point is~$0$-bad is small.
\begin{corollary}
	\label{cor_scale_0_G}
	The following holds if~$\mathsf v$ is large enough.
	Assume that~$(\zeta_t)$ starts from a random configuration~$\zeta_0$ such that the law of the projection~$\xi^{\zeta_0} \in \{0,1\}^{\mathbb Z^d}$ stochastically dominates~$\pi_{p_0}$. Then, for any~$(m,n)$, the probability that~$(m,n)$ is~$0$-bad for~$(\zeta_t)$ is smaller than~$3\mathsf v^{-\varepsilon_0/2}$.
\end{corollary}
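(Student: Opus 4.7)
The plan is to reduce to $(m,n)=(0,0)$ by translation invariance and then separately bound the probabilities of the two bad conditions $(\mathrm{B}1)$ and $(\mathrm{B}2)$. Since $\pi_{p_0}$ is translation invariant in space and stationary under the interchange dynamics, the standard monotone coupling shows that $\xi^{\zeta_{nh_0}}\circ \theta(\mathbf{x}_0(m))$ stochastically dominates $\pi_{p_0}$. Combined with the Markov property at time $nh_0$, the translated process $(\zeta_{nh_0+t}\circ \theta(\mathbf{x}_0(m)))_{t \ge 0}$ therefore satisfies the same hypotheses on its initial configuration as $(\zeta_t)_{t \ge 0}$, so it suffices to prove $\mathbb P((0,0)\text{ is 0-bad}) < 3\sfv^{-\varepsilon_0/2}$.

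For $\mathbb P((\mathrm B1))$, Markov's inequality gives $\mathbb P(G_\sfv(\zeta_0) \ge e^{-\sfv^{\varepsilon_0}/2}) \le e^{\sfv^{\varepsilon_0}/2}\,\mathbb E[G_\sfv(\zeta_0)]$. The function $g^\downarrow(\Tlow,\cdot)$ is decreasing in its configuration argument, so coupling $\xi^{\zeta_0}$ with $\eta \sim \pi_{p_0}$ satisfying $\eta \le \xi^{\zeta_0}$ reduces the problem to bounding $\mathbb E[g^\downarrow(\Tlow,\eta\cdot \mathds{1}_{B_0(2L_\Tlow)})]$. The principal obstacle here is the spatial restriction $\mathds 1_{B_0(2L_\Tlow)}$ built into $G_\sfv$, which prevents a direct application of Lemma~\ref{lem_integral_gs}. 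To circumvent it, I will couple the interchange processes started from $\eta$ and from $\eta \cdot \mathds{1}_{B_0(2L_\Tlow)}$ via a common graphical representation and argue, in the spirit of Lemma~\ref{lem_covariances_0}, that they agree on $B_0(L_\Tlow)\times[0,t_\Tlow]$ outside an event of probability $\mathrm{discr}^{\mathrm{ip}}(L_\Tlow,2L_\Tlow,t_\Tlow)$, yielding
\[
\mathbb E[G_\sfv(\zeta_0)] \le \int g^\downarrow(\Tlow,\xi)\,\pi_{p_0}(\mathrm d\xi)+\mathrm{discr}^{\mathrm{ip}}(L_\Tlow,2L_\Tlow,t_\Tlow).
\]
Lemma~\ref{lem_integral_gs} with $p=p_\Tlow<p_0=p'$ bounds the first term by $\exp\{-c\sfv^{1/8}\}$ (up to polynomial factors), using $(2\ell_\Tlow+1)^d \ge c\sfv^{1/8}$ and $p_0-p_\Tlow = \tfrac12(p_0-\plow)>0$, while Lemma~\ref{lem_first_rw_discr} combined with $L_\Tlow^2/t_\Tlow = \sfv^{2\varepsilon_0}\log^4 \sfv$ shows the discrepancy term is of order $\exp\{-c'\sfv^{2\varepsilon_0}\log^4 \sfv\}$. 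Since $\varepsilon_0<1/16<1/8$, both contributions are $\ll e^{-\sfv^{\varepsilon_0}/2}$, so $\mathbb P((\mathrm B1)) = o(\sfv^{-\varepsilon_0/2})$.

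For $\mathbb P((\mathrm B2))$, I will condition on $\zeta_0$ and apply Proposition~\ref{prop_estrela_vermelha} with $(\lambda,\plow,p_0)$ in place of $(\lambda,p,p')$. The hypotheses hold because $\plow<p_0$, $2d\plow\lambda>1$, and the corresponding $h_0,\varepsilon_0$ agree with those fixed in this section; moreover, the parameter set $\Theta$ appearing in that proposition coincides exactly with $\Tlow$, since $\tfrac12(\plow+p_0)=p_\Tlow$. By monotonicity of $g^\downarrow$ in its configuration argument, restricting to $B_0(2L_\Tlow)$ only increases the value, so $g^\downarrow(\Tlow,\xi^{\zeta_0})\le G_\sfv(\zeta_0)$; hence on $(\mathrm B1)^c$, Proposition~\ref{prop_estrela_vermelha} yields $\mathbb P((\mathrm B2)\mid \zeta_0) \le 2\sfv^{-\varepsilon_0/2}+e^{-\sfv^{\varepsilon_0}/2}$. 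Taking expectations and combining with the $(\mathrm B1)$ bound gives $\mathbb P((0,0)\text{ is 0-bad})<3\sfv^{-\varepsilon_0/2}$ for $\sfv$ sufficiently large.
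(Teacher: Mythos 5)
Your proposal is correct and follows essentially the same route as the paper: reduce to $(m,n)=(0,0)$ by stationarity, bound $(\mathrm B1)$ via Markov's inequality after comparing $G_\sfv$ to the unrestricted $g^\downarrow$ through $\mathrm{discr}^{\mathrm{ip}}(L_{\Tlow},2L_{\Tlow},t_{\Tlow})$ and invoking Lemmas~\ref{lem_integral_gs} and~\ref{lem_first_rw_discr}, then bound $(\mathrm B2)$ by conditioning on $\zeta_0$ and applying Proposition~\ref{prop_estrela_vermelha} with $(\plow,p_0)$, noting $\Theta=\Tlow$. The only (harmless) deviation is that for $(\mathrm B2)$ you use the monotonicity inequality $g^\downarrow(\Tlow,\xi^{\zeta_0})\le G_\sfv(\zeta_0)$ directly, where the paper routes this comparison through the discrepancy bound again — your version is, if anything, slightly cleaner.
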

\begin{proof}
	The assumption that~$\xi^{\zeta_0}$ stochastically dominates~$\pi_{p_0}$ implies that~$\xi^{\zeta_{nh_0}\circ \theta(\mathbf{x}(m))}$ {also does it;} this can be easily seen using the graphical representation and the fact that Bernoulli product measures are stationary for the interchange process. Due to this observation, it suffices to prove the bound for~$(m,n) = (0,0)$.

We start by finding an upper bound for~$\mathbb E[G_{\mathsf v} (\zeta_0)]$.
If~$\xi,\xi' \in \{0,1\}^{\mathbb Z^d}$ are such that~$\xi(x)\le \xi'(x)$ for all~$x$, then~$g^\downarrow(\Tlow,\xi) \ge g^\downarrow(\Tlow,\xi')$. Using this, we have\vspace{-1mm}
\begin{equation*}
\mathbb E[G_{\mathsf v} (\zeta_0)]
    = \mathbb E[g^\downarrow(\Tlow,\xi^{\zeta_0} \cdot \mathds{1}_{B_0(2L_{\Tlow})})] 
    \le \smash{\int} g^\downarrow(\Tlow,\xi \cdot \mathds{1}_{B_0(2L_{\Tlow})})\; \pi_{p_0}(\mathrm{d}\xi).
\end{equation*}
Now note that, for any~$\xi$,
\begin{align}
&|g^\downarrow(\Tlow,\xi \cdot \mathds{1}_{B_0(2L_{\Tlow})})-g^\downarrow(\Tlow,\xi )| \le \mathrm{discr}^{\mathrm{ip}}(L_{\Tlow},2L_{\Tlow},\mathsf v^{1-2\varepsilon_0})\nonumber\\
\label{eq_gdown_to_Gv}
    &\stackrel{\mathclap{\eqref{eq_with_t_1}}}{\leq} 16ed^3 \mathsf v^{1-2\varepsilon_0} \cdot (4\sqrt{\mathsf v}\log^2(\mathsf v)+1)^{d-1} \cdot \exp \Bigl\{-(\sqrt{\mathsf v}\log^2(\mathsf v)) \cdot \log \Bigl(1+ \frac{\sqrt{\mathsf v}\log^2(\mathsf v) }{2\mathsf v^{1-2\varepsilon_0}} \Bigr) \Bigr\}.
\end{align}
When~$\sfv$ is large enough, the expression on the r.h.s. is smaller than~$\exp\{-\sfv^{\varepsilon_0}\}$. This shows that
\[
\mathbb E[G_{\mathsf v} (\zeta_0)] \le  \int g^\downarrow(\Tlow,\xi)\; \pi_{p_0}(\mathrm{d}\xi) +\exp\{-\sfv^{\varepsilon_0}\}.
\]
Using the definition of~$\Tlow$ and Lemma~\ref{lem_integral_gs}, the integral on the r.h.s. is smaller than
\[
		(2\sqrt{\sfv}\log^2(\sfv)+1)^d \cdot (e(2\mathsf v^{1/(8d)}+2)^d \mathsf v^{1-2\varepsilon_0} + e)\cdot \exp \Bigl\{-\frac12 (2 \mathsf v^{1/(8d)}+1)^d (p_0-\plow)^2 \Bigr\},
\]
which is smaller than~$\exp\{-\sfv^{1/16}\}$ when~$\sfv$ is large enough. We have thus proved that
\[\mathbb E[G_{\mathsf v} (\zeta_0)] \le \exp\{-\sfv^{1/16}\} + \exp\{-\sfv^{\varepsilon_0}\} \le 2\exp\{-\sfv^{\varepsilon_0}\},\]
since we have taken~$\varepsilon_0<1/16$. Markov's inequality now gives
\begin{align*}
	&\mathbb{P}\left(G_\mathsf{v}(\zeta_0 ) \ge \exp\{-\tfrac12 \mathsf v^{\varepsilon_0} \}\right) \le  \exp\{\tfrac12 \mathsf v^{\varepsilon_0}\} \cdot \mathbb E[G_{\mathsf v} (\zeta_0)] < 2\exp\{-\tfrac12 \mathsf v^{\varepsilon_0}\},
 \end{align*}
 controlling the probability of condition~$(\mathrm B1)$ in Definition~\ref{def_bad_0_new}.
	Now, let~$\mathcal A_0$ denote the event that~$G_{\mathsf v}(\zeta_0) < \exp\{-\tfrac12\mathsf v^{\varepsilon_0}\}$ and~$\zeta_0$   has at least~$\mathsf v^{\varepsilon_0}$ infections in~$B_{0}( \sqrt{\mathsf v})$. Let~$\mathcal A_0'$ be the event that~$\mathcal A_0$ occurs, but~$\zeta_{h_0}$ fails to have at least~$\mathsf v^{\varepsilon_0}$ infections in either of the boxes in~\eqref{eq_important_boxes_new}. Note that~$\mathcal A_0'$ corresponds to the event described in condition~$(\mathrm B2)$. We then have
	\begin{align*}
	\text{on } \mathcal A_0, \quad \mathbb P(\mathcal A_0' \mid \zeta_0) 
        &\le g^\downarrow(\Tlow,\xi^{\zeta_0})+2\sfv^{-\varepsilon_0/2} 
        \overset{\mathclap{\eqref{eq_gdown_to_Gv}}}{\le} G_{\mathsf v}(\zeta_0) + \exp\{-\sfv^{\varepsilon_0}\} + 2\sfv^{-\varepsilon_0/2}\\
        &\le 2\exp\{-\tfrac12 \sfv^{\varepsilon_0}\}+  2\sfv^{-\varepsilon_0/2}
	\end{align*}
	where the first inequality follows from Proposition~\ref{prop_estrela_vermelha} and the last inequality follows from the fact that~$G_{\sfv}(\zeta_0) < \exp\{-\tfrac12 \mathsf v^{\varepsilon_0}\}$ on~$\mathcal A_0$. Integrating the above inequality now gives
	\begin{equation*}
    \mathbb P(\mathcal A_0') 
    \le \mathbb E[\mathds{1}_{\mathcal A_0} \cdot \mathbb P(\mathcal A_0' \mid \zeta_0)] 
    \le 2\exp\{-\tfrac12 \sfv^{\varepsilon_0}\} +2\sfv^{-\varepsilon_0/2}.
    \end{equation*}

Putting things together, we have proved that
	\[\mathbb P((0,0) \text{ is 0-bad for $(\zeta_t)$}) \le 4\exp\{-\tfrac12 \sfv^{\varepsilon_0}\} + 2\sfv^{-\varepsilon_0/2}; \]
	when~$\mathsf v$ is large enough, the r.h.s. is smaller than~$3\mathsf v^{-\varepsilon_0/2}$, as desired.
\end{proof}

\subsubsection{Higher-scale grids and boxes}

Our next goal is to define a sequence of grid scales and a collection of boxes associated to each scale. The boxes will be taken so that their union covers the ``slab''~$\mathbb R \times [-\mathcal L_0^{\mathrm{side}},\mathcal L_0^{\mathrm{side}}]^{d-1} \times [0,\infty)$.
In our construction, it will be useful to allow for some spatial overlap between adjacent boxes. The overlap on scale $N$ is controlled by a factor{~$\rho_N \in [1,2)$}. We define it by setting
{\begin{equation}
\label{eq:defi_rho_N}
    \rho_N := \sum_{i=0}^{N} 2^{-i},\quad N \in \mathbb N_0.
\end{equation}  }
The growth of scales will be controlled by the value
\begin{equation*}
\alphav:=\lfloor \mathsf v^{\varepsilon_0/64}\rfloor.
\end{equation*}
Recall that~$h_0$ has been fixed, and~$\mathcal L_0 := \lfloor \sqrt{\mathsf v} \rfloor$.
\begin{definition}[Scale-$N$ grid and boxes] 
Let
\begin{align}\label{eq_def_of_primes}
	&\mathcal L_{N} :=  \alphav^{N^2} \cdot \mathcal L_0  \quad \text{and}\quad h_N' :=  \rho_N\alphav^{N^2} \cdot  h_0  ,\quad N \in \mathbb N.
\end{align}
	In order to obtain an integer multiple of~$h_{N-1}$ from the latter, we set
	\[h_{N}:= \lfloor h_{N}'/h_{N-1} \rfloor \cdot h_{N-1}, \quad N \in \mathbb N.\]
Given~$m \in \mathbb Z$ and~$n \in \mathbb N_0$, define
\begin{equation*}\label{eq_def_bfx}
\mathbf{x}_N(m) := \mathcal L_N m \cdot \mathsf{e}_1 \in \mathbb Z^d,
\quad \text{and} \quad
\vec{\mathbf{x}}_N(m,n):=  \mathcal L_N m \cdot \mathsf{e}_1 + h_N n \cdot \mathsf{e}_{d+1} \in \mathbb Z^{d} \times [0,\infty).
\end{equation*}
The points~$\vec{\mathbf{x}}_N(m,n)$ are called the \emph{scale-$N$ grid points}.
Next, let
	\begin{equation}\label{eq_def_LN_side}
    \mathcal L_N^{\mathrm{side}}:=\rho_N \mathcal L_N,\quad N \in \mathbb N.
	\end{equation}
    Define the collection of space-time boxes~$\{\mathcal Q_N(m,n): m \in \mathbb Z,\; n \in \mathbb N_0\}$  by
    \begin{align*}
    \cQ_N(0,0) 
        &:= [-\mathcal L_N^{\mathrm{side}}, \mathcal L_N^{\mathrm{side}}] \times [-\mathcal L_0^\mathrm{side},\mathcal L_0^\mathrm{side}]^{d-1} \times [0,h_N]
        \\
    \cQ_N(m,n)
        &:= \vec{\mathbf{x}}_N(m,n) + \cQ_N(0,0), \quad m \in \mathbb Z,\; n \in \mathbb N_0.
    \end{align*}
\end{definition}

Next, we give an inductive definition of a bad point for scale $N$. 
\begin{definition}[Bad points in scale~$N$]\label{def_bad_N} Let~$N \in \mathbb N$.
	We declare that the point~$(m,n) \in \mathbb Z \times \mathbb N_0$ is \emph{$N$-bad} for~$(\zeta_t)$ if there are indices $(i,j)$ and
$(i',j')$ such that
\begin{itemize}
	\item $(i,j)$ and~$(i',j')$ are~$(N-1)$-bad;
	\item $\mathcal{Q}_{N-1}(i,j)$ and~$\mathcal{Q}_{N-1}(i',j')$ are contained in~$\mathcal Q_N(m,n)$;
	\item either $|j-j'| > 1$ or $\bigl[|j-j'| \le 1$ and $|i-i'| > \sqrt{\alphav}\bigr]$.
	\end{itemize}
\end{definition}
Note that as before,~$(m,n)$ is~$N$-bad for~$(\zeta_t)$ if and only if~$(0,0)$ is~$N$-bad for the translated process~$(\zeta_{nh_N+t} \circ \theta(\mathbf x_N(m)))_{t \ge 0}$.

\begin{remark}\label{rem_dependence}
Using Remark~\ref{rem_dependence0} and arguing by induction, we see that it is possible to decide whether~$(m,n)$ is~$N$-bad with knowledge of~$\{\zeta_t(x): (x,t) \in \mathcal Q_N(m,n)\}$.
\end{remark}

\begin{remark}\label{rem_slow}
Let us give an heuristic explanation for our choices of scales and sizes of the renormalization scheme. Note that apart from scale 0, which is somewhat special, the quotient~$\mathcal{L}_N^{\mathrm{side}}/h_N$ is roughly the same in all scales, a natural choice. Somewhat trickier is the fact that the factor~$\rho_N$ appears in the definition of~$h_N$ and~$\mathcal{L}_N^{\mathrm{side}}$, but not on the spatial grid length~$\mathcal{L}_N$, thus causing spatial overlap between adjacent boxes. This is what we now address on an intuitive level; this intuition is mathematically implemented in the statement and proof of Lemma~\ref{lem_accessible_spread} below.

For the sake of this explanation, define the \emph{cone of scale-$N$ boxes} 
\[\mathscr{C}_N:=\bigcup_{\substack{(m,n):n \ge 0,\\-n \le m \le n}} \mathcal{Q}_N(m,n).\]
Let us think of~$\mathscr{C}_N$ as the region inside which the infection could ideally propagate using level-$N$ boxes -- by `ideally' we mean we are thinking of an idealized scenario where, very roughly speaking,
\begin{equation*} \begin{split}
    &\mathcal{Q}_N(m,n)\text{ has many infections }, \mathcal{Q}_N(m-1,n+1), \mathcal{Q}_N(m,n+1), \mathcal{Q}_N(m+1,n+1) \text{ are good } \\ &\Longrightarrow \quad \mathcal{Q}_N(m-1,n+1), \mathcal{Q}_N(m,n+1), \mathcal{Q}_N(m+1,n+1) \text{ have many infections.}
\end{split}\end{equation*}
Note that~$\mathscr{C}_N$ has slope equal to~$\mathrm{slope}(N):=\mathcal{L}_N/h_N$. For the renormalization to work from one scale to the next, it is very important that~$\mathrm{slope}(N) \ge \mathrm{slope}({N+1})$; this way, we could hope that a propagating front of level-$N$ boxes could produce a propagating front of level-$(N+1)$ boxes, thus allowing us to prove propagation in all scales by an inductive argument.

In fact, having~$\mathrm{slope}(N) = \mathrm{slope}({N+1})$ (which would hold without the introduction of the overlap) would not be good enough: we need a strict inequality. Indeed, when we consider good~$N$-boxes propagating inside an environment of good~$(N+1)$-boxes, the \emph{effective speed} is slightly less than ~$\mathrm{slope}(N)$, because occasionally (albeit sporadically) a space-time region of bad level-$N$ boxes has to be circumvented. The role of the overlap factors~$(\rho_N)_N$ is to cause~$\mathrm{slope}(N)$ to be (slowly) decreasing in~$N$, in order to guarantee that the inequality~$\mathrm{slope}(N) > \mathrm{slope}({N+1})$ holds (even if we reduce the l.h.s. to its effective value which accounts for loss of speed).

Incidentally, this loss of speed effect is the main reason we have taken our renormalization scales growing faster than exponentially. If we had taken the scale growth as~$\alphav^N$ rather than~$\alphav^{N^2}$, then the ratio between the box side lengths~$\mathcal{L}_{N}^{\mathrm{side}}$ and~$\mathcal{L}_{N+1}^{\mathrm{side}}$ would not tend to zero with~$N$, but would stay constant instead, causing the speed to decrease by a constant factor with each scale, eventually vanishing. 
\end{remark}

The following is a summary of the renormalization scheme described so far, for ease of reference:

\medskip
\noindent
\begin{boxedminipage}{\textwidth}
{\footnotesize
\begin{tabular}{p{0.24\textwidth} p{0.76\textwidth}}
\textbf{Initialization constants}:&$\plow,\lambda$ \text{with }$2d\lambda \plow > 1$;\;$h_0$,~$\varepsilon_0$ corresponding to~$\lambda$ and~$\plow$ in Proposition~\ref{prop_estrela_vermelha_new}
\end{tabular}\\[.2cm]
\begin{tabular}{p{0.43\textwidth} p{0.56\textwidth}}
	\multicolumn{2}{p{\textwidth}}{\textbf{Renormalization growth constants}:$\quad$ $\alpha_{\mathsf{v}}:=\lfloor \mathsf{v}^{\varepsilon_0/64}\rfloor$,\; $\rho_N:=\sum_{i=0}^N 2^{-i},\; N \in \mathbb{N}_0$}\\[.2cm]
	\textbf{Grids}&\textbf{Boxes}\\
	$\mathcal{L}_N := \lfloor \sqrt{\mathsf{v}} \rfloor \cdot \alpha_{\mathsf v}^{N^2},\; N \in \mathbb{N}_0$
	 & $\mathcal{L}_0^{\mathrm{side}}:=2\sqrt{\mathsf v}\log^2(\mathsf v);\;\mathcal{L}_N^{\mathrm{side}} := \rho_N \mathcal{L}_N,\; N \ge 1$   \\
	 $h_N':=\rho_N \alpha_{\mathsf{v}}^{\mathrlap{N^2}} \cdot h_0;\; h_N:=\left\lfloor \tfrac{h_N'}{h_{N-1}} \right\rfloor h_{N-1},\; N \ge 1$
	& $\mathcal Q_N(0,0):=[-\mathcal{L}_N^{\mathrm{side}}, \mathcal{L}_N^{\mathrm{side}}] \times [-\mathcal{L}_0^{\mathrm{side}}, \mathcal{L}_0^{\mathrm{side}}]^{d-1} \times [0,h_N]$\\
	For $m \in \mathbb{Z},\; n \in \mathbb{N}_0$:
    &For $m \in \mathbb{Z},\; n \in \mathbb{N}_0$:\\
	$\mathbf{x}_N(m):=\mathcal{L}_N m \cdot \mathsf{e}_1, $& $\mathcal{Q}_N(m,n):= \vec{\mathbf{x}}_N(m,n)+\mathcal{Q}_N(0,0)$\\
	$\vec{\mathbf{x}}_N(m,n):=\mathbf{x}_N(m)+ h_N n \cdot \mathsf{e}_{d+1}$&\\[.2cm]
\multicolumn{2}{p{\textwidth}}{\textbf{Bad points at scale 0}  }\\
\multicolumn{2}{p{\textwidth}}{$\Tlow :=  (\ell_{\Tlow}:=\mathsf v^{1/(8d)},\;L_{\Tlow}:=\sqrt{\mathsf v}\log^2(\mathsf v),\; t_{\Tlow}:=\mathsf v^{1-2\varepsilon_0},\;p_{\Tlow}:=\tfrac12(\plow + p_0))$, $G_{\mathsf{v}}(\zeta):=g^\downarrow(\Tlow,\; \xi^{\zeta} \cdot \mathds{1}_{B_0(\mathcal{L}_0^{\mathrm{side}})})$ }\\
	\multicolumn{2}{p{0.97\textwidth}}{$(0,0)$ is~$0$-bad for~$(\zeta_t)_{t\ge 0}$ if: \textit{either} $G_\mathsf{v}(\zeta_0) \ge \exp\{-\tfrac12\mathsf v^{\varepsilon_0}\}$ \textit{or} [$G_\mathsf{v}(\zeta_0) < \exp\{-\tfrac12\mathsf v^{\varepsilon_0}\}$ \textit{and} $\zeta_0$ has more than~$\mathsf{v}^{\varepsilon_0}$ infections in~$B_0(\sqrt{\mathsf v})$ \textit{and} $\zeta_{h_0}$ has fewer than~$\mathsf{v}^{\varepsilon_0}$ infections in one of the boxes $B_{\mathbf x_0(-1)}( \sqrt{\mathsf v}),\;B_{0}( \sqrt{\mathsf v}),\; B_{\mathbf x_0(1)}( \sqrt{\mathsf v})$]}\\
	\multicolumn{2}{p{0.97\textwidth}}{$(m,n)$ is~$0$-bad for~$(\zeta_t)_{t\ge 0}$ if $(0,0)$ is~$0$-bad for~$(\zeta_{nh_0+t} \circ \theta(\mathbf{x}_0(m)))_{t \ge 0}$}
	\\[.2cm]
	\multicolumn{2}{p{0.97\textwidth}}{\textbf{Bad points at scale $N \ge 1$}  }\\
	\multicolumn{2}{p{0.97\textwidth}}{$(m,n)$ is~$N$-bad for~$(\zeta_t)_{t \ge 0}$ if there exist~$(i,j),(i',j')$, both~$(N-1)$-bad, such that~$\mathcal{Q}_{N-1}(\vec{\mathbf{x}}_{N-1}(i,j))$ and $\mathcal{Q}_{N-1}(\vec{\mathbf{x}}_{N-1}(i',j'))$ are contained in $\mathcal{Q}_N(m,n)$ \textit{and} [\textit{either} $|j-j'|>1$ \textit{or} [$|j-j'|\le 1$ \textit{and} $|i-i'| > \sqrt{\alphav}$]]}\\
\end{tabular}
}
\end{boxedminipage}\\[.2cm]

We now define~$p_1,p_2,\ldots$ recursively with
\begin{equation}\label{eq_nice_forumulas_p}
	p_{N+1} := \tfrac12 (p_N+p), \quad N \in \mathbb N_0.
\end{equation}

\begin{proposition}\label{prop_higher_scales_0}
	The following holds if~$\sfv$ is large enough. 
	Let~$N \in \mathbb N$ and assume that~$(\zeta_t)$ starts from a random configuration~$\zeta_0$ such that the law of the projection~$\xi^{\zeta_0} \in \{0,1\}^{\mathbb Z^d}$ stochastically dominates~$\pi_{p_N}$. Then, for any~$(m,n)$, the probability that~$(m,n)$ is~$N$-bad for~$(\zeta_t)$ is smaller than~$\alphav^{-8(N+2)}$.
\end{proposition}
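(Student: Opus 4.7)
The plan is to prove this by induction on~$N$. Fix $(m,n)$ and assume the bound at scale~$N$. By Definition~\ref{def_bad_N}, the event that $(m,n)$ is $(N{+}1)$-bad is the union over admissible ordered pairs $((i,j),(i',j'))$ of scale-$N$ grid indices---those with $\mathcal Q_N(i,j)\cup\mathcal Q_N(i',j') \subset \mathcal Q_{N+1}(m,n)$ and satisfying the required spatial or temporal separation---of the joint event $A_{i,j}\cap A_{i',j'}$, where $A_{i,j}=\{(i,j)\text{ is }N\text{-bad}\}$. A direct geometric count bounds the number of such pairs by $\alphav^{C_d}$ for some dimensional constant $C_d$. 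Since the induction hypothesis gives $P(A_{i,j})P(A_{i',j'})\le\alphav^{-16(N+2)}$, the induction will close if each joint probability satisfies $P(A_{i,j}\cap A_{i',j'})\le P(A_{i,j})P(A_{i',j'}) + \mathrm{err}$ with $\mathrm{err} \ll \alphav^{-8(N+3)-C_d}$. The base case $N=1$ is handled by the very same argument, with Corollary~\ref{cor_scale_0_G} (which bounds $P(0\text{-bad}) \le 3\sfv^{-\varepsilon_0/2}$) playing the role of the inductive input---this is applicable because $\xi^{\zeta_0} \succeq \pi_{p_1} \succeq \pi_{p_0}$.

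For a horizontally separated pair ($|j-j'|\le 1$ and $|i-i'|>\sqrt{\alphav}$), the boxes sit at spatial distance at least $(\sqrt{\alphav}-2)\mathcal L_N$, while the relevant time window is at most $h_{N+1}$. Remark~\ref{rem_dependence} makes $A_{i,j}$ and $A_{i',j'}$ measurable with respect to disjoint space-time regions, so Lemma~\ref{lem_covariances} bounds their covariance by a constant multiple of $\mathrm{discr}^{\mathrm{icp}}_{\sfv,\lambda}(\mathcal L_N^{\mathrm{side}},\; c\sqrt{\alphav}\mathcal L_N,\; h_{N+1})$, and Proposition~\ref{prop_discrepancy} renders this super-exponentially small in $\sfv$. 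It is essential here that the scale growth is super-exponential, $\mathcal L_N \sim \alphav^{N^2}\mathcal L_0$ (cf.\ Remark~\ref{rem_slow}), so that the separation dwarfs the distance the interchange-and-contact flow can cover within the window $h_{N+1}$.

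For a vertically separated pair ($j<j'$ with $|j-j'|>1$) a direct Markov argument at the intermediate time $t^*=j'h_{N+1}$ fails: the induction hypothesis requires the law of $\xi^{\zeta_{t^*}}$ to dominate $\pi_{p_N}$, whereas conditioning on the past freezes it. Instead we use sprinkling. Fix some $t^{**}$ with $(j+1)h_{N+1} < t^{**} < t^*$ and, on an enlarged probability space, introduce an auxiliary interchange-and-contact process $(\tilde\zeta_s)_{s\ge t^{**}}$ started from a fresh configuration with $\xi^{\tilde\zeta_{t^{**}}}\sim\pi_{p_{N+1}}$. Apply Lemma~\ref{lem_coupling_rate_one} to the $\xi$-projections with density parameter in $(p_N,p_{N+1})$ to obtain a coupling in which $\xi^{\zeta}$ and $\xi^{\tilde\zeta}$ agree throughout $\mathcal Q_N(i',j')$ outside an exceptional event whose probability is controlled by $g^\uparrow$, $g^\downarrow$, and $\mathrm{err}_{\mathrm{coup}}$; Lemma~\ref{lem_integral_gs} and the quantitative estimates of Section~\ref{s_prelim_interchange} bound this error super-exponentially in $\sfv$. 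Since the contact dynamics is functionally determined by $\xi$ together with the common graphical representation (recoveries and transmissions), matching the two processes on their $\xi$-projections inside the box also matches their infection labels there, so $A_{i',j'}$ becomes measurable with respect to the independent randomness introduced at $t^{**}$---hence independent of $A_{i,j}$. Because $p_{N+1}>p_N$, the induction hypothesis applied to $(\tilde\zeta_s)$ gives $P(A_{i',j'}^{\tilde\zeta})\le\alphav^{-8(N+2)}$, which combines with $P(A_{i,j})\le\alphav^{-8(N+2)}$ to produce the desired factorized bound.

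The hard part is the vertical step, exactly as anticipated in Section~\ref{ss_ideas}: the survival-regime notion of $N$-badness involves the healthy/infected status of particles and not only their locations, so the classical sprinkling of~\cite{BT} is insufficient, and one must invoke the refinement provided by Lemma~\ref{lem_coupling_rate_one}, which permits deterministic initial configurations for the coupled processes. The roles of the overlap factors $\rho_N$ and the super-exponential scale progression $\alphav^{N^2}$ are technical but indispensable: they are calibrated so that all decoupling errors at each step decay substantially faster than the target $\alphav^{-8(N+3)}$, thereby closing the induction uniformly in $N$ once $\sfv$ is taken large enough.
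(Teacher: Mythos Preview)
Your inductive skeleton and horizontal decoupling match the paper's (though be careful: the time window for the covariance bound is $2h_N$, not $h_{N+1}$; using the latter would make the growth factor $e^{8d\lambda t}$ in Proposition~\ref{prop_discrepancy} overwhelm the decay).

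The vertical step has a genuine gap. You claim that ``matching the two processes on their $\xi$-projections inside the box also matches their infection labels there''. This is false: the status $\zeta_s(x)\in\{\stateh,\statei\}$ of an occupied site depends on the full configuration at time $t^{**}$ (including which particles were infected), not just on the $\xi$-projection together with the graphical representation on $[t^{**},T]$. Your auxiliary process $\tilde\zeta$, started from a fresh Bernoulli $\xi$-projection at $t^{**}$, has no prescribed infection labels; if you set them all to $\stateh$, every box is trivially good for $\tilde\zeta$ while saying nothing about $\zeta$. Hence $A_{i',j'}$ is \emph{not} measurable with respect to the $\xi$-projection plus the common marks, and your factorization fails.

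The paper's argument (Lemmas~\ref{lem_feio} and~\ref{lem_muito_feio}) avoids this trap by never introducing a second interchange-and-contact process. It conditions on $\mathcal F_{h_N(n'-1)}$ and bounds the conditional probability of $A_{i',j'}$ as a function of the \emph{entire} configuration $\zeta_{h_N(n'-1)}$, infection labels included. The bound depends on this configuration only through $g^\downarrow(\Theta_N,\xi^{\zeta_{h_N(n'-1)}})$: whenever the particle density inside a large box dominates $\pi_{p_N}$, one can modify the configuration \emph{only outside} that box (Lemma~\ref{lem_feio}), keeping all infection labels inside intact, and then apply the induction hypothesis to the modified configuration; the containment-flow discrepancy ensures the outside modification does not reach $\mathcal Q_N(m',n')$. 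Lemma~\ref{lem_coupling_rate_one} enters only at the $\xi$-level, to certify that the needed density domination holds with high probability given $\xi^{\zeta_0}\succeq\pi_{p_{N+1}}$.
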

We prove this proposition in Section~\ref{s_survival_induction}.

\subsubsection{Completion of proof of survival}
We now show how Corollary~\ref{cor_scale_0_G} and Proposition~\ref{prop_higher_scales_0} can be combined to prove~\eqref{eq_main_surv}, the survival side of Theorem~\ref{thm_main}.

It will be useful to have some estimates on the number of scale~$(N-1)$ boxes that are contained in a scale~$N$ box.
Denote by $\llbracket a,b \rrbracket$ the integer interval $[a,b] \cap \ZZ$. For any~$N\in \mathbb N_0$, we can write
\begin{equation}\label{eq_handy_integers}
	\{(i,j): \mathcal{Q}_{N-1}(i,j) \subset \cQ_N(m,n) \}
    = \llbracket l_N(m), r_N(m) \rrbracket \times \llbracket b_N(n), t_N(n)
    \rrbracket,
\end{equation}
for integers $l_N(m), r_N(m), b_N(n), t_N(n)$ representing the left-, right-, bottom- and top-most extreme indices, respectively.
It is clear that $b_N(n+1) = t_N(n)+1$, but~$l_N(m+1)$ and~$r_N(m)$ do not satisfy this relation, due to the spatial overlap between boxes. We obtain explicit formulas for these indices, starting with~$l_N(m)$, which is the smallest integer~$i$ such that~$\mathbf{x}_{N-1}(i) - \mathcal{L}_{N-1}^{\mathrm{side}} \ge \mathbf{x}_N(m)-\mathcal L_N^{\mathrm{side}}$, so
\begin{align}
	\label{eq_formula_for_lN}
l_N(m)&=\left\lceil \frac{\mathcal{L}_N }{\mathcal{L}_{N-1}}m - \frac{\rho_N \mathcal{L}_N}{\mathcal{L}_{N-1}} +\rho_{N-1} \right\rceil = 
\left\lceil \alpha_{\sfv}^{2N-1}(m - \rho_N) +\rho_{N-1}\right\rceil, \quad \text{and}\\
	\label{eq_formula_for_rN}
r_N(m)&= \left\lfloor \alpha_{\sfv}^{2N-1}(m + \rho_N) -\rho_{N-1}\right\rfloor,
\end{align}
similarly. Since~$h_N$ was taken as an integer multiple of~$h_{N-1}$, we have
\begin{equation}\label{eq_formula_for_bt}
b_N(n)=\frac{h_N}{h_{N-1}}n,\qquad t_N(n)=\frac{h_N}{h_{N-1}}(n+1)-1.
\end{equation}

Next, we note that~$h_N'/h_{N-1}'$ and~$h_N/h_{N-1}$ have the same order of magnitude. 
Indeed, setting~$h_0':=h_0$, note that for all~$N \ge 1$, 
\begin{equation}\label{eq_h_n_prime}
h_N' \ge h_N \ge h_N' - h_{N-1} \ge h_N' - h_{N-1}'\ge (1- \alpha_{\sfv}^{-2N+1})h_N',
\end{equation}
where in the last inequality we used that $\rho_{N-1}/\rho_N < 1$. Using~\eqref{eq_h_n_prime}, we obtain
\begin{gather}
\label{eq_for_case_N}
\begin{aligned}
\frac{h_1'}{h_0'}
    \ge \ \ \frac{h_1}{h_0}\ \
    &\ge (1-\alpha_{\sfv}^{-1})\cdot \frac{h_1'}{h_0'},
    &&\text{for $N = 1$;} \\
\frac{1}{1-\alpha_{\sfv}^{-2N+3}}\cdot \frac{h_N'}{h_{N-1}'}
    \ge \frac{h_N}{h_{N-1}}
    &\ge (1- \alpha_{\sfv}^{-2N+1})\cdot \frac{h_N'}{h_{N-1}'}, 
    &&\text{for $N \ge 2$}.
\end{aligned}    
\end{gather}
Taking $\sfv$ large, the terms multiplying $\frac{h'_N}{h'_{N-1}}$ get arbitrarily close to 1, uniformly in $N\ge 1$.

\begin{definition}[Accessible points]
\hspace{0cm}
\begin{itemize}
\item 
	A point~$(m,n) \in \mathbb Z \times \mathbb N_0$ is \emph{$0$-accessible} if there are
indices ${0 = m_0, m_1, \ldots, m_n = m}$ such that $|m_{k+1}-m_k| \le 1$ for~$k=0,\ldots,n-1$
		and $(m_0,0),\ldots, (m_n,n)$ are all $0$-good.

	\item Let $N \in \mathbb N$. A point~$(m,n)$ is \emph{$N$-accessible} if,
		among all points~$\{(i, t_N(n)):i \in \llbracket l_N(m), r_N(m)\rrbracket\}$, all are~$(N-1)$-accessible except for at most~$\sqrt{\alpha_{\sfv}}$.
\end{itemize}
\end{definition}

The following lemma is a deterministic result showing that the property
of accessibility spreads well in a region of good boxes.
\begin{lemma}
\label{lem_accessible_spread}
Let~$m,m' \in \mathbb Z$ with~$|m-m'| \le 1$ and~$n \in \mathbb N_0$.
If $(m,n)$ is $N$-accessible and $(m', n+1)$ is $N$-good, then
$(m',n+1)$ is~$N$-accessible.
\end{lemma}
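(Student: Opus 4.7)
The plan is to argue by induction on $N$.

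For the base case $N = 0$, given a $0$-accessibility witness $0 = m_0, m_1, \ldots, m_n = m$ with $|m_{k+1}-m_k|\le 1$ and each $(m_k, k)$ being $0$-good, setting $m_{n+1} := m'$ extends the witness since $|m-m'|\le 1$ and $(m', n+1)$ is $0$-good, so $(m', n+1)$ is $0$-accessible by definition.

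For the inductive step $N \ge 1$, the inductive hypothesis applied at scale $N-1$ gives, at each pair of consecutive rows of the level-$(N-1)$ grid, the propagation rule: if $(i,j)$ is $(N-1)$-accessible, $(i', j+1)$ is $(N-1)$-good and $|i - i'|\le 1$, then $(i', j+1)$ is $(N-1)$-accessible. The hypothesis that $(m, n)$ is $N$-accessible yields a set $\mathcal I \subseteq \llbracket l_N(m), r_N(m)\rrbracket$ of size at most $\sqrt{\alpha_\sfv}$ consisting of the columns $i$ for which $(i, t_N(n))$ is $(N-1)$-inaccessible. The hypothesis that $(m', n+1)$ is $N$-good, unpacked via Definition~\ref{def_bad_N}, confines all $(N-1)$-bad points inside $\mathcal Q_N(m', n+1)$ to a single window $W$ of at most $2$ consecutive rows and at most $\sqrt{\alpha_\sfv}+1$ consecutive columns; every other level-$(N-1)$ grid point inside $\mathcal Q_N(m', n+1)$ is $(N-1)$-good. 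I would then track the $(N-1)$-accessible columns $\mathcal A_j \subseteq \llbracket l_N(m'), r_N(m')\rrbracket$ row by row from $j = t_N(n)$ up to $j = t_N(n+1)$ using the propagation rule.

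The technical heart of the argument is a quantitative geometric estimate. Setting $T := t_N(n+1) - t_N(n) = h_N/h_{N-1}$, one needs $T$ to exceed the horizontal deficit $\max(r_N(m') - r_N(m),\, l_N(m) - l_N(m'))$, a quantity of order $\alpha_\sfv^{2N-1}$ by \eqref{eq_formula_for_lN}-\eqref{eq_formula_for_rN}, by an additive margin much larger than $\sqrt{\alpha_\sfv}$. Using \eqref{eq_h_n_prime}-\eqref{eq_for_case_N} and the identity $\rho_N/\rho_{N-1} = 1 + 1/(2(2^N-1))$, the excess $T - \alpha_\sfv^{2N-1}$ is of order $\alpha_\sfv^{2N-1}/2^N$, which dominates $\sqrt{\alpha_\sfv}$ uniformly in $N\ge 1$ for $\alpha_\sfv$ large. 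A standard "front propagation" argument (rightmost and leftmost accessible columns advance by at least one per row through $(N-1)$-good points) then shows that the $\pm 1$ spreading has enough rows to fill the new columns $\llbracket l_N(m'), r_N(m')\rrbracket \setminus \llbracket l_N(m), r_N(m)\rrbracket$, absorb the at most $\sqrt{\alpha_\sfv}$ starting gaps in $\mathcal I$, and tolerate the two-row stall caused by $W$, leaving at row $t_N(n+1)$ at most $\sqrt{\alpha_\sfv}$ columns in $\llbracket l_N(m'), r_N(m')\rrbracket$ not in $\mathcal A_{t_N(n+1)}$.

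The main obstacle is this uniform-in-$N$ quantitative step: securing $T - (r_N(m') - r_N(m)) \gg \sqrt{\alpha_\sfv}$ for every $N$ is precisely what forces the superexponential scale growth $\mathcal L_N \propto \alpha_\sfv^{N^2}$, as alluded to in Remark~\ref{rem_slow}; under mere exponential growth, the ratio $T/(r_N(m') - r_N(m))$ would stabilize just above $1$ and eventually fail to overwhelm the shrinking overlap surplus $\rho_N/\rho_{N-1} - 1 \to 0$. A secondary delicate point is carefully bounding the residual "shadow" of $W$ at row $t_N(n+1)$ once the accessible front refills from the unblocked sides, and confirming that this shadow, together with the surviving members of $\mathcal I$ pushed forward by propagation, collectively stays within the $\sqrt{\alpha_\sfv}$ budget required by the definition of $N$-accessibility.
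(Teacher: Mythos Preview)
Your approach is essentially the same as the paper's: induction on $N$, row-by-row propagation at level $N-1$ inside $\mathcal Q_N(m',n+1)$, confinement of $(N-1)$-bad points to a single window of width $\le \sqrt{\alphav}+1$ and height $2$, and the decisive quantitative check that the number of rows exceeds the horizontal deficit by an amount of order $\alpha_\sfv^{2N-1}\cdot 2^{-N} \gg \sqrt{\alphav}$.

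The one place where the paper's bookkeeping is cleaner than yours is the choice of what to track. Rather than following the set $\mathcal A_j$ (and separately its left/right fronts and interior gaps), the paper tracks only the count $f(j):=|\mathcal A_j|$. The point is that whenever $\mathcal A_{j-1}\subsetneq \llbracket l_N(m'),r_N(m')\rrbracket$ is nonempty and row $j$ is outside the bad window, some inaccessible column adjacent to an accessible one must exist, so $f(j)\ge f(j-1)+1$; and across the two bad rows, $f$ drops by at most the window width. This single-quantity increment argument subsumes both your ``front advance'' and ``gap absorption'' considerations simultaneously, and it immediately resolves your ``secondary delicate point'': once $f$ reaches the full value $A$, it can only lose at most (roughly) $\sqrt{\alphav}$ afterwards, so no separate accounting of surviving members of $\mathcal I$ is needed. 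Your outline is correct; the paper's scalar tracking just makes the endgame one line instead of a case analysis.
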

\begin{proof}

If $N=0$, the statement of the lemma is immediate. Now, we assume that
the statement of the lemma holds for scale $N-1$, and prove that
it also holds for scale $N$. We will only do the proof for the case~$m'=m+1$. The case~$m'=m-1$ is then handled by symmetry, and the case~$m'=m$ is much easier. Hence, from now on we assume that~$(m,n)$ is~$N$-accessible and~$(m+1,n+1)$ is~$N$-good.

Let $f: \llbracket b_N(n+1), t_N(n+1)\rrbracket \to \NN$ be defined as
\begin{equation*}
	f(j) := |\{i \in \llbracket l_N(m+1), r_N(m+1)\rrbracket: (i,j)\ \text{is $(N-1)$-accessible}\}|.
\end{equation*}
	In words, the function $f$ counts the number of $(N-1)$-accessible points at a fixed height between~$b_N(n+1)$ and~$t_N(n+1)$. The statement that~$(m+1,n+1)$ is~$N$-accessible, can now be expressed as 
	\begin{equation*}
		f(t_N(n+1)) \ge r_N(m+1) - l_N(m+1) + 1 - \sqrt{\alpha_{\sfv}}.
	\end{equation*}

	A first observation in this direction is that~$f(b_N(n+1))$ cannot be too small. To see this, note that, since~$(m,n)$ is~$N$-accessible, we have
    \[|\{i \in \llbracket l_N(m),r_N(m)\rrbracket: (i,t_N(n))\text{ is not $(N-1)$-accessible}\}| \le \sqrt{\alphav},\]
    and since~$(m+1,n+1)$ is~$N$-good, we have
    \[|\{i \in \llbracket l_N(m+1),r_N(m+1)\rrbracket: (i,b_N(n+1))\text{ is $(N-1)$-bad}\}| \le \sqrt{\alphav}.\]
    Recall that~$b_N(n+1)=t_N(n)+1$. The induction hypothesis implies that, if~$i \in \llbracket l_N(m),r_N(m)\rrbracket \cap \llbracket l_N(m+1),r_N(m+1)\rrbracket$ is such that~$(i,t_N(n))$ is~$(N-1)$-accessible and~$(i,b_N(n+1))$ is~$(N-1)$-good, then~$(i,b_N(n+1))$ is~$(N-1)$-accessible. This shows that
\begin{align}
f(b_N(n+1)) 
	&\ge r_N(m)- l_N(m+1) + 1 - 2\sqrt{\alpha_\sfv}\nonumber\\
	&\overset{\mathclap{\eqref{eq_formula_for_lN},\eqref{eq_formula_for_rN}}}{=}
	\quad \lfloor \alpha_{\sfv}^{2N-1}(m+\rho_N) -\rho_{N-1}\rfloor - \lceil \alpha_{\sfv}^{2N-1}(m+1-\rho_N)+\rho_{N-1}\rceil  +1 - 2\sqrt{\alpha_\sfv}\nonumber\\
    \label{eq:bound_fb}
	&= \quad \alpha_{\sfv}^{2N-1}(2\rho_N-1) - 2\sqrt{\alpha_{\sfv}} + O(1)
\end{align}
as $\sfv \to \infty$. Let us abbreviate notation by defining
\begin{equation*}
    b := b_N(n+1),\quad t:=t_N(n+1),\quad r:= r_N(m+1),\quad l:= l_N(m+1),
    \quad \text{and} \quad
    A := r - l + 1.
\end{equation*}

Now that we have a lower bound on~$f(b)$, our strategy is to show that the increments of~$f$ are easy to control, using the 	induction hypothesis. 

	By the definition of~$(m+1,n+1)$ being~$N$-good, we can find a box of indices of the form
	\[ \llbracket i^*, i^*+\sqrt{\alpha_{\sfv}} \rrbracket \times \llbracket j^*,j^*+1 \rrbracket \subset \llbracket l,r \rrbracket \times \llbracket b,t\rrbracket\]
	such that, except possibly for~$(i,j)$ inside this box, every~$(i,j)$ in~$\llbracket l,r \rrbracket \times \llbracket b,t\rrbracket$ is~$(N-1)$-good.
Using the induction hypothesis, we now note that, for all~$j \in \{b+1,\ldots, t\} \backslash \{j^*,j^*+1\}$, we have:
	\begin{itemize}
    \item if~$f(j-1) < A$, then~$f(j) \ge f(j-1)+ 1$ (indeed: since~$f(j-1)<A$, we can find some~$i,i' \in \llbracket l,r \rrbracket$ such that~$|i-i'|=1$,~$(i,j-1)$ is~$(N-1)$-accessible, but~$(i',j-1)$ is not; then, since~$j \notin \{j^*,j^*+1\}$, we have that~$(i',j)$ is~$(N-1)$-good, so it gains the property of being~$(N-1)$-accessible from~$(i,j-1)$);
\item if~$f(j-1) = A$, then~$f(j)=A$ (indeed: for every~$i \in \llbracket l,r \rrbracket$, we have that~$(i,j-1)$ is~$(N-1)$-accessible and~$(i,j)$, so~$(i,j)$ gains the property of being~$(N-1)$-accessible from~$(i,j-1)$).
	\end{itemize}
    Moreover, we have~$f(j^*+1) \ge f(j^*-1) - \sqrt{\alphav}$, since at most~$\sqrt{\alphav}$ points lose the property of being~$(N-1)$-accessible due to being in the bad region of indices.
    
	From this, it is readily seen that
	\begin{equation}
		\text{if there exists $j \in \llbracket b, t\rrbracket$ such that~$f(j) = A$, then~$f(t) \ge A - \sqrt{\alpha_\sfv}$.}
	\end{equation}
	Let us prove that there indeed exists~$j$ such that~$f(j)=A$.

	For all~$j \in \{b+1,\ldots, t\}$, using the above observations about the increments of~$f$, we have that
\begin{align*} \text{if } f<A \text{ on } \{b,\ldots,j\}, \text{ then }
f(j)
	&\ge f(b) - \sqrt{\alpha_\sfv} + j-b-2.
\end{align*}
This implies that
\begin{equation*}
	\text{if } f(t) < f(b) - \sqrt{\alpha_\sfv}+t-b-2, \text{ then there is $j' \in \{b,\ldots,t\}$ such that $f(j')=A$}.
\end{equation*}
So, it suffices to prove that~$f(t) < f(b) - \sqrt{\alpha_\sfv}+t-b-2$. Keeping in mind that~$f \le A$, it suffices to prove that
\[
	f(b) - \sqrt{\alpha_\sfv}+t-b-2 > A.
\]
It follows from~\eqref{eq_formula_for_lN} and~\eqref{eq_formula_for_rN} that
\begin{equation} \label{eq_new_bound_A}A=2\rho_N\alpha_\sfv^{2N-1} + O(1).\end{equation}
	Additionally, recalling that $h'_N/h'_{N-1} = (\rho_N/\rho_{N-1}) \alphav^{2N-1}$, it follows from~\eqref{eq_for_case_N} that
\begin{equation}\label{eq_new_bound_A2}
	t-b \ge \frac{h_N}{h_{N-1}}\ge \frac{\rho_N}{\rho_{N-1}}(\alpha_\sfv^{2N-1} - 1).
\end{equation}
Using~\eqref{eq:bound_fb},~\eqref{eq_new_bound_A} and~\eqref{eq_new_bound_A2}, we obtain:
\begin{align}
	\nonumber&f(b) - \sqrt{\alpha_\sfv}+t-b-2 - A \\
	\nonumber&\ge \alpha_{\sfv}^{2N-1}(2\rho_N-1) - 2\sqrt{\alpha_{\sfv}} - O(1)-\sqrt{\alpha_\sfv} + \frac{\rho_N}{\rho_{N-1}}\alpha_\sfv^{2N-1}  - 2\rho_N\alpha_\sfv^{2N-1} - O(1)\\
	&= \alpha_{\sfv}^{2N-1} \Bigl( \frac{\rho_N}{\rho_{N-1}}-1 \Bigr) - 3\sqrt{\alpha_\sfv}  -O(1).\label{eq_wanted_exxpr}
\end{align}
Recall from~\eqref{eq:defi_rho_N} that~$\rho_N=\rho_{N-1}+ 2^{-N}$. Hence,
\begin{equation*}
\alpha_{\sfv}^{2N-1} \Bigl( \frac{\rho_N}{\rho_{N-1}}-1 \Bigr) = \alpha_{\sfv}^{2N-1} \cdot \frac{2^{-N}}{\rho_{N-1}} \ge \alpha_{\sfv}^{2N-1} \cdot 2^{-N-1}= \alpha_{\sfv}^{2N-1-\frac{\log 2}{\log \alpha_\sfv}(N-1)}.
\end{equation*}
When~$\sfv$ is large enough (so that~$\alpha_\sfv$ is large), uniformly in~$N$, the r.h.s. above is much larger than~$\sqrt{\alpha_\sfv}$. This shows that the expression in~\eqref{eq_wanted_exxpr} is positive, concluding the proof.
\end{proof}

\begin{proposition}\label{prop_thm_from_easy}
	If the interchange-and-contact process is started from a random configuration~$\zeta_0$ with law~$\hat{\pi}^{B_0(\mathcal{L}_0^{\mathrm{side}})}_p$, then the infection stays present at all times with positive probability.
\end{proposition}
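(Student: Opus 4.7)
The strategy is to combine the renormalization estimate Proposition~\ref{prop_higher_scales_0}, the deterministic propagation Lemma~\ref{lem_accessible_spread}, and a supercritical oriented percolation comparison on the grid of scale-$N$ boxes. First, observe that the initial configuration satisfies $\zeta_0(x) = \statei$ for every $x \in B_0(\mathcal{L}_0^{\mathrm{side}})$; in particular $\zeta_0$ contains far more than $\mathsf v^{\varepsilon_0}$ infected particles in $B_0(\sqrt{\mathsf v})$, and its projection $\xi^{\zeta_0}$ stochastically dominates $\pi_p$, hence $\pi_{p_N}$ for every $N \ge 0$ (since the recursion $p_{N+1} = \tfrac12(p_N + p)$ gives $p_N \uparrow p$). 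Because the Bernoulli product measures are invariant for the interchange dynamics, this domination persists after every space-time shift, so the hypotheses of Corollary~\ref{cor_scale_0_G} and Proposition~\ref{prop_higher_scales_0} are satisfied at every grid point. Survival will be implied once I produce, on a positive-probability event, an infinite sequence $(m_k,k)_{k \ge 0}$ with $m_0 = 0$, $|m_k - m_{k-1}| \le 1$, and $(m_k,k)$ being $0$-accessible for each $k$: inductively along the chain, clause $(\mathrm{G}2)$ of the definition of $0$-good, together with the initial condition, produces $\ge \mathsf v^{\varepsilon_0}$ infected particles in $\zeta_{kh_0}$ inside $B_{\mathbf{x}_0(m_k)}(\sqrt{\mathsf v})$ for every $k$.

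Fix an integer $N \ge 1$, eventually to be chosen large. The first step is to obtain, with positive probability, an infinite oriented chain of $N$-good sites rooted at $(0,0)$. By Remark~\ref{rem_dependence} the event $\{(m,n)\text{ is $N$-bad}\}$ is measurable with respect to $\{\zeta_t(x) : (x,t) \in \mathcal{Q}_N(m,n)\}$, and since $\rho_N < 2$, each scale-$N$ box intersects only $O(1)$ others uniformly in $N$, so these events form a $K$-dependent family with $K$ an absolute constant. Proposition~\ref{prop_higher_scales_0} bounds the marginals by $\alphav^{-8(N+2)} \to 0$ as $\mathsf v \to \infty$. The Liggett--Schonmann--Stacey theorem then yields that the $N$-good labels dominate an i.i.d.\ Bernoulli site process of density tending to $1$, which for $\mathsf v$ large exceeds the critical parameter for oriented site percolation on the triangular directed lattice. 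Consequently, with positive probability, there is an infinite sequence $(m_k,k)_{k \ge 0}$ of $N$-good sites with $m_0 = 0$ and $|m_k - m_{k-1}| \le 1$.

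Next, I show by induction on $N$ that $(0,0)$ is $N$-accessible with high probability. The base case $N=0$ reduces to $(0,0)$ being $0$-good, which by Corollary~\ref{cor_scale_0_G} holds with probability $\ge 1-3\mathsf v^{-\varepsilon_0/2}$. For the inductive step, assuming $(0,0)$ is $(N-1)$-accessible, a union bound over the $O(\alphav^{4N-2})$ scale-$(N-1)$ points in $\mathcal{Q}_N(0,0)$---each $(N-1)$-bad with probability at most $\alphav^{-8(N+1)}$---shows that with probability $1 - O(\alphav^{-4N-10})$ every scale-$(N-1)$ point in $\mathcal{Q}_N(0,0)$ is $(N-1)$-good. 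Iterating Lemma~\ref{lem_accessible_spread} from the base point $(0,0)$ then propagates $(N-1)$-accessibility throughout the oriented light cone of $\mathcal{Q}_N(0,0)$. The overlap factors $\rho_N$ are chosen precisely so that $\mathrm{slope}(N-1) > \mathrm{slope}(N)$ (cf.\ Remark~\ref{rem_slow}), which ensures that this cone covers all but at most $\sqrt{\alphav}$ of the top-row points $\{(i,t_N(0)) : i \in \llbracket l_N(0),r_N(0) \rrbracket\}$; hence $(0,0)$ is $N$-accessible.

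On the positive-probability intersection of these two events, Lemma~\ref{lem_accessible_spread}, applied iteratively along the infinite $N$-good chain starting from the $N$-accessible point $(0,0)$, yields that each $(m_k,k)$ is $N$-accessible. Unraveling the recursive definition down to scale $0$---by repeating the cone-propagation argument inside each $\mathcal{Q}_N(m_k,k)$---produces, for every $n \ge 0$, at least one $m$ with $(m,n)$ being $0$-accessible, and the reduction of the first paragraph concludes survival. The hardest part is the inductive step of the third paragraph: verifying that the geometry of the overlap factors $\rho_N$ ensures the $(N-1)$-accessibility cone uniformly covers the top slab of $\mathcal{Q}_N(0,0)$ at every scale, and that the dependency structure of the $N$-bad labels permits the Liggett--Schonmann--Stacey comparison with constants uniform in $N$, both of which rest on the delicate scale design developed earlier in Section~\ref{s_surv_renorm}.
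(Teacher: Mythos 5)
Your reduction to producing an unbounded family of $0$-accessible points, and your use of Lemma~\ref{lem_accessible_spread} to spread accessibility along a chain of good points, are in the spirit of the paper. But the pivotal step of your argument has a genuine gap: the claim that the events $\{(m,n)\text{ is $N$-bad}\}$ form a $K$-dependent family, which you need for the Liggett--Schonmann--Stacey comparison and the supercritical oriented-percolation argument. Remark~\ref{rem_dependence} only says that badness of $(m,n)$ is measurable with respect to $\{\zeta_t(x):(x,t)\in\mathcal Q_N(m,n)\}$; the restrictions of $(\zeta_t)$ to disjoint space-time boxes are \emph{not} independent, since particles and infections move between boxes and, more seriously, the occupancy field carries long-range space-time correlations (a density deficit at time $0$ influences badness of boxes at all later heights over a wide spatial range). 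The paper only ever obtains \emph{approximate} decoupling: horizontally it needs a separation of order $\sqrt{\alphav}$ grid steps (this is built into Definition~\ref{def_bad_N} and Lemma~\ref{lem_horizontal_decoupling}), and vertically it needs sprinkling, i.e.\ the strict decrease of the density parameter from $p_{N+1}$ to $p_N$ (Lemma~\ref{lem_vertical_decoupling}), which cannot be iterated indefinitely at a single fixed scale to manufacture exact finite-range dependence. So LSS is not applicable as stated, and the oriented-percolation step collapses; this is exactly the obstruction the multiscale construction of Section~\ref{s_surv_renorm} exists to bypass. A secondary quantitative error: in your induction you claim that the accessibility cone emanating from the single root $(0,0)$ covers all but $\sqrt{\alphav}$ of the top row of $\mathcal Q_N(0,0)$. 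It does not: the cone widens by at most one index per level, so by height $t_N(0)\approx (\rho_N/\rho_{N-1})\alphav^{2N-1}$ it reaches only a fraction $\approx 1/\rho_{N-1}\le 2/3$ (for $N\ge 2$) of the half-width $r_N(0)\approx \rho_N\alphav^{2N-1}$, leaving far more than $\sqrt{\alphav}$ uncovered points. This is precisely why the paper stacks two boxes $\mathcal Q_N(0,0)\cup\mathcal Q_N(0,1)$ and proves that $(0,1)$, not $(0,0)$, is $N$-accessible, after checking $r_N(0)<t_N(1)$.

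For contrast, the paper's proof needs no percolation comparison at all: it defines, for each $N\ge 1$, the event $\mathcal A_N$ that \emph{every} scale-$(N-1)$ point whose box lies in $\mathcal Q_N(0,0)\cup\mathcal Q_N(0,1)$ is $(N-1)$-good, and takes $\mathcal A=\cap_{N\ge1}\mathcal A_N$. A single union bound over all scales, using that there are at most $8\alphav^{4N-2}$ such points at level $N$ while $\delta_{N-1}=\alphav^{-8(N+1)}$ decays much faster, gives $\mathbb P(\mathcal A)>0$ for $\sfv$ large. On $\mathcal A$, iterating Lemma~\ref{lem_accessible_spread} inside the all-good column shows that $(0,1)$ is $N$-accessible for every $N$ simultaneously, which yields survival. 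If you want to salvage your route, you would have to replace the LSS step by this ``all scales at once along one column'' argument (or by a dependent-percolation argument whose inputs are exactly the approximate decoupling bounds already proved), and run the accessibility induction for $(0,1)$ over the stacked pair of boxes rather than for $(0,0)$ within a single box.
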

\begin{proof}
	Recall that~$p > p_N$ for any~$N$. Consequently, when~$\zeta_0 \sim \hat{\pi}^{B_0(\mathcal{L}_0^{\mathrm{side}})}_p$, the projection~$\xi^{\zeta_t}$ stochastically dominates~$\pi_{p_N}$, for any~$t$ and~$N$. Hence, Corollary~\ref{cor_scale_0_G} implies that
	\begin{equation}
		\label{eq_unnbd1}
	\text{for {all} }(m,n),\quad \mathbb P((m,n) \text{ is $0$-bad}) \le 2 \mathsf v^{-\varepsilon_0/2},
	\end{equation}
	and Proposition~\ref{prop_higher_scales_0} implies that, for {each}~$N \in \mathbb N$, 
	\begin{equation}
		\label{eq_unnbd2}
	\text{for all }(m,n), \quad \mathbb P((m,n) \text{ is $N$-bad}) \le \alphav^{-8(N+2)}.
	\end{equation}

	For~$N \in \mathbb N$, define the event
	\begin{equation*}
		\mathcal A_N := \left\{(i,j) \text{ is $(N-1)$-good for $(\zeta_t)$, for all $(i,j)$ such that $\mathcal{Q}_{N-1}(i,j) \subset \mathcal Q_N(0,0) \cup \mathcal Q_N(0,1)$} \right\}.
	\end{equation*}
	The number of~$(i,j)$ such that~$\mathcal{Q}_{N-1}(i,j) \subset \mathcal Q_N(0,0) \cup \mathcal Q_N(0,1)$ is
\[
	2\left\lceil\frac{\mathcal L_N^{\mathrm{side}}}{\mathcal{L}_{N-1}^{\mathrm{side}}} \right\rceil \cdot \frac{h_N}{h_{N-1}} \le 2 \cdot 2\alpha_{\sfv}^{2N-1} \cdot 2\alpha_{\sfv}^{2N-1} = 8\alpha_\sfv^{4N-2}.
\]
	Then, letting~$\mathcal A:= \cap_{N=1}^\infty \mathcal A_N$, by a union bound using~\eqref{eq_unnbd1} and~\eqref{eq_unnbd2}, we have
	\[
		\mathbb P(\mathcal A) \ge 1 - 8\alpha_\sfv^{4 \cdot 1 - 2} \cdot 2\mathsf v^{-\varepsilon_0/2} -   \sum_{N=2}^\infty  8\alpha_\sfv^{4N-2}\cdot \alphav^{-8(N+2)}.
	\]
	By taking~$\sfv$ large enough, using the fact that~$\alpha_\sfv = \lfloor \sfv^{\varepsilon_0/64}\rfloor$, the r.h.s. above can be made positive.

	We now claim that
	\begin{equation*}
		\mathcal A \subseteq \bigcap_{N=0}^\infty \{(0,1) \text{ is $N$-accessible}\} \subseteq \{\forall t\; \exists x:\; \zeta_t(x) = \statei \}.
	\end{equation*}
	The second inclusion being obvious, we now justify the first. We assume from here on that~$\mathcal A$ occurs, and will prove by induction on~$N$ that~$(0,1)$ is~$N$-accessible for { every}~$N \in \mathbb N_0$.

	For~$N = 0$, this is clear: since~$\mathcal{Q}_0(0,0),\mathcal{Q}_0(0,1) \subset \mathcal Q_1(0,0)$ and~$\mathcal A_1$ occurs, we see that~$(0,0)$ and~$(0,1)$ are both~$0$-good, hence~$(0,1)$ is~$0$-accessible.

	Now let~$N \in \mathbb N$ and assume that we have already proved that~$(0,1)$ is~$(N-1)$-accessible. Using the notation introduced in~\eqref{eq_handy_integers}, we now check that
	\begin{equation}
		\label{eq_order_r_and_t}
	r_N(0) < t_N(1).
	\end{equation}
	Indeed, by~\eqref{eq_formula_for_rN}, we have
	\begin{equation*}
	r_N(0) = \lfloor \alpha_\sfv^{2N-1} \rho_N - \rho_{N-1}\rfloor \le 2 \alpha_\sfv^{2N-1},    
	\end{equation*}
    and by~\eqref{eq_for_case_N}, recalling that $\rho_N-\rho_{N-1} = 2^{-N}$ and~$\rho_{N-1} \in [1,2]$, we have
	\begin{align*}\nonumber
	t_N(1)=2\frac{h_N}{h_{N-1}}
        &\ge 2\frac{\rho_N}{\rho_{N-1}}(\alpha_\sfv^{2N-1} - 1) 
        = 2\Bigl(1+\frac{2^{-N}}{\rho_{N-1}}\Bigr)(\alpha_\sfv^{2N-1} - 1) \\
		&\ge 2\alpha_\sfv^{2N-1} + 2^{-N}\alphav^{2N-1} - 2 - 2^{-N+1}.\label{eq_order2rt}
	\end{align*}
	For $N\ge 1$, we have $2^{-N} \alpha_\sfv^{2N-1} \gg 1$ and the proof of~\eqref{eq_order_r_and_t} is complete.

	Now, by the induction hypothesis we have that~$(0,1)$ is~$(N-1)$-accessible, and then, using Lemma~\ref{lem_accessible_spread} and the assumption that~$\mathcal A_N$ occurs, for all~$j \in \{2,\ldots, t_N(1)\}$,
	\[(i,j)\text{ is }(N-1)\text{-accessible, for } i \in \{(-j+1) \vee l_N(0),\ldots, (j-1) \wedge r_N(0)\}.\]
	By~\eqref{eq_order_r_and_t} and the fact that~$l_N(0)=-r_N(0)$, we conclude that
	\[(i,t_N(1))\text{ is }(N-1)\text{-accessible, for } i \in \{l_N(0),\ldots, r_N(0)\},\]
	and consequently,~$(0,1)$ is~$N$-accessible, as required.
\end{proof}

\begin{proof}[Proof of Theorem~\ref{thm_main},~\eqref{eq_main_surv}]
	Assume the interchange-and-contact process is started from~$\hat{\pi}_p^{\{0\}}$, as in the statement of the theorem.
Let~$\mathcal A$ be the event that:
	\begin{itemize}
		\item $\zeta_0(x)=\stateh$ for all~$x \in B_0(\mathcal{L}_0^{\mathrm{side}} )\backslash \{0\}$;
		\item in the time interval~$[0,1]$, there are no jump marks involving any site in~$B_0(\mathcal{L}_0^{\mathrm{side}})$, and no recoveries at any site in~$B_0(\mathcal{L}_0^{\mathrm{side}})$;
		\item the infection initially present at 0 manages to spread, before time 1, to all particles in $B_0(\mathcal{L}_0^{\mathrm{side}})\backslash \{0\}$ (but it does not leave this box).
	\end{itemize}
	Clearly,~$\mathbb P(\mathcal A)>0$. 

	We now claim that conditionally on~$\mathcal A$, the law of~$\zeta_1$ is~$\hat{\pi}_p^{B_0(\mathcal{L}_0^{\mathrm{side}})}$. Indeed, let~$x_1,\ldots,x_m \in \mathbb Z^d \backslash B_0(\mathcal{L}_0^{\mathrm{side}})$. For~$i=1,\ldots,m$, let~$X_i$ be the random element of~$\mathbb Z^d$ such that~$\Phi(X_i,0,1)=x_i$. Since~$x \mapsto \Phi(x,0,1)$ is a bijection and on~$\mathcal A$ we have~$\Phi(x,0,1)=x$ for all~$x \in B_0(\mathcal{L}_0^{\mathrm{side}})$, it must hold that~$X_i \notin B_0(\mathcal{L}_0^{\mathrm{side}})$ for all~$i$. Then,
	\[\mathbb P(\zeta_1(x_1)=\cdots=\zeta_1(x_m)=0 \mid \mathcal A)=\mathbb P(\zeta_0(X_1)=\cdots=\zeta_0(X_m)=0 \mid \mathcal A)=(1-p)^m,\]
	where the second equality holds because~$\mathcal A$ only involves the initial configuration inside~$B_0(\mathcal{L}_0^{\mathrm{side}})$ and the graphical representation, and these are independent of the initial configuration outside~$B_0(\mathcal{L}_0^{\mathrm{side}})$.

	Having established that the law of~$\zeta_1$ conditionally on~$\mathcal A$ is~$\hat{\pi}_p^{B_0(\mathcal{L}_0^{\mathrm{side}})}$, the conclusion of the theorem now follows from the Markov property and Proposition~\ref{prop_thm_from_easy}.
\end{proof}

\subsection{Induction step}
\label{s_survival_induction}
In what follows, we write $\delta_N = \alpha_{\mathsf v}^{-8(N+2)}$, which will serve as an upper bound for the probability that a point is $N$-bad.
Since $\alpha_{\mathsf{v}} = \lfloor \mathsf{v}^{\varepsilon_0/64} \rfloor$ the quantity $\delta_N$ depends on the parameter $\mathsf{v}$ and $\lambda$ of the interchange-and-contact process (recall that $\varepsilon_0$ depends on $\lambda$).

\textbf{Badness estimate at scale $N$ $(\mathrm{BE}_N)$:} 
\begin{equation}\label{eq_induction_hypothesis}\tag{$\mathrm{BE}_N$}
	\xi^{\zeta_0} \text{ stochastically dominates }\pi_{p_N} \quad \Longrightarrow \quad \mathbb P(\mathcal Q_N(0,0) \text{ is bad for }(\zeta_t)) < \delta_N.
\end{equation}
Using the fact that~$\pi_{p_N}$ is stationary for the interchange process, if~$\xi^{\zeta_0}$ stochastically dominates~$\pi_{p_N}$, then~$\xi^{\zeta_t \circ \theta(x)}$ stochastically dominates~$\pi_{p_N}$ as well, for any~$x \in \mathbb Z^d$ and~$t \ge 0$. 
In particular, if hypothesis~\eqref{eq_induction_hypothesis} holds, then we also have
\begin{equation}\label{eq_induction_hypothesis_translated}
	\xi^{\zeta_0} \text{ stochastically dominates }\pi_{p_N} \quad \Longrightarrow \quad \mathbb P(\mathcal Q_N(m,n) \text{ is bad for }(\zeta_t)) < \delta_N \; \text{ for all }(m,n).
\end{equation}

\begin{lemma}[Horizontal decoupling]
	\label{lem_horizontal_decoupling}
	Let~$N \in \mathbb N_0$ and assume that~\eqref{eq_induction_hypothesis} is satisfied.
	Let~$(\zeta_t)_{t \ge 0}$ be the interchange-and-contact process with parameters~$\mathsf v$ and~$\lambda$, started from a random configuration~$\zeta_0$ such that~$\xi^{\zeta_0}$ stochastically dominates~$\pi_{p_N}$. Let~$(m,n),(m',n') \in \mathbb Z \times \mathbb N_0$ be such that~$|n-n'| \le 1$ and~$|m-m'| \ge \sqrt{\alphav}$.
	Then,
	\begin{equation*}
		\mathbb P(\mathcal Q_N(m,n) \text{ and } \mathcal Q_N(m',n') \text{ are both bad for }(\zeta_t)) \le \delta_N^2 + \exp\{-\alphav^{N^2+1/8}\}.
	\end{equation*}
\end{lemma}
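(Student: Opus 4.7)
The plan is to combine the induction hypothesis with the horizontal decoupling machinery from Section~\ref{s_prelim_interchange_and_contact}. Let $A_i$ denote the event that $\mathcal{Q}_N(m_i, n_i)$ is $N$-bad, with $(m_1,n_1)=(m,n)$ and $(m_2,n_2)=(m',n')$. By~\eqref{eq_induction_hypothesis_translated} we have $\mathbb{P}(A_i) < \delta_N$ for each $i$, so writing
\[
\mathbb{P}(A_1 \cap A_2) \le \mathbb{P}(A_1)\mathbb{P}(A_2) + |\mathrm{Cov}(\mathds{1}_{A_1},\mathds{1}_{A_2})| \le \delta_N^2 + |\mathrm{Cov}(\mathds{1}_{A_1},\mathds{1}_{A_2})|
\]
reduces the task to bounding the covariance by $\exp\{-\alphav^{N^2+1/8}\}$.

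For the covariance bound, first reduce by time-translation. Assuming without loss of generality $n \le n'$, shift time by $nh_N$ to work with the process $(\zeta_{nh_N+t})_{t \ge 0}$, whose initial configuration still has projection dominating $\pi_{p_N}$ because Bernoulli product measures are stationary for the interchange dynamics. This places us in the case $n = 0$, $n' \in \{0,1\}$. By Remark~\ref{rem_dependence}, $A_i$ is measurable with respect to $\{\zeta_s(y): (y,s) \in \mathcal{Q}_N(m_i,n_i)\}$, and since the slab-shaped box is contained in the cube $B_{\mathbf{x}_N(m_i)}(\mathcal{L}_N^{\mathrm{side}}) \times [0,2h_N]$ (using $\mathcal{L}_0^{\mathrm{side}} \le \mathcal{L}_N^{\mathrm{side}}$ for $\sfv$ large), Lemma~\ref{lem_covariances} is applicable. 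The centers are separated by $\|\mathbf{x}_N(m)-\mathbf{x}_N(m')\| \ge \sqrt{\alphav}\,\mathcal{L}_N$, which is much larger than $2\mathcal{L}_N^{\mathrm{side}}+2 = 2\rho_N\mathcal{L}_N + 2$ once $\sfv$ is large, and the lemma yields
\[
|\mathrm{Cov}(\mathds{1}_{A_1},\mathds{1}_{A_2})| \le 4\,\mathrm{discr}^{\mathrm{icp}}_{\sfv,\lambda}\bigl(\mathcal{L}_N^{\mathrm{side}},\, \lfloor \tfrac{1}{2}\sqrt{\alphav}\,\mathcal{L}_N \rfloor,\, 2h_N\bigr).
\]

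The final step is to estimate the right-hand side using Proposition~\ref{prop_discrepancy}. With $\ell = \mathcal{L}_N^{\mathrm{side}} \le 2\mathcal{L}_N$, $L = \lfloor \tfrac12\sqrt{\alphav}\,\mathcal{L}_N \rfloor$, $t = 2h_N \le 4\alphav^{N^2}h_0$, one has $L-\ell \gtrsim \sqrt{\alphav}\,\mathcal{L}_N$ and $u := \tfrac{L-\ell}{4(\sfv+\lambda)t} \sim \sqrt{\alphav}/\sqrt{\sfv}$, which is much smaller than $1$ because $\alphav = \lfloor \sfv^{\varepsilon_0/64} \rfloor$. Linearizing via $\log(1+u) \ge u/2$ produces an exponent of order
\[
\frac{(L-\ell)^2}{16(\sfv+\lambda)t} \asymp \frac{\alphav \cdot \alphav^{2N^2}\sfv}{\sfv \cdot \alphav^{N^2}} = \alphav^{N^2+1},
\]
up to constants depending on $d,\lambda,h_0$. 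The main obstacle is bookkeeping: one must verify that this exponent dominates both the polynomial prefactors in $\sfv$ and $\alphav^{N^2}$ and, crucially, the exponential factor $e^{8d\lambda t} = e^{O(\alphav^{N^2})}$ in Proposition~\ref{prop_discrepancy}. Since $\alphav^{N^2+1} = \alphav \cdot \alphav^{N^2}$ dominates $O(\alphav^{N^2})$ by a factor $\alphav$ that grows with $\sfv$, this is routine, and the resulting bound is much smaller than $\exp\{-\alphav^{N^2+1/8}\}$ for $\sfv$ large enough (uniformly in $N$), completing the proof.
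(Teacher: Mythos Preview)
Your proof is correct and follows essentially the same approach as the paper's: decompose into product plus covariance, invoke Lemma~\ref{lem_covariances} to bound the covariance by $4\,\mathrm{discr}^{\mathrm{icp}}_{\sfv,\lambda}$, and estimate the discrepancy via Proposition~\ref{prop_discrepancy}. Your exponent computation actually yields the sharper order $\alphav^{N^2+1}$ (the paper settles for $\alphav^{N^2+1/4}$ after a cruder lower bound on $L-\ell$), but both comfortably beat the target $\exp\{-\alphav^{N^2+1/8}\}$; just be mindful that $\mathcal{L}_0^{\mathrm{side}} = 2\sqrt{\sfv}\log^2(\sfv)$ is defined differently from $\mathcal{L}_N^{\mathrm{side}} = \rho_N\mathcal{L}_N$ for $N\ge 1$, so your remarks ``$2\mathcal{L}_N^{\mathrm{side}}+2 = 2\rho_N\mathcal{L}_N+2$'' and ``$\mathcal{L}_0^{\mathrm{side}}\le\mathcal{L}_N^{\mathrm{side}}$'' need a brief separate check at $N=0$ (both hold for $\sfv$ large since $\sqrt{\alphav}$ dominates $\log^2(\sfv)$).
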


\begin{lemma}[Vertical decoupling]
	\label{lem_vertical_decoupling}
	Let~$N \in \mathbb N_0$ and assume that~\eqref{eq_induction_hypothesis} is satisfied.
	Let~$(\zeta_t)_{t \ge 0}$ be the interchange-and-contact process with parameters~$\mathsf v$ and~$\lambda$, started from a random configuration~$\zeta_0$ such that~$\xi^{\zeta_0}$ stochastically dominates~$\pi_{p_{N+1}}$. Let~$(m,n),(m',n') \in \mathbb Z \times \mathbb N_0$ be such that~$n' \ge n+1$.
	Then,
	\begin{equation*}
		\mathbb P(\mathcal Q_N(m,n) \text{ and } \mathcal Q_N(m',n') \text{ are both bad for }(\zeta_t)) \le \delta_N^2 + 3\exp\{- \alphav^{(N^2+1)/8}\}.
	\end{equation*}
\end{lemma}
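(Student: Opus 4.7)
The plan is to parallel the extinction-case vertical decoupling (Lemma~\ref{lem_vertical_decoupling_ext}), replacing the half-crossing events with $N$-badness and invoking the refined sprinkling coupling of Lemma~\ref{lem_coupling_rate_one} to confront the additional complication that the survival-side bad event mixes density information with infection behaviour. By monotonicity in the time gap I may assume the adjacent case $n' = n+1$. Since $N$-badness of $(m',n')$ is measurable with respect to the dynamics after time $n'h_N$ (Remark~\ref{rem_dependence}), I would first decompose via the Markov property:
\[
\mathbb P(\text{both bad}) = \mathbb E\bigl[\mathbf 1\{\mathcal Q_N(m,n) \text{ is bad}\}\cdot f(\tilde\zeta)\bigr],\qquad f(\zeta) := \mathbb P_\zeta(\mathcal Q_N(0,0)\text{ is bad}),
\]
with $\tilde\zeta := \zeta_{n'h_N}\circ \theta(\mathbf x_N(m'))$, so that the task reduces to bounding $f(\tilde\zeta)$ in terms of quantities driven by the particle density at time $n'h_N$.

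The central step is to prove, for a suitable parameter set $\Theta_N = (\ell_N, L_N, t_N, p_{\Theta_N})$ with $p_{\Theta_N} := \tfrac12(p_N + p_{N+1})$, the survival analog of Lemma~\ref{lem_outro_feio}, namely
\[
f(\zeta)\ \le\ \delta_N + g^\downarrow(\Theta_N, \xi^{\zeta}) + \int g^\uparrow(\Theta_N, \eta)\,\pi_{p_N}(\mathrm d\eta) + \mathrm{err}_{\mathrm{coup}}(\Theta_N').
\]
Note that the roles of $g^\uparrow$ and $g^\downarrow$ are swapped compared to the extinction setting because in survival $p_N \nearrow p$, so $p_N < p_{\Theta_N} < p_{N+1}$: the real process is above the sandwich value, the auxiliary below. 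To prove the displayed inequality I would couple the real process (started from the deterministic $\zeta$) with an auxiliary interchange process of initial law $\pi_{p_N}$ by means of Lemma~\ref{lem_coupling_rate_one}, which is precisely the refined coupling that accepts one deterministic initial state; after sprinkling, the auxiliary particles sit below the real ones inside $B_0(L_N/4)\times[t_N,T_N]$. Driving both processes by the same recovery and transmission marks on this window makes the auxiliary containment flow a lower bound for the real one, and arranging the initial auxiliary infections as a subset of the real ones yields a pointwise comparison of infected-site sets. Feeding ($\mathrm{BE}_N$) on the auxiliary side then gives the bound, modulo the sprinkling-failure event quantified by $\mathrm{err}_{\mathrm{coup}}(\Theta_N')$ and the density-atypicality events measured by $g^\uparrow,g^\downarrow$.

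With this auxiliary bound in hand, the assembly follows the extinction blueprint exactly: set $a := \int g^\downarrow(\Theta_N, \xi)\,\pi_{p_{N+1}}(\mathrm d\xi)$ and $\mathcal A := \{g^\downarrow(\Theta_N, \xi^{\tilde\zeta}) > \sqrt{a}\}$. Because $\pi_{p_{N+1}}$ is stationary for the interchange dynamics and the infection only relabels health statuses without creating or destroying particles, $\xi^{\tilde\zeta}$ still stochastically dominates $\pi_{p_{N+1}}$; together with monotonicity of $g^\downarrow$ in its configuration argument, Markov's inequality gives $\mathbb P(\mathcal A) \le \sqrt{a}$. On $\mathcal A^c$ the auxiliary bound then yields $f(\tilde\zeta)\le \delta_N + \sqrt{a} + \mathcal E$ where $\mathcal E$ collects the $g^\uparrow$-integral and $\mathrm{err}_{\mathrm{coup}}$. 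Using ($\mathrm{BE}_N$) for the outer indicator,
\[
\mathbb P(\text{both bad}) \le \sqrt{a} + \delta_N\bigl(\delta_N + \sqrt{a} + \mathcal E\bigr) \le \delta_N^2 + \sqrt{a} + \mathcal E.
\]
Parameter calibration then chooses $\ell_N$ as a small power of $\alpha_\sfv^{N^2}$, $L_N$ proportional to $\mathcal L_N^{\mathrm{side}}$, $t_N \asymp h_N$ and $T_N = 2t_N$; the density gap $p_{N+1}-p_N \asymp 2^{-N}$ in Lemma~\ref{lem_integral_gs} controls the $g$-terms, while Lemma~\ref{lem_first_rw_discr} together with~\eqref{eq_better_bound_meet} handles $\mathrm{err}_{\mathrm{coup}}(\Theta_N')$, so that each of the three error sources is at most $\exp\{-\alpha_\sfv^{(N^2+1)/8}\}$, producing the stated $3\exp\{-\alpha_\sfv^{(N^2+1)/8}\}$.

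The main obstacle is the infection-aware implementation of the auxiliary bound in the central step. Coupling only the interchange projections is insufficient: $N$-badness inherits sensitivity to the infection itself (through condition $(\mathrm B2)$ and its recursive propagation to higher scales), and so the coupling must also ensure that the auxiliary process's $N$-badness event sandwiches the real one on the domination window, even though the two processes use distinct graphical representations for the interchange dynamics in the sprinkled region. This is exactly the difficulty flagged in Section~\ref{ss_ideas}, and is what forces the use of the refined variant of the \cite{BT} decoupling that permits deterministic initial data.
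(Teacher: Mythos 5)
Your overall skeleton (Markov decomposition, the event $\mathcal A=\{g^\downarrow(\Theta_N,\cdot)>\sqrt a\}$ with $a=\int g^\downarrow(\Theta_N,\xi)\,\pi_{p_{N+1}}(\mathrm d\xi)$, the swapped roles of $g^\uparrow/g^\downarrow$, and the final bookkeeping) matches the paper, but the central step has two genuine gaps. First, the conditioning time: you condition at $n'h_N$ and ask for a bound on $f(\zeta)=\mathbb P_\zeta(\mathcal Q_N(0,0)\text{ bad})$ for deterministic $\zeta$, with coupling parameters $t_N\asymp h_N$, $T_N=2t_N$. But Lemma~\ref{lem_coupling_rate_one} only yields domination on $B_0(L/4)\times[t,T]$, i.e.\ \emph{after} a burn-in of length $t$; with your calibration the domination begins only at (rate-$\sfv$) time $h_N$, after the height-$0$ box has already been evaluated -- recall that badness depends on the configuration at the box's initial time ($G_\sfv(\zeta_0)$ and the infection count in condition $(\mathrm B1)$--$(\mathrm B2)$, propagated recursively to scale $N$). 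This is exactly why the hypothesis $n'\ge n+1$ is there: the paper conditions at $\mathcal F_{h_N(n'-1)}$, which lies after the first box ends, and proves the deterministic-start estimate for the box at height $h_N$ (Lemma~\ref{lem_muito_feio}), so that the whole interval $[0,h_N]$ serves as burn-in. As written, your decomposition squanders the time gap and your central inequality for the height-$0$ box is not obtainable by the coupling.

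Second, the transfer mechanism. You propose to drive both processes by common recovery/transmission marks, plant the auxiliary infections as a subset of the real ones, deduce containment/infected-set domination, and then feed $(\mathrm{BE}_N)$ to the auxiliary process. This fails on two counts: (i) badness is \emph{not} monotone in the infected set (condition $(\mathrm G1)$ declares a point good precisely when there are few infections, so a smaller infected set can be good while the larger one is bad), so infected-set comparison does not sandwich the badness events; and (ii) the coupling's domination is only of the \emph{occupancy} field, only \emph{inside} $B_0(L_{\Theta_N}/4)$, while $(\mathrm{BE}_N)$ requires stochastic domination of $\pi_{p_N}$ on all of $\mathbb Z^d$. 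The paper never compares infections between two processes: on the coupling-success event it keeps the actual configuration $\zeta_{h_N}$ (infections and all), notes that its occupancy dominates $\pi_{p_N}$ \emph{locally}, and upgrades local to global via Lemma~\ref{lem_feio}, re-randomizing outside the window at the cost of a containment-flow discrepancy term $\mathrm{discr}^{\mathrm{icp}}_{\sfv,\lambda}(\mathcal L_N^{\mathrm{side}},\sqrt{\alphav}\,\mathcal L_N,h_N)$. That term is missing from your error budget $\mathcal E$; it is one of the four error sources the paper collapses into the stated $3\exp\{-\alphav^{(N^2+1)/8}\}$. (Minor: the reduction ``by monotonicity in the time gap to $n'=n+1$'' is neither justified nor needed -- the argument works for all $n'\ge n+1$ directly.)
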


\begin{proposition}[Induction step] 
\label{prop:induction_step}
    Let~$N \in \mathbb N_0$ and assume that~\eqref{eq_induction_hypothesis} is satisfied.
	Let~$(\zeta_t)_{t \ge 0}$ be the interchange-and-contact process with parameters~$\mathsf v$ and~$\lambda$, started from a random configuration~$\zeta_0$ such that~$\xi^{\zeta_0}$ stochastically dominates~$\pi_{p_{N+1}}$. 
	Then,
 \begin{equation}
 \mathbb P(\mathcal Q_{N+1}(0,0) \text{ is bad for }(\zeta_t)) \le { 16\alphav^{8N+4}}\cdot (\delta_N^2 + 3 \exp\{-\alphav^{(N^2+1)/8}\})
 \label{e:induction_step}
 \end{equation}
\end{proposition}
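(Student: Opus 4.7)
The plan is to apply a union bound on the decomposition of $\{\mathcal Q_{N+1}(0,0) \text{ is bad}\}$ coming directly from Definition~\ref{def_bad_N}: the box $\mathcal Q_{N+1}(0,0)$ is $(N+1)$-bad precisely when there exist ordered pairs of indices $(i,j),(i',j') \in \mathbb Z \times \mathbb N_0$, both with the respective scale-$N$ box contained in $\mathcal Q_{N+1}(0,0)$, which are both $N$-bad and which satisfy the separation condition: either $|j - j'| > 1$, or $|j - j'| \le 1$ together with $|i - i'| > \sqrt{\alphav}$. Accordingly, I will bound the probability of the target event by summing the probabilities of all such pair-events. For the pairs in the first case (large temporal gap) I use Lemma~\ref{lem_vertical_decoupling}, and for pairs in the second case (large spatial gap with small temporal gap) I use Lemma~\ref{lem_horizontal_decoupling}. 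Both lemmas are available to us: from the recursion~\eqref{eq_nice_forumulas_p} we have $p_N \le p_{N+1}$, so the assumption that $\xi^{\zeta_0}$ stochastically dominates $\pi_{p_{N+1}}$ automatically implies domination of $\pi_{p_N}$, covering the hypothesis of Lemma~\ref{lem_horizontal_decoupling}.

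The next step is to count the admissible ordered pairs. Using the formulas~\eqref{eq_formula_for_lN},~\eqref{eq_formula_for_rN} with $m=0$ and shifted to scale $N+1$, together with $\rho_{N+1} < 2$, we have $r_{N+1}(0)-l_{N+1}(0)+1 \le 2\rho_{N+1}\alphav^{2N+1}+1$. Similarly,~\eqref{eq_formula_for_bt} yields $t_{N+1}(0)+1 = h_{N+1}/h_N$, which by~\eqref{eq_for_case_N} is bounded uniformly in $N$ by a constant multiple of $\alphav^{2N+1}$ for $\sfv$ large. Combining these ingredients in the same bookkeeping style used in the proof of Proposition~\ref{prop_thm_from_easy}, the number of indices $(i,j)$ with $\mathcal Q_N(i,j) \subset \mathcal Q_{N+1}(0,0)$ is at most $4\alphav^{4N+2}$, and therefore the number of ordered pairs is at most $16\alphav^{8N+4}$.

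To conclude, I apply the decoupling lemmas to each admissible pair. For pairs in the first case, Lemma~\ref{lem_vertical_decoupling} gives the bound $\delta_N^2 + 3\exp\{-\alphav^{(N^2+1)/8}\}$. For pairs in the second case, Lemma~\ref{lem_horizontal_decoupling} gives $\delta_N^2 + \exp\{-\alphav^{N^2+1/8}\}$; since $N^2 + 1/8 \ge (N^2+1)/8$ for every $N \ge 0$ (because $7N^2 \ge 0$), this is also bounded above by $\delta_N^2 + 3\exp\{-\alphav^{(N^2+1)/8}\}$. Multiplying the common per-pair bound by the pair count gives exactly~\eqref{e:induction_step}.

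There is no real obstacle: the entire technical content lives in Lemmas~\ref{lem_horizontal_decoupling} and~\ref{lem_vertical_decoupling}, which have already been stated and whose proofs will be given separately. The present proposition is a combinatorial-union-bound step, and the only minor delicacy is to verify that the pair count admits a clean bound that is uniform in $N$, which is enabled by the super-exponential growth of the scales $\alphav^{N^2}$ ensuring that $h_{N+1}/h_N$ and the spatial index range at scale $N+1$ are both of order $\alphav^{2N+1}$.
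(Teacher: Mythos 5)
Your proposal is correct and follows essentially the same route as the paper: the paper's proof is precisely a union bound over the (at most $16\alphav^{8N+4}$) pairs of scale-$N$ boxes inside $\mathcal Q_{N+1}(0,0)$, applying Lemma~\ref{lem_horizontal_decoupling} to spatially separated pairs and Lemma~\ref{lem_vertical_decoupling} to temporally separated ones, with the weaker error term $3\exp\{-\alphav^{(N^2+1)/8}\}$ absorbing both bounds exactly as you argue. Your verification that domination of $\pi_{p_{N+1}}$ implies domination of $\pi_{p_N}$, and your pair count of order $\alphav^{8N+4}$, match the paper's (equally rough, constant-level) bookkeeping.
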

\begin{proof}
   The number of pairs of boxes of scale~$N$ that intersect $\mathcal{Q}_{N+1}(0,0)$ is bounded above by
   \[
        \Big(\frac{\mathcal{L}_{N+1}^{\mathrm{side}}}{\mathcal{L_N}} \cdot \frac{h_{N+1}}{h_N}\Big)^2 \le 16\alphav^{8N+4}.
   \]
   The result then follows from the previous two lemmas together with a union bound.
\end{proof}

\begin{proposition}\label{prop_higher_scales_00}
	If~$\sfv$ is large enough, then \eqref{eq_induction_hypothesis} holds for every $N \in \mathbb{N}$.
\end{proposition}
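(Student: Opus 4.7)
The plan is to prove \eqref{eq_induction_hypothesis} by induction on $N$, using Corollary~\ref{cor_scale_0_G} as the base case and Proposition~\ref{prop:induction_step} (which encapsulates Lemmas~\ref{lem_horizontal_decoupling} and~\ref{lem_vertical_decoupling}) for the induction step. The entire argument amounts to a book-keeping verification that the sequence $\delta_N = \alphav^{-8(N+2)}$ is a stable upper bound under the recursion implicit in~\eqref{e:induction_step}, provided $\sfv$ (and hence $\alphav = \lfloor \sfv^{\varepsilon_0/64}\rfloor$) is chosen sufficiently large.

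For the base case $N = 0$, under the hypothesis that $\xi^{\zeta_0}$ stochastically dominates $\pi_{p_0}$, Corollary~\ref{cor_scale_0_G} gives that every $(m,n)$ is $0$-bad with probability at most $3\sfv^{-\varepsilon_0/2}$. Since $\alphav \le \sfv^{\varepsilon_0/64}$, we have $\delta_0 = \alphav^{-16} \ge \sfv^{-\varepsilon_0/4}$, so the required inequality $3\sfv^{-\varepsilon_0/2} < \delta_0$ holds whenever $\sfv^{\varepsilon_0/4} > 3$, i.e., for $\sfv$ large enough.

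For the induction step, assume \eqref{eq_induction_hypothesis} holds at some scale $N \in \mathbb N_0$. Proposition~\ref{prop:induction_step} then yields
\[
\mathbb P(\mathcal Q_{N+1}(0,0)\text{ is bad}) \le 16 \alphav^{8N+4}\bigl(\delta_N^2 + 3\exp\{-\alphav^{(N^2+1)/8}\}\bigr).
\]
The first term equals $16 \alphav^{8N+4} \cdot \alphav^{-16N-32} = 16\alphav^{-8N-28}$, so its ratio to $\delta_{N+1} = \alphav^{-8N-24}$ is $16 \alphav^{-4}$, which is bounded above by $1/2$ as soon as $\alphav^4 \ge 32$. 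The second term is $48 \alphav^{8N+4} \exp\{-\alphav^{(N^2+1)/8}\}$, and requiring that it also be at most $\delta_{N+1}/2$ amounts to the inequality
\[
\alphav^{(N^2+1)/8} \ge (16N + 28)\log \alphav + \log 96,
\]
which holds for every $N \in \mathbb N_0$ provided $\alphav$ is large enough (the worst case being $N = 0$, where we need $\alphav^{1/8} \gtrsim \log \alphav$). Both conditions are uniform in $N$, so a single choice of $\sfv$ suffices to close the induction and obtain $\mathbb P(\mathcal Q_{N+1}(0,0)\text{ is bad}) < \delta_{N+1}$, as required.

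No serious obstacle is expected here: the main work has already been carried out in Proposition~\ref{prop:induction_step} and Corollary~\ref{cor_scale_0_G}, and the only remaining task is to check that the renormalization growth constants $\alphav^{N^2}$ and $\delta_N = \alphav^{-8(N+2)}$ were calibrated so that the quadratic decay $\delta_N^2$ beats the combinatorial factor $16 \alphav^{8N+4}$ with room to spare, while the super-exponential decay $\exp\{-\alphav^{(N^2+1)/8}\}$ arising from the decoupling error dominates any polynomial correction. The choice of the exponent $8(N+2)$ in $\delta_N$ is precisely what makes both checks go through uniformly in $N$.
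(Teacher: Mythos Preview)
Your proof is correct and follows essentially the same approach as the paper's: induction on $N$ with Corollary~\ref{cor_scale_0_G} as the base case and Proposition~\ref{prop:induction_step} for the step, verifying that the combinatorial factor $16\alphav^{8N+4}$ is beaten by $\delta_N^2$ and that the decoupling error is negligible. The only cosmetic difference is that the paper first bounds $3\exp\{-\alphav^{(N^2+1)/8}\} \le \delta_N^2$ uniformly in $N$ and then carries a single $2\delta_N^2$ term through the recursion, whereas you treat the two contributions separately; both routes yield the same $32\alphav^{-4}<1$ estimate.
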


\begin{proof}
Let $\mathsf v_o= \mathsf v_o (\lambda, p)>0$ be such that for every $\mathsf{v} \geq \mathsf{v}_o$ one has
\begin{align}
\label{e:delta_N_larger_exponential}
\delta_{N}^2=\alpha_{\mathsf v}^{-16(N+2)}
    &\geq 3 \exp\{-\alpha_{\mathsf{v}}^{(N^2+1)/8}\} \qquad \text{uniformly over $N \geq 0$, and}\\
\label{e:alpha_v_larger_4}
 \alpha_{\mathsf{v}}
    &> 4,
\end{align}
which is possible because $\alpha_{\mathsf{v}} = \lfloor \mathsf{v}^{\varepsilon_0/64}\rfloor \to \infty$  as $\mathsf{v} \to \infty$. Since we are assuming that $\xi^{\zeta_0} \in \{0,1\}^{\mathbb Z^d}$ stochastically dominates~$\pi_{p_N}$ (hence it dominates $\pi_{p_0}$), Corollary \ref{cor_scale_0_G} ensures that 
\begin{equation}
\label{e:induction_hypothesis_delta}
\mathbb P(\mathcal Q_0(m,n) \text{ is bad for }(\zeta_t)) < \mathsf v^{-\varepsilon_0/2} \le \lfloor \sfv \rfloor^{-\varepsilon_0/2} < \alpha_{\mathsf{v}}^{-8(0+2)} = \delta_0.
\end{equation}

Now assume that for a given $N-1$, \eqref{eq_induction_hypothesis_translated} holds.
Our goal is to show that it also holds for $N$.
Assume that $\xi^{\zeta_0}$ stochastically dominates $\pi_{p_{N}}$, so it also dominates $\pi_{p_{N-1}}$, hence we can divide both sides in \eqref{e:induction_step} by $\delta_{N}$ and use \eqref{e:delta_N_larger_exponential} in order to obtain 
\begin{equation*}
\frac{\mathbb{P}(\mathcal Q_{N}(0,0) \text{ is bad for }(\zeta_t))}{\delta_{N}} \le 16\alphav^{8N-4} \cdot (\delta_{N-1}^2 + 3 \exp\{-\alphav^{{((N-1)}^2+1)/8}\})\delta_{N}^{-1} \le  32\alphav^{8N-4}\cdot \delta_{N-1}^2 \delta_{{N}}^{-1} .
 \end{equation*}

Now recalling that $\delta_{N-1} = \alpha_{\mathsf v}^{-8(N+1)}$ we get for $\mathsf{v}\geq \mathsf{v}_o$,
\begin{equation}
\label{e:induction_step_delta}
\frac{\mathbb{P}(\mathcal Q_{N}(0,0) \text{ is bad for }(\zeta_t))}{\delta_{N}} \leq 32\alphav^{8N-4}  \alphav^{-16(N+1)}\alphav^{8(N+2)} = 32 \alpha_{\mathsf{v}}^{-4} < 1,
\end{equation}
where the last inequality follows from \eqref{e:alpha_v_larger_4}.
Using \eqref{e:induction_step_delta} and \eqref{e:induction_hypothesis_delta}, it follows that
\[
\mathbb P(\mathcal Q_N(m,n) \text{ is bad for }(\zeta_t)) \leq \delta_{N}.\qedhere
\]
\end{proof}

We will carry out the proofs of Lemma~\ref{lem_horizontal_decoupling} and Lemma~\ref{lem_vertical_decoupling} in the following two subsections.

\subsubsection{Horizontal decoupling: proof of Lemma~\ref{lem_horizontal_decoupling}}
\begin{proof}[Proof of Lemma~\ref{lem_horizontal_decoupling}]
	Fix~$(m,n),(m',n')$ as in the statement of the lemma; assume without loss of generality that~$n \le n'$. Let~$\mathcal A$ be the event that~$\mathcal Q_N(m,n)$ is bad, and~$\mathcal A'$ the event that~$\mathcal Q_N(m',n')$ is bad. By Remark~\ref{rem_dependence},~$\mathcal A$ can be determined from the values of~$\zeta_t(x)$ for~$(x,t)$ in~$\mathcal{Q}_N(m,n)$, and~$\mathcal A'$ can be determined from the values of~$\zeta_t(x)$ for~$(x,t)$ in~$\mathcal{Q}_N(m',n')$. We bound
	\[\mathbb P(\mathcal A \cap \mathcal A') \le \mathbb P(\mathcal A) \cdot \mathbb P(\mathcal A') + |\mathrm{Cov}(\mathds{1}_\mathcal{A},\mathds{1}_{\mathcal{A}'})| \le \delta_N^2 + |\mathrm{Cov}(\mathds{1}_\mathcal{A},\mathds{1}_{\mathcal{A}'})|,\]
	where the second inequality follows from~\eqref{eq_induction_hypothesis}. By Lemma~\ref{lem_covariances}, 
	\begin{equation}
		\label{eq_final_cov}
		|\mathrm{Cov}(\mathds{1}_\mathcal{A},\mathds{1}_{\mathcal{A}'})| \le 4\mathrm{discr}^{\mathrm{icp}}_{\mathsf v, \lambda}(\mathcal L_N^{\mathrm{side}},\;\lfloor \tfrac12 \|\mathbf{x}_N(m) - \mathbf{x}_N(m')\| \rfloor,\; h_N (n'+1) - h_N n)).
	\end{equation}
	The value of~$\mathrm{discr}^{\mathrm{icp}}_{\mathsf v,\lambda}(\ell,L,t)$ is non-increasing in~$L$ and non-decreasing in~$t$. Using the assumptions on~$(m,n),(m',n')$, we bound
    \begin{align*}
    \lfloor \tfrac12 \|\mathbf{x}_N(m) - \mathbf{x}_N(m')\| \rfloor 
        &= \lfloor \tfrac12   \mathcal L_N |m - m' |\rfloor \ge \tfrac14 \mathcal L_N \sqrt{\alphav}, \\
    h_N(n'+1) - h_N n 
        &\le 2h_N.
    \end{align*}
	Then, the r.h.s. of~\eqref{eq_final_cov} is at most
    \begin{equation}\label{eq_the_same_term}
4 \mathrm{discr}^{\mathrm{icp}}_{\mathsf v, \lambda}(\mathcal L_N^{\mathrm{side}}, \tfrac14 \mathcal L_N \sqrt{\alphav}, 2h_N).
    \end{equation}
	By Proposition~\ref{prop_discrepancy}, this is bounded from above by
	\begin{equation}\begin{split}
		&4 \cdot 64d^3e^2 \max(4d^2\mathsf v^2,1) \cdot \Bigl(9 \cdot \mathcal L_N^{\mathrm{side}} \cdot \frac 14 \mathcal L_N \sqrt{\alphav}\Bigr)^{d-1} \cdot 2h_N \exp\{8d\lambda \cdot 2h_N\} \\
        &\hspace{1cm}\cdot \exp\Bigl\{-\frac12 \Bigl(\frac14 \mathcal L_N  \sqrt{\alphav} - \mathcal L_N^{\mathrm{side}} \Bigr) \log \Bigl(1+ \frac{\tfrac14 \mathcal{L}_N \sqrt{\alphav} - \mathcal L_N^{\mathrm{side}}}{4(\mathsf v + \lambda) \cdot (2h_N)} \Bigr) \Bigr\}.
	\end{split} \label{eq_final_for_discr_icp}
	\end{equation}
In order to deal with this expression, let us recall that
\vspace{-2mm}%
\begin{align*}
\mathcal L_N = \lfloor \sqrt{\sfv} \rfloor \alphav^{N^2},\; N \ge 0
\qquad \text{and}\qquad
\mathcal L_N^{\mathrm{side}} = \begin{cases}
    2\lfloor \sqrt{\sfv} \rfloor \log^2(\sfv) & \text{if }N=0;\\
    \rho_N  \lfloor \sqrt{\sfv} \rfloor \alphav^{N^2} &\text{if } N \ge 1,
\end{cases}
\end{align*}
and also that $h_N \le h'_N \le 2 h_0 \alphav^{N^2}$ for all $N \ge 0$.
We can check that for large $\sfv$
\begin{align}\label{eq_aux_aux10}
    \tfrac14 \mathcal L_N  \sqrt{\alphav} - \mathcal L_N^{\mathrm{side}} 
    &\ge \sqrt{\sfv} \cdot \alphav^{N^2+1/4},  &&\text{for $N \in \mathbb N_0$};\\
\label{eq_aux_aux20}
    4(\mathsf v + \lambda) \cdot (2h_N) 
    &\le 16 \mathsf v h_N \le 32 \sfv h_0 \alphav^{N^2}, &&\text{for $N \in \mathbb N_0$.}
\end{align}
Using~\eqref{eq_aux_aux10} and~\eqref{eq_aux_aux20}, we obtain
\[
\Bigl(\frac14 \mathcal L_N  \sqrt{\alphav} - \mathcal L_N^{\mathrm{side}} \Bigr) \log \biggl(1+ \frac{\tfrac14 \mathcal L_N\sqrt{\alphav} - \mathcal L_N^{\mathrm{side}}}{4(\mathsf v + \lambda) \cdot (2h_N)} \biggr)
\ge \sqrt{\sfv} \cdot \alphav^{N^2+1/4} \log\biggl(1+\frac{\sqrt{\sfv} \cdot \alphav^{N^2+1/4}}{  32 \sfv h_0 \alphav^{N^2} } \biggr).\]
Using~$\log(1+x) \ge x/2$ for small~$x$ and bounding~$\sqrt{\sfv} \cdot \alphav^{N^2+1/4}/( 32 \sfv h_0 \alphav^{N^2} ) \ge 1/\sqrt{\sfv}$, the above is larger than
\[
 \sqrt{\sfv} \cdot \alphav^{N^2+1/4} \cdot \frac{1}{2\sqrt{\sfv}}  =  \frac12  \alphav^{N^2+1/4} .\]
Having this in mind, we have
\begin{equation*}
    \exp\biggl\{8d\lambda \cdot 2h_N -\frac12 \Bigl(\frac14 \mathcal L_N  \sqrt{\alphav} - \mathcal L_N^{\mathrm{side}} \Bigr) \log \biggl(1+ \frac{\tfrac14 \mathcal L_N \sqrt{\alphav} - \mathcal L_N^{\mathrm{side}}}{4(\mathsf v + \lambda) \cdot (2h_N)} \biggr) \biggr\}
    \le \exp\Bigl\{ 32d\lambda h_0 \alphav^{N^2}\!\! - \frac14  \alphav^{N^2+1/4} \Bigr\}.
\end{equation*}
Now, when~$\sfv$ is large the r.h.s. above is much smaller than~$\exp\{-\alphav^{N^2+1/8}\}$, the quotient between the two values being small uniformly over~$N$. It is easy to check that the contribution of the remaining terms in~\eqref{eq_final_for_discr_icp} is negligible in comparison, so the proof is complete.
\end{proof}

\subsubsection{Vertical decoupling: proof of Lemma~\ref{lem_vertical_decoupling}}
\begin{lemma}[Bad box at height 0, starting from random configuration with occupancy dominating $\pi_{p_N}$ inside a large box] \label{lem_feio}
	Let~$N \in \mathbb N_0$ and assume that~\eqref{eq_induction_hypothesis} is satisfied for some choice of~$\delta_N$. Let~$(\zeta_t)_{t \ge 0}$ be the interchange-and-contact process with parameters~$\mathsf v$ and $\lambda$ started from a random configuration~$\zeta_0$. Assume that the distribution of~$\zeta_0$ is such that
	\[(\xi^{\zeta_0}(y): y \in B_{\mathbf{x}_N(0)}(\sqrt{\alphav}\mathcal{L}_N)) \]
	stochastically dominates the product Bernoulli   measure with parameter~$p_N$ in~$B_{\mathbf{x}_N(0)}(\sqrt{\alphav}\mathcal{L}_N)$. Then,  
	\begin{equation*}
		\mathbb P(\cQ_N(0,0) \text{ is bad for }(\zeta_t)_{t \ge 0}) \le \delta_N + \mathrm{discr}^\mathrm{icp}_{\mathsf v, \lambda}(\mathcal{L}_N^{\mathrm{side}},\sqrt{\alphav}\mathcal{L}_N,h_N).
	\end{equation*}
\end{lemma}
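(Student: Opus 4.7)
The plan is to couple $(\zeta_t)_{t\ge 0}$ with a second interchange-and-contact process $(\zeta_t')_{t\ge 0}$ whose initial configuration satisfies the \emph{global} domination hypothesis of \eqref{eq_induction_hypothesis}, in such a way that the two processes agree on $\mathcal Q_N(0,0)$ with high probability. Let $B := B_{\mathbf{x}_N(0)}(\sqrt{\alphav}\mathcal L_N)$ and construct $\zeta_0'$ by setting $\zeta_0'(x) = \zeta_0(x)$ for $x \in B$ and, outside $B$, letting $\zeta_0'(x)$ equal $\stateh$ with probability $p_N$ and $0$ with probability $1-p_N$, independently and independently of $\zeta_0$. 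Since by hypothesis $\xi^{\zeta_0}|_B$ stochastically dominates $\pi_{p_N}|_B$, a standard monotone coupling makes $\xi^{\zeta_0'}$ globally dominate $\pi_{p_N}$ (match the two configurations outside $B$, and use the dominating coupling inside $B$). Build $(\zeta_t)$ and $(\zeta_t')$ from a common graphical representation $H$. The translation-invariant form \eqref{eq_induction_hypothesis_translated} of the induction hypothesis then gives
\[
\mathbb P(\mathcal Q_N(0,0) \text{ is bad for } (\zeta_t')) \le \delta_N.
\]

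Next, the idea is to control the disagreement set between the two coupled processes via the containment flow. Set $A := \{x : \zeta_0(x) \ne \zeta_0'(x)\}$; by construction $A \subseteq \mathbb Z^d \setminus B$. Lemma~\ref{lem_contain} yields $\{x : \zeta_t(x) \ne \zeta_t'(x)\} \subseteq \Psi^A_t$ for every $t \ge 0$. The spatial projection of $\mathcal Q_N(0,0)$ onto $\mathbb Z^d$, namely $\mathbf{x}_N(0) + [-\mathcal L_N^{\mathrm{side}},\mathcal L_N^{\mathrm{side}}]\times [-\mathcal L_0^{\mathrm{side}},\mathcal L_0^{\mathrm{side}}]^{d-1}$, is contained in the $\ell_\infty$-ball $B_{\mathbf{x}_N(0)}(\mathcal L_N^{\mathrm{side}})$ (using $\mathcal L_0^{\mathrm{side}} \le \mathcal L_N^{\mathrm{side}}$). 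Define the bad-coupling event
\[
\mathcal E := \{\exists\, x \in A,\; y \in B_{\mathbf{x}_N(0)}(\mathcal L_N^{\mathrm{side}}),\; t \in [0,h_N]:\ y \in \Psi(x,0,t)\}.
\]
On $\mathcal E^c$, the two configurations coincide throughout $\mathcal Q_N(0,0)$, so by Remark~\ref{rem_dependence} the box is bad for $(\zeta_t)$ if and only if it is bad for $(\zeta_t')$. Hence
\[
\mathbb P(\mathcal Q_N(0,0) \text{ is bad for } (\zeta_t)) \le \mathbb P(\mathcal Q_N(0,0) \text{ is bad for } (\zeta_t')) + \mathbb P(\mathcal E) \le \delta_N + \mathbb P(\mathcal E).
\]

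To finish, it remains to bound $\mathbb P(\mathcal E)$ by the claimed discrepancy. For $\sfv$ large (as is tacitly assumed throughout this section), one has $\mathcal L_N^{\mathrm{side}} < \sqrt{\alphav}\mathcal L_N$, so the target ball sits strictly inside $B$. Any containment path witnessing $\mathcal E$ starts outside $B$ and ends inside $B_{\mathbf{x}_N(0)}(\mathcal L_N^{\mathrm{side}})$; since containment paths move by unit jumps, it must pass through $\partial B_{\mathbf{x}_N(0)}(\sqrt{\alphav}\mathcal L_N)$ at some time $s$ and later cross $\partial B_{\mathbf{x}_N(0)}(\mathcal L_N^{\mathrm{side}})$ at some time $s' \le h_N$, yielding a sub-path of exactly the type counted by $\mathrm{discr}^{\mathrm{icp}}$. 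By translation invariance of the graphical representation and Definition~\ref{def_discr_icp}, this gives $\mathbb P(\mathcal E) \le \mathrm{discr}^{\mathrm{icp}}_{\mathsf v,\lambda}(\mathcal L_N^{\mathrm{side}}, \sqrt{\alphav}\mathcal L_N, h_N)$, which combined with the previous display completes the proof. The only step requiring care is checking that the auxiliary i.i.d. randomness used to build $\zeta_0'$ outside $B$ is independent of both $\zeta_0|_B$ and $H$, so that \eqref{eq_induction_hypothesis_translated} can legitimately be invoked for $(\zeta_t')$; this is automatic from the construction.
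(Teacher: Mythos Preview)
Your proof is correct and follows essentially the same approach as the paper's: construct the auxiliary process $(\zeta_t')$ by resampling outside $B_{\mathbf{x}_N(0)}(\sqrt{\alphav}\mathcal L_N)$ as i.i.d.\ Bernoulli$(p_N)$, couple both processes via a common graphical representation, apply \eqref{eq_induction_hypothesis} to $(\zeta_t')$, and bound the disagreement event using Lemma~\ref{lem_contain} together with the definition of $\mathrm{discr}^{\mathrm{icp}}$. Your version is slightly more explicit than the paper's in justifying why $\xi^{\zeta_0'}$ globally dominates $\pi_{p_N}$ and in spelling out the containment-path geometry behind the discrepancy bound, but the structure and ideas are the same.
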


\begin{proof}
	We assume that the process is obtained from a graphical construction. Using extra randomness (independently of the graphical construction), we define a random configuration~$\zeta_0' \in \{0,\stateh, \statei\}^{\mathbb Z^d}$ by setting~$\zeta_0'(x) = \zeta_0(x)$ for every~$x \in B_{\mathbf{x}_N(0)}(\sqrt{\alphav}\mathcal{L}_N)$, and
\begin{equation} \label{eq_outside} x \in B_{\mathbf{x}_N(0)}(\sqrt{\alphav}\mathcal{L}_N)^c \quad \Longrightarrow\quad   \zeta'_0(x) = \begin{cases} \stateh & \text{with probability }p_N;\\0&\text{with probability } 1-p_N\end{cases}\end{equation}
(independently over~$x$).
We define the interchange-and-contact process~$(\zeta'_t)_{t \ge 0}$ started from~$\zeta_0'$, using the same graphical construction as the one for~$(\zeta_t)_{t \ge 0}$. We define the event 
\[\mathcal A:= \{\zeta_t(x) = \zeta'_t(x) \text{ for all }(x,t) \in B_{\mathbf{x}_N(0)}(\mathcal{L}^{\mathrm{side}}_N) \times [0,h_N]\}\]
and bound
\begin{align*}
\mathbb P(\cQ_N(0,0) \text{ is bad for }(\zeta_t)_{t \ge 0}) 
    &\le \mathbb P(\mathcal A \cap \{\cQ_N(0,0) \text{ is bad for }(\zeta_t)_{t \ge 0} \}) + \mathbb P(\mathcal A^c) \\
    &\le \delta_N + \mathrm{discr}^\mathrm{icp}_{\mathsf v, \lambda}(\mathcal{L}_N^{\mathrm{side}},\sqrt{\alphav}\mathcal{L}_N,h_N),
\end{align*}
where in the second inequality we have used the definition of discrepancy (Definition~\ref{def_discr_icp}) together with Lemma~\ref{lem_contain}, as well as~\eqref{eq_induction_hypothesis}.
\end{proof}

Recall the definition of~$\mathrm{err}_{\mathrm{coup}}$ from~\eqref{eq_err_coup}.

\begin{lemma}[Bad box at height $h_N$, starting from deterministic configuration] \label{lem_muito_feio}
Let~$N \in \mathbb N_0$ and assume that~\eqref{eq_induction_hypothesis} is satisfied. Let~$(\zeta_t)_{t \ge 0}$ be the interchange-and-contact process with parameters~$\mathsf v$ and $\lambda$ started from a deterministic configuration~$\zeta_0$. For~$N \ge 0$, let
	\begin{align}
		\label{eq_choice_of_thetaN}&\Theta_N := (\ell_{\Theta_N} := \mathcal L_N^{1/(4d)},\; L_{\Theta_N} := 4\sqrt{\alphav} \mathcal L_N,\; t_{\Theta_N} := \sfv h_N,\; p_{\Theta_N} := \tfrac12(p_N+p_{N+1})),\\
		&\Theta_N' := (\ell_{\Theta_N},\; L_{\Theta_N},\; t_{\Theta_N},\; T = \mathsf v h_N).\label{eq_def_ThetaNp}
	\end{align}
We then have
	\begin{align*}
	\mathbb P(&\cQ_N(0,1) \text{ is bad for }(\zeta_t)_{t \ge 0}) \\[-1mm]
        &\le \delta_N + g^{\downarrow}(\Theta_N, \zeta_0 ) + \mathrm{discr}^\mathrm{icp}_{\mathsf v, \lambda}(\mathcal{L}_N^{\mathrm{side}},\sqrt{\alphav}\mathcal{L}_N,h_N) + \int g^\uparrow(\Theta_N,\xi)\;\pi_{p_N}(\mathrm{d}\xi) + \mathrm{err}_\mathrm{coup}(\Theta_N').
	\end{align*}
\end{lemma}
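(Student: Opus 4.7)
My approach is to mirror the structure of the proof of Lemma~\ref{lem_outro_feio}, but implemented at time~$h_N$ instead of at time~$0$, and to invoke the induction hypothesis~\eqref{eq_induction_hypothesis} directly (rather than Lemma~\ref{lem_feio}). First I would sample~$\xi_0' \sim \pi_{p_N}$ independently of the graphical representation~$H$ driving~$(\zeta_t)$, and apply Lemma~\ref{lem_coupling_rate_one} in its time-rescaled form (as in the proof of Lemma~\ref{lem_coupling_interchanges}) with~$\xi_0'$ in the role of the smaller initial occupancy and~$\xi^{\zeta_0}$ in the role of the larger one. The choice of parameters~$\Theta_N$ and~$\Theta_N'$ is exactly tailored so that the resulting interchange process~$(\xi'_t)_{t \ge 0}$ started from~$\xi_0'$ satisfies
\[\mathcal{E} := \bigl\{\xi^{\zeta_{h_N}}(x) \ge \xi'_{h_N}(x) \text{ for every } x \in B_0(\sqrt{\alphav}\mathcal{L}_N)\bigr\}\]
with~$\mathbb P(\mathcal E^c \mid \xi_0') \le g^\downarrow(\Theta_N, \xi^{\zeta_0}) + g^\uparrow(\Theta_N, \xi_0') + \mathrm{err}_\mathrm{coup}(\Theta_N')$, which after averaging over~$\xi_0' \sim \pi_{p_N}$ becomes $g^\downarrow(\Theta_N, \zeta_0) + \int g^\uparrow(\Theta_N, \xi)\,\pi_{p_N}(\mathrm d\xi) + \mathrm{err}_\mathrm{coup}(\Theta_N')$.

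Next I would define the auxiliary configuration~$\check\zeta_{h_N}$ by
\[\check\zeta_{h_N}(x) := \begin{cases} \zeta_{h_N}(x) & \text{if } \xi^{\zeta_{h_N}}(x) = 1, \\ \stateh & \text{if } \xi^{\zeta_{h_N}}(x) = 0 \text{ and } \xi'_{h_N}(x) = 1, \\ 0 & \text{otherwise}, \end{cases}\]
so that~$\xi^{\check\zeta_{h_N}} = \xi^{\zeta_{h_N}} \vee \xi'_{h_N} \ge \xi'_{h_N}$ pointwise, and I would run the interchange-and-contact process~$(\check\zeta_{h_N + t})_{t \ge 0}$ started from~$\check\zeta_{h_N}$ and driven by the restriction of~$H$ to times~$\ge h_N$. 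Since~$\check\zeta_{h_N}$ depends only on the data up to time~$h_N$ (and on~$\xi_0'$, which is independent of the post-$h_N$ graphical data), it is independent of that restriction; moreover~$\xi^{\check\zeta_{h_N}}$ stochastically dominates~$\pi_{p_N}$ thanks to the coupling with~$\xi'_{h_N} \sim \pi_{p_N}$. The induction hypothesis~\eqref{eq_induction_hypothesis}, applied to the shifted process at the scale-$N$ box based at the (shifted) space-time origin, then yields $\mathbb{P}(\mathcal Q_N(0, 1) \text{ bad for } (\check\zeta_t)) \le \delta_N$.

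Finally I would compare $(\zeta_t)$ and $(\check\zeta_t)$ after time~$h_N$ via Lemma~\ref{lem_contain}: both processes use the same graphical representation for~$t \ge h_N$, so their disagreement at time~$h_N + s$ is contained in the containment flow up to time~$s$ starting from~$A := \{x: \zeta_{h_N}(x) \ne \check\zeta_{h_N}(x)\}$. Unpacking the definition, $A = \{x: \xi^{\zeta_{h_N}}(x) = 0,\, \xi'_{h_N}(x) = 1\}$, and on~$\mathcal{E}$ the defining inequality forces $A \subseteq B_0(\sqrt{\alphav}\mathcal L_N)^c$. Hence on~$\mathcal{E}$ the event that~$(\zeta_t)$ and~$(\check\zeta_t)$ disagree anywhere inside~$\mathcal Q_N(0, 1)$ is contained in the event of Definition~\ref{def_discr_icp} and has probability at most $\mathrm{discr}^\mathrm{icp}_{\mathsf v, \lambda}(\mathcal L_N^{\mathrm{side}}, \sqrt{\alphav}\mathcal L_N, h_N)$. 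The triangle inequality
\[\mathbb{P}(\mathcal Q_N(0,1) \text{ bad for } (\zeta_t)) \le \mathbb{P}(\mathcal Q_N(0,1) \text{ bad for } (\check\zeta_t)) + \mathbb{P}(\mathcal{E}^c) + \mathbb{P}\bigl(\mathcal{E},\ (\zeta_t),(\check\zeta_t) \text{ disagree in } \mathcal Q_N(0,1)\bigr)\]
then assembles the five error terms and yields the claimed bound. The main subtlety is the design of~$\check\zeta_{h_N}$: it must simultaneously carry enough particles to dominate~$\pi_{p_N}$ unconditionally (so the induction hypothesis applies), and agree with~$\zeta_{h_N}$ inside~$B_0(\sqrt{\alphav}\mathcal L_N)$ whenever~$\mathcal{E}$ holds, so that Lemma~\ref{lem_contain} produces only a discrepancy localized outside that ball, controlled by a single $\mathrm{discr}^\mathrm{icp}$-term.
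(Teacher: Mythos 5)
Your proof is correct, but it takes a genuinely different route from the paper's. The paper never puts the auxiliary Bernoulli system on the same probability space as the full process: it extracts from Lemma~\ref{lem_coupling_rate_one} only a coupling measure~$\bar\nu$ between the \emph{law} of~$\xi^{\zeta_{h_N}}$ and~$\pi_{p_N}$, glues the healthy/infected labels back on via a three-coordinate measure~$\kappa$ built from regular conditional probabilities (second and third coordinates independent given the occupancy), and then feeds the resulting configuration into the intermediate Lemma~\ref{lem_feio}, which is exactly the statement that domination of~$\pi_{p_N}$ \emph{inside} $B_0(\sqrt{\alphav}\mathcal L_N)$ suffices at the cost of one $\mathrm{discr}^{\mathrm{icp}}$ term. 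Your argument collapses these two steps: by realizing the Baldasso--Teixeira coupling with the dominating interchange driven by the jump marks of~$H$ itself, the labels of~$\zeta_{h_N}$ come along for free (no~$\kappa$ needed), and your construction of~$\check\zeta_{h_N}$ together with the Lemma~\ref{lem_contain}/discrepancy comparison reproduces internally what Lemma~\ref{lem_feio} does, since~$\xi^{\check\zeta_{h_N}}\ge\xi'_{h_N}\sim\pi_{p_N}$ globally, $\check\zeta_{h_N}$ is independent of the post-$h_N$ marks, and on~$\mathcal E$ the disagreement set lies outside~$B_0(\sqrt{\alphav}\mathcal L_N)$; the induction hypothesis~\eqref{eq_induction_hypothesis} then applies directly. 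What your route buys is a shorter, more transparent proof with the same five error terms; what it costs is that you are using the \emph{construction} behind Lemma~\ref{lem_coupling_rate_one} rather than its black-box statement: you need that the dominating process in that coupling can be taken to follow a prescribed graphical representation (identifiable with the jump marks of~$H$), that the coupling is adapted (so that~$\xi'_{h_N}$, hence~$\check\zeta_{h_N}$, is measurable with respect to pre-$h_N$ data), and that the auxiliary randomness~$(\xi'_0,\mathcal J^1)$ is independent of the recovery/transmission marks. All of this is visible in the Appendix~\ref{sec:decoupling} construction (the dominating process there evolves by the clean representation~$\mathcal J^2$, and the pairing/refreshing scheme is causal), but it is not asserted in the lemma as stated, which is plausibly why the authors chose the more abstract route; in a written version you should state and justify these three properties explicitly.
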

\begin{proof}
Fix~$\zeta_0$ and let~$(\zeta_t)_{t \ge 0}$ be as in the statement of the lemma. 
 By the Markov property, we have
\begin{align}\label{eq_def_of_f_bad}
    \mathbb P(\mathcal Q_N(0,1) \text{ is bad for }(\zeta_t)_{t \ge 0}) = \mathbb P(\mathcal Q_N(0,0) \text{ is bad for }(\zeta_{h_N+t})_{t \ge 0}) = \mathbb E[f(\zeta_{h_N})],
\end{align}
where for~$\zeta' \in \{0,\stateh,\statei\}^{\mathbb Z^d}$, we define~$f(\zeta')$ as the probability that~$\mathcal Q_N(0,0)$ is bad for an interchange-and-contact process with parameters~$\mathsf v$ and~$\lambda$ started from~$\zeta'$. Define
\[\mu := \text{law of }\xi^{\zeta_{h_N}},\qquad \bar{\mu} := \text{law of the pair }(\xi^{\zeta_{h_N}},\zeta_{h_N}).\]

Next, using Lemma~\ref{lem_coupling_rate_one}, we can obtain a probability measure~$\bar{\nu}$ on~$\{0,1\}^{\mathbb Z^d} \times \{0,1\}^{\mathbb Z^d}$ such that
\[\text{if } (\xi,\xi') \sim \bar{\nu},\quad \text{then } \xi \sim \mu \text{ and } \xi' \sim \pi_{p_N}\]
and moreover,
\begin{align*}
&\bar \nu(\{\xi(x) \ge \xi'(x) \text{ for all }x \in B_0(\sqrt{\alphav}\mathcal{L}_N)\})\ge 1 - \int g^{\uparrow}(\Theta_N,\cdot )\; \pi_{p_N}(\mathrm{d}\cdot) - g^{\downarrow}(\Theta_N,\zeta_0) - \mathrm{err}_{\mathrm{coup}}(\Theta_N').
\end{align*}

Let
\[\Omega_3:= \{(\xi,\zeta,\xi'):\; \xi, \xi' \in \{0,1\}^{\mathbb Z^d},\; \zeta \in \{0,\stateh,\statei\}^{\mathbb Z^d},\; \xi = \xi^{\zeta}  \}.\]
We now construct a probability measure~$\kappa$ on~$\Omega_3$ such that
\[
\text{if } (\xi,\zeta,\xi') \sim \kappa,\quad \text{ then } (\xi,\zeta) \sim \bar{\mu} \text{ and }(\xi,\xi') \sim \bar{\nu}.
\]
This can be achieved as follows. Using regular conditional probabilities, we let~$K$ and~$K'$ be the probability kernels such that
\[\bar{\mu}(A \times B) = \int_A K(\xi,B) \;\mu(\mathrm{d}\xi),\qquad \bar{\nu}(A \times C) = \int_A K'(\xi,C)\;\mu(\mathrm{d}\xi).\]
Then, we construct~$\kappa$ using an extension theorem with the prescription that
\[\kappa(A \times B \times C) = \int_A \mu(\mathrm{d}\xi) K(\xi,B)\cdot K'(\xi,C),\]
that is, the second and third coordinates are independent, given the first.
Let
\[\mathcal A:= \{(\xi,\zeta,\xi') \in \Omega_3: \xi(x) \ge \xi'(x) \text{ for all } x \in B_0(\sqrt{\alphav}\mathcal{L}_N)\}.\]
We define a function~$Z: \Omega_3 \to \{0,\stateh, \statei\}^{\mathbb Z^d}$ as follows:
\[
\text{if }(\xi,\zeta,\xi') \in \mathcal A, \text{ set } Z(\xi,\zeta,\xi') = \zeta;\qquad \text{otherwise, set }[Z(\xi,\zeta,\xi')](x) = \stateh \text{ for all }x.
\]
Note that by construction,
\[\{x \in B_0(\sqrt{\alphav}\mathcal{L}_N): [Z(\xi,\zeta,\xi')](x)\neq 0\} \supseteq \{x  \in B_0(\sqrt{\alphav}\mathcal{L}_N): \xi'(x) = 1\}.\] Hence, when~$(\xi,\zeta,\xi') \sim \kappa$, we have that~$Z(\xi,\zeta,\xi')$ is a random element of~$\{0,\stateh,\statei\}^{\mathbb Z^d}$ whose projection to~$\{0,1\}^{\mathbb Z^d}$ stochastically dominates~$\pi_{p_N}$ inside~$B_0(\sqrt{\alphav}\mathcal{L}_N)$. Recalling the function~$f$ from~\eqref{eq_def_of_f_bad} and using Lemma~\ref{lem_feio}, we then have
\begin{align*}
\int_{\Omega_3} f(Z)\;\mathrm{d}\kappa \le \delta_N + \mathrm{discr}^\mathrm{icp}_{\mathsf v, \lambda}(\mathcal{L}_N^{\mathrm{side}},\sqrt{\alphav}\mathcal{L}_N,h_N).
\end{align*}
Finally, since~$f$ is bounded by~$1$,
\begin{align*}
&\int_{\Omega_3} f(\zeta)\;\kappa(\mathrm{d}(\xi,\zeta,\xi')) \le \int_{\mathcal A} f(Z)\;\mathrm{d}\kappa + \kappa(\mathcal A^c)\\
&\le \delta_N + \mathrm{discr}^\mathrm{icp}_{\mathsf v, \lambda}(\mathcal{L}_N^{\mathrm{side}},\sqrt{\alphav}\mathcal{L}_N,h_N) + \int g^{\uparrow}(\Theta_N,\cdot )\; \pi_{p_N}(\mathrm{d}\cdot ) + g^{\downarrow}(\Theta_N,\zeta_0) + \mathrm{err}_{\mathrm{coup}}(\Theta_N').\qedhere
\end{align*}
\end{proof}

\begin{proof}[Proof of Lemma~\ref{lem_vertical_decoupling}]
Several steps of this proof are identical to the corresponding steps in the proof of Lemma~\ref{lem_vertical_decoupling_ext}. However, since there are important differences in the renormalization schemes and constants between Section~\ref{s_proof_extinction} and our current setting, we carry out the proof in full.

Fix~$(m,n)$ and~$(m',n')$ with~$n' \ge n+1$. 
Let~$(\zeta_t)_{t \ge 0}$ be the interchange-and-contact process with parameters~$\mathsf v$ and~$\lambda$, and assume that~$\zeta_0$ is random and such that~$\xi^{\zeta_0}$ stochastically dominates~$\pi_{p_{N+1}}$.
We abbreviate $\tilde \zeta := \zeta_{h_N(n'-1)} \circ \theta(\mathbf{x}_N(m'))$ and let
\[a:=\int g^\downarrow(\Theta_N, \xi)\;\pi_{p_{N+1}}(\mathrm{d}\xi)
\quad \text{and define the event} \quad \mathcal{A} := \left\{g^\downarrow(\Theta_N,\tilde{\zeta}) > \sqrt{a} \right\}.\]
Since~$\xi^{\zeta_0}$ stochastically dominates~$\pi_{p_{N+1}}$ and Bernoulli product measures are stationary for the interchange dynamics, we obtain that for any~$t$ and~$x$,~$\xi^{\zeta_{t}\circ \theta(x)} $ stochastically dominates~$\pi_{p_{N+1}}$ as well. Hence, by Markov's inequality and monotonicity of~$g^\downarrow$,
\begin{align*}
    \mathbb P(\mathcal A) \le a^{-1/2} \cdot \mathbb E[g^\downarrow(\Theta_N, \tilde{\zeta})] \le a^{-1/2} \cdot \int g^\downarrow(\Theta_N,\xi) \;\pi_{p_{N+1}}(\mathrm{d}\xi) = \sqrt{a}.
\end{align*}
Next, letting~$(\mathcal F_t)_{t \ge 0}$ be the natural filtration associated to~$(\zeta_t)$, Lemma~\ref{lem_muito_feio} implies that
\begin{align*}
    &\mathbb P(\mathcal Q_N(m',n') \text{ is bad for }(\zeta_t) \mid \mathcal F_{h_N(n'-1)}) \\ &\le \delta_N + g^\downarrow(\Theta_N, \tilde{\zeta})+ \mathrm{discr}^\mathrm{icp}_{\mathsf v, \lambda}(\mathcal{L}_N^{\mathrm{side}},\sqrt{\alphav}\mathcal{L}_N,h_N) + \int g^\uparrow(\Theta_N,\xi)\;\pi_{p_N}(\mathrm{d}\xi) + \mathrm{err}_\mathrm{coup}(\Theta_N').
\end{align*}
Hence, 
\begin{align*}
    \text{on } \mathcal A^c, \quad \mathbb P(\mathcal Q_N(m',n') \text{ is bad for }(\zeta_t) \mid \mathcal F_{h_N(n'-1)})\le \delta_N + \mathcal{E},
\end{align*}
where
\[\mathcal{E}:=  \sqrt{a} + \mathrm{discr}^\mathrm{icp}_{\mathsf v, \lambda}(\mathcal{L}_N^{\mathrm{side}},\sqrt{\alphav}\mathcal{L}_N,h_N)  + \int g^\uparrow(\Theta_N,\xi)\;\pi_{p_N}(\mathrm{d}\xi) + \mathrm{err}_\mathrm{coup}(\Theta_N').\]
We are now ready to bound
\begin{align}
\nonumber&\mathbb P(\mathcal Q_N(m,n) \text{ and }\mathcal Q_N(m',n') \text{ are both bad for }(\zeta_t))\\
    \nonumber&= \mathbb E[ \mathds{1}\{\mathcal Q_N(m,n) \text{ is bad for }(\zeta_t)  \} \cdot \mathbb P(\mathcal Q_N(m',n') \text{ is bad for }(\zeta_t) \mid \mathcal F_{h_N(n'-1)})]\\
    &\le \mathbb P(\mathcal{A}) + (\mathcal E + \delta_N) \cdot \mathbb P(\mathcal Q_N(m,n) \text{ is bad for }(\zeta_t)) \le \sqrt{a} + \mathcal E \delta_N + \delta_N^2 \le \sqrt{a} + \mathcal E + \delta_N^2.\label{eq_final_with_a_and_E}
\end{align}

We now turn to bounding all the error terms that we have gathered along the way. For convenience, we recall that
\begin{align*}
\ell_{\Theta_N} = \mathcal L_N^{1/(4d)},\quad L_{\Theta_N} = 4\sqrt{\alphav} \mathcal L_N,\quad t_{\Theta_N} = \sfv h_N,\quad p_{\Theta_N} = \tfrac12(p_N+p_{N+1}),\quad N \ge 0.
\end{align*}
\textbf{Bound on~$\sqrt{a}$.}
Using Lemma~\ref{lem_integral_gs}, we bound
\begin{align}
\nonumber a&\le e(8\sqrt{\alphav}\mathcal{L}_N + 1)^d \cdot ((2\mathcal{L}_N^{1/(4d)}+2)^d\sfv h_N+1) \cdot \exp\Bigl\{-\frac12 (2\mathcal{L}_N^{1/(4d)}+1)^d(p_{N+1}-p_N)^2 \Bigr\}\\
\label{eq_start_bound_integralg}
&\le C \alphav^{d/2}\cdot \mathcal{L}_N^{d+1/4} \cdot \sfv  h_N \cdot \exp\Bigl\{-c \mathcal{L}_N^{1/4}(p_{N+1}-p_N)^2 \Bigr\},
\end{align}
where~$c,C$ are positive constants that do not depend on~$\sfv$ or~$N$.
Recall from~\eqref{eq_nice_forumulas_p} that~$p_{N+1}-p_N = 2^{-(N+2)}(p - \underline{p})$. Also using~$\mathcal{L}_N=\lfloor \sqrt{\sfv}\rfloor \alphav^{N^2}$ and~$h_N \le h_N' \le 2 h_0\alphav^{N^2}$, the above is smaller than
\[
C\alphav^{d/2} \cdot (\lfloor \sqrt{\sfv}\rfloor \alphav^{N^2})^{d+1/4} \cdot \sfv h_0 \alphav^{N^2} \cdot \exp\left\{-c(\lfloor\sqrt{\sfv}\rfloor \alphav^{N^2})^{1/4} \cdot 2^{-2N-4}(p-\plow)^2 \right\}.
\]
Since~$p$ and~$\underline{p}$ are fixed and do not depend on~$\mathsf v$, we can take~$\mathsf v$ large enough (uniformly over~$N$) so that the above expression is smaller than~$\exp\{-\sfv^{1/8}\alphav^{\smash{N^2}/8}\}$. Since~$\alphav \ll {\sfv}$, this is in turn much smaller than~$\exp\{-\alphav^{(N^2+1)/8}\}$. We have thus proved that
\[\sqrt{a} = \Bigl( \int g^\downarrow(\Theta_N,\xi)\;\pi_{p_{N+1}}(\mathrm{d}\xi)\Bigr)^{\smash{1/2}} \le \exp\{- \alphav^{(N^2+1)/8}\}.\]

\textbf{Bound on $\int g^\uparrow(\Theta_N,\xi)\pi_{p_N}(\mathrm{d}\xi)$.} Lemma~\ref{lem_integral_gs} gives the exact same bound obtained for $a$.

\bigskip
\textbf{Bound on~$\mathrm{discr}^{\mathrm{icp}}_{\mathsf v,\lambda}(\mathcal{L}_N^{\mathrm{side}}, \sqrt{\alphav}\mathcal{L}_N, h_N)$.} 
In the proof of Lemma~\ref{lem_horizontal_decoupling} we have bounded the expression~\eqref{eq_the_same_term}, which is essentially the same as the one we have here, apart from constant factors ($4, 1/4$ and $2$) which make no difference. Hence, the same argument as in that proof shows that
\begin{equation*}
    \mathrm{discr}^{\mathrm{icp}}_{\mathsf v,\lambda}(\mathcal{L}_N^{\mathrm{side}}, \sqrt{\alphav}\mathcal{L}_N, h_N) \le \exp\{-\alphav^{N^2+1/8}\}.
\end{equation*}

\textbf{Bound on~$\mathrm{err}_{\mathrm{coup}}(\Theta_N')$.}
Recall from~\eqref{eq_err_coup} that
\[
\mathrm{err}_{\mathrm{coup}}(\ell,L,t,T) := |B_0(L/2)|\cdot \left( 1 - \mathrm{meet}(\ell) \right)^{\lfloor t/\ell^2 \rfloor} + \mathrm{discr}^{\mathrm{ip}}(L/4, L/2, T),
\]
and recall from~\eqref{eq_def_ThetaNp} that $\Theta_N' :=  (\ell_{\Theta_N},\; L_{\Theta_N},\; t_{\Theta_N},\; T = \mathsf v h_N)$. Hence,
\begin{align*}
    \mathrm{err}_{\mathrm{coup}}(\Theta_N')= |B_0(2\sqrt{\alphav}\mathcal L_N)| \cdot (1-\mathrm{meet}(\mathcal{L}_N^{1/(4d)} ))^{\lfloor \mathsf v h_N/\mathcal{L}_N^{1/(2d)}\rfloor} + \mathrm{discr}^{\mathrm{ip}}(\sqrt{\alphav}\mathcal{L}_N,2\sqrt{\alphav}\mathcal{L}_N, \mathsf v h_N).
\end{align*}

By~\eqref{eq_better_bound_meet}, we can bound~$( 1 - \mathrm{meet}(\ell) )^{\lfloor t/\ell^2 \rfloor} \le e^{-c t/\ell^{d \vee 2}}$, so
\begin{align*}
    |B_0(2\sqrt{\alphav}\mathcal L_N)| \cdot (1-\mathrm{meet}(\mathcal{L}_N^{1/(4d)} ))^{\lfloor \mathsf v h_N/\mathcal{L}_N^{1/(2d)}\rfloor}
    \le (4\sqrt{\alphav}\mathcal{L}_N+1)^d \cdot \exp\biggl\{- c \frac{\mathsf v h_N}{\big(\mathcal{L}_{N}^{1/(4d)}\big)^{d\vee 2}}\biggr\}.
\end{align*}
Using~$\mathcal{L}_N = \lfloor \sqrt{\sfv} \rfloor \alphav^{\smash{N^2}}$ and~$h_N \ge h_N'/2 \ge h_0 \alphav^{\smash{N^2}}/2$, and bounding~$d \vee 2 \le 2d$, the r.h.s. is smaller than 
\[
(4 \lfloor\sqrt{\sfv}\rfloor \alphav^{N^2+1/2}+1)^d\cdot \exp \biggl\{ -c  \frac{\sfv h_0  \alphav^{N^2}/2}{ (\lfloor \sqrt{\sfv} \rfloor \alphav^{N^2})^{1/2} } \biggr\}.
\]
When~$\sfv$ is large (uniformly over~$N$), the r.h.s. is smaller than~$\exp\{-\sqrt{\sfv} \cdot \alphav^{\smash{N^2/2}}\}$, which in turn is much smaller than~$\exp\{-\alphav^{1+N^2/2}\}$, since~$\alphav \ll \sqrt{\sfv}$.
Finally, using Lemma~\ref{lem_first_rw_discr}, we bound 
\begin{align}
\mathrm{discr}^{\mathrm{ip}}(\ell_{\Theta_N}, L_{\Theta_N}, \mathsf v h_N)
&\le 16ed^3 \sfv h_N {(4\alphav^{\frac{1}{2}}\mathcal{L}_N + 1)}^{d-1} \!\!\!\!\! \exp \Bigl\{-\alphav^{\frac{1}{2}}\mathcal{L}_N \cdot \log \Bigl( 1\!+\! \frac{\alphav^{\smash{\frac{1}{2}}\phantom{|}}\mathcal{L}_N}{2\sfv h_N}\Bigr) \Bigr\}.
\label{eq_last_discr}
\end{align}
Recalling that $h_N \le 2h_0 \alphav^{\smash{N^2}}$, we bound
\[
\frac{\sqrt{\alphav}\mathcal{L}_N}{2\sfv h_N} 
\ge \frac{\lfloor \sqrt{\sfv} \rfloor \alphav^{N^2}}{4\sfv h_0\alphav^{N^2}} > \frac{1}{\sfv^{1/4}},
\]
and then,
\[
\sqrt{\alphav}\mathcal{L}_N \cdot \log \left( 1+ \frac{\sqrt{\alphav}\mathcal{L}_N}{2\sfv h_N}\right) \ge \lfloor \sqrt{\sfv} \rfloor \alphav^{N^2+1/2} \cdot \frac{1}{2\sfv^{1/4}} > \alphav^{N^2+1/2}.
\]
Using this, it is now easy to see that the r.h.s. of~\eqref{eq_last_discr} is smaller than~$\exp\{-\alphav^{(N^2+1)/2}\}$.

\bigskip
This concludes the treatment of all error terms. Going back to~\eqref{eq_final_with_a_and_E}, we have thus proved that
\begin{align*}
&\mathbb P(\mathcal Q_N(m,n) \text{ and }\mathcal Q_N(m',n') \text{ are both bad for }(\zeta_t)) \\
&\le 2\exp\{- \alphav^{(N^2+1)/8}\} + \exp\{-\alphav^{N^2+1/8}\} + \exp\{-\alphav^{N^2/2+1}\} + \exp\{-\alphav^{(N^2+1)/2}\}\\
&\le 3\exp\{- \alphav^{(N^2+1)/8}\}.\qedhere
\end{align*}
\end{proof}

\appendix
\section{Stochastic domination for interchange process}
\label{sec:decoupling}

{In this section, we provide the details on the proof of Lemma~\ref{lem_coupling_rate_one}.

Before we delve into the proof, we first summarise a closely related result, Theorem~1.5 in~\cite{BT}, which is stated for the exclusion process. Although our context involves the interchange process, we briefly describe this result as follows:

Consider two well-separated space-time boxes $B_1, B_2$, meaning that their distance
$\dist(B_1,B_2)$ is comparable to their perimeters $\per(B_1)$ and $\per(B_2)$:
\begin{equation*}
    \dist(B_1, B_2) \ge 6 (\per(B_1) + \per(B_2)) + C_1,
\end{equation*}
where $C_1>0$ is a universal constant. 
Then, for any pair of non-decreasing functions $f_1, f_2: \{0,1\}^{\ZZ \times \RR} \to [0,1]$ supported on $B_1$ and $B_2$, respectively, and for every $p < p' \in [0,1]$, we have: 
\begin{equation}
\label{e:decoulplin_bt}
\EE_{\pi_p} [f_1 f_2]
    \le \EE_{\pi_{p'}} [f_1]\cdot \EE_{\pi_{p'}} [f_2] + c_1 \dist(B_1, B_2)^2
        \exp \bigl\{ - c_1^{-1} (p'-p)^2 \dist(B_1, B_2)^{1/4} \bigr\},
\end{equation}
where $c_1>0$ is a universal constant.
As explained in the introduction, \eqref{e:decoulplin_bt} features a technique known as \textit{sprinkling}, which helps to improve the decoupling bound at the cost of slightly modifying the density in the measures on both sides of the inequality.

Our Lemma~\ref{lem_coupling_rate_one} provides an improvement on \cite[Theorem~1.5]{BT}. 
The strategies used for proving both rely on the construction of a coupling between two processes started with slightly different densities within a given box. 
The coupling is carefully designed to ensure that outside events of very small probability, after a sufficiently long time, each particle in the process with lower density is coupled with a corresponding particle in the higher density process.

In comparison with \cite[Theorem~1.5]{BT}, besides dealing with any dimension $d \ge 1$,  the main innovation of Lemma~\ref{lem_coupling_rate_one} is that it provides a disintegrated version of the coupling: it estimates the coupling probability for any two starting configurations. 
This feature is essential for our arguments, as we later need to perform couplings that do not start from a Bernoulli product measure on $\ZZ^d$ (e.g., in Proposition~\ref{prop_estrela_vermelha_new}).
This is similar to the coupling present in \cite{kious2024sharp} for the exclusion process in $\ZZ$.
} 

\begin{proof}[Proof of Lemma~\ref{lem_coupling_rate_one}]
Given the starting configurations for $\xi$ and $\xi'$, we wish to build a coupling
of the processes $(\xi_s)$ and $(\xi'_s)$ so that, outside an event whose
probability we are able to bound, we have $\xi'_s(x) \ge \xi_s(x)$ for
every $(x,s) \in B = B_0(L/4) \times [t,T]$, see Figure~\ref{fig:decoupling}
for an illustration.

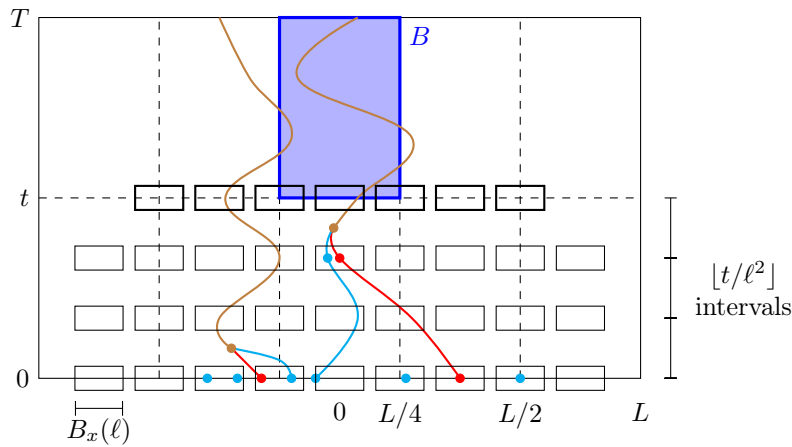
\begin{figure}
\centering
\begin{tikzpicture}[scale=.8]
    \draw (-5,3) rectangle (5,-3);
    \foreach \x in {-3,-1,1,3} \draw[dashed] (\x,-3) -- (\x,3);
    \draw[dashed] (-5,0) -- (5,0);
    \draw ( 0,-3) node[below=2mm] {$0$};
    \draw ( 1,-3) node[below=2mm] {$L/4$};
    \draw ( 3,-3) node[below=2mm] {$L/2$};
    \draw ( 5,-3) node[below=2mm] {$L$};

    \draw (-5,-3) node[left] {$0$};
    \draw (-5, 0) node[left] {$t$};
    \draw (-5, 3) node[left] {$T$};

    \filldraw[very thick, draw=blue, fill=blue, fill opacity = 0.3]
    (-1,3) rectangle (1,0);
    \draw[blue] (1,3) node[below right] {$B$};

    \foreach \x in {-2.2,-1.7, -.8, -.4, 1.1, 3} \filldraw[cyan] (\x,-3) circle (.7mm);
    \foreach \x in {-1.3, 2} \filldraw[red] (\x,-3) circle (.7mm);

    \draw[red, thick, tension=.7] plot [smooth] coordinates
    {(-1.3,-3) (-1.8,-2.5)};
    \draw[brown, thick, tension=.7] plot [smooth] coordinates
    {(-1.8,-2.5) (-2,-2) (-1,-1) (-1.9,0) (-.8,1) (-1.5,2) (-2,3)};
    \draw[cyan, thick, tension=.7] plot [smooth] coordinates
    {(-.8,-3)  (-1,-2.7) (-1.8,-2.5)};
    \filldraw[brown] (-1.8,-2.5) circle (.7mm);

    \draw[red, thick, tension=.7] plot [smooth] coordinates
    {(2,-3) (1.2,-2) (0,-1) (-.1,-.5)};
    \draw[cyan, thick, tension=.7] plot [smooth] coordinates
    {(-.4,-3) (0.3,-2) (-.2,-1) (-.1,-.5)};
    \draw[brown, thick, tension=.7] plot [smooth] coordinates
    {(-.1,-.5) (0.3,0) (1.2,1) (-.7,2) (.3,3)};
    \filldraw[red] (0,-1) circle (.7mm);
    \filldraw[cyan] (-.2,-1) circle (.7mm);
    \filldraw[brown] (-.1,-.5) circle (.7mm);
    {(2,-3) (1.2,-2) (0,-1) (-.1,-.5) (0.3,0) (1.2,1) (-.7,2) (.3,3)};

    \foreach \x in {-4,...,4} {
        \foreach \y in {-3,...,-1} {
            \draw (\x,\y) ++(-.4,-.2) rectangle ++(.8,.4);
            \draw (5.4,\y) -- ++(.2,0);
        }
    }
    \foreach \x in {-3,...,3}
    \draw[thick] (\x,0) ++(-.4,-.2) rectangle ++(.8,.4);

    \draw (5.5,-3) -- ++(0,3) node[midway,right=2mm, align=center]
    {$\lfloor t/\ell^2 \rfloor$\\ intervals};
    \foreach \y in {-3,...,0} \draw (5.4,\y) -- ++(.2,0);

    \draw[|-|] (-4.4,-3.5) -- ++(.8,0) node[midway, below] {$B_x(\ell)$};

\end{tikzpicture}
\caption{Space-time regions for the coupling in
    Lemma~\ref{lem_coupling_rate_one}, which ensures $\xi'_s(x) \ge \xi_s(x)$
    for all $(x,s) \in B$. Intuitively, the coupling works when
    all the particles passing through $B$ remain nearby on interval $[0,T]$ (controlled by
    $\mathrm{discr}^{\mathrm{ip}}$), and $\xi'$ particles (cyan) are more frequent (in
    a precise way) than $\xi$ particles (red) in $B_0(L)$ for a sufficiently long time $t$
    (controlled by $g^{\uparrow} + g^{\downarrow}$), which gives enough time
    for every $\xi$ particle to couple with a $\xi'$ particle (controlled by
    $|B_0(L/2)| (1 - \mathrm{meet}(\ell))^{\lfloor t/\ell^2 \rfloor}$).
    }
\label{fig:decoupling}
\end{figure}

\medskip
\noindent
\textbf{Pairing configurations.} We can regard $\xi$ and $\xi'$ as subsets of $\ZZ^d$. 
    Fix a collection of particles $Z \subset \xi$ and assume that $m : Z \to \xi'$ is an injective function, i.e., $m$
    associates to each particle $z \in Z$ a corresponding particle $m(z) \in \xi'$. 
    An important idea introduced in~\cite{BT} is a coupling that aims at matching
    $z$ to its pair $m(z)$. 
    If $z = m(z)$, particle $z$ is considered
    \textit{matched} from the very beginning. As the process evolves, paired particles
    that started apart become matched once they meet at a later time, and, from that time on, they will move together. 

\medskip
\noindent \textbf{Coupled evolution.}
Let
\begin{equation*}
\cJ^{i} = (\cJ^{i}_{\{x,y\}}:\, \text{$\{x,y\}$ is an edge of $\ZZ^d$})
\end{equation*}
with $i=1,2$ be two independent collections of independent Poisson point processes $\cJ^{i}_{\{x,y\}}$ on
$[0,\infty)$ with intensity $1$.
Starting from $\xi_0 = \xi$ and $\xi'_0 = \xi'$, we use $\cJ^1$ and
$\cJ^2$ to define a coupled time evolution for the pair $(\xi_s, \xi'_s)$:
\begin{itemize}
\item[(i)]
    $(\xi'_s)$ simply uses the graphical representation provided by $\cJ^2$ as in Definition \ref{def_interchange_flow}.
\item[(ii)]
    The evolution of 
    $(\xi_s)$ is slightly more subtle since it is determined by both $\cJ^1$ and $\cJ^2$ together with $m$ as follows.
    For every edge $\{x,y\}$ we use the marks in $\cJ^1_{\{x,y\}}$ when  neither $x$ nor $y$ contains 
    matched particles, and use the marks in $\cJ^2_{\{x,y\}}$ if either $x$ or $y$ contains matched particles. 
    As in \cite[Claim 3.5]{BT}, one can verify that the resulting process $(\xi_s)$ is distributed as an interchange process started from $\xi$. We denote its associated interchange flow by $\Phi$.
\end{itemize}

\medskip
\noindent
\textbf{Refreshing the pairing functions.}
Under the coupled dynamics, the distance between two paired particles follows the law of a continuous-time symmetric simple random walk on $\ZZ^d$ with jump rate $2$.
Therefore, in dimensions $d =1,2$ every pair will eventually match with probability one, but for $d \ge 3$ such a pair might never match.
Moreover, in any dimension, the matching times are heavy-tailed random variables.
Another idea from~\cite{BT}
that helps to improve the matching procedure is to only allow pairing of particles
located withing a maximal distance $\ell$ and to reset the pairing function after time intervals of length approximately $\ell^2$.

We discuss the procedure further.
Fix $\cB = \{B_x(\ell)\}$ be a finite collection of disjoint
boxes of radius $\ell$ that covers $B_0(L-2\ell)$. 
By
Definition~\ref{defi:meet} and Lemma~\ref{lem_meet}, two
paired particles inside some $B_x(\ell) \in \cB$ meet before a time of order $\ell^2$ with reasonable probability.

We shall say that $(\xi, \xi')$
is a \textit{good pair of configurations} if there exists a deterministic
pairing function $m : \xi \cap B_0(L-2\ell) \to \xi'$ such that for every $z
\in \xi \cap B_x(\ell)$ we have $m_1(z) \in \xi' \cap B_x(\ell)$.
Whenever we start with a good pair of configurations $(\xi_0, \xi'_0)$ at time $t=0$, we will perform the coupling with
such a pairing function $m_0$ for a time interval of length $\ell^2$.
Assuming that we get a pair $(\xi_{\ell^2}, \xi^{'}_{\ell^2})$ that is once again good, we can repeat the construction using a (possibly different) pairing function $m_1$ during the time interval $[\ell^2, 2\ell^2]$.
We iterate the procedure at times $j\ell^2$.
That is, partitioning the interval $[0,t]$ into intervals of length at least $\ell^2$
\begin{equation}
\label{eq:intervals_matching}
[0,t]
    = \Bigl(\bigcup_{i=1}^{\lfloor t/\ell^2 \rfloor - 1}
        \bigl[(i-1)\ell^2, i \ell^2\bigr)\Bigr)
        \cup \bigl[(\lfloor t/\ell^2 \rfloor-1)\ell^2, t\bigr],
\end{equation}
the construction above produces a coupling of $(\xi_s, \xi'_s)$ started from
$(\xi, \xi')$ that holds in the interval $[0,t]$, provided the event
\begin{equation*}
A_1
    := \{\text{$(\xi_s, \xi'_s)$ are good pairs for $s = 0, \ell^2, \ldots,
        \lfloor t/\ell^2 \rfloor \cdot \ell^2$ }\}
\end{equation*}
occurs.
It is clear from Definition~\ref{def_gs} that 
\begin{equation}
    \PP(A_1^c) \le
g^{\uparrow}(\ell,L,t,p,\xi) + g^{\downarrow}(\ell,L,t,p,\xi').
\end{equation}

\medskip
\noindent
\textbf{Stochastic domination on $B$.}
On the event $A_1$, the coupling of $(\xi_s, \xi_s')$ during interval $[0,t]$
is well-defined and we would like to ensure that $\xi'_s(x) \ge \xi_s(x)$ for
every $(x,s) \in B = B_0(L/4) \times [t,T]$. Consider the event
\begin{equation*}
A_2
    := \{\text{for every } x \in \partial B_0(L/2), \text{ and every }
    0 \le s < s' \le T,\; \Phi(x,s,s') \notin \partial B_0(L/4)\}.
\end{equation*}

Recalling Definition~\ref{defi:discr_ip}, one can show that
$\PP(A_2^c) \le \mathrm{discr}^{\mathrm{ip}}(L/4, L/2, T)$. 
Moreover, on $A_1 \cap A_2$, every $\xi$ particle that touches $B$ must have stayed inside $B_0(L/2) \times [0,T]$.
Therefore, such a particle had many attempts to match with a corresponding $\xi'$ particle  until $t$.
On $\xi_t \cap B_0(L/2)$ there are at most $|B_0(L/2)|$ particles and if any of them is not matched, then it has failed to match in every interval of the partition~\eqref{eq:intervals_matching}.
Hence, denoting
\begin{equation*}
    A_3 := \{\text{every $\xi$ particle that touches $B$ was matched by time $t$ and passed through $B_0(L/2) \times \{t\}$}\},
\end{equation*}
then $\PP(A_3^c \cap A_1 \cap A_2) \le |B_0(L/2)| \cdot
(1 - \mathrm{meet}(\ell))^{\lfloor t/\ell^2 \rfloor}$.

\medskip
Summing it up, on event $A_1 \cap A_2 \cap A_3$ the desired coupling
holds, and by construction
\begin{equation*}
\PP(\cup_{i=1}^3 A_i^c)
    \le g^{\uparrow}(\ell,L,t,p,\xi) + g^{\downarrow}(\ell,L,t,p,\xi') + 
    \mathrm{err}_{\mathrm{coup}},
\end{equation*}
where $\mathrm{err}_{\mathrm{coup}} = 
    |B_0(L/2)|\cdot (1 - \mathrm{meet}(\ell))^{\lfloor t/\ell^2 \rfloor}
    + \mathrm{discr}^{\mathrm{ip}}(L/4, L/2, T).$
\end{proof}

\section{Proofs of estimates for the interchange process}\label{appendix_rw_ip}

{The following is proved in the beginning of Section 6.7 in~\cite{DP}. Although the proof therein is written for $d=1$, the extension to $d \geq 1$ is easy}.


\begin{lemma}\label{lem_distance_two_interchange}
Letting~$\Phi$ be an interchange flow  with rate~$\mathsf v = 1$, for any~$\delta > 0$, we have
\begin{equation*}
    \sup_{\substack{x,y \in \mathbb Z^d \\ x \neq y}} \;\sum_{w,z \in\mathbb Z^d } |\mathbb P(\Phi(x,0,t)=w,\; \Phi(y,0,t) = z) - \mathbb P(\Phi(x,0,t)=w) \cdot \mathbb P(\Phi(y,0,t) = z)| \xrightarrow{t \to \infty} 0.
\end{equation*}
\end{lemma}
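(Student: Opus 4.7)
The plan is to reduce the claim to an estimate of the total variation distance between the joint law of $(\Phi(x,0,t),\Phi(y,0,t))$ and the product of its marginals. By the self-duality of the interchange flow, the marginal law of $\Phi(x,0,t)$ coincides with the transition kernel $q_t(x,\cdot)$ of a single rate-$1$ CTRW (Definition~\ref{def_ctrw}). Denoting by $P_t$ the two-particle stirring semigroup on $\mathbb Z^d \times \mathbb Z^d$ (supported off the diagonal) and by $Q_t := q_t\otimes q_t$ the semigroup of two independent CTRWs, the sum in the statement equals $\|P_t((x,y),\cdot)-Q_t((x,y),\cdot)\|_{L^1}$, so the goal becomes $\sup_{x\neq y}\|P_t((x,y),\cdot)-Q_t((x,y),\cdot)\|_{L^1}\to 0$.

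Next I would compare the generators $L$ and $\tilde L$ of $P_t$ and $Q_t$ respectively. They agree on non-adjacent ordered pairs, and a short computation shows that for $a\sim b$,
\[
(L-\tilde L)f(a,b)=f(b,a)+f(a,b)-f(a,a)-f(b,b),
\]
a signed kernel of total mass zero supported on four points clustered near $(a,b)$. The variation-of-constants formula gives $P_t-Q_t=\int_0^t P_s(L-\tilde L)Q_{t-s}\,ds$, and substituting the product structure $Q_{t-s}((c,d),(e,f))=q_{t-s}(c,e)\,q_{t-s}(d,f)$ produces the key factorization
\[
\bigl(P_s(L-\tilde L)Q_{t-s}\bigr)((x,y),(e,f)) = -\sum_{a\sim b}P_s((x,y),(a,b))\,\Delta^e_{a,b}\,\Delta^f_{a,b},
\]
where $\Delta^z_{a,b}:=q_{t-s}(b,z)-q_{t-s}(a,z)$. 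Summing $|\cdot|$ over $(e,f)$ decouples the sum over $e$ and over $f$, and for any adjacent $a\sim b$ the translation invariance of $q_{t-s}$ gives
\[
\|P_s(L-\tilde L)Q_{t-s}((x,y),\cdot)\|_{L^1}\le \mathbb P\bigl(\Phi(x,0,s)\sim\Phi(y,0,s)\bigr)\cdot\|q_{t-s}(b,\cdot)-q_{t-s}(a,\cdot)\|_{L^1}^{\,2}.
\]

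Finally I would estimate the two factors. The $L^1$ distance of two heat kernels at neighboring sites is trivially at most $2$ and, by the local central limit theorem, of order $(t-s)^{-1/2}$ for $t-s$ large, so its square is at most $\min(4,C/(t-s))$. For the collision probability, I would dominate the joint stirring law by two independent CTRWs via the negative-correlation inequality (Proposition~1.7 of \cite[Chapter VIII]{Lig}, already invoked in the proof of Lemma~\ref{lem_time_together}) and then apply the local CLT to obtain $\mathbb P(\Phi(x,0,s)\sim\Phi(y,0,s))\le\min(1,Cs^{-d/2})$, uniformly in $x\neq y$. Combining,
\[
\|(P_t-Q_t)((x,y),\cdot)\|_{L^1}\le C\int_0^t \min(1,s^{-d/2})\cdot\min\bigl(4,C'/(t-s)\bigr)\,ds,
\]
which an elementary split at $s=t/2$ and $s=t-O(1)$ shows to vanish as $t\to\infty$, uniformly in $x\neq y$, with rate $O((\log t)/\sqrt t)$ in $d=1$ and faster in higher dimensions. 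The main subtlety is the non-integrable singularity at $s=t$ of the heat-kernel bound, which is tamed by truncating at the trivial bound $2$; together with the fact that $\mathbb P(\Phi(x,0,s)\sim\Phi(y,0,s))$ is small precisely at large $s$, this truncation is what forces the integral to vanish rather than merely stay bounded.
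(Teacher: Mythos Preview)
Your argument is correct. The paper does not give its own proof here but simply cites Section~6.7 of De~Masi--Presutti~\cite{DP}, remarking that the $d=1$ argument there extends to all $d\ge 1$; the Duhamel (variation-of-constants) comparison between the stirring and independent-walk semigroups that you carry out is precisely the method used in that reference, so you have effectively reconstructed the cited proof in a self-contained way. One small remark: invoking self-duality to identify the one-particle marginal with the CTRW kernel is unnecessary, since it is immediate from the construction of the interchange flow that each individual trajectory $t\mapsto\Phi(x,0,t)$ is a rate-$1$ continuous-time simple random walk.
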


\begin{lemma}
\label{lem_av_rw}
	Let~$p \in [0,1]$. For each~$t > 0$, let~$A_t$ be a subset of~$\mathbb Z^d$; assume that these sets satisfy the following property: for any~$K > 0$ and any~$\delta > 0$, there exists~$t_0 > 0$ such that for all~$t \ge t_0$,
	\begin{equation}\label{eq_counting_At}
		\frac{|A_t \cap B_x\big(\delta \sqrt{t}\big)|}{|\mathbb Z^d \cap B_x\big(\delta \sqrt{t}\big)|} \le  p  \quad \text{for all } x \in B_0\big(K\sqrt{t}\big).
	\end{equation}
Then, letting~$(X_t)_{t \ge 0}$ denote a random walk on~$\mathbb Z^d$ with transition function as in~\eqref{e:ctrw}, we have
\begin{equation*}
	\limsup_{t \to \infty}\mathbb P(X_t \in A_t) \le p.
\end{equation*}
\end{lemma}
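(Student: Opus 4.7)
The plan is to use the local central limit theorem to exploit the almost-constant nature of the random walk transition kernel on scales much smaller than $\sqrt{t}$, together with the hypothesized density bound on such scales.

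First, fix $\varepsilon > 0$. Using Lemma~\ref{lem_kallenberg} (or a direct tail estimate on $\|X_t\|$), choose $K = K(\varepsilon) > 0$ large enough that
\[
\mathbb{P}(\|X_t\| > K\sqrt{t}) < \varepsilon \qquad \text{for all } t \text{ sufficiently large}.
\]
This confines the analysis to $B_0(K\sqrt{t})$, up to an additive error $\varepsilon$.

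Next, fix $\delta = \delta(\varepsilon) > 0$ small (to be chosen at the end), and partition $B_0(K\sqrt{t}) \cap \mathbb{Z}^d$ into pairwise disjoint boxes $B^{(1)},\ldots,B^{(N_t)}$, each of radius $\delta\sqrt{t}$ (allowing a negligible boundary layer if necessary). By the local CLT for the continuous-time random walk, for every such box $B^{(i)}$ with center $x_i \in B_0(K\sqrt{t})$ we have
\[
\sup_{y,y' \in B^{(i)}} \frac{\mathsf{p}(0,y,t)}{\mathsf{p}(0,y',t)} \le 1 + \eta(\delta),
\]
where $\eta(\delta) \to 0$ as $\delta \to 0$, uniformly in $t$ large and in $i$ (this is a standard consequence of the Gaussian approximation, since on $B^{(i)}$ the Gaussian density varies by a multiplicative factor close to $1$ when $\delta$ is small). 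Let $\bar{\mathsf{p}}_i := \max_{y \in B^{(i)}} \mathsf{p}(0,y,t)$.

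Now decompose
\[
\mathbb{P}(X_t \in A_t) \le \mathbb{P}(\|X_t\| > K\sqrt{t}) + \sum_{i=1}^{N_t} \sum_{y \in A_t \cap B^{(i)}} \mathsf{p}(0,y,t)
\le \varepsilon + \sum_{i=1}^{N_t} \bar{\mathsf{p}}_i \cdot |A_t \cap B^{(i)}|.
\]
By the hypothesis~\eqref{eq_counting_At} applied with the chosen $K$ and $\delta$, for $t$ large we have $|A_t \cap B^{(i)}| \le p \cdot |B^{(i)}|$. Hence the double sum is at most
\[
p \cdot \sum_{i=1}^{N_t} \bar{\mathsf{p}}_i \cdot |B^{(i)}| \le p(1+\eta(\delta)) \cdot \sum_{i=1}^{N_t} \sum_{y \in B^{(i)}} \mathsf{p}(0,y,t) \le p(1+\eta(\delta)),
\]
where the last step uses that the boxes are disjoint and the total mass of $\mathsf{p}(0,\cdot,t)$ is $1$. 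Combining,
\[
\mathbb{P}(X_t \in A_t) \le \varepsilon + p(1+\eta(\delta)) \qquad \text{for all } t \text{ large.}
\]
Taking $\limsup_{t\to\infty}$, then sending $\delta \to 0$ and finally $\varepsilon \to 0$, we obtain $\limsup_t \mathbb{P}(X_t \in A_t) \le p$.

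The only non-trivial technical point is the local CLT bound controlling the oscillation of $\mathsf{p}(0,\cdot,t)$ on boxes of radius $\delta\sqrt{t}$ inside $B_0(K\sqrt{t})$; this is a standard estimate for continuous-time simple random walk (derivable coordinatewise from the one-dimensional local CLT and independence), so no novel input is required.
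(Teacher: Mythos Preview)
Your proof is correct and follows essentially the same approach as the paper: both restrict to $B_0(K\sqrt{t})$ via a tail bound, partition into sub-boxes of side $\sim\delta\sqrt t$, invoke the local CLT to show the transition kernel is nearly constant on each sub-box, and then apply the density hypothesis~\eqref{eq_counting_At}. The only cosmetic difference is that the paper uses the additive Gaussian approximation $\mathsf p(0,x,t)=t^{-d/2}f(x/\sqrt t)+o(t^{-d/2})$ and passes through a Riemann sum, whereas you use a multiplicative ratio bound $\bar{\mathsf p}_i\le(1+\eta(\delta))\min_{y\in B^{(i)}}\mathsf p(0,y,t)$ and compare directly to the total mass; both are valid and equivalent in spirit.
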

\begin{proof}
	Fix~$\varepsilon > 0$. Choose~$K$ large enough that, letting~$Z \sim \mathcal N(0,\mathrm{Id})$ be a standard Gaussian in~$\mathbb R^d$, we have~$\mathbb P(Z \in [-K,K)^d) >1- \varepsilon/2$. By the Central Limit Theorem, if~$t$ is large enough we have~$\mathbb P\big(X_t \in \big[-K\sqrt{t},K\sqrt{t}\big)^d\big) > 1- \varepsilon$. We can then bound
	\begin{align*}
		\mathbb P(X_t \in A_t) \le  \varepsilon+\mathbb P\big(X_t \in A_t \cap \big[-K\sqrt{t},K\sqrt{t}\big)^d\big) 
	\end{align*}
	when~$t$ is large enough. Letting~$f:\mathbb R^d \to [0,\infty)$ be the probability density function of~$Z$, the Local Central Limit Theorem gives
	\begin{equation*}\sup_{x \in \mathbb Z^d}\left|\mathbb P(X_t = x) - \frac{1}{t^{d/2}}\cdot f\left(\frac{1}{\sqrt{t}}x\right)\right| = o\left(\frac{1}{t^{d/2}}\right);\end{equation*}
		combining this with the above bound, for~$t$ large enough (depending on~$K$) we have
	\begin{align}\label{eq_nicer_inside}
		\mathbb P(X_t \in A_t) \le 2\varepsilon+\frac{1}{t^{d/2}}\sum_{x \in A_t \cap [-K\sqrt{t},K\sqrt{t})^d} f\left(\frac{1}{\sqrt{t}}x\right).
	\end{align}

	Let~$\delta > 0$ be small, to be chosen later (not depending on~$t$), with~$K/\delta \in \mathbb N$. We write
	\begin{equation*}
    \Lambda(K,\delta):=\{-K,-K+\delta,-K+2\delta,\ldots, K-\delta\}^d,
    \ \text{so that}\
    [-K,K)^d\!\! =\, \bigcup_{\mathclap{q\in \Lambda(K,\delta)}}\ (q+[0,\delta)^d).
    \end{equation*}
    For each~$q \in \Lambda(K,\delta)$, we bound
	\begin{align*}
		\sum_{x \in A_t \cap (q\sqrt{t} + [0,\delta\sqrt{t})^d)}f\left(\frac{1}{\sqrt{t}}x\right) &\le \max_{u \in (q+[0,\delta)^d)} f(u) \cdot |A_t \cap(q\sqrt{t} + [0,\delta\sqrt{t})^d)|\\[-2mm]
		&\overset{\mathclap{\eqref{eq_counting_At}}}{\le} \max_{u \in (q+[0,\delta)^d)} f(u) \cdot p \cdot |\mathbb Z^d \cap (q\sqrt{t} + [0,\delta\sqrt{t})^d)|\\
		&\le \max_{u \in (q+[0,\delta)^d)} f(u) \cdot p \cdot (\delta\sqrt{t}+1)^d.
	\end{align*}
	By bounding~$(\delta \sqrt{t}+1)^d \le (1+\varepsilon)\delta^d t^{d/2}$ for~$t$ large and combining this with~\eqref{eq_nicer_inside}, we have
	\begin{align*}
		\limsup_{t\to \infty}\mathbb P(X_t \in A_t) &\le 2\varepsilon+p(1+\varepsilon) \delta^d \sum_{q \in \Lambda(K,\delta)}\max_{u \in (q+[0,\delta)^d)} f(u).
		 	\end{align*}
	By taking~$\delta$ small (independently of~$t$), the r.h.s. above approaches
\[
2\varepsilon+p(1+\varepsilon) \int_{[-K,K)^d} f(u)\;\mathrm{d}u \le 2\varepsilon + p (1+\varepsilon).
\]
	Since~$\varepsilon$ is arbitrary, the desired bound follows.
\end{proof}

\begin{proof}[Proof of Lemma~\ref{lem_av_rw_bal}]
Fix~$p$,~$p'$ and~$\xi_0$ as in the statement, and let~$\mathsf v$ be large, to be chosen later. 
Also fix~$u$,~$T$ and~$\mathsf e$ as in the statement. We have
\begin{align*}
    \mathbb P(\mathcal Y \in A) = \sum_{v \in A} \mathbb P(\mathcal Y = v) &=  \sum_{v \in A}\;\sum_{x \in \mathbb Z^d}  \mathbb P(\Phi(u,0,T)=x,\;\Phi(v,0,T) = x+\mathsf e)\\[.2cm]
    &=\sum_{v \in A}\;\sum_{x \in \mathbb Z^d}\mathbb P(\Phi(x,0,T) = u,\; \Phi(x+\mathsf e,0,T) = v),
\end{align*}
where the second equality follows from invariance of the law of the interchange flow under time reversal. Further using invariance of this law under spatial shifts, as well as a change of variable ($y:=u-x$), the above equals
\begin{align*}
    \sum_{y \in \mathbb Z^d}\;\sum_{v \in A} \mathbb P(\Phi(0,0,T) = y,\; \Phi(\mathsf e,0,T) = y+ v-u).
\end{align*}

We introduce the intermediate time~$t := T-\mathsf v^{-3/4}$ and note that the above equals
\begin{equation}\label{eq_more_important_expr}
    \sum_{w,z,y \in \mathbb Z^d}\sum_{v \in A} \mathbb P(\Phi(0,0,t) = w,\;\Phi(\mathsf e,0,t) = z) \cdot \mathbb P(\Phi(w,t,T) = y,\; \Phi(z,t,T) = y+v-u).
\end{equation}

Let us abbreviate $B:= B_0(\tfrac18L_0) \cap \mathbb Z^d$ and fix~$\varepsilon > 0$. When~$\mathsf v$ is large enough, we have
\begin{align*}
	\mathbb P(B \supset \{\Phi(0,0,t),\; \Phi(0,0,T),\; \Phi(\mathsf e,0,t),\; \Phi(\mathsf e,0,T)\}) > 1- \varepsilon.
\end{align*}

Hence,~\eqref{eq_more_important_expr} is smaller than
\begin{equation*}
\varepsilon + \sum_{\mathclap{\substack{w,z \in B;\\ w\neq z}}} \; \mathbb P(\Phi(0,0,t) = w,\;\Phi(\mathsf e,0,t) = z) \times \sum_{y \in B}\; \sum_{v \in A} \mathbb P(\Phi(w,t,T) = y,\; \Phi(z,t,T) = y+v-u),
\end{equation*}
which,  by Lemma~\ref{lem_distance_two_interchange} and  for~$\mathsf v$ large  is smaller than
\begin{equation}\label{eq_another_imp}
    2\varepsilon + \sum_{\mathclap{\substack{w,z \in B;\\ z\neq w}}} \; \mathbb P(\Phi(0,0,t) = w,\;\Phi(\mathsf e,0,t) = z) \times \sum_{y \in B} \mathbb P(\Phi(w,t,T) = y) \; \sum_{v \in A} \mathbb P(\Phi(z,t,T) = y+v-u).
\end{equation}
We write
\[
\sum_{v \in A} \mathbb P(\Phi(z,t,T) = y+v-u) = \mathbb P(\Phi(z-y+u,0,T-t) \in A).
\]
Recall that we have fixed~$u \in B_0(\tfrac12 L_0)$. Fix a choice of~$z \in B$ and $y \in B$. By the triangle inequality, we have~$z-y+u \in B_0(\frac34L_0)$. In particular,~$B_{z-y+u}(\tfrac14 L_0) \subseteq B_0(L_0)$.
Then, by the assumption~\eqref{eq_my_density_assumption},
\begin{equation*}
    \frac{|A \cap B_x(\mathsf v^{1/10})|}{|\mathbb Z^d \cap B_x(\mathsf v^{1/10})|} \le p \quad \text{ for all }x \in B_{z-y+u}(\tfrac14 L_0).
\end{equation*}
We also have~$T-t = \mathsf v^{-3/4}$, so~$ \sqrt{\mathsf v \cdot(T-t)} =\mathsf v^{1/8}$, which is much larger than~$\mathsf v^{1/10}$ and much smaller than~$\tfrac14 L_0 = \tfrac14 \sqrt{\sfv}\log^4(\sfv)$. It is then easy to see that the above implies that, fixing~$K$ and~$\delta$, and taking~$\mathsf v$ large enough (depending on~$K$ and~$\delta$), we have
\begin{equation*}
    \frac{|A \cap B_x\big(\delta \cdot \sqrt{\mathsf v \cdot (T-t)} \big)|}{|\mathbb Z^d \cap B_x\big(\delta \cdot \sqrt{\mathsf v \cdot (T-t)} \big)|} \le p+\varepsilon \quad \text{ for all }x \in B_{z-y+u}\big(K \cdot \sqrt{\mathsf v \cdot (T-t)}\big).
\end{equation*}
Then, Lemma~\ref{lem_av_rw} (with time multiplied by~$\mathsf v$) implies that 
\[\mathbb P(\Phi(z-y+u,0,T-t) \in A) < p+\varepsilon\]
if~$\mathsf v$ is large enough. 
From this bound, we see that the expression in~\eqref{eq_another_imp} is smaller than~$p+ 3 \varepsilon$.
\end{proof}

\section*{Acknowledgements}
The authors thank Andreas Kyprianou for the suggestion of using reference~\cite{Biggins} to prove Lemma~\ref{lem_bbm}.
M.E.V. is partially supported by CNPq grant 310734/2021-5 and by FAPERJ grant
E-26/200.442/\break 2023.
The research of M.H.\ is partially supported by FAPEMIG grant APQ-01214-21,
CNPq grant 312566/2023-9  and CNPq grant 406001/2021-9.

\bibliographystyle{plain}

\end{document}